\newcommand{\dd}{\mathbf{d}}
\newcommand{\EE}{\ensuremath{\mathbb{E}}}
\newcommand{\PP}{\ensuremath{\mathbb{P}}}
\newcommand{\R}{\ensuremath{\mathbb{R}}}
\newcommand{\C}{\ensuremath{\mathbb{C}}}
\newcommand{\Z}{\ensuremath{\mathbb{Z}}}
\newcommand{\Real}{\ensuremath{\mathfrak{Re}}}
\renewcommand{\Re}{\ensuremath{\mathfrak{Re}}}
\def \Ai {{\rm Ai}}
\def \sgn {{\rm sgn}}
\newtheorem{theorem}{Theorem}[section]
\newtheorem{lemma}[theorem]{Lemma}
\newtheorem{proposition}[theorem]{Proposition}
\theoremstyle{definition}
\newtheorem{remark}[theorem]{Remark}
\theoremstyle{definition}
\theoremstyle{definition}
\newtheorem{definition}[theorem]{Definition}
\theoremstyle{definition}
\newcommand{\PSP}{\ensuremath{\mathbf{PSP}}}
\newcommand{\PSM}{\ensuremath{\mathbf{PSM}}}
\newcommand{\Plancherel}{\ensuremath{\mathrm{Plancherel}}}
\newcommand{\Pf}{\ensuremath{\mathrm{Pf}}}
\newcommand{\Geom}{\ensuremath{\mathrm{Geom}}}
\newcommand{\sign}{\ensuremath{\mathrm{sign}}}
\newcommand{\Conf}{\ensuremath{\mathrm{Conf}}}
\newcommand{\I}{\ensuremath{\mathbf{i}}}
\newcommand{\skeww}{\ensuremath{\mathrm{Skew}_2}}
\begin{document}
\title[Pfaffian Schur processes and LPP in a half-quadrant]{Pfaffian Schur processes and last passage percolation in a half-quadrant}
%\runtitle{Pfaffian Schur processes and LPP in a half-quadrant}
% indicate corresponding author with \corref{}

\author[J. Baik]{Jinho Baik}
\address{J. Baik, University of Michigan, Department of Mathematics,
	530 Church Street,
	Ann Arbor, MI 48109, USA}
\email{baik@umich.edu}

\author[G. Barraquand]{Guillaume Barraquand}
\address{G. Barraquand,
	Columbia University,
	Department of Mathematics,
	2990 Broadway,
	New York, NY 10027, USA}
\email{barraquand@math.columbia.edu}

\author[I. Corwin]{Ivan Corwin}
\address{I. Corwin, Columbia University,
	Department of Mathematics,
	2990 Broadway,
	New York, NY 10027, USA,
	and Clay Mathematics Institute, 10 Memorial Blvd. Suite 902, Providence, RI 02903, USA}
\email{ivan.corwin@gmail.com}

\author[T. Suidan]{Toufic Suidan}
\address{T. Suidan}
\email{tsuidan@gmail.com}

\subjclass[2010]{Primary 60K35, 82C23; secondary 60G55, 05E05,
	60B20}

\keywords{Last passage percolation, KPZ universality class, Tracy-Widom
	distributions, Schur process, Fredholm Pfaffian, Phase transition}

\begin{abstract}
We study last passage percolation in a half-quadrant, which we analyze within the framework of Pfaffian Schur processes. For the model with exponential weights, we prove that the fluctuations of the last passage time to a point on the diagonal are either GSE Tracy-Widom distributed, GOE Tracy-Widom distributed, or Gaussian, depending on the size of weights along the diagonal. Away from the diagonal, the fluctuations of passage times follow the  GUE Tracy-Widom distribution. We also obtain a two-dimensional crossover between the GUE, GOE and GSE distribution by studying the multipoint distribution of last passage times close to the diagonal when the size of the diagonal weights is simultaneously scaled close to the critical point. We expect that this crossover arises universally in KPZ growth models in half-space. Along the way, we introduce a method to deal with diverging correlation kernels of point processes where points collide in the scaling limit. 
\end{abstract}

%\begin{keyword}[class=MSC]
%\kwd[Primary ]{60K35}
%\kwd{82C23}
%\kwd[; secondary ]{60G55}
%\kwd{05E05}
%\kwd{60B20}
%\end{keyword}
\maketitle

\setcounter{tocdepth}{1}
\hypersetup{linktocpage}
\tableofcontents

\section{Introduction}

In this paper, we study last passage percolation in a half-quadrant of $\Z^2$. We extend all known results for the case of geometric weights (see discussion of previous results below) to the case of exponential weights. We study in a unified framework the distribution of passage times on and off diagonal, and for arbitrary boundary condition. We do so by realizing the joint distribution of last passage percolation times as a marginal of Pfaffian Schur processes. This allows us to use powerful methods of Pfaffian point processes to prove limit theorems. Along the way, we discuss an issue arising when a simple point process converges to a point process where every point has multiplicity two, which we expect to have independent interest. These results also have consequences about interacting particle systems -- in particular the TASEP on positive integers and the facilitated TASEP -- that we discuss in \cite{baik2017facilitated}. 

\begin{definition}[Half-space exponential weight LPP] Let $\big(w_{n,m}\big)_{n\geqslant m\geqslant 0}$ be a sequence of  independent exponential random variables\footnote{The exponential distribution with rate $\alpha\in(0,+\infty)$, denoted $\mathcal{E}(\alpha)$,  is the probability distribution on $\R_{>0}$ such that if $X\sim \mathcal{E}(\alpha)$,
		$ \PP(X>x) = e^{-\alpha x}.$} with rate $1$  when $n\geqslant m+1$ and with rate $\alpha$ when  $n=m$. We define the exponential last passage percolation (LPP) time on the half-quadrant, denoted $H(n,m)$, by the recurrence for $n\geqslant m$,
	$$ H(n,m) =  w_{n,m} + \begin{cases}
	\max\Big\lbrace H(n-1, m)  , H(n, m-1)\Big\rbrace &\mbox{if } n\geqslant m+1, \\
	H(n,m-1) &\mbox{if } n=m
	\end{cases}$$
	with the boundary condition $H(n,0)=0$.
	\label{def:LPPexp}
\end{definition}

\begin{figure}
	\begin{tikzpicture}[scale=0.55]
	\draw[thick, gray,  ->] (0,0) -- (12.2,0);
	\draw[thick, gray, ->] (0,0) -- (0,8.2);
	\draw[thick, gray] (0,1) -- (1,1) -- (1,2) -- (2,2) -- (2,3) -- (3,3) -- (3,4) -- (4,4) -- (4,5) -- (5,5) -- (5,6) -- (6,6) -- (6,7) -- (7,7) -- (7,8) -- (8,8)  ; %-- (9,9) -- (9,10) -- (10,10) -- (10, 11) -- (11,11);
	%\draw[dotted]  (0,0) grid(11,11);
	\foreach \x in {1, ..., 8}
	\draw[dashed, gray] (\x,0) -- (\x,\x);
	\foreach \x in {9, 10, 11, 12}
	\draw[dashed, gray] (\x,0) -- (\x,8);
	\foreach \x in {1, ..., 8}
	\draw[dashed, gray] (\x,\x) -- (12.1,\x);
	\fill[gray, opacity=0.5] (0,0) -- (0,1) -- (3,1) -- (3,2) -- (3,3) -- (3,4) -- (4,4) -- (4,5) -- (5,5) -- (6,5) -- (6,6) -- (7,6) -- (7,5)-- (7,4) -- (6,4) -- (5,4) -- (5,3) -- (4,3) -- (4,2) -- (4,0) --  (0,0);
	\fill[gray, opacity=0.7] (6,6) -- (7,6) -- (7,5) -- (6,5) -- (6,6);
	\node at (6.5, -0.5) {$n$};
	\node at (-0.5, 5.5) {$m$};
	\draw [thick, dashed] (0,5.5) -- (6.5,5.5);
	\draw [thick, dashed] (6.5,0) -- (6.5,5.5);
	\node at (0.5, 0.5) {\footnotesize{$w_{11}$}};
	\node at (1.5, 0.5) {\footnotesize{$w_{21}$}};
	\node at (1.5, 1.5) {\footnotesize{$w_{22}$}};
	\node at (2.5, 0.5) {\footnotesize{$w_{31}$}};
	\draw[ultra thick, black] (6,6) -- (7,6) -- (7,4) -- (8,4) -- (8,3) -- (10, 3) -- (10,2) -- (11,2) -- (11,0) ;
	\fill[gray, opacity=0.3] (0,0)-- (0,1) -- (1,1) -- (1,2) -- (2,2) -- (2,3) -- (3,3) -- (3,4) -- (4,4) -- (4,5) -- (5,5) -- (5,6) -- (6,6) -- (7,6) -- (7,4) -- (8,4) -- (8,3) -- (10, 3) -- (10,2) -- (11,2) -- (11,0);
	\end{tikzpicture}
	\caption{LPP on the half-quadrant. One admissible path from $(1,1)$ to $(n,m)$ is shown in dark gray. $H(n,m)$ is the maximum over such paths of the sum of the weights $w_{ij}$ along the path.}
	\label{fig:lastpassagehalfquadrant}
\end{figure}
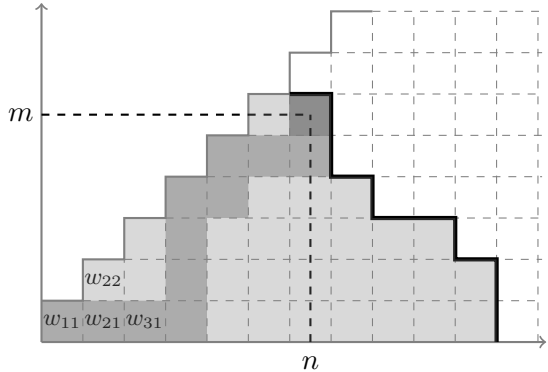

It is useful to notice that the model is equivalent to a model of last passage percolation on the full quadrant where the weights are symmetric with respect to the diagonal ($w_{ij}=w_{ji}$). 

\begin{remark}
	If one imagines that the light gray area in Figure \ref{fig:lastpassagehalfquadrant} corresponds to the percolation cluster at some time, its border (shown in black in Figure \ref{fig:lastpassagehalfquadrant}) evolves as the height function in the Totally Asymmetric Simple Exclusion process on the positive integers with open boundary condition, so that all our results could be equivalently phrased in terms of the latter model.
\end{remark}

\subsection{Previous results in (half-space) LPP}
The history of fluctuation results in last passage percolation goes back to the solution to Ulam's problem about the asymptotic distribution of the size of the longest increasing subsequence in a uniformly random permutation \cite{baik1999distribution}. A proof of the nature of fluctuations in exponential distribution LPP is provided in  \cite{johansson2000shape}. On a half-space, \cite{baik2001algebraic, baik2001asymptotics, baik2001symmetrized} studies the longest increasing subsequence in random involutions. This is equivalent to a half-space LPP problem since for an involution $\sigma\in \mathcal{S}_n$, the graph of $i\mapsto \sigma(i)$ is symmetric with respect to the first diagonal. Actually, \cite{baik2001algebraic, baik2001asymptotics,baik2001symmetrized} treat both the longest increasing subsequences of a random involution and symmetrized LPP with geometric weights. That work contains the geometric weight half-space LPP analogue of Theorem \ref{theo:LPPdiagointro}. The methods used therein only worked when restricted to the one point distribution of passage times exactly along the diagonal.
Further work on half-space LPP was undertaken in  \cite{sasamoto2004fluctuations}, where the results are stated there in terms of the discrete polynuclear growth (PNG) model rather than the equivalent half-space LPP with geometric weights. The framework of \cite{sasamoto2004fluctuations} allows to study the correlation kernel corresponding to multiple points in the space direction, with or without nucleations at the origin. In the large scale limit, the authors performed a non-rigorous critical point derivation of the limiting correlation kernel in certain regimes and found Airy-type process and various limiting crossover kernels near the origin, much in the same spirit as our results (see the discussion about these results in Section \ref{sec:new}).

\subsection{Previous methods}
A key property in the solution of Ulam's problem in \cite{baik1999distribution} was the relation to the RSK algorithm which reveals a beautiful algebraic structure that can be generalized using the formalism of Schur measures \cite{okounkov2001infinite} and Schur processes \cite{okounkov2003correlation}. RSK is just one of a variety of Markov dynamics on sequences of partitions which preserves the Schur process \cite{borodin2008anisotropic, borodin2011schur}. Studying these dynamics gives rise to a number of interesting probabilistic models related to Schur processes. In the early works of \cite{baik1999distribution, baik2001asymptotics} the convergence to the Tracy-Widom distributions was proved by Riemann-Hilbert problem asymptotic analysis. Subsequently Fredholm determinants became the preferred vehicle for asymptotic analysis.

In a half-space, \cite{baik2001algebraic} applied RSK to symmetric matrices. Subsequently, \cite{rains2000correlation} (see also \cite{forrester2006correlation}) showed that geometric weight half-space LPP is a Pfaffian point process (see Section \ref{sec:PPP}) and computed the correlation kernel. Later, \cite{borodin2005eynard} defined Pfaffian Schur processes generalizing the probability measures defined in \cite{rains2000correlation} and \cite{sasamoto2004fluctuations}. The Pfaffian Schur process is an analogue of the Schur process for symmetrized problems.

\subsection{Our methods and new results}
\label{sec:new}
We consider Markov dynamics similar to those of \cite{borodin2008anisotropic} and show that they preserve Pfaffian Schur processes.  This enables us to show how half-space LPP with geometric weights is a marginal of the Pfaffian Schur process. We then apply a general formula giving the correlation kernel of Pfaffian Schur processes \cite{borodin2005eynard} to study the model's multipoint distributions. We degenerate our model and formulas to the case of exponential weight half-space LPP (previous works \cite{baik2001algebraic, baik2001asymptotics, baik2001symmetrized, sasamoto2004fluctuations} only considered geometric weights). We perform an asymptotic analysis of the Fredholm Pfaffians characterizing the distribution of passage times in this model, which leads to Theorems \ref{theo:LPPdiagointro} and \ref{theo:LPPawaydiagointro}. We also study the $k$-point distribution of passage times at a distance $\mathcal{O}(n^{2/3})$ away from the diagonal, and introduce a new two-parameter family of crossover distributions when the rate of the weights on the diagonal are simultaneously scaled close to their critical value. This generalizes the crossover distributions found in \cite{forrester1999correlations, sasamoto2004fluctuations}. The asymptotics of fluctuations away from the diagonal were already studied in  \cite{sasamoto2004fluctuations} for the PNG model on a half-space (equivalently the geometric weight half-space LPP). The asymptotic analysis there was non-rigorous, at the level of studying the behavior of integrals around their critical points without controlling the decay of tails of integrals.

The proof of the first part of Theorem \ref{theo:LPPdiagointro} (in Section \ref{sec:GSEasymptotics}) required a new idea which we believe is the most novel technical contribution of this paper: When $\alpha>1/2$, $H(n,n)$ is the maximum of a simple Pfaffian point process which converges as $n\to \infty$ to a point process where all points have multiplicity $2$. The limit of the correlation kernel does not exist\footnote{For a non-simple point process, the correlation functions -- Radon-Nikodym derivatives of the factorial moment measures -- generically do not exist (see Definitions in Section \ref{sec:PPP}).} as a function (it does have a formal limit involving the derivative of the Dirac delta function). Nevertheless, a careful reordering and non-trivial cancellation of terms in the Fredholm Pfaffian expansions allows us to study the law of the maximum of this limiting point process, and ultimately find that it corresponds to the GSE Tracy-Widom distribution. We actually provide a general scheme for how this works for random matrix type kernels in which simple Pfaffian point processes limit to doubled ones (see Section \ref{formal}).

Theorem 5.3 of \cite{sasamoto2004fluctuations} derives a formula for certain crossover distributions in half-space geometric LPP. This crossover distribution, introduced in \cite{forrester1999correlations},  depends on a parameter $\tau$ (which is denoted $\eta$ in our Theorem \ref{theo:SU}) and it corresponds to a natural transition ensemble between GSE when $\tau\to 0$ and GUE when $\tau\to+\infty$. Although the collision of eigenvalues should occur in the GSE limit of this crossover, this point was left unaddressed in previous literature and the convergence of the crossover kernel to the GSE kernel was not proved in \cite{forrester1999correlations} (see our Proposition \ref{prop:limitcrossoverSU}).
The formulas for limiting kernels, given by \cite[(4.41), (4.44), (4.47)]{forrester1999correlations} or \cite[(5.37)-(5.40)]{sasamoto2004fluctuations}, make sense so long as $\tau>0$. Notice a difference of integration domain between \cite[(4.44)]{forrester1999correlations} and \cite[(5.39)]{sasamoto2004fluctuations}. Our Theorem \ref{theo:SU} is the analogue of \cite[Theorem 5.3]{sasamoto2004fluctuations} for half-space exponential LPP and we recover the exact same kernel. When $\tau=0$ the formula for  $\mathcal{I}_4$ in \cite[(5.39)]{sasamoto2004fluctuations} involves divergent integrals\footnote{The saddle-point analysis in the proof of Theorem 5.3 in \cite{sasamoto2004fluctuations} is valid only when $\tau>0$. Indeed, \cite[(5.44)]{sasamoto2004fluctuations} requires $\tau_1+\tau_2>\eta_1 + \eta_2$ and one needs that $\eta_1, \eta_2>0$ as in the proof of  \cite[Theorem 4.2]{sasamoto2004fluctuations}.}, though if one uses the formal identity $\int_{\R}\Ai(x+r)\Ai(y+r)dr = \delta(x-y)$ then it is possible to rewrite the kernel in terms of an expression involving the derivative of the Dirac delta function, and the expression formally matches with the kernel $K^{\infty}$ introduced in Section \ref{formal}. From that (formal) kernel it is non-trivial to match the Fredholm Pfaffian with a known formula for the GSE distribution, this matching does not seem to have been made previously in the literature and we explain it in Section \ref{formal}.

In random matrix theory, a similar phenomenon of collisions of eigenvalues occurs in the limit from the discrete symplectic ensemble to the GSE \cite{borodin2006averages}. However, \cite{borodin2006averages} used averages of characteristic polynomials to characterize the point process, and computed the correlation kernels from the characteristic polynomials only after the limit, so that collisions of eigenvalues were not an issue. A random matrix ensemble which crosses over between GSE and GUE was studied in \cite{forrester1999correlations}. 

\subsection{Other models related to KPZ growth in a half-space}
A positive temperature analogue of LPP -- directed random polymers -- has also been studied. The log-gamma polymer in a half-space (which converges to half-space LPP with exponential weights in the zero temperature limit) is considered in \cite{o2014geometric} where an exact formula is derived for the distribution of the partition function in terms of Whittaker functions. Some of these formulas are not yet proved and presently there has not been a successful asymptotic analysis preformed of them. Within physics, \cite{gueudre2012directed} and \cite{borodin2015directed} studies the continuum directed random polymer (equivalently the KPZ equation) in a half-space with a repulsive and refecting (respectively) barrier and derives GSE Tracy-Widom limiting statistics. Both works are mathematically non-rigorous due to the ill-posed moment problem and certain unproved forms of the conjectured Bethe ansatz completeness of the half-space delta Bose gas. On the side of particle systems, half-space ASEP has first been  studied in \cite{tracy2013asymmetric}, but the resulting formulas were not amenable to asymptotic analysis. More recently, \cite{barraquand2017stochastic} derives exact Pfaffian formulas for the half-space stochastic six-vertex model, and proves GOE asymptotics for the current at the origin in half-space ASEP, for a certain boundary condition. The exact formulas involve the correlation kernel of the Pfaffian Schur process and the  asymptotic analysis in \cite{barraquand2017stochastic} uses some of the ideas developed  here. 

\subsection{Main results}
We now state the limit theorems that constitute the main results of this paper. They involve the GOE, GSE and GUE Tracy-Widom distributions, respectively  characterized by the distribution functions  $F_{\rm GOE}$, $F_{\rm GSE}$ and  $F_{\rm GUE}$ given  in Section \ref{sec:defdistributions} and the standard Gaussian distribution function denoted by $G(x)$. 
\begin{theorem} The last passage time on the diagonal $H(n,n)$ satisfies the following limit theorems, depending on the rate $\alpha$ of the weights on the diagonal.
	\begin{enumerate}
		\item For $\alpha >1/2$,
		$$ \lim_{n\to\infty} \PP\left( \frac{H(n,n) -4n}{2^{4/3}n^{1/3}} < x \right) = F_{\rm GSE}\left( x\right).$$
		\item For $\alpha =1/2$,
		$$ \lim_{n\to\infty} \PP\left( \frac{H(n,n) -4n}{2^{4/3}n^{1/3}} < x \right) = F_{\rm GOE}\left( x\right).$$
		\item For $\alpha <1/2$,
		$$ \lim_{n\to\infty} \PP\left( \frac{H(n, n) -\frac{n}{\alpha(1-\alpha)}}{\sigma n^{1/2}} < x \right)  = G(x),$$
		where 
		$\sigma= \frac{(1-2\alpha)^{1/2}}{\alpha (1-\alpha)}.$
	\end{enumerate}
	\label{theo:LPPdiagointro}
\end{theorem}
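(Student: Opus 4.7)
The plan is to realize $H(n,n)$ as an observable of a Pfaffian Schur process (as set up earlier in the paper via RSK-type dynamics on symmetric tableaux), which gives a Fredholm Pfaffian formula
\[
\PP\bigl(H(n,n) \leq s\bigr) = \Pf\!\bigl(J - K\bigr)_{\ell^2(s,+\infty)}
\]
where $K$ is a $2{\times}2$ correlation kernel whose four entries are given by the Borodin--Rains double contour integrals, with exponent of the form $n(G(z)+G(w))$ for an explicit $G$ built from the LPP parameters, multiplied by boundary factors that carry simple poles at $z=\alpha$ (and symmetrically in $w$). The analysis then reduces to a standard steepest descent: $G$ has a critical point at $z_c=1/2$, with $G'(z_c)=0$, $G''(z_c)=0$, and $G'''(z_c)\neq 0$, producing the $4n$ centering and the $2^{4/3}n^{1/3}$ scaling.

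First I would deform each contour to the steepest descent path through $z_c$. The way the boundary pole at $\alpha$ interacts with this deformation dictates the three cases: when $\alpha<1/2$ the pole lies on the wrong side and must be crossed, producing a residue that dominates and yields the Gaussian regime with mean $n/(\alpha(1-\alpha))$ and variance $\sigma^{2}n$; when $\alpha=1/2$ the pole sits exactly at $z_c$, and a Taylor expansion absorbs it into the limiting kernel, producing (after the standard rescaling) the GOE Tracy-Widom kernel; when $\alpha>1/2$ the pole is to the other side and plays no role in the leading order, and the formal saddle-point limit of the kernel is a ``doubled'' GSE-type kernel in which pairs of points have collapsed.

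The main obstacle is case (1), $\alpha>1/2$. The pre-limit point process is simple, but the limit puts double mass at every point, so the correlation kernel has no honest pointwise limit: the $21$-entry formally contains $\int_\R \Ai(x+r)\Ai(y+r)\,dr = \delta(x-y)$, and related terms produce $\delta'$. I would handle this rigorously by the scheme announced in Section \ref{formal}: write the Fredholm Pfaffian as a sum over matchings, isolate the singular contributions of pairs $(x_i,x_j)$ that formally collide, and show that within each term of the expansion a cancellation between the $\delta$-type and $\delta'$-type contributions makes the singular pieces integrable after one integration by parts, producing explicit finite limits. Collecting these reorganized terms, I would identify the resulting series with a known Pfaffian representation of $F_{\mathrm{GSE}}$ (for instance via the kernel of \cite{forrester1999correlations} in the $\tau\to 0$ limit established in Proposition \ref{prop:limitcrossoverSU}), which concludes the GSE case.

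Throughout, tail control of the Fredholm Pfaffian expansion is needed (the contours must be chosen so that $\mathrm{Re}(G(z))$ has the correct sign to bound the kernel entries uniformly and justify dominated convergence); this is routine once the descent contours are fixed, but it is what distinguishes the present treatment from the formal saddle-point derivations of \cite{sasamoto2004fluctuations}. The Gaussian case (3) is the easiest: the residue at $z=\alpha$ produces a Gaussian-type integrand, and the remaining integral is a lower-order correction that can be bounded away.
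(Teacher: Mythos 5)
Your overall framework coincides with the paper's: realize $H(n,n)$ as a marginal of a Pfaffian Schur process, write $\PP(H(n,n)\leqslant s)$ as a Fredholm Pfaffian of the Borodin--Rains kernel degenerated to exponential weights, and perform steepest descent, with the position of the boundary pole relative to the double critical point dictating the GSE/GOE/Gaussian trichotomy; your GOE description matches Section \ref{sec:GOEasymptotics}. But in the GSE case, which you correctly single out as the crux, your sketch has a genuine gap. The actual limit of the rescaled kernel has no $\delta(x-y)$ in the off-diagonal entries: $I^{\rm exp,n}_{12}\to 2K^{\rm GSE}_{12}$ is smooth (Proposition \ref{prop:GSEpointwise}), and the only singular object is $R^{\rm exp,n}_{22}$, an explicit antisymmetric kernel concentrating on the diagonal that converges weakly to $\delta'$ (Proposition \ref{prop:asymptoticsRGSE}). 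So there is no ``cancellation between $\delta$-type and $\delta'$-type contributions'' to exploit, and one integration by parts does not finish the job. What is needed, and what your plan never supplies, is the exact algebraic identity of Proposition \ref{prop:equivalentpfaffians}: the Fredholm Pfaffian of the kernel with entries $\big(A,\,-2\partial_yA,\,-2\partial_xA,\,4\partial_x\partial_yA+\delta'\big)$ equals that of $\big(A,\,-\partial_yA,\,-\partial_xA,\,\partial_x\partial_yA\big)$. Its proof rests on the minor summation formula (Lemma \ref{lem:minorsummation}) and the resummation $\sum_{i=0}^{k}\binom{k}{i}2^{k-i}(-1)^i=1$, where the factors of $2$ in the off-diagonal limits --- absent from your sketch --- are precisely what compensate the alternating $\delta'$ insertions; an approximate version (Proposition \ref{prop:approxequivalentpfaffians}) is then required to apply this to the prelimit kernel. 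Moreover, outsourcing the final identification to the $\tau\to0$ limit of the symplectic--unitary kernel via Proposition \ref{prop:limitcrossoverSU} is circular: that proposition is itself proved by this very machinery, and that limit was precisely the unproved point in the earlier literature.

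In the Gaussian case your mechanism (cross the pole, keep the residue, show it dominates) is the heuristic of Section \ref{sec:mainasymptoticresults}, not a proof, and as stated it would not go through: after recentering at $h(\alpha)n=n/(\alpha(1-\alpha))$, the relevant function $f_\alpha$ has non-degenerate critical points at $\pm\theta=\pm\tfrac{1-2\alpha}{2}$ which coincide with the poles of the kernel, so the pole cannot be crossed and a residue term cleanly separated; instead the pole factors enter the rescaled integrand (producing the $1/z$, $1/w$ factors and the $n^{1/2}$ normalizations), and the limit is a rank-one Hermite-type kernel whose Fredholm Pfaffian is the Gaussian distribution function (Section \ref{sec:Gaussianasymptotics}). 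Your stated locations of the critical point ($z_c=1/2$) and pole ($z=\alpha$) also do not match the exponential-weight kernel, where the double critical point sits at $0$ and the poles at $\pm\tfrac{2\alpha-1}{2}$, though that is a cosmetic parametrization issue.
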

This extends the results of \cite{baik2001algebraic, baik2001asymptotics} on the model with geometric weights. The first part of Theorem \ref{theo:LPPdiagointro} is proved in Section \ref{sec:GSEasymptotics}, while the second and third parts are proved in Section \ref{sec:asymptoticsvarious}.   Section \ref{sec:mainasymptoticresults} includes an explanation for why the transition occurs at $\alpha=1/2$, using a property of the Pfaffian Schur measure (Prop. \ref{prop:diagorow}).

Far away from the diagonal, the limit theorem satisfied by $H(n,m)$ coincides with that of the (unsymmetrized) full-quadrant model.
\begin{theorem}
	For any $\kappa \in (0,1)$ and $\alpha >\frac{\sqrt{\kappa}}{1+\sqrt{\kappa}}$, we have that
	$$ \lim_{n\to\infty} \PP\left( \frac{H(n,\kappa n) -(1+\sqrt{\kappa})^2 n}{\sigma n^{1/3}} < x \right) = F_{\rm GUE}(x),$$
	where
	$ \sigma = \frac{(1+ \sqrt{\kappa})^{4/3}}{\sqrt{\kappa}^{1/3}}.$
	\label{theo:LPPawaydiagointro}
\end{theorem}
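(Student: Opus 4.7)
The plan is to exploit the identification (established in the earlier part of the paper) of $H(n,m)$ as a marginal of a Pfaffian Schur process, together with the general formula of \cite{borodin2005eynard} expressing the distribution of last passage times as a Fredholm Pfaffian
$$ \PP\bigl( H(n,\kappa n) \leqslant s \bigr) \;=\; \Pf\!\bigl( J - K \bigr)_{\ell^2(s,+\infty)}, $$
with $K = (K_{ij})_{i,j=1,2}$ a $2\times 2$ matrix kernel whose entries are given by explicit double contour integrals. In the exponential degeneration, $K_{12}(x,y)$ has an integrand of the form
$$\frac{1}{(2\I\pi)^2}\oint\!\oint \frac{z-w}{z+w}\,\frac{1}{zw}\,\frac{(1-\alpha z)(1-\alpha w)}{\ldots}\,e^{n\,g(z) - n\,g(w)}\,e^{-x z + y w}\, dz\, dw,$$
and similarly $K_{11}, K_{22}$, where $g(z) = \log(1-z)^{-1} - \kappa \log z$ (up to a constant). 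The whole proof is a saddle-point analysis of this Fredholm Pfaffian under the scaling $s = (1+\sqrt{\kappa})^2 n + \sigma n^{1/3} x$.

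First I would verify that $g$ has a double critical point at $z_c = \frac{\sqrt{\kappa}}{1+\sqrt{\kappa}}$ and that the Taylor expansion of $g$ around $z_c$ reproduces, after the rescaling $z = z_c + \zeta/(\sigma n^{1/3})$, a cubic action of the form $\tfrac{1}{3}\zeta^3 - x\zeta$, with the prefactor $\sigma$ coming out exactly as $\sigma = (1+\sqrt{\kappa})^{4/3}/\kappa^{1/6}$. Then I would choose explicit steepest-descent contours passing through $z_c$ at angles $\pm \pi/3$ for $z$ and $\pm 2\pi/3$ for $w$, deformed so that the $z$-contour stays inside the $w$-contour; the hypothesis $\alpha > z_c = \frac{\sqrt{\kappa}}{1+\sqrt{\kappa}}$ is exactly what allows the contours to be moved to the critical point without crossing the pole at $1/\alpha$ (or, depending on the convention, $\alpha$), so no residue is picked up. After rescaling, $K_{12}$ converges pointwise to the Airy kernel $K_{\Ai}(x,y)$.

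Next I would argue that the diagonal blocks $K_{11}$ and $K_{22}$ vanish in the limit. Each of these blocks contains an additional factor $\frac{1}{z+w}$ or $\frac{z-w}{z+w}$ that remains bounded and non-singular at the critical point $z_c > 0$ (since $z_c + z_c \neq 0$), so they too can be analyzed by the same saddle-point method; crucially, they carry an extra factor $e^{\pm(xz - yw)}$ of only one of the two variables, which after the cubic scaling forces the block to be of order $n^{-1/3}$ and hence to decay to $0$. The Pfaffian of a matrix kernel with vanishing diagonal blocks reduces to a determinant, $\Pf(J-K) \to \det(I - K_{12})$, and the limiting kernel is the Airy kernel, so the Fredholm Pfaffian converges to $F_{\rm GUE}(x)$. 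To promote the pointwise convergence to convergence of the full Fredholm Pfaffian expansion, I would establish Gaussian-type exponential decay of $K(x,y)$ in $x+y$ along the rescaled contours (uniformly in $n$), using the convexity of $\Re g$ along the steepest-descent path away from $z_c$, and then invoke dominated convergence term by term in the Pfaffian series.

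The main obstacle, in my view, is not the formal critical-point computation (which parallels the full-quadrant analysis of \cite{johansson2000shape}) but rather producing uniform tail bounds on the matrix kernel entries that are strong enough to control the Pfaffian series and simultaneously to justify the contour deformations past the $\alpha$-pole for all $n$. This requires a careful choice of contours that are only locally the steepest descent curves near $z_c$ and are completed globally in a way adapted to the location of $\alpha$; verifying, uniformly on compact sets of $x$, that the error between the rescaled kernel and the Airy kernel admits an integrable majorant is where the bulk of the technical work lies. Once these estimates are in place, combined with the vanishing of $K_{11}$ and $K_{22}$, the identification with $F_{\rm GUE}(x)$ is immediate.
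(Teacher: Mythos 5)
Your overall route is the paper's: write the one-point law as the Fredholm Pfaffian of the $2\times 2$ kernel $K^{\rm exp}$ coming from the Pfaffian Schur process (Proposition \ref{prop:kernelexponential}), run a steepest-descent analysis at the double critical point of the action (in the paper's coordinates $f_{\kappa}(z)=-hz+\log(1+2z)-\kappa\log(1-2z)$, critical point $\theta$, with $\alpha>\sqrt{\kappa}/(1+\sqrt{\kappa})$ exactly the condition that the boundary pole is not crossed), show $K_{12}\to K_{\Ai}$ while the diagonal blocks disappear, and conclude $\Pf(J-K)\to\det(I-K_{\Ai})=F_{\rm GUE}$ by dominated convergence with Hadamard-type bounds. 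So the skeleton is sound and matches Section \ref{sec:GUEasymptotics}.

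There is, however, a genuine gap in your treatment of the diagonal blocks. As stated, ``$K_{11}$ and $K_{22}$ decay to $0$'' is false: at the critical point these entries carry the reciprocal exponential prefactors $e^{\pm 2nf_{\kappa}(\theta)}$ and $e^{\mp\sigma n^{1/3}\theta(x+y)}$, and $f_{\kappa}(\theta)\neq 0$ in general, so one of the two blocks blows up exponentially in $n$. These factors cancel only in the products $K_{11}K_{22}$ occurring in each Pfaffian term; concretely you must first conjugate by a diagonal kernel (Pfaffian-preserving), as the paper does when defining $K^{\rm exp,n}$, and only then prove the conjugated diagonal entries vanish (they are $O(n^{-2/3})$ in the paper's normalization). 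Moreover the mechanism you invoke --- ``an extra factor $e^{\pm(xz-yw)}$ of only one of the two variables'' forcing an $n^{-1/3}$ --- is not the actual source of the suppression: both diagonal entries are double integrals with $e^{-xz-yw}$ in both variables. The correct reason is that in $K_{11}$ (and $K_{22}$) the two saddle points coincide, so the numerator $(z-w)$ vanishes at the critical configuration while $1/(z+w)$ stays regular there, whereas in $K_{12}$ the saddles sit at $\theta$ and $-\theta$ and the $1/(z+w)$ factor is singular exactly at that configuration, which upgrades $K_{12}$ to order one and produces the Airy kernel; your argument would need to be rewritten along these lines, and the uniform bounds fed into the Pfaffian expansion should be the asymmetric ones of Lemma \ref{lem:hadamard} (decay for $K_{11},K_{12}$, mild growth allowed for $K_{22}$) applied to the conjugated kernel. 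Finally, for $\sqrt{\kappa}/(1+\sqrt{\kappa})<\alpha\leqslant 1/2$ --- a range the theorem covers --- the exponential degeneration does not give a bare double integral: $K^{\rm exp}_{22}$ comes with additional residue terms ($\hat R_{22}$, $\bar R_{22}$) from the analytic continuation in the boundary parameter, and these must be recombined into a single double integral on contours compatible with the pole at $(2\alpha-1)/2$ (as in the proof of Proposition \ref{prop:GUEpointwise22}) or shown to vanish separately; your plan does not account for them.
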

This extends the results of \cite{sasamoto2004fluctuations} on the geometric model to the exponential case and to allow any boundary parameter. Theorem \ref{theo:LPPawaydiagointro} is proved in Section \ref{sec:GUEasymptotics}. 
\begin{remark}
	\label{rem:BBP}
	Our asymptotic analyses could be extended to show that the fluctuations of $H(n, \kappa n)$ for $\kappa\in(0,1)$ follow the BBP transition \cite{baik2005phase} when $\alpha= \sqrt{\kappa}/(1+\sqrt{\kappa}) + \mathcal{O}(n^{-1/3})$, and in particular are distributed according to $\big(F_{\rm GOE}\big)^2$ when $\alpha= \sqrt{\kappa}/(1+\sqrt{\kappa})$. The BBP transition would also occur if one varies the rate  of exponential weights in the  first rows of the lattice. Regarding $H(n,n)$, it is not clear if the higher order phase transitions would coincide with  the spiked GSE \cite{bloemendal2013limits, wang2009largest}.
\end{remark}

Our results rely on an asymptotic analysis of the following formula for the joint distribution of passage times. It involves the  Fredholm Pfaffian (see Section \ref{sec:defdistributions} for background on Fredholm Pfaffians)  of a matrix-valued  correlation kernel $K^{\rm exp}$ defined in Section \ref{sec:kernelexp} where the next proposition is proved. 
\begin{proposition}
	For any $h_1, \dots, h_k>0$ and integers $0<n_1 < n_2 < \dots <n_k$ and $m_1> m_2 > \dots > m_k$ such that $n_i>m_i$ for all $i$, we have that
	$$ \mathbb{P}\left(  \bigcap_{i=1}^k \big\lbrace  H(n_i, m_i) < h_i \big\rbrace \right)= \Pf\big(J-K^{\rm exp}\big)_{\mathbb{L}^2\big( \mathbb{D}_k(h_1, \dots, h_k) \big)},$$
	where the R.H.S  is a Fredholm Pfaffian (Definition \ref{def:FredholmPfaffian}) on the domain  
	$$ \mathbb{D}_k(h_1, \dots, h_k) = \lbrace (i,x)\in \lbrace 1, \dots, k\rbrace \times\R: x \geqslant h_i\rbrace.$$
	\label{prop:kernelexponential}
\end{proposition}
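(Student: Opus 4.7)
The plan is to realize the joint distribution of the LPP times $\big(H(n_i,m_i)\big)_{i=1}^k$ along the staircase $(n_1,m_1), \dots, (n_k,m_k)$ as a marginal of a Pfaffian Schur process, then apply the general Pfaffian correlation kernel formula of Borodin--Rains \cite{borodin2005eynard} and convert the gap probability into a Fredholm Pfaffian. The condition $n_1<\dots<n_k$ together with $m_1>\dots>m_k$ (and $n_i>m_i$) is exactly what makes the sequence of points $(n_i,m_i)$ lie along an admissible staircase in the half-quadrant, so that the associated sequence of Young diagrams produced by a symmetrized RSK-type dynamics forms a Pfaffian Schur process under the chain $\varnothing \nearrow \lambda^{(1)} \nearrow \dots \nearrow \lambda^{(k)}$ along the staircase. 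This part must be set up for the geometric weight model first, using the Markov dynamics on sequences of partitions discussed in Section~\ref{sec:new}, which I would take as already established in the earlier sections of the paper to which Section~\ref{sec:kernelexp} belongs.

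First, I would establish the identity $H(n_i,m_i) = \lambda^{(i)}_1$ (the largest part of the $i$-th partition), so that the event $\{H(n_i,m_i)<h_i\}$ translates into the absence of any particle of the associated Pfaffian point process at position $\geqslant h_i$ in the $i$-th ``time'' slice. This is the standard RSK identity transported to the symmetrized half-quadrant setting, and the staircase conditions ensure that the points $(i,\lambda^{(i)}_1)$ form the maximum locations in their respective fibres of the full two-dimensional Pfaffian point process supported on $\{1,\dots,k\}\times \Z$. The joint event $\bigcap_i\{H(n_i,m_i)<h_i\}$ is then precisely the gap event on the set $\mathbb{D}_k(h_1,\dots,h_k)$, and the standard identity for Pfaffian point processes expresses such gap probabilities as the Fredholm Pfaffian $\Pf(J-K)_{\mathbb{L}^2(\mathbb{D}_k)}$, where $K$ is the $2\times 2$ matrix-valued correlation kernel of the process.

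Next, I would invoke the Borodin--Rains contour-integral formula for the correlation kernel of a Pfaffian Schur process with specializations corresponding to geometric weights (with the appropriate row/column and diagonal specialization parameters, including a Plancherel-like specialization for the boundary). This produces an explicit geometric-weight kernel $K^{\rm geo}$ with entries given as double contour integrals, which by construction satisfies the gap probability identity above. At this point the geometric-weight version of the proposition holds.

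Finally, I would pass to the exponential regime by degeneration: scale the geometric parameters so that the geometric random variables converge to exponentials with the prescribed rates ($1$ off the diagonal, $\alpha$ on the diagonal), and rescale the passage times $h_i$ simultaneously. Under this limit the contour integrals defining $K^{\rm geo}$ converge entrywise to those defining $K^{\rm exp}$ as constructed in Section~\ref{sec:kernelexp}, while the integrations in the Fredholm Pfaffian expansion can be controlled by dominated convergence (using Hadamard-type bounds on the Pfaffian minors together with explicit exponential decay of the kernel entries on the contours). The main technical obstacle I anticipate is not the algebraic identification of kernels but rather the Fredholm convergence: one must justify term-by-term passage to the limit in the series expansion $\Pf(J-K) = \sum_{n\geqslant 0}\frac{1}{n!}\int \Pf\bigl(K(x_i,x_j)\bigr)_{i,j=1}^n dx_1\cdots dx_n$, which requires uniform exponential tail bounds on the geometric kernel as the parameters degenerate. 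Once this is in place, the exponential-weight identity follows and Proposition~\ref{prop:kernelexponential} is proved.
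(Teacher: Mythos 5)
Your proposal follows essentially the same route as the paper's proof: the joint passage times are realized as $\lambda_1$-marginals of a Pfaffian Schur process (Proposition \ref{prop:SchurLPPcorrelations}), the Borodin--Rains kernel then gives the geometric-weight Fredholm Pfaffian identity (Proposition \ref{prop:kernelgeom}), and the exponential case is obtained by the $q\to 1$ degeneration with pointwise Laplace-method convergence of the kernel entries plus dominated convergence of the Pfaffian series justified by Hadamard-type exponential bounds (Lemmas \ref{lem:hadamard} and \ref{lem:expboundgeom}). The only ingredient you pass over silently is the preliminary contour deformation of Section \ref{sec:deformedcontours}, which forces $\vert zw\vert>1$ and produces the residue terms $R$, $\hat R$, $\bar R$ according to whether $\alpha>1/2$, $\alpha=1/2$ or $\alpha<1/2$; this is a technical step within the same strategy rather than a different argument.
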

In the one point case $k=1$, and for $n_1=m_1$, one could alternatively characterize the probability distribution of $H(n,n)$ by taking the exponential limit of the formulas in \cite{baik2001algebraic, baik2001asymptotics} using for instance the results from \cite{baik2002painleve}. Note that we need only the case $k=1$ in order to prove Theorems \ref{theo:LPPdiagointro} and \ref{theo:LPPawaydiagointro}, but the multipoint case is necessary for Theorems \ref{theo:crossfluctuations} and \ref{theo:SU} stated below and proved in \cite{baik2017facilitated}.

We can generalize the results of Theorems \ref{theo:LPPdiagointro} and \ref{theo:LPPawaydiagointro}   by allowing the parameters $\kappa$ and $\alpha$ to vary in regions of size $n^{-1/3}$  around the critical values $\alpha=1/2, \kappa=1 $. In the limit, we obtain a new two-parametric family of probability distributions. 

More generally, we can compute the finite dimensional marginals of Airy-like processes in various ranges of $\alpha$ and $\kappa$. We state the results below and refer to \cite{baik2017facilitated} for detailed proofs. KPZ scaling predictions (together with Theorem \ref{theo:LPPawaydiagointro}) suggests to define
$$H_n(\eta)= \frac{H\big(n + n^{2/3}\xi\eta , n -n^{2/3}\xi\eta\big)-4n + n^{1/3}\xi^2\eta^2}{\sigma n^{1/3}},$$
where $\eta\geqslant 0$,  $\sigma = 2^{4/3}$ and $\xi=2^{2/3}$. 
Let us scale $\alpha$ as
$$ \alpha = \frac{1+2\sigma^{-1}\varpi n^{-1/3}}{2} $$
where  $\varpi\in\R$ is a free parameter. The limiting joint distribution of multiple points  is characterized by a new crossover kernel $K^{\rm  cross}$ that we introduce in Section \ref{sec:defcrossovers}.
\begin{theorem} For $0\leqslant \eta_1 < \dots < \eta_k$,  $\varpi\in\R$, setting
	$\alpha = \frac{1+2^{2/3}\varpi n^{-1/3}}{2}$, we have  that
	$$ \lim_{n\to\infty} \PP\left( \bigcap_{i=1}^k  \big\lbrace H_n(\eta_i) < x_i \big\rbrace  \right)  = \Pf\big( J- K^{\rm  cross}\big)_{\mathbb{L}^2(\mathbb{D}_k(x_1, \dots, x_k))}.$$
	\label{theo:crossfluctuations}
\end{theorem}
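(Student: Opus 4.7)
The plan is to start from the exact Pfaffian formula of Proposition \ref{prop:kernelexponential} specialized to $n_i = n + \lfloor n^{2/3}\xi\eta_i\rfloor$, $m_i = n - \lfloor n^{2/3}\xi\eta_i\rfloor$ and $h_i = 4n - n^{1/3}\xi^2\eta_i^2 + \sigma n^{1/3}x_i$, and to show that under the joint scaling $\alpha = (1+2^{2/3}\varpi n^{-1/3})/2$ of the boundary rate, the correlation kernel $K^{\rm exp}$ rescales (entry-wise) to the crossover kernel $K^{\rm cross}$ of Section \ref{sec:defcrossovers}. After the change of variables $h\mapsto x$ above, the domain $\mathbb{D}_k(h_1,\dots,h_k)$ turns into $\mathbb{D}_k(x_1,\dots,x_k)$ and each entry of the $2\times2$ matrix-valued kernel picks up a Jacobian of order $n^{1/3}$. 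The bulk of the work is then an entry-wise asymptotic analysis of the double contour integral representations of $K^{\rm exp}_{11}$, $K^{\rm exp}_{12}$, $K^{\rm exp}_{22}$ derived in Section \ref{sec:kernelexp}, coupled with uniform tail bounds that allow one to pass to the limit inside the Fredholm Pfaffian series expansion.

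For the pointwise asymptotics, each entry takes the form of a double contour integral whose exponential factor is $\exp(n S(z) + n S(w))$ with $S$ the same action governing the full-quadrant LPP at a diagonal endpoint; on the diagonal $S$ has a double critical point at $z=1$. Inflating via $z = 1 + 2^{2/3}\tilde z\, n^{-1/3}$ and $w = 1 + 2^{2/3}\tilde w\, n^{-1/3}$ and Taylor expanding to third order produces a cubic exponent $\tilde z^3/3 + \tilde w^3/3$ together with linear terms in $\tilde z,\tilde w$ driven by $x_i$ and $\eta_i$, giving the expected Airy-type scaling. The boundary parameter $\alpha$ contributes a simple pole at $z = (1-\alpha)/\alpha$; under the chosen scaling of $\alpha$ this pole migrates to $\tilde z = -\varpi$ in the limit, yielding the characteristic factor $(\tilde z+\varpi)^{-1}$ that distinguishes $K^{\rm cross}$ from the Airy kernel and produces the two-parameter crossover family indexed by $(\varpi,\eta_i)$. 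To promote pointwise convergence to $\mathbb{L}^2$ convergence, I would conjugate the matrix kernel by a gauge factor that preserves the Fredholm Pfaffian and forces decay of every entry as $x_i, x_j\to +\infty$.

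To control the tails, I would deform each pair of contours onto steepest descent paths leaving $z=1$ at angles $\pm 2\pi/3$ or $\pm\pi/3$ depending on the entry, keeping careful track of residues collected when the contour crosses the pole at $(1-\alpha)/\alpha$ and the polar factors in $(z-w)$ and $(1-zw)$ that are built into the Pfaffian kernel. Off a shrinking neighborhood of $z=1$ the integrand decays exponentially in $n$, while inside the neighborhood the cubic change of variables identifies the limit with the integral defining $K^{\rm cross}$. Combined with exponential-type dominating bounds of the form $\lvert K^{\rm exp}_{\bullet\bullet}\rvert \leqslant C e^{-c(x_i+x_j)}$ on the conjugated kernel, Hadamard's bound for Pfaffians yields dominated convergence of the Fredholm expansion and hence of the Fredholm Pfaffian itself. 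The main obstacle is the consistent bookkeeping of contour deformations and residue corrections across the three matrix entries and across the sign of $\varpi$: when $\varpi$ changes sign the pole at $(1-\alpha)/\alpha$ crosses the steepest descent contour and produces additional residue terms that must recombine to give $K^{\rm cross}$. Carrying out this analysis rigorously is precisely the content of \cite{baik2017facilitated}.
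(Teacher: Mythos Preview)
Your overall strategy is correct and coincides with the paper's: start from the Fredholm Pfaffian of Proposition~\ref{prop:kernelexponential}, rescale each entry of $K^{\rm exp}$ by the appropriate power of $n^{1/3}$, localize the double contour integrals by steepest descent around the double critical point, track the boundary pole coming from $\alpha$ as it drifts into the critical window, and conclude via Hadamard-type bounds and dominated convergence. The paper itself only sketches this and defers the computation to \cite{baik2017facilitated}, noting that it parallels the proof of Theorem~\ref{theo:LPPawaydiagointro}.

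A few specifics in your write-up are off, however, and suggest you are implicitly working with the geometric kernel $K^{\rm geo}$ rather than the exponential kernel $K^{\rm exp}$ that the paper actually analyzes. In $K^{\rm exp}$ (Section~\ref{sec:kernelexp}) the action is $f(z)=-4z+\log(1+2z)-\log(1-2z)$, whose double critical point is at $z=0$, not $z=1$; the rational coupling factor is $(z+w)^{-1}$, not $(1-zw)^{-1}$; and the boundary pole sits at $w=(2\alpha-1)/2$, not at $(1-\alpha)/\alpha$. Under $\alpha=\tfrac12(1+2^{2/3}\varpi n^{-1/3})$ and the change of variables $z\mapsto \sigma^{-1} n^{-1/3}\tilde z$, this pole lands at $\tilde w=\varpi$ (after the shifts by $\eta_i,\eta_j$), producing the factors $(z+\varpi+\eta_i)$ and $(w-\varpi-\eta_j)^{-1}$ in $K^{\rm cross}$. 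The steepest-descent angles in the paper are $\pm\pi/3$ throughout, not $\pm2\pi/3$. None of this changes the mechanism, but it would need to be corrected before the computation goes through.
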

The proof of this result is very similar with the proof of Theorem \ref{theo:LPPawaydiagointro}. We provide the details of the computations in \cite{baik2017facilitated}. 

We note that the published version of \cite{baik2017facilitated} contains several mistakes in the definition and derivation of the correlation kernel $K^{\rm  cross}$ which filtered into the definitions provided herein in Section \ref{sec:defcrossovers}. These mistakes, and their remedy, were pointed out to us by Zongrui Yang. The arXiv version of \cite{baik2017facilitated} now contains a corrected derivation and definition for $K^{\rm  cross}$, and the current version of this paper has been likewise updated. We graciously acknowledge Yang's help in this revision.

In the one-point case ($k=1$), this yields a new probability distribution $\mathcal{L}_{\rm cross}$, depending on two parameters $\varpi, \eta$, (see Definition \ref{def:Fcross}), which realizes a two-dimensional crossover between GSE, GUE and GOE distributions (see Figure \ref{fig:phasediagram} and details in Section \ref{sec:defcrossovers}).  When $\eta=0$, we recover the probability distribution $F(x; w)$ from \cite[Definition 4]{baik2001asymptotics}. When $\varpi=0$, the correlation kernel $K^{\rm cross}$ degenerates to the orthogonal-unitary transition kernel obtained in \cite{forrester1999correlations} and $F_{\rm cross}(x; 0,\eta )$ realizes a crossover between $F_{\rm GOE}$ for $\eta=0$ and $F_{\rm GUE}$ for $\eta \to+\infty$. An analogous result was obtained in \cite[Theorem 4.2]{sasamoto2004fluctuations} for the half-space PNG model.

\begin{figure}
	
	\begin{tikzpicture}[scale=0.75]
	
	%%%%%%%%%%%%%%%%%%%%%%%%%%
	
	\fill[fill=gray!15] (0,0) -- plot [smooth, domain=0:4.5]  (\x, {4/(sqrt(4*\x+1)+1)}) --  (4.5, 0) -- cycle;

	\draw[ gray, >=stealth'] (0,-0.2) -- (0,5);
	\draw[ gray, >=stealth'] (-0.2,0) -- (5, 0);
	%\draw[gray] (-0.1, 2) -- (0.1, 2);
	\fill[] (0,2) circle(0.07);
	\draw[thick] plot [smooth, domain=0:4.5]  (\x, {4/(sqrt(4*\x+1)+1)});
	
	\draw (-1, 0) node {{\footnotesize $\alpha=0$}};
	\draw (-1, 2) node {{\footnotesize $\alpha=1/2$}};
	\draw (-1, 5) node {{\footnotesize $\alpha=+\infty$}};
	
	\draw (0, -0.5) node {{\footnotesize $\frac n m=1$}};
	\draw (5, -0.5) node {{\footnotesize $\frac n m=+\infty$}};
	
	\draw (0, 3.5) node {$\mathrm{GSE}$};
	\draw (2.5, 2.5) node {$\mathrm{GUE}$};
	\draw (0.5, 0.5) node {$\mathrm{Gaussian}$};
	\draw (2.5, 1) node {$\mathrm{GOE}^2$};
	
	\draw[ dotted] (-0.2, 1.5) -- (0.5, 1.5) -- (0.5, 2.5) -- (-0.2, 2.5) -- cycle;
	\draw[ dotted] (0.5, 1.5) -- (8,-3);
	\draw[ dotted] (0.5, 2.5) -- (8,7);
	\draw[dotted] (8, -3) -- (16, -3) -- (16, 7) -- (8, 7) -- cycle;

	\draw[gray] (6, 4.3) node {{\footnotesize $ \alpha=\frac{1+2^{2/3}\varpi k^{-1/3}}{2} $}};
	\draw[gray] (6, 3.6) node {{\footnotesize $n = k + 2^{2/3}k^{2/3}\eta $}};
	\draw[gray] (6, 3) node {{\footnotesize $m = k - 2^{2/3}k^{2/3}\eta $}};
	
	\draw[->, gray, >=stealth'] (9,-2) -- (9,6);
	\draw[->, gray, >=stealth'] (9,2) -- (14, 2);
	\draw[thick, ->, >=stealth'] plot [domain=9:13] (\x, 11-\x);
	\draw (14, 1.5) node {$ \eta $};
	\draw (8.5, 5.5) node {$ \varpi $};
	
	\draw (9, 6.5) node {$\mathrm{GSE}$};
	\draw (9, 2) node {$\mathrm{GOE}$};
	\draw (9.5, -2.5) node {$\mathrm{Gaussian}$};
	\draw (15, 2) node {$\mathrm{GUE}$};
	\draw (13.7, -2.5) node {$\mathrm{GOE}^2$};
	\draw (11.5, 3) node {$\mathcal{L}_{\rm cross}(\cdot; \varpi, \eta)$};
	
	\draw[gray, ->, >=stealth'] (13, 5.2) -- (13.5, 5.7);
	\draw (14, 6) node {$\mathrm{GUE}$};
	\draw[gray, ->, >=stealth'] (12.9, 0) -- (13.5,-0.4) ;
	\draw (14.2, -0.7) node {$\mathrm{GUE}$};
	\end{tikzpicture}
	
	\caption{Phase diagram of the fluctuations of $H(n,m)$ as $n\to\infty$ when $\alpha$ and the ratio $n/m$ varies. The gray area corresponds to a region of the parameter space where the fluctuations are on the scale $n^{1/2}$ and Gaussian. The bounding curve (where fluctuations are expected to be Tracy-Widom  $\mathrm{GOE}^2$ cf Remark \ref{rem:BBP}) asymptotes to zero as $n/m$ goes to $+\infty$. The crossover distribution $\mathcal{F}^{\rm cross}(\cdot; \varpi, \eta)$ is defined in Definition \ref{def:Fcross} and describes the fluctuations in the vicinity of $n/m=1$ and $\alpha=1/2$.}
	\label{fig:phasediagram}
\end{figure}
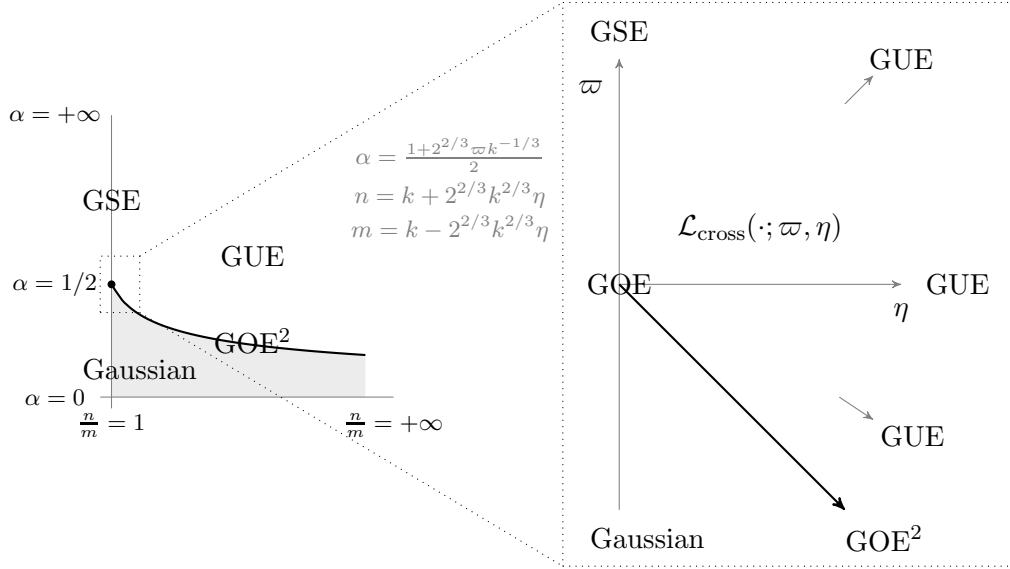

In the case when $\alpha>1/2$ is fixed, the joint distribution of passage-times is governed by the so-called symplectic-unitary transition \cite{forrester1999correlations}.
\begin{theorem}
	For $\alpha>1/2$ and $0< \eta_1< \dots <\eta_k$, 
	we have that
	$$ \lim_{n\to\infty} \PP\big(H_n(\eta_1) < x_1, \dots , H_n(\eta_k) < x_k \big)  = \Pf\big( J- K^{\rm  SU}\big)_{\mathbb{L}^2(\mathbb{D}_k(x_1, \dots, x_k))},$$
	where the kernel $K^{\rm  SU}$ is defined in Section \ref{sec:defcrossovers}. 
	\label{theo:SU}
\end{theorem}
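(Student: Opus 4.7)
The plan is to start from the Fredholm Pfaffian formula in Proposition \ref{prop:kernelexponential} applied at the points
\[
n_i = n + n^{2/3}\xi\eta_i, \qquad m_i = n - n^{2/3}\xi\eta_i, \qquad h_i = 4n - n^{1/3}\xi^2\eta_i^2 + \sigma n^{1/3} x_i,
\]
and perform a saddle-point analysis of the $2\times 2$ matrix kernel $K^{\rm exp}$, showing entry-by-entry that after the standard rescaling by $\sigma n^{1/3}$ the kernel converges to $K^{\rm SU}$. Since $0<\eta_1<\dots<\eta_k$ are strictly positive (we are away from the diagonal), no collision of points occurs in the limit and the analysis can proceed along the lines of the proof of Theorem \ref{theo:LPPawaydiagointro} in Section \ref{sec:GUEasymptotics}, rather than requiring the subtle reordering of terms used in Section \ref{sec:GSEasymptotics} for the $\alpha>1/2$, on-diagonal case.

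The key step is the critical point analysis. The integrands in each entry of $K^{\rm exp}$ factor as $e^{n G(z)}$ times slowly varying prefactors, with $G$ having a double critical point at $z=1$ when $n_i\approx m_i\approx n$. Near this point, the change of variables $z = 1 + n^{-1/3} \tilde z$ (and analogously for the dual variable) turns the double critical point into a cubic, producing Airy-type integrals in the limit. Since $\alpha>1/2$ is fixed, the pole at $z=\alpha$ (coming from the diagonal weights) stays on the correct side of the steep descent contour and contributes only lower-order corrections, so the limiting kernel entries are precisely the symplectic–unitary ones from \cite{forrester1999correlations}, assembled into the matrix kernel $K^{\rm SU}$ defined in Section \ref{sec:defcrossovers}. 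The crossover from GSE (at $\eta=0^+$) to GUE (as $\eta\to\infty$) will emerge automatically from the structure of the $\tau$-deformed Airy kernel since $\eta$ enters through the same variable $\tau$ as in \cite{sasamoto2004fluctuations}.

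To turn convergence of kernels into convergence of Fredholm Pfaffians, I would use dominated convergence on the expansion, which requires Gaussian-type tail bounds on each entry of $K^{\rm exp}$ uniformly in $n$. These are obtained by deforming the integration contours to steep descent contours passing through $z=1$ at angle $\pm \pi/3$ and bounding $|e^{n G(z)}|$ by a Gaussian in the rescaled variable away from the critical point, together with exponential decay in $x_i$ on the tail. The $\eta_i$-dependence in the exponent produces a Gaussian factor in the integration variable that preserves absolute summability of the Pfaffian expansion.

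The main obstacle I would expect is bookkeeping for the four entries of $K^{\rm exp}$, which contain asymmetric double integrals and integrals of different topological types (e.g.\ contributions from residues crossed when deforming contours past the pole at $z=\alpha$ or at $z=1/\alpha$). In principle the contributions of such residues must be tracked and shown to be subleading when $\alpha>1/2$ is fixed and $\eta>0$; this is where the hypothesis $\alpha>1/2$ (bounding $\alpha$ away from the critical value) and the hypothesis $\eta_1>0$ (bounding the contour away from the diagonal regime) are both used. Once these residue contributions are shown to vanish in the scaling limit, the remaining steepest-descent analysis is a routine, if tedious, adaptation of the argument given for Theorem \ref{theo:LPPawaydiagointro}, which is why the detailed computation can be relegated to \cite{baik2017facilitated}.
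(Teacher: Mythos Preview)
Your overall plan---start from Proposition \ref{prop:kernelexponential}, do a saddle-point analysis of each entry of $K^{\rm exp}$ near the critical point, then apply dominated convergence via Lemma \ref{lem:hadamard}---is the right one and matches what the paper indicates (the proof is deferred to \cite{baik2017facilitated} and described as analogous to Theorems \ref{theo:crossfluctuations} and \ref{theo:LPPawaydiagointro}). Two points need correction.

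First, a minor one: in the exponential model the critical point of the dominant exponent $f(z)=-4z+\log(1+2z)-\log(1-2z)$ is at $z=0$, not $z=1$ (you may be thinking of the geometric model before the $q\to 1$ limit). The rescaling is $z=n^{-1/3}\tilde z$ around $0$, exactly as in Section \ref{sec:pointwiseGSE}.

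Second, and this is the genuine gap: you write that the residue contributions ``must be tracked and shown to be subleading'' and ``vanish in the scaling limit.'' This is wrong. The kernel $K^{\rm SU}$ defined in Section \ref{sec:defcrossovers} splits as $K^{\rm SU}=I^{\rm SU}+R^{\rm SU}$ with $R^{\rm SU}_{12}$ and $R^{\rm SU}_{22}$ both \emph{nonzero}: $R^{\rm SU}_{12}$ is the heat-kernel term familiar from the extended Airy kernel, and $R^{\rm SU}_{22}$ is the single-integral term that, as $\eta\to 0$, degenerates to the distribution $\delta'$ (see Proposition \ref{prop:limitcrossoverSU}). These arise precisely as the scaling limits of $R^{\rm exp}_{12}$ and $R^{\rm exp}_{22}$ from Section \ref{sec:kernelexp}. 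If you discard them, you do not get $K^{\rm SU}$. What the hypothesis $\eta_1>0$ buys you is not that these terms vanish, but that $R^{\rm SU}_{22}$ is a smooth kernel with exponential decay, so that ordinary dominated convergence applies and the machinery of Section \ref{sec:GSEasymptotics} is unnecessary. The hypothesis $\alpha>1/2$ ensures the poles at $\pm(2\alpha-1)/2$ stay at macroscopic distance from the critical point $0$, so the prefactors involving $\alpha$ become harmless constants (absorbed by conjugation) in the limit of the $I^{\rm exp}$ terms---that part of your reasoning is correct.
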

Theorem \ref{theo:SU} can be seen as a $ \varpi\to+\infty$ degeneration of Theorem \ref{theo:crossfluctuations} (see Section \ref{sec:defcrossovers}). The proof is very similar with that  of Theorem \ref{theo:crossfluctuations}. We provide the details of the computations in \cite{baik2017facilitated}. 

An analogous result was found for the half-space PNG model without nucleations at the origin  \cite{sasamoto2004fluctuations} although the statement of \cite[Theorem 5.3]{sasamoto2004fluctuations} makes rigorous sense only when $\tau>0$. The correlation kernel corresponding to the symplectic-unitary transition was studied first in  \cite{forrester1999correlations} \footnote{The integration domain in \cite[(4.44)]{forrester1999correlations} should be $\R_{<0}$ instead of $\R_{>0}$. The correct formula was found in \cite[(5.39)]{sasamoto2004fluctuations}.}.  This random matrix ensemble is the point process corresponding to the eigenvalues of a Hermitian complex matrix $X_{\eta}$ for $\eta\in (0,+\infty)$ with density  proportional to
$$ \exp\left( \tfrac{-\mathrm{Tr}\big( (X_{\eta}-e^{-\eta}X_{0})^2\big)}{ 1-e^{-2\eta} }\right), $$
where $X_{0}$ is a GSE matrix. Various limits of the symplectic-unitary and orthogonal-unitary transition kernels are considered in \cite[Section 5]{forrester1999correlations}. In particular \cite[Section 5.6]{forrester1999correlations} considers the limit as $\eta\to0$. While the GOE distribution is recovered in the orthogonal-unitary case, the explanations are missing in the symplectic-unitary case (the scaling argument at  the end of  \cite[Section 5.6]{forrester1999correlations} is not the reason why one does not recover $K^{\rm GSE}$). We provide a rigorous proof in Section \ref{sec:rigorousSUtoGSE}. 
\begin{remark}
	One can also study the limiting $n$-point distribution of
	$$ \eta\mapsto \frac{H(n + n^{2/3}\xi\eta , \kappa n -n^{2/3}\xi\eta)-(1+\sqrt{\kappa})^2 n -n^{1/3}x^2}{2^{4/3}n^{1/3}},$$
	for a fixed $\kappa\in(0,1)$ and several values of $x$. In the $ n\to\infty$  limit, one would obtain the extended Airy kernel for $K^{\rm exp}_{12}$ and $0$ for $K^{\rm exp}_{11}$  and $K^{\rm exp}_{22}$ but we do not pursue that direction.
\end{remark}

\subsection*{Outline of the paper}

In Section \ref{sec:defdistributions}, we provide convenient Fredholm Pfaffian formulas  for the Tracy-Widom distributions and their generalizations. 
In Section \ref{sec:PSP}, we define Pfaffian Schur processes and construct dynamics preserving them, thus making a connection to half-space LPP (Proposition \ref{prop:SchurLPPcorrelations}). In Section \ref{sec:kpointdistribution}, we apply a general result of Borodin and Rains giving the correlation structure of Pfaffian Schur processes, in order to express the $k$-point distribution along space-like paths in half-space LPP with geometric (Proposition \ref{prop:kernelgeom}) and exponential  (Proposition \ref{prop:kernelexponential}) weights. In Section \ref{sec:GSEasymptotics}, we discussed the issues related to the multiplicity of the limiting point process, and prove the first part of Theorem \ref{theo:LPPdiagointro}. In Section \ref{sec:asymptoticsvarious}, we perform all other asymptotic analysis: we prove limit theorems towards the GUE Tracy-Widom distribution (Theorem \ref{theo:LPPawaydiagointro}),  GOE Tracy-Widom distribution (second part of Theorem \ref{theo:LPPdiagointro}), and Gaussian distribution (third part of \ref{theo:LPPdiagointro}).

\subsection*{Acknowledgements}

The authors would like to thank Alexei Borodin and Eric Rains for sharing their insights about Pfaffian Schur processes. We are especially grateful to Alexei Borodin for discussions related to the dynamics preserving Pfaffian Schur processes and for drawing our attention to the reference \cite{borodin2006averages}. We also thank Tomohiro Sasamoto and Takashi Imamura for discussions regarding their paper \cite{sasamoto2004fluctuations}. We graciously acknowledge Zongrui Yang's help in catching and fixing some mistakes in our formulas for the crossover kernel in \cite[Section 5]{baik2017facilitated}, which filtered into Section \ref{sec:defcrossovers} of this paper.

Part of this work was done during the stay of J. Baik, G. Barraquand and I. Corwin  at the KITP and supported by  NSF PHY11-25915. J. Baik was supported in part by NSF grant DMS-1361782. G. Barraquand was supported in part by the LPMA UMR CNRS 7599, Universit\'e Paris-Diderot--Paris 7  and the Packard Foundation through I. Corwin's Packard Fellowship. I. Corwin was  supported in part by the NSF through DMS-1208998,  a Clay Research Fellowship, the  Poincaré Chair, and  a Packard Fellowship for Science and Engineering.

\section{Fredholm Pfaffian formulas}
\label{sec:defdistributions}
Let us introduce a convenient notation that we use throughout the paper to specify integration contours in the complex plane.
\begin{definition}
	Let $\mathcal{C}_{a}^{\varphi}$ be the union of two semi-infinite rays departing $a\in\C$ with angles $\varphi$ and $-\varphi$. We  assume that the contour is oriented from $a+\infty e^{-i\varphi}$ to $a+\infty e^{+i\varphi}$.
	\label{def:basicrays}
\end{definition}
\subsection{GUE Tracy-Widom distribution}
For a kernel $K:\mathbb{X}\times \mathbb{X} \to \R,$
we define its Fredholm determinant $ \det(I+K)_{\mathbb{L}^2(\mathbb{X},\mu)}$ as given by the series expansion
$$ \det(I+K)_{\mathbb{L}^2(\mathbb{X},\mu)} = 1+\sum_{k=1}^{\infty}\frac{1}{k!} \int_{\mathbb{X}} \dots \int_{\mathbb{X}} \det\big( K(x_i, x_j)\big)_{i,j=1}^k\ \mathrm{d}\mu^{\otimes k}(x_1 \dots x_k), $$
whenever it converges. We will generally omit the measure $\mu$ in the notations and write simply  $\mathbb{L}^2(\mathbb{X})$ when the uniform or the Lebesgue  measure is considered. 
%With a slight abuse of notations, we will also write $ \det(I+K)_{\mathbb{L}^2(\mathbb{X})}$ instead of $ \det(I+\mathcal{K})_{\mathbb{L}^2(\mathbb{X})}$.

\begin{definition}
	The GUE Tracy-Widom distribution, denoted $\mathcal{L}_{\rm GUE}$ is a probability distribution on $\R$ such that if $X\sim \mathcal{L}_{\rm GUE}$,
	$$ \PP(X\leqslant x) = F_{\rm GUE}(x) = \det(I-K_{\rm Ai})_{\mathbb{L}^2(x,+\infty )} $$
	where  $K_{\rm Ai}$ is the Airy kernel,
	\begin{equation}
	K_{\rm Ai} (u, v) = \int_{\mathcal{C}_{-1}^{2\pi/3}} \dd w \int_{\mathcal{C}_1^{\pi/3}} \dd z \frac{e^{z^3/3-zu}}{e^{w^3/3-wv}}\frac{1}{z-w}.
	\label{eq:defairykernel}
	\end{equation}
	Throughout the paper, all integrals over a contour of the complex plane will take a factor $1/(2\I\pi)$, and thus we use the notation $\dd z := \frac{1}{2\I\pi}\mathrm{d} z$. 
	\label{def:GUEdistribution}
\end{definition}

\subsection{Fredholm Pfaffian}
In order to define the GOE and GSE distribution in a form which is convenient for later purposes, we introduce the concept of Fredholm Pfaffian.  We refer to Section \ref{sec:PPP} explaining how Fredholm Pfaffians naturally arise in the study of Pfaffian point processes.
\begin{definition}[\cite{rains2000correlation} Section 8]
	For a $2\times 2$-matrix valued  skew-symmetric kernel,
	$$ K(x,y) = \begin{pmatrix}
	K_{11}(x,y) & K_{12}(x,y)\\
	K_{21}(x, y) & K_{22}(x,y)
	\end{pmatrix},\ \ x,y\in \mathbb{X},$$
	(such kernel is called skew-symmetric if the $2k\times 2k$ matrix $\Big( K(x_i, x_j)\Big)_{i,j=1}^k$ is skew-symmetric) we define its Fredholm Pfaffian  by the series expansion
	\begin{equation}
	\Pf\big(J+K\big)_{\mathbb{L}^2(\mathbb{X},\mu)} = 1+\sum_{k=1}^{\infty} \frac{1}{k!}
	\int_{\mathbb{X}} \dots \int_{\mathbb{X}}  \Pf\Big( K(x_i, x_j)\Big)_{i,j=1}^k \mathrm{d}\mu^{\otimes k}(x_1 \dots x_k),
	\label{eq:defFredholmPfaffian}
	\end{equation}
	provided the series converges, and we recall that for an skew-symmetric  $2k\times 2k$ matrix $A$, its Pfaffian is defined by
	\begin{equation}
	\Pf(A) = \frac{1}{2^k k!} \sum_{\sigma\in\mathcal{S}_{2k}} \sign(\sigma) a_{\sigma(1)\sigma(2)}a_{\sigma(3)\sigma(4)} \dots a_{\sigma(2k-1)\sigma(2k)}.
	\label{def:pfaffian}
	\end{equation}
	The kernel $J$ is defined by
	$$ J(x,y)  = \mathds{1}_{x=y}
	\begin{pmatrix}
	0 & 1\\-1 & 0
	\end{pmatrix} .$$
	\label{def:FredholmPfaffian}
\end{definition}
\begin{remark} 
	We must consider matrix kernels as made up of $k^2$ blocks, each of which has size $2 \times 2$. Considering  $2^2$ blocks of size $k \times k$ instead would change the value of its Pfaffian by a factor $(-1)^{k(k-1)/2}$.
\end{remark}

In Sections \ref{sec:kpointdistribution}, \ref{sec:GSEasymptotics} and \ref{sec:asymptoticsvarious}, we will need to control the convergence of Fredholm Pfaffian series expansions. This can be done using Hadamard's bound. We recall that for a matrix
$M$ of size $k$, if the $(i,j)$ entry of $M$ is bounded by $a_i b_j$ for all $i,j\in\lbrace 1, \dots, k\rbrace$ then
\begin{equation*}
\big\vert \det[M]\big\vert \leqslant k^{k/2}\prod_{i=1}^k a_ib_i.
\label{eq:hadamard}
\end{equation*}
Using this inequality and the fact that $ \Pf[A] = \sqrt{\det[A]}$ for a skew-symmetric matrix $A$, we have 
\begin{lemma}
	Let $K(x, y)$ a $2\times 2$ matrix valued skew symmetric kernel. Assume that there exist constants $C>0$ and constants $a>b\geqslant 0$ such that
	\begin{equation*}
	\vert K_{11}(x, y) \vert <C e^{-ax-ay}, \ \
	\vert K_{12}(x, y)\vert   <C e^{-ax+by}, \ \
	\vert K_{22}(x, y)\vert  <C e^{bx+by}.
	\end{equation*}
	Then, for all $k\in \Z_{>0}$,
	$$ \Big\vert \Pf\big[ K(x_i, x_j)\big]_{i,j=1}^k \Big\vert < (2k)^{k/2}C^k\prod_{i=1}^{k}e^{-(a-b) x_i}.$$
	\label{lem:hadamard}
\end{lemma}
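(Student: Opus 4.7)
The plan is to unpack the Pfaffian into a Hadamard-style determinant bound and then take the square root via the identity $\Pf[A]^2 = \det[A]$.

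First, I would use skew-symmetry to obtain the missing bound on $K_{21}$. Since $K(y,x)^T = -K(x,y)$ as $2\times 2$ blocks (which is what skew-symmetry of the $2k\times 2k$ matrix assembled from $K$ forces), we have $K_{21}(x,y) = -K_{12}(y,x)$, so $|K_{21}(x,y)| < Ce^{-ay+bx}$. Together with the three given bounds, this lets me write every entry of the $2k\times 2k$ matrix $A = \bigl[K(x_i,x_j)\bigr]_{i,j=1}^k$ (viewed as a $2\times 2$-block matrix) in the product form
\[
|A_{IJ}| \;\le\; C\,\alpha_I\,\beta_J,
\]
with the multi-index convention $I = 2i-1$ or $2i$ (and similarly for $J$), $\alpha_{2i-1} = \beta_{2i-1} = e^{-ax_i}$, and $\alpha_{2i} = \beta_{2i} = e^{bx_i}$. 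A direct check on each of the four cases for $(I,J)$ gives precisely the stated bounds on $K_{11}, K_{12}, K_{21}, K_{22}$.

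Next I would apply the Hadamard inequality stated just before the lemma to the $2k\times 2k$ matrix $A$, obtaining
\[
\bigl|\det A\bigr| \;\le\; (2k)^{k}\,C^{2k}\,\prod_{I=1}^{2k} \alpha_I \beta_I.
\]
Splitting the product according to the parity of $I$,
\[
\prod_{I=1}^{2k}\alpha_I\beta_I \;=\; \prod_{i=1}^{k}\bigl(e^{-ax_i}\bigr)^{2}\bigl(e^{bx_i}\bigr)^{2} \;=\; \prod_{i=1}^{k} e^{-2(a-b)x_i}.
\]
This uses nothing beyond that $\alpha_I=\beta_I$ under our labeling.

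Finally, since $A$ is skew-symmetric, $\Pf[A]^2 = \det[A]$, so taking square roots yields
\[
\bigl|\Pf[A]\bigr| \;\le\; (2k)^{k/2}\,C^{k}\,\prod_{i=1}^{k} e^{-(a-b)x_i},
\]
which is the desired inequality. No step is genuinely hard here; the only mildly delicate point is making sure the exponential factors are grouped so that Hadamard's bound produces the product $\prod \alpha_I\beta_I$ in the form above rather than a mixed $\prod \alpha_I\cdot\prod \beta_J$, which is automatic once one observes $\alpha_I=\beta_I$ in our indexing. The hypothesis $a > b \geq 0$ is not used in this bound itself but ensures the resulting exponential factor $e^{-(a-b)x_i}$ decays, which is what makes this estimate useful for controlling the Fredholm Pfaffian series in later sections.
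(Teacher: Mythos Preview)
Your proof is correct and follows exactly the approach the paper indicates: bound each entry of the $2k\times 2k$ matrix in the separable form $a_I b_J$, apply the Hadamard determinant bound stated just before the lemma, and take the square root via $\Pf[A]^2=\det[A]$. The paper does not spell out the details beyond that one sentence, and your write-up fills them in correctly.
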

\subsection{GOE Tracy-Widom distribution}
The GOE Tracy-Widom distribution, denoted $\mathcal{L}_{\rm GOE}$, is a continuous probability distribution on $\R$. The following is a convenient formula for its cumulative distribution function. 
\begin{lemma}
	For $X\sim \mathcal{L}_{\rm GOE}$, 
	$ F_{\rm GOE}(x):=\PP(X\leqslant x) =  \Pf\big( J- K^{\rm GOE}\big)_{\mathbb{L}^2(x, \infty)},$
	where $K^{\rm GOE}$ is the $2\times 2$ matrix valued kernel defined by
	\begin{align*}
	K_{11}^{\rm GOE}(x,y) &=  \int_{\mathcal{C}_{1}^{\pi/3}}\dd z\int_{\mathcal{C}_{1}^{\pi/3}}\dd w  \frac{z-w}{z+w} e^{z^3/3 + w^3/3 - xz -yw},\\
	K_{12}^{\rm GOE}(x,y) &= -K_{21}^{\rm GOE}(x,y) = \int_{\mathcal{C}_{1}^{\pi/3}}\dd z\int_{\mathcal{C}_{-1/2}^{\pi/3}}\dd w  \frac{w-z}{2w(z+w)} e^{z^3/3 + w^3/3 - xz -yw} ,\\
	K_{22}^{\rm GOE}(x,y) &=  \int_{\mathcal{C}_{1}^{\pi/3}}\dd z\int_{\mathcal{C}_{1}^{\pi/3}}\dd w  \frac{z-w}{4zw(z+w)} e^{z^3/3 + w^3/3 - xz -yw} \\
	& +\int_{\mathcal{C}_{1}^{\pi/3}} e^{z^3/3-zx}\frac{\dd z}{4z} -  \int_{\mathcal{C}_{1}^{\pi/3}} e^{z^3/3-zy}\frac{\dd z}{4z} -\frac{\sgn{(x-y)}}{4}.
	\end{align*}
	Throughout the paper, we adopt the convention that
	$$\sgn(x-y) = \mathds{1}_{x>y} - \mathds{1}_{x<y}. $$
	\label{def:GOEdistribution}
\end{lemma}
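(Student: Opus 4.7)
The plan is to verify the formula by reducing the double-contour-integral kernel $K^{\rm GOE}$ to an equivalent representation in terms of Airy functions and their antiderivatives, and then matching with a standard Pfaffian representation of $F_{\rm GOE}$ available in the literature on GOE edge scaling.

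The key tools are the Laplace identity $\frac{1}{z+w}=\int_0^\infty e^{-r(z+w)}\,dr$, which is valid since $\Re(z+w)>0$ on all three contour choices appearing in the kernel, together with the Airy representations
\[
\Ai(x)=\int_{\mathcal{C}_1^{\pi/3}} e^{z^3/3-xz}\,\dd z, \qquad \Ai'(x)=-\int_{\mathcal{C}_1^{\pi/3}} z\,e^{z^3/3-xz}\,\dd z, \qquad \int_{\mathcal{C}_1^{\pi/3}} \tfrac{e^{z^3/3-xz}}{z}\,\dd z = \int_x^\infty \Ai(s)\,ds.
\]
Applying these to $K_{11}^{\rm GOE}$ decouples the $z$ and $w$ integrals and immediately produces
\[
K_{11}^{\rm GOE}(x,y) = \int_0^\infty \bigl[\Ai(x+r)\Ai'(y+r)-\Ai'(x+r)\Ai(y+r)\bigr]\,dr,
\]
a classical entry in Pfaffian kernels for $F_{\rm GOE}$. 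For $K_{12}^{\rm GOE}$, the partial fraction $\frac{w-z}{w(z+w)}=\frac{2}{z+w}-\frac{1}{w}$ produces one piece equal to $\int_0^\infty\Ai(x+r)\Ai(y+r)\,dr$ and a second piece handled by deforming $\mathcal{C}_{-1/2}^{\pi/3}$ across $w=0$: the residue at $0$ contributes $1$ while the deformed contour integral yields $\int_y^\infty \Ai(s)\,ds$. For $K_{22}^{\rm GOE}$, the decomposition $\frac{z-w}{zw(z+w)}=\frac{1}{w(z+w)}-\frac{1}{z(z+w)}$ combined with the same tools yields the expected double integral and antiderivative terms; the explicit single-integral corrections and the $-\tfrac{1}{4}\sgn(x-y)$ term emerge precisely as the residue and boundary contributions required to enforce skew-symmetry of the $(2,2)$ entry.

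Having expressed $K^{\rm GOE}$ in a standard Airy-function form, I would conclude by invoking a known Pfaffian representation of $F_{\rm GOE}$ coming from the asymptotics of Pfaffian point processes at the GOE edge, and matching it entry by entry. The main obstacle is the careful bookkeeping of signs: tracking each residue picked up when shifting contours across $z=0$ or $w=0$, confirming that the single-integral corrections and the $\sgn(x-y)/4$ term align with a reference GOE kernel rather than differing from it by stray terms that would destroy the Fredholm Pfaffian identity, and verifying skew-symmetry of the full matrix-valued kernel throughout as a consistency check. A secondary issue is justifying the interchange of the auxiliary $r$-integral with the contour integrals, which is straightforward given the Gaussian-type decay of $e^{z^3/3}$ along the chosen rays.
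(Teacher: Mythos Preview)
Your proposal is correct and essentially matches the paper's proof. The paper proceeds in the reverse direction---starting from the Airy-function kernel $K^{1}$ of Tracy--Widom and converting it to the contour-integral form $K^{\rm GOE}$ using the Airy representation \eqref{eq:defairy}, the identity $\int_0^\infty e^{-\lambda(z+w)}\,d\lambda = (z+w)^{-1}$, and the residue at $w=0$ when shifting $\mathcal{C}_{-1/2}^{\pi/3}$ to $\mathcal{C}_{1}^{\pi/3}$---but the content is the same: your Laplace trick and contour deformation are precisely the inverse of the paper's steps, and both rely on the identity $\int_0^{+\infty}\Ai(\lambda)\,d\lambda = 1$ to reconcile the $K_{12}$ entries.
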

\begin{proof}
	According to \cite{tracy2005matrix} (see also equivalent formulas \cite[(2.9) and (6.17)]{ferrari2004polynuclear}),  the GOE distribution function can be defined by $F_{\rm GOE} = \Pf\big(J-K^{1}\big)$, where $K^{1}$ is defined by the matrix kernel
	\begin{align*}
	K^{1}_{11}(x,y) &= \int_0^{\infty} \mathrm{d}\lambda \Ai(x+\lambda)\Ai'(y+\lambda) - \int_0^{\infty} \mathrm{d}\lambda \Ai(y+\lambda)\Ai'(x+\lambda), \\
	K^{1}_{12}(x,y) &=- K^{1}_{21}(y,x) =  \int_{0}^{\infty} \Ai(x+\lambda)\Ai(y+\lambda) +\frac 1 2 \Ai(x) \int_0^{\infty} \mathrm{d}\lambda \Ai(y-\lambda) \\
	K^{1}_{22}(x,y) &=\frac{1}{4} \int_0^{\infty} \mathrm{d}\lambda \int_{\lambda}^{\infty}\mathrm{d}\mu \Ai(y+\mu)\Ai(x+\lambda) - \frac{1}{4} \int_0^{\infty} \mathrm{d}\lambda \int_{\lambda}^{\infty}\mathrm{d}\mu \Ai(x+\mu)\Ai(y+\lambda) \\
	&- \frac{1}{4}\int_{0}^{+\infty} \Ai(x+\lambda)\mathrm{d}\lambda + \frac{1}{4}\int_{0}^{+\infty} \Ai(y+\lambda)\mathrm{d}\lambda - \frac{\sgn(x-y)}{4}.
	\end{align*}
	Using the contour integral representation of the Airy function
	\begin{equation}
	\Ai(x) =\int_{\mathcal{C}_{a}^{\varphi}} e^{z^3/3-zx}\dd z, \text{ for any }\varphi\in(\pi/6, \pi/2]\text{ and }a\in \R_{>0},
	\label{eq:defairy}
	\end{equation}
	with integration on $\mathcal{C}_{1}^{\pi/3}$ for $\Ai(x+\lambda)$ and  $\Ai'(x+\lambda)$,  we readily see that
	\begin{align}
	K^{1}_{11}(x,y) &= \int_0^{\infty} \mathrm{d}\lambda  \int_{\mathcal{C}_{1}^{\pi/3}}\dd  z\int_{\mathcal{C}_{1}^{\pi/3}}\dd w (z-w) e^{z^3/3 + w^3/3 - xz -yw - \lambda z - \lambda w}\nonumber\\
	&=   \int_{\mathcal{C}_{1}^{\pi/3}}\dd z\int_{\mathcal{C}_{1}^{\pi/3}}\dd w (z-w) e^{z^3/3 + w^3/3 - xz -yw }\int_0^{\infty} \mathrm{d}\lambda e^{-\lambda z - \lambda w} \label{eq:explainGOE11}\\
	&= K^{\rm GOE}_{11}(x,y) .\nonumber
	\end{align}
	The exchange of integrations in \eqref{eq:explainGOE11} is justified because $z+w$ has positive real part along the contours.
	Turning to $K_{12}^{\rm GOE}$, we first deform the contour $\mathcal{C}_{-1/2}^{\pi/3}$ for the variable $w$ to $\mathcal{C}_{1}^{\pi/3}$. When doing this, we encounter a pole at zero. Thus, taking into account the residue at zero, we find that
	$$ K_{12}^{\rm GOE}(x,y) = \frac 1 2 \Ai(x) + \int_{\mathcal{C}_{1}^{\pi/3}}\dd z\int_{\mathcal{C}_{1}^{\pi/3}}\dd w  \frac{w-z}{2w(z+w)} e^{z^3/3 + w^3/3 - xz -yw} $$
	Using again \eqref{eq:defairy} with the contour $\mathcal{C}_{1}^{\pi/3}$, we find that
	\begin{align*}
	K_{12}^{\rm GOE}(x,y)&= \frac 1 2 \Ai(x) + \int_{0}^{\infty} \Ai(x+\lambda)\Ai(y+\lambda)\mathrm{d}\lambda - \frac 1 2 \Ai(x) \int_0^{\infty} \mathrm{d}\lambda \Ai(y+\lambda)  \\
	&=  K_{12}^{1}(x,y),
	\end{align*}
	where in the last equality we have used that
	$ \int_{-\infty}^{+\infty}\Ai(\lambda)\mathrm{d}\lambda =1.$
	For $K^{1}_{22}$ we use similarly  \eqref{eq:defairy} with integration on $\mathcal{C}_{1}^{\pi/3}$ for $\Ai(x+\lambda)$ and $\Ai(x+\mu)$ and  get that $K^{1}_{22}(x,y) = K^{\rm GOE}_{22}(x,y)$.
\end{proof}
\subsection{GSE Tracy-Widom distribution}
The GSE Tracy-Widom distribution, denoted $\mathcal{L}_{\rm GSE}$, is a continuous probability distribution on $\R$. 
\begin{lemma}
	For $X\sim \mathcal{L}^{\rm GSE}$, 
	$ F_{\rm GSE}\left( x\right) :=\PP(X\leqslant x)  = \Pf\big( J- K^{\rm GSE}\big)_{\mathbb{L}^2(x, \infty)} ,$
	where $K^{\rm GSE}$ is a $2\times 2$-matrix valued kernel
	defined by
	\begin{align*}
	K_{11}^{\rm GSE}(x,y) &=  \int_{\mathcal{C}_{1}^{\pi/3}}\dd z\int_{\mathcal{C}_{1}^{\pi/3}}\dd w \frac{z-w}{4zw(z+w)} e^{z^3/3 + w^3/3 - xz -yw} ,\\
	K_{12}^{\rm GSE}(x,y) &= -K_{21}^{\rm GSE}(x,y) =  \int_{\mathcal{C}_{1}^{\pi/3}}\dd z\int_{\mathcal{C}_{1}^{\pi/3}}\dd w \frac{z-w}{4z(z+w)} e^{z^3/3 + w^3/3 - xz -yw} ,\\
	K_{22}^{\rm GSE}(x,y) &= 
	\int_{\mathcal{C}_{1}^{\pi/3}}\dd z\int_{\mathcal{C}_{1}^{\pi/3}}\dd w \frac{z-w}{4(z+w)} e^{z^3/3 + w^3/3 - xz -yw} .
	\end{align*}
	\label{def:GSEdistribution}
\end{lemma}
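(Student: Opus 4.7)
The plan is to mirror the preceding GOE computation. I would start from a known Fredholm Pfaffian representation of $F_{\rm GSE}$ whose kernel is written in terms of the Airy function and its iterated half-line integrals (such as the kernel $K^{4}$ of \cite{tracy2005matrix}, or equivalently the symplectic edge kernel of \cite{ferrari2004polynuclear}), then substitute the contour integral representation \eqref{eq:defairy} with contour $\mathcal{C}_1^{\pi/3}$ for every Airy factor, and finally evaluate the resulting auxiliary exponential integrals explicitly.

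Concretely, the classical kernel $K^{4}$ has entries $K^{4}_{11}, K^{4}_{12}, K^{4}_{22}$ built respectively from triple, double, and single nested integrals over $\R_{>0}$ of products $\Ai(x+\lambda)\Ai(y+\mu)$ (antisymmetrised in $(x,y)$), with an overall prefactor $1/4$. Substituting $\Ai(x+\lambda) = \int_{\mathcal{C}_1^{\pi/3}} e^{z^3/3-z(x+\lambda)}\dd z$ and the analogous expression in the variable $w$, one has $\Re(z), \Re(w) \geqslant 1$ along both contours, so that every iterated auxiliary integral converges absolutely and a Fubini swap of the auxiliary and contour integrals is justified. Applying the elementary identities
$$\int_0^{\infty} e^{-\lambda z}\,\mathrm{d}\lambda = \frac{1}{z}, \qquad \int_0^{\infty} e^{-\lambda(z+w)}\,\mathrm{d}\lambda = \frac{1}{z+w}$$
iteratively produces the denominators $\frac{1}{z+w}$, $\frac{1}{z(z+w)}$, and $\frac{1}{zw(z+w)}$ in $K^{\rm GSE}_{22}$, $K^{\rm GSE}_{12}$, and $K^{\rm GSE}_{11}$ respectively, while the antisymmetrisation in $(x,y)$ collapses to the common numerator $(z-w)$, and the overall $1/4$ prefactor is inherited directly from the normalisation of $K^{4}$. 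This matches the three formulas in the lemma.

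The main obstacle is purely bookkeeping: one must correctly identify which nested combination of antisymmetrised Airy integrals in the classical formula produces which contour denominator, and carefully track the $1/4$ prefactor across all three entries. Crucially, and in contrast with the $K^{\rm GOE}_{12}$ computation, no contour deformation is needed here: every variable already lives on $\mathcal{C}_1^{\pi/3}$ so the pole at $w=0$ is never crossed. Consequently no isolated $\Ai(x)$ or $\sgn(x-y)$ terms appear in $K^{\rm GSE}$, consistent with the stated form. Absolute convergence and the exchange-of-integration arguments are identical to those in the proof of the GOE lemma above, using the cubic decay coming from $\Re(z^{3}/3)\to -\infty$ along $\mathcal{C}_1^{\pi/3}$.
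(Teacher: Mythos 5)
Your contour-substitution step is exactly the computation the paper performs, and that part of the plan is sound: with every variable on $\mathcal{C}_1^{\pi/3}$ one has $\Re(z+w)\geqslant 2>0$, so the Fubini swap and the identities $\int_0^{\infty}e^{-\lambda z}\,\mathrm{d}\lambda=1/z$ and $\int_0^{\infty}e^{-\lambda(z+w)}\,\mathrm{d}\lambda=1/(z+w)$ are justified, no contour deformation is needed, and no isolated $\Ai$ or $\sgn$ terms appear. The genuine gap is at your starting point. The kernel $K^{4}$ of Tracy--Widom does \emph{not} furnish a "known Fredholm Pfaffian representation" of $F_{\rm GSE}$: the classical statement is $F_{\rm GSE}(x)=\sqrt{\det(I-K^{4})_{\mathbb{L}^2(x,\infty)}}$, a Fredholm \emph{determinant}, and $K^{4}$ is not skew-symmetric as a $2\times2$ matrix kernel (its symmetry is $K^{4}_{11}(x,y)=K^{4}_{22}(y,x)$, not $K^{4}_{11}(x,y)=-K^{4}_{11}(y,x)$), so a Pfaffian built from $K^{4}$ itself is not even well defined. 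The missing idea -- which is the structural heart of the paper's proof -- is to skew-symmetrize by setting $K^{\rm GSE}:=JK^{4}$, which permutes and sign-flips the entries ($K^{\rm GSE}_{11}=K^{4}_{21}$, $K^{\rm GSE}_{12}=K^{4}_{22}$, $K^{\rm GSE}_{21}=-K^{4}_{11}$, $K^{\rm GSE}_{22}=-K^{4}_{12}$), and then to invoke the identity $\Pf(J+A)^2=\det(I-JA)$ for skew-symmetric kernels (Rains), valid once the Fredholm expansions converge, to conclude $\sqrt{\det(I-K^{4})}=\Pf(J-K^{\rm GSE})$. Without this conversion your entry-by-entry matching cannot yield the Lemma, whose statement is a Pfaffian formula.

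Relatedly, your description of $K^{4}$ as antisymmetrised Airy products carrying an overall $1/4$ prefactor is inaccurate, and this is precisely the bookkeeping you defer: each entry of $JK^{4}$ is a sum of two terms with prefactors $1/2$ and $1/4$, and the numerator $(z-w)$ together with the common $1/4$ only emerges after combining them; for instance $K^{\rm GSE}_{11}=K^{4}_{21}$ gives
\begin{equation*}
-\frac{1}{2z(z+w)}+\frac{1}{4zw}=\frac{z-w}{4zw(z+w)}.
\end{equation*}
Once you add the $JK^{4}$ step and the Pfaffian--determinant identity, your plan coincides with the paper's proof.
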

\begin{proof}
	$F_{\rm GSE}(x)$ is defined as
	$F_{\rm GSE}(x) = \sqrt{\det(I - K^{4})_{\mathbb{L}^2(x, \infty)}} $  in  \cite[Section III]{tracy2005matrix},
	where $K^{4}$ is the $2\times 2$ matrix valued kernel defined by
	\begin{align*}
	K^{4}_{11}(x,y) &= K^{4}_{22}(y,x) =  \frac{1}{2} K_{\rm Ai}(x,y) - \frac 1 4 \Ai(x) \int_y^{\infty} \Ai(\lambda)\mathrm{d}\lambda,\\
	K^{4}_{12}(x,y) &= \frac{-1}{2}\partial_y  K_{\rm Ai}(x,y) - \frac 1 4 \Ai(x)\Ai(y),\\
	K^{4}_{21}(x,y) &=  \frac{-1}{2} \int_x^{\infty}K_{\rm Ai}(\lambda,y)\mathrm{d}\lambda + \frac 1 4 \int_{x}^{\infty}\Ai(\lambda)\mathrm{d}\lambda \int_{y}^{\infty}\Ai(\mu)\mathrm{d} \mu.
	\end{align*}
	In order to have a Pfaffian formula, we need a skew-symmetric kernel. We compute the kernel $K^{\rm GSE}:=JK^{4}$ using
	$$ \begin{pmatrix}
	0 & 1 \\
	-1 & 0
	\end{pmatrix} \begin{pmatrix}
	a & b \\ c & d
	\end{pmatrix} = \begin{pmatrix}
	c & d \\
	-a & -b
	\end{pmatrix},$$
	and find that the operator $K^{\rm GSE}$ is given by the matrix kernel
	\begin{align*}
	K_{11}^{\rm GSE}(x,y) &= K^{4}_{21}(x,y), \\
	K_{12}^{\rm GSE}(x,y) &= K^{4}_{22}(x,y), \\
	K_{21}^{\rm GSE}(x,y) &=  -K^{4}_{11}(x,y) = -K^{4}_{22}(y,x)  = -K^{\rm GSE}_{12}(y,x),\\
	K_{22}^{\rm GSE}(x,y)  &= -K^{4}_{12}(x,y).
	\end{align*}
	Using the contour integral representations for the Airy function \eqref{eq:defairy} with integrations on $\mathcal{C}_{1}^{\pi/3}$
	and the definition of the Airy kernel \eqref{eq:defairykernel}, we find that the entries
	$ K_{ij}^{\rm GSE}(x,y)$ match those given in the statement of Lemma \ref{def:GSEdistribution}.
	Now $K^{\rm GSE}$ is skew-symmetric. Using that for a skew-symmetric kernel $A$, $\Pf(J+A)^2 = \det(I-JA)$ as soon as Fredholm expansions are convergent (\cite[Lemma 8.1]{rains2000correlation} , see also \cite[Proposition B.4]{ortmann2015pfaffian} for a proof),
	we have that (using $J^2=I$)
	\begin{equation*}
	F_{\rm GSE}(x) = \sqrt{\det(I - K^{4})_{\mathbb{L}^2(x, \infty)}} = \Pf\big(J-K^{\rm GSE}\big)_{\mathbb{L}^2(x, \infty)}. \qedhere
	\end{equation*}
\end{proof}
\begin{remark}
	There exists an alternative scalar kernel which yields the GSE distribution function  \cite{gueudre2012directed}: 
	\begin{equation}
	F_{\rm GSE}(x) = \Pf\big( J -K^{\rm GSE})_{\mathbb{L}^2(x, \infty)} = \sqrt{\det\big( I-K^{\rm GLD} \big)_{\mathbb{L}^2(x, \infty)} },
	\label{eq:alterGSE}
	\end{equation}
	where
	$$ K^{\rm GLD}(x,y) = K_{\Ai}(x,y) -  \frac 1 2 \Ai(x) \int_0^{\infty}\dd z\ \Ai(y+z).$$
	The identity \eqref{eq:alterGSE} can be shown by factoring $K^{\rm GSE}$ and using the $\det(I+AB)=\det(I+BA)$ trick.
\end{remark}

\subsection{Crossover kernels}
\label{sec:defcrossovers}
The crossover kernel in Theorem \ref{theo:crossfluctuations} concerns the limiting fluctuations of multiple points, hence the kernel is indexed by elements of $\lbrace 1, \dots, k\rbrace \times \R$. We introduce a matrix kernel $K^{\rm cross}$ , depending on parameters $\varpi \in \R$ and $0\leqslant \eta_1 < \dots < \eta_k$,  which decomposes as 
$$ K^{\rm cross}(i, x;j, y) = I^{\rm cross}(i, x;j, y)+R^{\rm cross}(i, x;j, y).$$
We have  $ K^{\rm cross}_{21}(i, x;j, y)=-K^{\rm cross}_{12}(j, y; i, x)$, and  
\begin{align*}
I_{11}^{\rm cross}(i, x;j, y) &= \int_{\mathcal{C}_{1}^{\pi/3}} \dd z\int_{\mathcal{C}_{1}^{\pi/3}} \dd w
\frac{z+\eta_i -w-\eta_j}{z+w+ \eta_i+\eta_j}  \frac{z+\varpi+\eta_i}{z+\eta_i}\frac{w+\varpi+\eta_j}{w+\eta_j}  e^{z^3/3 + w^3/3 - x z -y w},\\
I_{12}^{\rm cross}(i, x;j, y) &= \int_{\mathcal{C}_{a_z}^{\pi/3}} \dd z\int_{\mathcal{C}_{a_w}^{\pi/3}} \dd w
\frac{z +\eta_i -w+\eta_j  }{2(z+\eta_i)(z+\eta_i+w-\eta_j)}
\frac{z+\varpi+\eta_i}{-w+\varpi+\eta_j}  e^{z^3/3 + w^3/3 - x z -y w} ,\\
%I_{21}^{\rm cross}(i, x;j, y) &= -I_{12}^{\rm cross}(j,y;i,x),\\
I_{22}^{\rm cross}(i, x;j, y) &=  \int_{\mathcal{C}_{b_z}^{\pi/3}} \dd z\int_{\mathcal{C}_{b_w}^{\pi/3}} \dd w
\frac{z-\eta_i-w+\eta_j}{4(z-\eta_i+w-\eta_j)}
\frac{e^{z^3/3 + w^3/3 - x z-yw }}{(z-\varpi-\eta_i)(w-\varpi-\eta_j)}.
\end{align*} 
%\begin{multline*}
%I_{11}^{\rm cross}(i, x;j, y) = \int_{\mathcal{C}_{1}^{\pi/3}} \dd z\int_{\mathcal{C}_{1}^{\pi/3}} \dd w
%\frac{z+\eta_i -w-\eta_j}{z+w+ \eta_i+\eta_j} \\ 
% \hspace{2cm} \times \frac{z+\varpi+\eta_i}{z+\eta_i}\frac{w+\varpi+\eta_j}{w+\eta_j}  e^{z^3/3 + w^3/3 - x z -y w},
%\end{multline*} 
% \begin{multline*}
%I_{12}^{\rm cross}(i, x;j, y) = \int_{\mathcal{C}_{a_z}^{\pi/3}} \dd z\int_{\mathcal{C}_{a_w}^{\pi/3}} \dd w
%\frac{z +\eta_i -w+\eta_j  }{2(z+\eta_i)(z+\eta_i+w-\eta_j)}\\ \times
%\frac{z+\varpi+\eta_i}{-w+\varpi+\eta_j}  e^{z^3/3 + w^3/3 - x z -y w} ,
%\end{multline*} 
%$I_{21}^{\rm cross}(i, x;j, y) = -I_{12}^{\rm cross}(y,x),$ and 
%\begin{multline*}
%I_{22}^{\rm cross}(i, x;j, y) =  \int_{\mathcal{C}_{b_z}^{\pi/3}} \dd z\int_{\mathcal{C}_{b_w}^{\pi/3}} \dd w
%\frac{z-\eta_i-w+\eta_j}{4(z-\eta_i+w-\eta_j)}\\ \times
%\frac{e^{z^3/3 + w^3/3 - x z-yw }}{(z-\varpi-\eta_i)(w-\varpi-\eta_j)}.
%\end{multline*} 
The contours in $I_{12}^{\rm cross}$ are chosen so  $a_z>-\eta_i$, $a_z+a_w>\eta_j-\eta_i$ and $a_w<\varpi+ \eta_j$.
The contours in $I_{22}^{\rm cross}$ are chosen so that  (1) if $\varpi\leqslant0$ then $b_z>\eta_i$ and $b_w>\eta_j$, and (2) if $\varpi>0$ then $b_z\in(\eta_i,\eta_i+\varpi)$ and $b_w\in(\eta_j,\eta_j+\varpi)$. 
We have $R_{11}^{\rm cross}(i, x;j, y)=0$, and $R_{12}^{\rm cross}(i, x;j, y)=0$ when $i\geqslant j$. When $i<j$,
$$ R_{12}^{\rm cross}(i, x;j, y) = \frac{-\exp\left(\frac{-(\eta_i-\eta_j)^4+ 6(x+y)(\eta_i-\eta_j)^2+3(x-y)^2}{12(\eta_i-\eta_j)}\right)}{\sqrt{4\pi(\eta_j-\eta_i)}}, $$
that we may also write
$$R_{12}^{\rm cross}(i, x;j, y)= -\int_{-\infty}^{+\infty} \mathrm{d}\lambda e^{-\lambda(\eta_i - \eta_j)} \Ai(x_i+\lambda)\Ai(x_j+\lambda).$$
The kernel  $R_{22}^{\rm cross}$ is antisymmetric, and when $ x -   \eta_i^2>y -  \eta_j^2$ we have
\begin{multline*}
 	R_{22}^{\rm cross}(i, x;j, y)=   
 	 \frac{\mathds{1}_{\varpi\leqslant0}}{4} \int_{\mathcal{C}_{a_z}^{\pi/3}}\dd z \frac{e^{ (z+\eta_j)^3/3 +(\varpi+\eta_i)^3/3 -y (z+\eta_j) -x(\varpi+\eta_i) }}{\varpi+z} \\
 -\frac{\mathds{1}_{\varpi\leqslant0}}{4} \int_{\mathcal{C}_{a_z}^{\pi/3}}\dd z \frac{e^{ (z+\eta_i)^3/3 +(\varpi+\eta_j)^3/3 -x (z+\eta_i) -y(\varpi+\eta_j)}}{\varpi+z} \\
 	-\frac{1}{2}\int_{\mathcal{C}_{b_z}^{\pi/3}}\dd z\frac{z e^{ (z+\eta_i)^3/3 +(-z+\eta_j)^3/3 -x (z+\eta_i) -y(-z+\eta_j) }}{(\varpi+z)(\varpi -z)} 
         -\frac{\mathds{1}_{\varpi=0}}{4} e^{\eta_i^3/3+\eta_j^3/3-\eta_ix-\eta_jy }           , 
 	\end{multline*}
 	where the contours are chosen so that $a_z<-\varpi$ and  (1) if $\varpi\neq0$ then $-\vert \varpi\vert <b_z<\vert \varpi \vert$, and (2) if $\varpi=0$ then $b_z>0$.  
\begin{definition}
	We define the probability distribution $\mathcal{L}_{\rm cross}$ by the distribution function
	\begin{equation*}
	F_{\rm cross}(x; \varpi, \eta ) = \Pf\Big( J- K^{\rm  cross}(1, \cdot ; 1, \cdot)\Big)_{\mathbb{L}^2(x, \infty)}.
	\end{equation*}
	\label{def:Fcross}
\end{definition}
This family of distribution functions realizes a two-dimensional crossover between GSE, GUE and GOE distributions, in the sense that
$F_{\rm cross}(x; 0,0 ) = F_{\rm GOE}(x) $ and
$$F_{\rm cross}(x; \varpi, \eta )\longrightarrow \begin{cases}
F_{\rm GSE}(x) &{\footnotesize \text{ when } \varpi\to+\infty \text{ and }\eta=0,}\\
\big(F_{\rm GOE}(x)\big)^2 &{\footnotesize \text{ when } \varpi\to -\infty, \eta\to+\infty \text{ with }\varpi/\eta=-1,}\\
F_{\rm GUE}(x) &{\footnotesize \text{ when } \eta\to+\infty \text{ and } \varpi+\eta\to +\infty,}\\
0 &{\footnotesize \text{ when } \varpi\to-\infty  \text{ and }\varpi+\eta\to -\infty.}\\
\end{cases} $$
Moreover, when $\varpi\to-\infty$ and $\varpi+\eta\to -\infty$, 
$$
F_{\rm cross}((\varpi+\eta)^2+\sqrt{2|\varpi+\eta|}y; \varpi, \eta )
\longrightarrow \int_{-\infty}^y \frac{1}{\sqrt{2\pi}} e^{-\frac12s^2} ds.
$$
In addition, for $a\in \R$,
$
F_{\rm cross}(x; \varpi, \eta )
\longrightarrow F_1(x;a)$ when 
$\varpi\to-\infty, \eta\to +\infty$ with $\varpi+\eta=a$,
where $F_1(x;a)$ is the distribution in \cite[Definition 1.3]{baik2005phase} that interpolates $F_{\rm GUE}(x)$
and $\big(F_{\rm GOE}(x)\big)^2$. When $\eta=0$, $F_{\rm cross}(x; \varpi,0 ) =F(x; \varpi)$ where the crossover distribution $F(x; w)$ is defined in \cite[Definition 4]{baik2001asymptotics}.

The crossover kernel arising in Theorem \ref{theo:SU} is a matrix kernel 
$K^{\rm SU}$, depending on parameters $\varpi \in \R$ and $0< \eta_1 < \dots < \eta_k$,  of the form
$$ K^{\rm SU}(i, x;j, y) = I^{\rm SU}(i, x;j, y)+R^{\rm SU}(i, x;j, y).$$
We have  $ K^{\rm SU}_{21}(i, x;j, y)=-K^{\rm SU}_{12}(j, y; i, x)$, and  
\begin{align*}
I^{\rm  SU}_{11}(i, x;j, y) &=  \int_{\mathcal{C}_{1}^{\pi/3}} \dd z\int_{\mathcal{C}_{1}^{\pi/3}} \dd w
\frac{(z+\eta_i-w-\eta_j)e^{z^3/3 + w^3/3 - x z -yw}}{4(z+\eta_i)(w+\eta_j)(z+w+ \eta_i+\eta_j)}   , \\
I^{\rm  SU}_{12}(i, x;j, y) &= \int_{\mathcal{C}_{a_z}^{\pi/3}} \dd z\int_{\mathcal{C}_{a_w}^{\pi/3}} \dd w
\frac{(z + \eta_i -w + \eta_j) e^{z^3/3 + w^3/3 - xz -yw}}{2(z+\eta_i)(z+w+\eta_i-\eta_j)}    ,\\
%I^{\rm  SU}_{21}(i, x;j, y) &= -I^{\rm  SU}_{12}(j, x_j ; i, y_i)\\
I^{\rm  SU}_{22}(i, x;j, y)&=  \int_{\mathcal{C}_{b_z}^{\pi/3}} \dd z\int_{\mathcal{C}_{b_w}^{\pi/3}} \dd w
\frac{z-\eta_i-w+\eta_j}{z-\eta_i+w-\eta_j}  e^{z^3/3 + w^3/3 - xz-yw},
\end{align*}
The contours in $I_{12}^{\rm SU}$ are chosen so $a_z>-\eta_i$,  $a_z+a_w>\eta_j-\eta_i$.  %when $i\geqslant j$ and $a_z>-\eta_i$, $a_z+a_w<\eta_j-\eta_i$ when $i<j$.
The contours in $I_{22}^{\rm SU}$ are chosen so that $b_z>\eta_i$ and $b_w>\eta_j$.

We have $R_{11}^{\rm SU}(i, x;j, y)=0$, and $R_{12}^{\rm SU}(i, x;j, y)=0$ when $i\geqslant j$.
When $i<j$,
\begin{equation*} 
R_{12}^{\rm SU}(i, x;j, y)  = \frac{-e^\frac{-(\eta_i-\eta_j)^4+ 6(x+y)(\eta_i-\eta_j)^2+3(x-y)^2}{12(\eta_i-\eta_j)}}{\sqrt{4\pi(\eta_j-\eta_i)}}= R_{12}^{\rm cross}(1, x;j, y).
\end{equation*}
The kernel  $R_{22}^{\rm SU}$ is antisymmetric, and when $ x -   \eta_i^2>y -  \eta_j^2$ we have
\begin{equation*}
R_{22}^{\rm SU}(i, x;j, y)= 
-\frac{1}{2}\int_{\mathcal{C}_{0}^{\pi/3}}\dd z\ \  ze^{ (z+\eta_i)^3/3 +(-z+\eta_j)^3/3 -x (z+\eta_i) -y(-z+\eta_j)},
\end{equation*}
where the contours are chosen so that $a_z>-\varpi$ and $b_z$ is between $-\varpi$ and $\varpi$.

Modulo a conjugation of the kernel, $K^{\rm  SU}$ is the limit of $K^{\rm cross}$ when $\varpi$ goes to $+\infty$, i.e.
$$K^{\rm  SU} = \lim_{\varpi\to \infty} \begin{pmatrix}\frac{1}{4\varpi^2}K^{\rm  cross}_{11} & K^{\rm  cross}_{12}\\
K^{\rm  cross}_{21} &4 \varpi^2 K^{\rm  cross}_{22}
\end{pmatrix}  .$$

\section{Pfaffian Schur process}
\label{sec:PSP}
The key tool in our analysis of last passage percolation on a half-space is the  Pfaffian Schur process. In this Section, we first introduce the Pfaffian Schur process as in \cite{borodin2005eynard}. This is a Pfaffian analogue of the determinantal Schur process  introduced in \cite{okounkov2003correlation}. We refer to \cite[Section 6]{borodin2012lectures} and references therein for background on the Schur process.  Then, in order to connect it to last passage percolation, we introduce an equivalent presentation of the  Pfaffian Schur processes indexed by  lattice paths in the half-quadrant, and we study dynamics preserving the Pfaffian Schur process.

\subsection{Definition of the Pfaffian Schur process}
\label{sec:defPSP}
\subsubsection{Partitions and Schur positive specializations}
Pfaffian Schur processes are measures on sequences of integer partitions. A partition is a nonincreasing sequence of nonnegative integers
$ \lambda = (\lambda_1\geqslant \lambda_2 \geqslant \dots \geqslant \lambda_{k}\geqslant 0), $
with finitely many non-zero components.
Given two partitions $\lambda, \mu$, we write $\mu\subset\lambda$ if $\lambda_i\geqslant \mu_i$ for all $i\geqslant 1$.  We write $\mu\prec\lambda$ and say that $\mu$ interlaces $\lambda$ if for all $i\geqslant 1$,
$ \lambda_i\geqslant \mu_i \geqslant \lambda_{i-1}. $
%One often associates to a partition a Young diagram as in Figure \ref{fig:youngdiagram}.
%\begin{figure}
%\begin{tikzpicture}[scale=0.6]
%\draw (0,0) -- (4,0);
%\draw (0,1) -- (4,1);
%\draw (0,2) -- (3,2);
%\draw (0,3) -- (1,3);
%\draw (0,4) -- (1,4);
%
%\draw (0,0) -- (0,4);
%\draw (1,0) -- (1,4);
%\draw (2,0) -- (2,2);
%\draw (3,0) -- (3,2);
%\draw (4,0) -- (4,1);
%
%\draw[gray, thick, dashed] (0,0) -- (4,4);
%
%\begin{scope}[xshift=7cm]
%\draw (0,0) -- (4,0);
%\draw (0,1) -- (4,1);
%\draw (0,2) -- (2,2);
%\draw (0,3) -- (2,3);
%\draw (0,4) -- (1,4);
%
%\draw (0,0) -- (0,4);
%\draw (1,0) -- (1,4);
%\draw (2,0) -- (2,3);
%\draw (3,0) -- (3,1);
%\draw (4,0) -- (4,1);
%
%\draw[gray, thick, dashed] (0,0) -- (4,4);
%\end{scope}
%\end{tikzpicture}
%\caption{Young diagram corresponding to the partition $(4,3,1,1)$ (left), and its dual $(4,2,2,1)$ (right).}\label{fig:youngdiagram}
%\end{figure}
For a given partition $\lambda$, its dual, denoted $\lambda'$, is the partition corresponding to the Young diagram which is symmetric to the Young diagram of $\lambda$ with respect to the first diagonal. In other words, $\lambda'_i = \sharp\lbrace j : \lambda_j \geqslant i \rbrace$. We say that a partition $\lambda$ is even if all its component are even integers. Thus, for a partition $\lambda$, we say that its dual $\lambda'$ is even when we have  $\lambda_1=\lambda_2, \lambda_3=\lambda_4, $ etc. We denote by $\vert \lambda \vert $ the number of boxes in the diagram corresponding to $\lambda$. 

The probabilities in the Pfaffian Schur process -- and the usual Schur process as well --  are expressed in terms of skew Schur symmetric functions $ s_{\lambda/\mu}$ indexed by pairs of partitions $\mu,\lambda$. We use the convention that $s_{\lambda/\mu}=0 $ if $\mu\not\subset \lambda$. We refer to \cite[Section 2]{borodin2012lectures} for a definition of (skew) Schur functions and  background on symmetric functions. We record later in Section \ref{sec:identities} the few properties of Schur functions that we will use.

A specialization $\rho$ of the algebra $\mathrm{Sym}$ of symmetric functions is an algebra morphism from $\mathrm{Sym}$ to $\C$. For example, the evaluation of a symmetric function into one fixed variable $\alpha\in \C$ defines such a morphism. We denote $f(\rho)$ the application of the specialization $\rho$ to $f\in \mathrm{Sym}$. A specialization can be defined by its values on a basis of $\mathrm{Sym}$ and we generally choose the power sum symmetric functions
$ p_n(x_1, x_2, \dots) = \sum x_i^n .$
For two specializations $\rho_1$ and $\rho_2$, their union, denoted $(\rho_1, \rho_2)$, is defined by
$$ p_n(\rho_1, \rho_2) = p_n(\rho_1)+ p_n(\rho_2) \text{ for all }n\geqslant 1. $$
More generally, the specialization $(\rho_1, \dots, \rho_n)$ is the union of specializations $\rho_1, \dots, \rho_n$. When $\rho_1$ and $\rho_2$ are  specializations corresponding to the evaluation into sets of variables, then $(\rho_1, \rho_2)$ corresponds to the evaluation into the union of both sets of variables, hence the term union for this operation on specializations.

Schur nonnegative specializations of $\mathrm{Sym}$ are specializations taking values in $\R_{\geqslant 0}$ when  applied to skew Schur functions $s_{\lambda/\mu}$ for any partitions $\lambda$ and $\mu$.
Thoma's theorem (see \cite{kerov2003asymptotic} and references therein) provides a classification of such specializations. Let $\alpha = \big( \alpha_i\big)_{i\geqslant 1}$, $\beta = \big( \beta_i\big)_{i\geqslant 1}$ and $\gamma$ be non-negative numbers such that $\sum (\alpha_i+\beta_i ) <\infty$. Any Schur nonnegative specialization $\rho$ is determined by parameters $(\alpha, \beta, \gamma)$ through the formal series identity
$$ \sum_{n\geqslant 0} h_n(\rho) z^n  = \exp(\gamma z) \prod_{i\geqslant 1} \frac{1+\beta_i z}{1-\alpha_i z} =:H(z; \rho).$$
where the $h_n$ are complete homogeneous symmetric functions. The specialization $(0,0,\gamma)$ is called (pure-)Plancherel and will be denoted by $\Plancherel(\gamma)$. It can be obtained as a limit of specializations $(\alpha, 0, 0)$ where $\alpha$ is the sequence of length $M$, $(\gamma/M, \dots, \gamma/M)$, when $M$ is sent to infinity. From now on, we assume that all specializations are always Schur non-negative.
\subsubsection{Useful identities}
\label{sec:identities}
We collect some identities from \cite{macdonald1995symmetric} that were used in  \cite{borodin2005eynard} to define the Pfaffian Schur process. The sums below (as in \eqref{eq:skewCauchy}) are always taken over all partitions, unless otherwise restricted. Starting with one  already useful for the determinantal Schur process, we have the skew-Cauchy identity
\begin{equation}
\sum_{\nu} s_{\nu/\lambda}(\rho)s_{\nu/\mu}(\rho')  = H(\rho ; \rho') \sum_{\tau} s_{\lambda/\tau}(\rho')s_{\mu/\tau}(\rho),
\label{eq:skewCauchy}
\end{equation}
where,  if $\rho$ and $\rho'$ are the specializations into  sets of variables $\lbrace x_i\rbrace , \lbrace y_i\rbrace$,
$$  H(\rho ; \rho') = \prod_{i,j} \frac{1}{1-x_iy_j}.$$
We also use the branching rule for Schur functions
\begin{equation}
\sum_{\nu} s_{\lambda/\nu}(\rho) s_{\nu/\mu}(\rho') = s_{\lambda/\mu}(\rho, \rho').
\label{eq:branchingrule}
\end{equation}
%where the sum runs over partitions such that $\mu\subset\nu\subset\lambda$.

The following identity is crucial to go from the  determinantal Schur processes to its Pfaffian analog. It is a  variant of the skew-Littlewood identity,
\begin{equation}
\sum_{\nu' \text{even}} s_{\nu/\lambda}(\rho) = H^\circ(\rho) \sum_{\kappa' \text{even}} s_{\lambda/\kappa}(\rho),
\label{eq:skewLittlewoodvariant}
\end{equation}
where if $\rho$ is the specialization into a set of variables $\lbrace x_i\rbrace$,
$$  H^\circ(\rho) = \prod_{i<j} \frac{1}{1-x_ix_j}.$$
In particular,
$$  \sum_{\lambda' \text{even}} s_{\lambda}(\rho)  = H^o(\rho).$$

\subsubsection{Definition of the Pfaffian Schur process} 
\begin{definition}[\cite{borodin2005eynard}]
	The Pfaffian Schur process parametrized by two sequences of Schur nonnegative specializations $\rho_{0}^+,\rho_{1}^+ , \dots, \rho_{n-1}^+  $ and $ \rho_1^-, \dots, \rho_n^-$, denoted  $\PSP[\rho_{0}^+;\rho_{1}^+; \dots; \rho_{n-1}^+ \vert  \rho_1^-; \dots; \rho_n^-]$, is a probability measure on sequences of integer partitions
	$$ \varnothing \subset \lambda^{(1)} \supset \mu^{(1)} \subset \lambda^{(2)} \supset \mu^{(2)} \dots \mu^{(n-1)} \subset \lambda^{(n)} \supset \varnothing. $$
	Under this measure, the probability of the sequence  $ \bar \lambda = (\lambda^{(1)}, \dots, \lambda^{(n)})$, $\bar\mu  = (\mu^{(1)}, \dots, \mu^{(n)})  $ is given by
	$$ \PSP[\rho_{0}^+;\rho_{1}^+; \dots; \rho_{n-1}^+ \vert  \rho_1^-; \dots; \rho_n^-]\big(\bar\lambda, \bar\mu\big) = \frac{\mathcal{V} \big(\bar\lambda, \bar\mu\big)}{Z^{\circ}(\rho)},$$
	where $Z^{\circ}(\rho)$ is a renormalization constant depending on the choice of specializations and
	the weight $\mathcal{V} (\bar\lambda, \bar\mu)$ is given by
	\begin{equation}
	\mathcal{V} \big(\bar \lambda, \bar \mu\big)  = \tau_{\lambda^{(1)}}(\rho_{0}^+)s_{\lambda^{(1)}/\mu^{(1)}}(\rho_1^-)s_{\lambda^{(2)}/\mu^{(1)}}(\rho_1^+) \dots s_{\lambda^{(n)}/\mu^{(n-1)}}(\rho_{n-1}^+)s_{\lambda^{(n)}}(\rho_n^-),
	\label{eq:weightPSP}
	\end{equation}
	where
	$$ \tau_{\lambda} = \sum_{\kappa' \text{even}} s_{\lambda/\kappa}.$$
	%and $\rho_{\circ}^+, \dots \rho_{n-1}^+, \rho_1^-, \dots \rho_n^-$ are two sequences of Schur-non-negative specializations.
	\label{def:PSP}
\end{definition}
We may encode the choice of specializations by the diagram shown in Figure \ref{fig:diagram}.
\begin{figure}
	\begin{center}
		\begin{tikzpicture}[scale=1.4]
		\usetikzlibrary{arrows}
		\usetikzlibrary{shapes}
		\usetikzlibrary{shapes.multipart}
		%\tikzstyle{topartition}=[->,>=stealth']
		\tikzstyle{topartition}=[thick]
		\node (emptyleft) at (0,0) {$\varnothing$};
		\node (lambda1) at (1,1) {$\lambda^{(1)}$};
		\node (mu1) at (2,0) {$\mu^{(1)}$};
		\node (lambda2) at (3,1) {$\lambda^{(2)}$};
		\node (mu2) at (4,0) {};
		\node (dots) at (5,0.1) {$\dots$};
		\node (muend) at (6,0) {$\mu^{(n-1)}$};
		\node (lambdaend) at (7,1) {$\lambda^{(n)} $};
		\node (emptyright) at (8,0) {$ \varnothing$};
		\draw[topartition] (emptyleft) -- (lambda1) node[midway, anchor=south east]{{\footnotesize  $\rho_{0}^+$ }};
		\draw[topartition] (lambda1) -- (mu1) node[midway, anchor=south west]{{\footnotesize  $\rho_1^- $}};
		\draw[topartition] (mu1) -- (lambda2) node[midway, anchor=south east]{{\footnotesize  $\rho_1^+$ }};
		\draw[topartition] (lambda2) -- (mu2);
		\draw[topartition] (muend) -- (lambdaend) node[midway, anchor=south east]{{\footnotesize  $\rho_{n-1}^+$ }};
		\draw[topartition] (lambdaend) -- (emptyright) node[midway, anchor=south west]{{\footnotesize  $\rho_n^- $}};
		\end{tikzpicture}
	\end{center}
	\caption{Diagram corresponding to The Pfaffian Schur process $\PSP[\rho_{0}^+;\rho_{1}^+; \dots; \rho_{n-1}^+ \vert  \rho_1^-; \dots; \rho_n^-]$. The ascending and descending links represent inclusions of partitions.}
	\label{fig:diagram}
\end{figure}
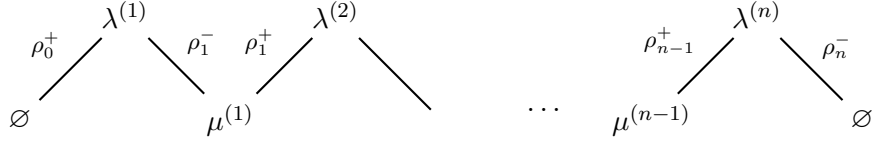
The normalization constant $Z^{\circ}(\rho)$ for the weights $\mathcal{V} (\bar\lambda, \bar\mu) $ is 
$$Z^{\circ} (\rho) := \sum_{ \varnothing \subset \lambda^{(1)} \supset \mu^{(1)} \subset \lambda^{(2)} \supset \mu^{(2)} \dots \mu^{(n-1)} \subset \lambda^{(n)} \supset \varnothing} \mathcal{V} (\bar\lambda, \bar\mu), $$
and can be  computed  \cite{borodin2005eynard} to be 
$$  Z^{\circ} (\rho) = H^{\circ}(\rho^-_1, \dots, \rho^-_n) \prod_{0\leqslant i<j\leqslant n} H(\rho_i^+ ; \rho_j^-).$$

\begin{definition}
	The Pfaffian Schur measure $\PSM[\rho\vert \rho']$ is a probability measure on a single partition $\lambda$ such that
	$$\PSM[\rho\vert\rho']\big(\lambda\big)  = \frac{1}{H^o(\rho') H(\rho; \rho')} \tau_{\lambda}(\rho) s_{\lambda}(\rho'),$$
	\label{def:PSM}
\end{definition}
\begin{lemma}
	The law of $\lambda^{(k)}$ under the Pfaffian Schur process \\
	$\PSP[\rho_{0}^+;\rho_{1}^+; \dots; \rho_{n-1}^+ \vert  \rho_1^-; \dots; \rho_n^-]$
	is the Pfaffian Schur measure\\
	$\PSM[\rho_{0}^+, \dots \rho_{k-1}^+ \vert \rho_{k}^-, \dots, \rho_n^-]. $
	\label{lem:marginalsPSP}
\end{lemma}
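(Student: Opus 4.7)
The strategy is a direct evaluation: sum the weight~\eqref{eq:weightPSP} over every partition in the chain except $\lambda^{(k)}$ using the three identities of Section~\ref{sec:identities}. It is natural to split the computation into a ``right'' sub-chain $\lambda^{(k)}\supset\mu^{(k)}\subset\lambda^{(k+1)}\supset\cdots\subset\lambda^{(n)}\supset\varnothing$ and a ``left'' sub-chain running from the virtual empty partition (and its $\tau$ twist) up to $\lambda^{(k)}$, and to collapse each separately from the outside in.

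For the right half I would alternate two elementary moves. First, sum $\lambda^{(j)}$ (for $j=n,n-1,\dots,k+1$) against $s_{\lambda^{(j)}/\mu^{(j-1)}}(\rho_{j-1}^+)$ and the Schur function of $\lambda^{(j)}$ already accumulated on the ``open'' side: the skew-Cauchy identity~\eqref{eq:skewCauchy} (applied with $\mu=\varnothing$ at the first step) emits a factor of the form $H(\rho_{j-1}^+;\cdot)$ and leaves a Schur function of $\mu^{(j-1)}$. Second, sum $\mu^{(j-1)}$ by the branching rule~\eqref{eq:branchingrule}, which merges the adjacent descending specialisations into a single Schur function of $\lambda^{(j-1)}$. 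Iterating this ``skew-Cauchy, then branch'' pair $(n-k)$ times contracts the right sub-chain to
\[
s_{\lambda^{(k)}}\bigl(\rho_k^-,\rho_{k+1}^-,\dots,\rho_n^-\bigr)\,\prod_{k\le i<j\le n}H(\rho_i^+;\rho_j^-).
\]
The left half is handled analogously, after first expanding $\tau_{\lambda^{(1)}}(\rho_0^+)=\sum_{\kappa'\,\text{even}}s_{\lambda^{(1)}/\kappa}(\rho_0^+)$ so that the initial $\tau$ behaves as an additional ascending edge ending at a half-closed partition $\kappa$. The same alternation now peels the chain from the left, and the final residual sum over $\kappa$ with $\kappa'$ even is discharged by the skew-Littlewood variant~\eqref{eq:skewLittlewoodvariant}; this produces an $H^\circ$ factor and rebuilds a $\tau$ twist anchored at $\lambda^{(k)}$, assembling the specialisations $(\rho_0^+,\dots,\rho_{k-1}^+)$.

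Combining the two contractions and dividing by the explicit normalisation $Z^\circ(\rho)=H^\circ(\rho_1^-,\dots,\rho_n^-)\prod_{0\le i<j\le n}H(\rho_i^+;\rho_j^-)$ recorded in Section~\ref{sec:defPSP}, the Cauchy and $H^\circ$ factors produced by the reductions cancel against the corresponding factors in $Z^\circ$, and what remains is exactly the density of the Pfaffian Schur measure $\PSM[\rho_0^+,\dots,\rho_{k-1}^+\mid\rho_k^-,\dots,\rho_n^-]$ evaluated at $\lambda^{(k)}$. The only real difficulty is the bookkeeping of these factors, and the cleanest way to tame it is to organise the whole argument as an induction on $n$: peeling $\lambda^{(n)}$ and $\mu^{(n-1)}$ reduces an $n$-step Pfaffian Schur process to an $(n-1)$-step one with the two trailing descending specialisations $(\rho_{n-1}^-,\rho_n^-)$ merged and an extra prefactor $H(\rho_{n-1}^+;\rho_n^-)$, whereupon the induction hypothesis handles $k\le n-1$; the boundary case $k=n$ is recovered by the symmetric reduction from the left end, which is where the skew-Littlewood step plays its essential role.
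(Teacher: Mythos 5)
Your overall route is the same as the paper's own (very terse) proof: sum out every partition except $\lambda^{(k)}$ using \eqref{eq:skewCauchy}, \eqref{eq:branchingrule} and \eqref{eq:skewLittlewoodvariant}, and your contraction of the right sub-chain is correct. The gap is in the left sub-chain, whose collapse you assert rather than compute. The elementary moves there are
\[
\sum_{\lambda}\tau_{\lambda}(\rho)\,s_{\lambda/\mu}(\rho')=H(\rho;\rho')\,H^{\circ}(\rho')\,\tau_{\mu}(\rho,\rho'),
\qquad
\sum_{\mu}\tau_{\mu}(\sigma)\,s_{\lambda/\mu}(\sigma')=\tau_{\lambda}(\sigma,\sigma'),
\]
both consequences of the three identities you cite. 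The crucial feature of the first identity is that the descending specialization $\rho'$ is \emph{not} discharged into a scalar prefactor, as it would be in the determinantal Schur process (where $\sum_{\lambda}s_{\lambda}(\rho)s_{\lambda/\mu}(\rho')=H(\rho;\rho')s_{\mu}(\rho)$); it is absorbed into the $\tau$-twist. Iterating the left peel therefore produces, up to explicit constants, $\tau_{\lambda^{(k)}}(\rho_0^+,\rho_1^-,\rho_1^+,\dots,\rho_{k-1}^-,\rho_{k-1}^+)$ and not $\tau_{\lambda^{(k)}}(\rho_0^+,\dots,\rho_{k-1}^+)$. So the sentence in your proof claiming that the skew-Littlewood step ``assembles the specialisations $(\rho_0^+,\dots,\rho_{k-1}^+)$'' is exactly the step that fails, and the advertised cancellation against $Z^{\circ}$ cannot go through either: $Z^{\circ}$ contains the cross factors $H(\rho_i^-;\rho_j^-)$ with $i<k\leqslant j$ hidden inside $H^{\circ}(\rho_1^-,\dots,\rho_n^-)$, and these are matched only if the $\rho_j^-$'s are carried into the $\tau$-slot.

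To see concretely that the claimed collapse is impossible, take $n=k=2$, $\rho_0^+=\rho_1^+=0$, $\rho_1^-=\lbrace b\rbrace$, $\rho_2^-=\lbrace d\rbrace$: the configuration $\lambda^{(1)}=(1,1)$, $\mu^{(1)}=\lambda^{(2)}=(1)$ has weight $bd>0$, whereas $\PSM[\rho_0^+,\rho_1^+\,\vert\,\rho_2^-]=\PSM[0\,\vert\,d]$ is supported on partitions with even dual and gives $\lambda^{(2)}=(1)$ probability zero; the honest computation instead yields $\PSM[0,b,0\,\vert\,d]=\PSM[b\,\vert\,d]$, under which $(1)$ has probability proportional to $bd$. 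In other words, the summation argument proves the lemma with the first slot read as $(\rho_0^+,\rho_1^-,\rho_1^+,\dots,\rho_{k-1}^-,\rho_{k-1}^+)$; for $k\geqslant 2$ this genuinely differs from the statement as printed (the paper's one-sentence proof elides the same point), while for $k=1$ the two coincide. The corrected form is also the one consistent with Proposition \ref{prop:SchurLPPcorrelations}: under those specializations the union $(\rho_0^+,\rho_1^-,\rho_1^+,\dots,\rho_{k-1}^-,\rho_{k-1}^+)$ equals $(c,a_{j_k+1},\dots,a_{i_k})$, i.e.\ what one gets by applying that proposition to the single point $(i_k,j_k)$, as it must be since the dynamics never modify the partition attached to a vertex once the path has passed it, so the law at $(i_k,j_k)$ cannot depend on the rest of the zig-zag path.
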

\begin{proof}
	Summing over all partitions except $\lambda^{(k)}$ in \eqref{eq:weightPSP} and using identities \eqref{eq:skewCauchy}, \eqref{eq:branchingrule}  and \eqref{eq:skewLittlewoodvariant} yields the result.
\end{proof}
The following property of the Pfaffian Schur measure will be useful to interpret the results in Section \ref{sec:mainasymptoticresults}.
\begin{proposition}[Corollary 7.6 \cite{baik2001algebraic}]
	Let $c$ be a positive real and $\rho$ a Schur nonnegative specialization.
	If $\mu$ is distributed according to $\PSM[c\vert \rho]$ and $\lambda$ is distributed according to $\PSM[0 \vert c,  \rho]$, then we have the equality in law
	$ \lambda_1 \overset{(d)}{=} \mu_1.$
	\label{prop:diagorow}
\end{proposition}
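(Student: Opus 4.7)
The plan is to reduce the equality in law to a symmetric-function identity and verify it by direct calculation, exploiting the fact that the specialization $c$ consists of a single variable. From Definition \ref{def:PSM}, the two measures have weights $\tau_\mu(c)s_\mu(\rho)$ and $\mathds{1}_{\lambda'\text{ even}}s_\lambda(c,\rho)$, respectively. Since $s_{\mu/\kappa}(c)$ vanishes unless $\mu/\kappa$ is a horizontal strip, in which case it equals $c^{|\mu|-|\kappa|}$, these weights simplify to
\begin{equation*}
\tau_\mu(c)\,s_\mu(\rho)=s_\mu(\rho)\sum_{\kappa'\text{ even},\,\kappa\prec\mu}c^{|\mu|-|\kappa|},\qquad
s_\lambda(c,\rho)=\sum_{\nu\prec\lambda}c^{|\lambda|-|\nu|}s_\nu(\rho),
\end{equation*}
the second equality following from the branching rule \eqref{eq:branchingrule}.

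The two normalizations coincide: $\PSM[c\vert\rho]$ has normalization $H^\circ(\rho)H(c;\rho)$, while $\PSM[0\vert c,\rho]$ has normalization $H^\circ(c,\rho)=H^\circ(c)H^\circ(\rho)H(c;\rho)=H^\circ(\rho)H(c;\rho)$, using $H^\circ(c)=1$ since the defining product over pairs of indices $i<j$ is empty for a single variable. Cancelling this common factor and identifying the partition $\nu$ that indexes $s_\nu(\rho)$ on each side reduces $\PP(\mu_1\le L)=\PP(\lambda_1\le L)$ to the combinatorial identity
\begin{equation*}
\mathds{1}_{\nu_1\le L}\sum_{\kappa'\text{ even},\,\kappa\prec\nu}c^{|\nu|-|\kappa|}
\;=\;\sum_{\lambda'\text{ even},\,\lambda_1\le L,\,\nu\prec\lambda}c^{|\lambda|-|\nu|},
\end{equation*}
to be verified for every partition $\nu$.

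The key observation is that the even-conjugate constraint combined with the horizontal-strip interlacing rigidifies both sums, so each collapses to a single term. On the right-hand side, $\lambda_{2k-1}=\lambda_{2k}$ together with $\lambda_{2k-1}\ge\nu_{2k-1}\ge\lambda_{2k}$ forces $\lambda_{2k-1}=\lambda_{2k}=\nu_{2k-1}$, so $\lambda_1=\nu_1$, making $\lambda_1\le L$ equivalent to $\nu_1\le L$, and $|\lambda|-|\nu|=\sum_k(\nu_{2k-1}-\nu_{2k})$. Symmetrically, on the left-hand side, $\kappa_{2k-1}=\kappa_{2k}$ combined with $\nu_{2k-1}\ge\kappa_{2k-1}\ge\nu_{2k}\ge\kappa_{2k}\ge\nu_{2k+1}$ forces $\kappa_{2k-1}=\kappa_{2k}=\nu_{2k}$, yielding $|\nu|-|\kappa|=\sum_k(\nu_{2k-1}-\nu_{2k})$. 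Both sides therefore reduce to $c^{\sum_k(\nu_{2k-1}-\nu_{2k})}$ when $\nu_1\le L$, and vanish otherwise.

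The only subtle point I anticipate is bookkeeping around trailing zeros of $\nu$ and the truncation by $L$, but the rigidity imposed by the parity condition makes both sums collapse to unique terms, so this is essentially automatic. This recovers Corollary 7.6 of \cite{baik2001algebraic} within the present Pfaffian Schur framework, and provides the key structural input that the diagonal-to-row transfer in the Pfaffian Schur measure (and hence the phase transition at $\alpha=1/2$ in Theorem \ref{theo:LPPdiagointro}) is already visible at the level of the symmetric-function identities.
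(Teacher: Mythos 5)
Your proof is correct and follows essentially the same route as the paper's: expand both weights via the branching rule, use that the even-dual condition together with single-variable interlacing rigidifies the sums so each collapses to a unique term (your collapses $\lambda_{2k-1}=\lambda_{2k}=\nu_{2k-1}$ and $\kappa_{2k-1}=\kappa_{2k}=\nu_{2k}$ are exactly the paper's observations that $\lambda'$ even and $\mu\prec\lambda$ force $\lambda_{2i-1}=\mu_{2i-1}=\lambda_{2i}$, and that $\tau_{\mu}(c)=c^{\mu_1-\mu_2+\mu_3-\cdots}$), and compare coefficients of $s_\nu(\rho)$. The only addition is your explicit verification that $H^{\circ}(c,\rho)=H^{\circ}(\rho)H(c;\rho)$, which the paper states without proof.
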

\begin{proof}
	By definition, we have that
	\begin{equation}
	\PP(\lambda_1\leqslant x) = \sum_{\lambda: \lambda_1\leqslant x} \frac{\tau_{\lambda}(0) s_{\lambda}(c, \rho)}{H^o(c, \rho)}, \ \ \PP(\mu_1\leqslant x) = \sum_{\mu: \mu_1\leqslant x} \frac{\tau_{\mu}(c) s_{\mu}(\rho)}{H^o(\rho)H(c; \rho)}.
	\label{eq:deflambda}
	\end{equation}
	The normalization constants $H^o(c, \rho)$ and $H^o(\rho)H(c; \rho)$ are equal. Since $\tau_{\lambda}(0)=\mathds{1}_{\lambda' \text{ even}}$, we have that
	\begin{align*}
	L.H.S. \eqref{eq:deflambda} &= \frac{1}{H^o(\rho)H(c; \rho)} \sum_{\underset{\lambda'\text{ even}}{\lambda: \lambda_1\leqslant x}} s_{\lambda}(c, \rho)\\
	&= \frac{1}{H^o(\rho)H(c; \rho)} \sum_{\underset{\lambda'\text{ even}}{\lambda: \lambda_1\leqslant x}} \sum_{\mu} s_{\mu}(\rho)s_{\lambda/\mu}(c)  \text{ (Branching rule)}\\
	&=\frac{1}{H^o(\rho)H(c; \rho)} \sum_{\mu: \mu_1\leqslant x} s_{\mu}(\rho) c^{\mu_1 - \mu_2+ \mu_3-\mu_4+ \dots}\\
	&=\frac{1}{H^o(\rho)H(c; \rho)} \sum_{\mu: \mu_1\leqslant x} s_{\mu}(\rho) \tau_{\mu}(c)\\
	&=R.H.S. \eqref{eq:deflambda}.
	\end{align*}
	Note that in the third equality, we use that if $\lambda'$ is even and $\mu\prec\lambda$, then we have for all $i\geqslant 1$, $\lambda_{2i-1}=\mu_{2i-1}=\lambda_{2i}$. Thus, the fact that $c$ is a single variable specialization is necessary. The fourth equality comes from the fact that for a single variable specialization $c$,
	\begin{equation}
	\tau_{\mu}(c) = \sum_{\kappa' \text{ even}} s_{\lambda/\kappa}(c)  = c^{\mu_1 - \mu_2+ \mu_3-\mu_4+ \dots}. \label{eq:calcultau}
	\end{equation}
	Indeed, there is only one partition $\kappa$ which gives a nonzero contribution in the sum in \eqref{eq:calcultau}.
\end{proof}
\begin{remark}
	The proof of Prop. \ref{prop:diagorow} actually shows that
	$(\lambda_1, \lambda_3, \lambda_5, \dots)$ has the same law as  $ (\mu_1, \mu_3, \mu_5, \dots),$
	but we will use only the first coordinates in applications.
\end{remark}

\subsection{Pfaffian Schur processes indexed by zig-zag paths}

\label{sec:zigzagformulation}
In Definition \ref{def:PSP}, Pfaffian Schur processes were determined by two sequences of specializations $\rho_{\circ}^+, \dots, \rho_{n-1}^+$ and $\rho_1^-, \dots, \rho_{n}^-$.
We describe here an equivalent  way of representing Pfaffian Schur processes, indexing them  by certain zig-zag  paths in the first quadrant of $\Z^2$ (as depicted in Figure \ref{fig:downrightpath}) where each edge of the path is labelled by some Schur nonnegative specialization. This representation is convenient  for defining dynamics on Pfaffian Schur processes.

More precisely, we consider oriented paths starting on the horizontal axis at $(+\infty,0)$,  proceeding by unit steps horizontally to the left or vertically to the top, hitting the diagonal at some point $(n,n)$ for some $n\in \Z_{>0}$ and then taking one final edge of length $n\sqrt{2}$ to the origin $(0,0)$ (see Figure \ref{fig:downrightpath}). Let us denote $\Omega$ the set of all such paths. For $\gamma\in \Omega$, let us denote $E(\gamma)$ its set of edges and $V(\gamma)$ its set of vertices. We denote $E^{\uparrow}(\gamma)$ the set of vertical edges and $E^{\leftarrow}(\gamma)$ the set of horizontal edges. Each edge $e$ is labelled by a Schur nonnegative specialization $\rho_e$.  We label by a specialization $\rho_{\circ}$ the edge joining the point $(n,n)$ to $(0,0)$. Each vertex $v\in \gamma$ indexes a  partition $\lambda^{v}$.
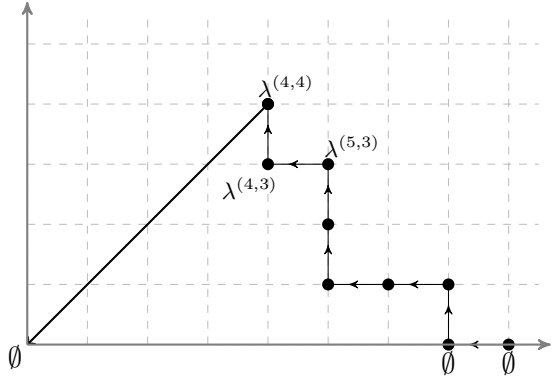
\begin{figure}
	\begin{center}
		\begin{tikzpicture}[scale=0.8]
		\tikzstyle{fleche}=[>=stealth', postaction={decorate}]
		
		\begin{scope}[decoration={
			markings,
			mark=at position 0.5 with {\arrow{<}}}]
		\draw[dashed, gray!50] (0,0) grid(8.5,5.5);
		\draw[thick, <-] (0,0) -- (4,4);
		\draw[fleche] (4,4) -- (4,3);
		\draw[fleche] (4,3)-- (5,3);
		\draw[fleche] (5,3) -- (5,2);
		\draw[fleche] (5,2) -- (5,1);
		\draw[fleche] (5,1) -- (6,1);
		\draw[fleche] (6,1) -- (7,1);
		\draw[fleche] (7,1) -- (7,0);
		\draw[fleche]  (7,0) -- (8,0);
		\fill (4,4) circle(0.1);
		\fill (4,4) circle(0.1);
		\fill (4,3) circle(0.1);
		\fill (5,3) circle(0.1);
		\fill (5,2) circle(0.1);
		\fill (5,1) circle(0.1);
		\fill (6,1) circle(0.1);
		\fill (7,1) circle(0.1);
		\fill (7,0) circle(0.1);
		\fill (8,0) circle(0.1);
		\draw (-0.2,-0.2) node{$\emptyset$};
		\draw (7,-0.3) node{$\emptyset$};
		\draw (8,-0.3) node{$\emptyset$};
		\draw (4.3,4.3) node{{\footnotesize $\lambda^{(4,4)}$}};
		\draw (3.7,2.6) node{{\footnotesize $\lambda^{(4,3)}$}};
		\draw (5.4,3.3) node{{\footnotesize $\lambda^{(5,3)}$}};
		\end{scope}
		\draw[->, >=stealth', thick, gray] (0,0) -- (0,5.7) ;
		\draw[->, >=stealth', thick, gray] (0,0) -- (8.7, 0) ;
		\end{tikzpicture}
	\end{center}
	\caption{A zig-zag path indexing the Pfaffian Schur process.}
	\label{fig:downrightpath}
\end{figure}

For $\gamma\in \Omega$,
the Pfaffian Schur process indexed by $\gamma$ and the specializations on $E(\gamma)$  is a probability measure on the sequence of partitions $\uplambda:=\big( \lambda^{v}\big)_{v\in V(\gamma)}$ where the weight of $\lambda$ is given by
$$ \mathcal{V}(\uplambda) = \tau_{\lambda^{(n,n)}}(\rho_{\circ})\prod_{e\in E^{\uparrow}(\gamma)\cup E^{\leftarrow}(\gamma) }  \mathcal{W}(e) , $$
where for an edge $e_1 \in E^{\leftarrow}(\pi)$ and for an edge $e_2 \in E^{\uparrow}(\pi)$, 
$$ \mathcal{W}\Big( e_1=\tikz[scale=0.4, baseline=-3pt]{\draw (0,0) node{$v$};\draw[thick, <-] (0.5, 0) -- (1.5,0);\draw (2,0) node{$u$}; } \Big) = s_{\lambda^{u}/\lambda^{v}}(\rho_{e_1}) \ \  \text{and} \  \ \mathcal{W}\bigg(e_2=\tikz[scale=0.4, baseline=7pt]{\draw (0,0) node{$u$};\draw[thick, ->] (0, 0.5) -- (0,1.5);\draw (0,2) node{$v$}; } \bigg) = s_{\lambda^{v}/\lambda^{u}}(\rho_{e_2}).$$
We adopt the convention that for all vertices $v$ on the horizontal axis, $\lambda^v$ is the empty partition, which in particular implies that specializations are empty on the horizontal axis. We also recall that  $s_{\lambda/\mu}\equiv 0$ if $\mu\not\subset \lambda$. Using the above formalism, the normalisation constant associated to the weights $\mathcal{V}(\uplambda)$ is given  by
$$ Z^{\circ}(\gamma) = \prod_{e\in E^{\uparrow}(\gamma)} H^{\circ}(\rho_e) H(\rho_{\circ} ; \rho_e) \prod_{\underset{e>e'}{e\in E^{\uparrow}(\gamma),\  e'\in E^{\leftarrow}(\gamma)}} H(\rho_e ; \rho_{e'}),$$
where $e>e'$ means that the edge $e$ occurs after the edge $e'$ in the oriented path $\gamma$.

\begin{remark}
	The above description is equivalent to the one given in Definition \ref{def:PSP}. As we have defined in Section \ref{sec:defPSP}, the Pfaffian Schur process is a measure on sequences
	$$ \varnothing \subset \lambda^{(1)} \supset \mu^{(1)} \subset \lambda^{(2)} \supset \mu^{(2)} \dots \mu^{(n-1)} \subset \lambda^{(n)} \supset \varnothing.$$
	However, by using empty specializations, one can force that $\lambda^{(i)} = \mu^{(i)}$ or $ \mu^{(i)}= \lambda^{(i+1)}$. Hence, we can consider sequences where the order of inclusions is any word on the alphabet $\lbrace \subset, \supset\rbrace$. By matching the inclusions $ \supset$ with vertical edges in the zig-zag path formulation and the inclusions  $\subset $ with horizontal edges, we see that both formulations are equivalent (zig-zag paths are in bijection with finite words over a two-letter alphabet). See for instance Figure \ref{fig:choicespecializations} for a particular example.  This zig-zag construction also applies to the usual Schur process \cite{okounkov2003correlation}. In that case, the processes are indexed by paths from a point on the horizontal axis to a point on the vertical axis.
\end{remark}

\subsection{Dynamics on Pfaffian Schur processes}
\label{sec:dynamics}
Here we give a sequential construction of Pfaffian Schur processes using the formalism of Section
\ref{sec:zigzagformulation} by defining Markov chains preserving the Pfaffian Schur structure. More precisely, we will define Markov dynamics such that the pushforward of the Pfaffian Schur process indexed by a path $\gamma\in \Omega$ and a set of specializations on $E(\gamma)$ is the Pfaffian Shur process indexed by a path $\gamma'$ and the same set of specializations, where $\gamma'$ is obtained from $\gamma$ by adding one box to the shape it defines, or half a box when it grows along the diagonal. These dynamics are a special case of those developed in \cite{barraquand2016pfaffian} with Alexei Borodin in the Macdonald case \cite{borodin2014macdonald}. They are an adaptation to the Pfaffian setting of Markov dynamics preserving the (determinantal) Schur process introduced in \cite[Section 2]{borodin2008anisotropic} and studied in a more general setting in \cite{borodin2011schur} (see the review  \cite[Section 6.4]{borodin2012lectures}).

Let us explain precisely how to obtain a path
$\gamma\in \Omega$ as the outcome of a sequence of elementary moves (see Figure \ref{fig:growingpath}).
$(i)$ We start with the path with only one vertex $(0,0)$.
$(ii)$ We may first make the path grow along the diagonal and get $ (1, 0) \to (1,1) \to (0,0)$.
$(iii)$ We can always freely move to the right the starting point of the path. For instance, assume that we get the new path $(2,0) \to (1, 0) \to (1,1) \to (0,0)$.
$(iv)$ Then, we may make the path grow by one box and get $ (2, 0) \to (2,1) \to (1,1) \to (0,0)$.
$(v)$ We will continue iteratively adding boxes to the path or growing along the diagonal, until we arrive at $\gamma$.

In a parallel way, we construct a sequence of Pfaffian Schur processes for each path described above.
$(i)$ We start with an empty Pfaffian Schur process $
\lambda^{(0,0)} = \varnothing$.
$(ii)$ The application of Markov dynamics -- that we shall define momentarily --
% cf \eqref{eq:defucorner} \eqref{eq:defuangle}
corresponding to the growth of the path along the diagonal will  define a Pfaffian Schur process on $\varnothing = \lambda^{(1,0)} \subset \lambda^{(1,1)} \supset \lambda^{(0,0)}= \varnothing$.
$(iii)$ By convention, the partitions indexed by vertices on the horizontal axis are empty under the Pfaffian Schur process. Hence, sliding the starting point of the path to the right has no effect in terms of Pfaffian Schur process.
$(iv)$ Then, the application of Markov dynamics corresponding to the growth of the path by one box at the corner $(1,0)$ will define a Pfaffian Schur process $\varnothing = \lambda^{(2,0)} \subset \lambda^{(2,1)} \supset \lambda^{(1,1)} \supset \lambda^{(0,0)}=\varnothing$.
$(v)$ We continue iteratively by applying Markov operators that update one partition in the Pfaffian Schur process.
\begin{figure}
	\begin{center}
		\begin{tikzpicture}[scale=0.85]
		\tikzstyle{fleche}=[>=stealth', postaction={decorate}]
		\tikzstyle{axis}=[->, >=stealth', thick, gray]
		\draw[axis] (0,0) -- (0,1.5) ;
		\draw[axis] (0,0) -- (1.5, 0) ;
		\begin{scope}[decoration={
			markings,
			mark=at position 0.5 with {\arrow{<}}}]
		\draw[dashed, gray] (0,0) grid(1.5, 1.5);
		\fill (0,0) circle(0.1);
		\draw (-0.2,-0.3) node{$\lambda^{(0,0)} = \emptyset$};
		\draw (1, -0.8) node{$(i)$};
		\end{scope}
		
		\begin{scope}[xshift=2cm]
		\draw[axis] (0,0) -- (0,2.5) ;
		\draw[axis] (0,0) -- (2.5, 0) ;
		\begin{scope}[decoration={
			markings,
			mark=at position 0.5 with {\arrow{<}}}]
		\draw[dashed, gray] (0,0) grid(2.5, 2.5);
		\draw[thick, <-] (0,0) -- (1,1);
		\draw[fleche] (1,1) -- (1,0);
		\fill (1,1) circle(0.1);
		\fill (1,0) circle(0.1);
		\draw (-0.2,-0.2) node{$\emptyset$};
		\draw (1,-0.3) node{$\emptyset$};
		\draw (1.3,1.3) node{{\footnotesize $\lambda^{(1,1)}$}};
		\draw (1, -0.8) node{$(ii)$};
		\end{scope}
		\end{scope}
		
		\begin{scope}[xshift=5cm]
		\draw[axis] (0,0) -- (0,2.5) ;
		\draw[axis] (0,0) -- (2.5, 0) ;
		\begin{scope}[decoration={
			markings,
			mark=at position 0.5 with {\arrow{<}}}]
		\draw[dashed, gray] (0,0) grid(2.5, 2.5);
		\draw[thick, <-] (0,0) -- (1,1);
		\draw[fleche] (1,1) -- (1,0);
		\draw[fleche] (1,0) -- (2,0);
		\fill (1,1) circle(0.1);
		\fill (1,0) circle(0.1);
		\fill (2,0) circle(0.1);
		\draw (-0.2,-0.2) node{$\emptyset$};
		\draw (1,-0.3) node{$\emptyset$};
		\draw (2,-0.3) node{$\emptyset$};
		\draw (1.3,1.3) node{{\footnotesize $\lambda^{(1,1)}$}};
		\draw (1, -0.8) node{$(iii)$};
		\end{scope}
		\end{scope}

		\begin{scope}[xshift=8cm]
		\draw[axis] (0,0) -- (0,2.5) ;
		\draw[axis] (0,0) -- (2.5, 0) ;
		\begin{scope}[decoration={
			markings,
			mark=at position 0.5 with {\arrow{<}}}]
		\draw[dashed, gray] (0,0) grid(2.5, 2.5);
		\draw[thick, <-] (0,0) -- (1,1);
		\draw[fleche] (1,1) -- (2,1);
		\draw[fleche] (2,1) -- (2,0);
		\fill (1,1) circle(0.1);
		\fill (2,1) circle(0.1);
		\fill (2,0) circle(0.1);
		\draw (-0.2,-0.2) node{$\emptyset$};
		\draw (2,-0.3) node{$\emptyset$};
		\draw (1.3,1.3) node{{\footnotesize $\lambda^{(1,1)}$}};
		\draw (2.3,1.3) node{{\footnotesize $\lambda^{(2,1)}$}};
		%\draw (3.7,2.6) node{{\footnotesize $\lambda^{(4,3)}$}};
		%\draw (5.4,3.3) node{{\footnotesize $\lambda^{(5,3)}$}};
		\draw (1, -0.8) node{$(iv)$};
		\end{scope}
		\end{scope}
		
		\begin{scope}[xshift=11cm]
		\draw[axis] (0,0) -- (0,2.5) ;
		\draw[axis] (0,0) -- (2.5, 0) ;
		\begin{scope}[decoration={
			markings,
			mark=at position 0.5 with {\arrow{<}}}]
		\draw[dashed, gray] (0,0) grid(2.5, 2.5);
		\draw[thick, <-] (0,0) -- (2,2);
		\draw[fleche] (2,2) -- (2,1);
		\draw[fleche] (2,1) -- (2,0);
		\fill (2,2) circle(0.1);
		\fill (2,1) circle(0.1);
		\fill (2,0) circle(0.1);
		\draw (-0.2,-0.2) node{$\emptyset$};
		\draw (2,-0.3) node{$\emptyset$};
		\draw (2.3,2.3) node{{\footnotesize $\lambda^{(2,2)}$}};
		\draw (2.4,1.3) node{{\footnotesize $\lambda^{(2,1)}$}};
		\draw (1, -0.8) node{$(v)$};
		\end{scope}
		\end{scope}
		\end{tikzpicture}
	\end{center}
	\caption{Illustration of the first steps according to which the path grows.}
	\label{fig:growingpath}
\end{figure}
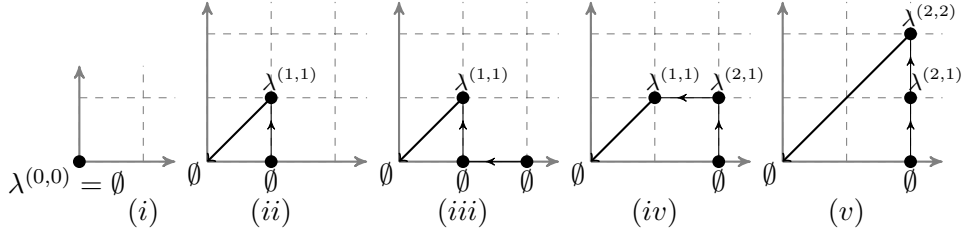

We now have to define the above-mentioned Markov dynamics, and explain how the specializations are chosen. We have seen that there are two distinct cases corresponding to the growth of the path by one box or the growth by a half box along the diagonal.

We define a transition operator $\mathcal{U}^{\llcorner}_{\rho_1, \rho_2}$ corresponding to the growth of the path by one box, as a probability distribution  $\mathcal{U}^{\llcorner}_{\rho_1, \rho_2}(\pi \vert \nu, \mu, \kappa)$ on partitions $\pi$, given partitions  $\nu, \mu, \kappa$ with $\mu\subset\nu, \kappa$. In terms of growing paths, this operator corresponds to adding one box in a corner formed by partitions $\mu\subset\nu, \kappa$, and it will update the partition $\mu $ to a partition $\pi$ containing $\nu$ and $\kappa$, in such a way  that the corner formed by partition $\kappa, \pi, \nu$ is the  marginal of a new Pfaffian Schur process. Pictorially, the action of the transition operator $\mathcal{U}^{\llcorner}_{\rho_1, \rho_2}$ can be represented by the following diagram.
\begin{center}
	\begin{tikzpicture}[scale=01]
	\node (v1) at (0,1) {$\nu$};
	\node (m) at (0,0) {$\mu$};
	\node (k1) at (1,0) {$\kappa$};
	\draw[thick, <-] (v1) -- (m) node[midway, anchor=east]{$\rho_1$};
	\draw[thick, <-] (m) -- (k1) node[midway, anchor=north]{$\rho_2$};
	
	\node[gray] (p1) at (1,1) {$?$};
	\draw[thick, gray, <-] (v1) -- (p1) node[midway, anchor=south]{$\rho_2$};
	\draw[thick, gray, <-] (p1) -- (k1) node[midway, anchor=west]{$\rho_1$};
	
	\draw[thick, ->, >=stealth'] (2,0.5) --(4,0.5) node[midway, anchor=south] {$\mathcal{U}^{\llcorner}_{\rho_1, \rho_2}$};
	
	%\node (points) at (4.5, 1.5) {$\ddots$};
	%\node (points) at (6.5, -0.3) {$\ddots$};
	\node (v2) at (5,1) {$\nu$};
	\node (p) at (6,1) {$\pi$};
	\node (k2) at (6,0) {$\kappa$};
	\draw[thick,<-] (v2) -- (p) node[midway, anchor=south]{$\rho_2$};
	\draw[thick, <-] (p) -- (k2) node[midway, anchor=west]{$\rho_1$};
	\node[gray] (m2) at (5,0) {$\mu$};
	\draw[thick, gray,<-] (v2) -- (m2) node[midway, anchor=east]{$\rho_1$};
	\draw[thick, gray, <-] (m2) -- (k2) node[midway, anchor=north]{$\rho_2$};

	\end{tikzpicture}
\end{center}
$\mathcal{U}^{\llcorner}_{\rho_1, \rho_2}$ updates the Pfaffian Schur process formed by partitions $\kappa \supset \mu \subset \nu$ on the left to the the Pfaffian Schur process formed by partitions $\kappa \subset \pi \supset \nu$ on the right. After the update, one can forget the information supported by the gray arrows on the right.
The specializations in the new Pfaffian Schur process are carried from the previous step by the transition operator as shown above.
\begin{lemma}
	Assume that  $\mathcal{U}^{\llcorner}_{\rho_1, \rho_2}$ satisfies
	\begin{equation}
	\sum_{\mu} s_{\kappa/\mu}(\rho_2) s_{\nu/\mu}(\rho_1)\mathcal{U}^{\llcorner}_{\rho_1, \rho_2}(\pi \vert \nu, \mu, \kappa) = \frac{s_{\pi/\kappa}(\rho_1)s_{\pi/\nu}(\rho_2)}{H(\rho_2 ; \rho_1)}.
	\label{eq:preserveschur}
	\end{equation}
	Then $\mathcal{U}^{\llcorner}_{\rho_1, \rho_2}$ preserves the Pfaffian Schur process measure in the following sense.
	Let $\gamma'\in \Omega$ contain the corner defined by the vertices
	$$ (i+1, j)\xrightarrow{\rho_2} (i,j)\xrightarrow{\rho_1} (i, j+1), $$
	where $\rho_1$ and $\rho_2$ are the specializations indexing edges of the corner.
	Let  $\gamma''\in \Omega$ contain the same set of vertex as $\gamma'$ except that the vertex $(i,j)$ is replaced by the vertex $ (i+1, j+1) $, and assume that the specializations are now chosen as
	$$  (i+1, j) \xrightarrow{\rho_1} (i+1,j+1) \xrightarrow{\rho_2} (i, j+1).$$
	Then $\mathcal{U}^{\llcorner}_{\rho_1, \rho_2}(\lambda^{(i+1, j+1)} \vert \lambda^{(i, j+1)}, \lambda^{(i,j)}, \lambda^{(i+1, j)}  )$ maps the Pfaffian Schur process indexed by $\gamma'$ and the set of specializations on $E(\gamma')$ to the Pfaffian Schur process indexed by $\gamma''$ and the same set of specializations.
\end{lemma}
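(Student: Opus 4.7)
The plan is to show the statement by direct manipulation of the weights $\mathcal{V}(\uplambda)$: apply the Markov kernel $\mathcal{U}^{\llcorner}_{\rho_1,\rho_2}$ on top of the $\gamma'$-weight, sum over the old partition $\mu=\lambda^{(i,j)}$ at the inner corner, and verify that what remains equals the normalized $\gamma''$-weight. All factors of $\mathcal{V}(\uplambda')$ other than the two skew Schur functions attached to the edges meeting at $(i,j)$ are indexed by vertices and edges of $\gamma'$ that also belong to $\gamma''$ (with the same specializations), so they pass through the computation untouched. The only factors in $\mathcal{V}(\uplambda')$ that involve $\mu$ are $s_{\lambda^{(i,j+1)}/\mu}(\rho_1)\,s_{\lambda^{(i+1,j)}/\mu}(\rho_2)$, and exactly the hypothesis \eqref{eq:preserveschur} applied with $\nu=\lambda^{(i,j+1)}$, $\kappa=\lambda^{(i+1,j)}$ and $\pi=\lambda^{(i+1,j+1)}$ converts
\[
\sum_{\mu} s_{\kappa/\mu}(\rho_2)\,s_{\nu/\mu}(\rho_1)\,\mathcal{U}^{\llcorner}_{\rho_1,\rho_2}(\pi\mid\nu,\mu,\kappa)
\]
into $\tfrac{1}{H(\rho_2;\rho_1)}\,s_{\pi/\kappa}(\rho_1)\,s_{\pi/\nu}(\rho_2)$, i.e.\ exactly the pair of skew Schur factors attached to the two edges of $\gamma''$ meeting at $(i+1,j+1)$, up to a multiplicative constant $H(\rho_2;\rho_1)^{-1}$.

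Next I would account for the normalization constants $Z^{\circ}(\gamma')$ and $Z^{\circ}(\gamma'')$. Both paths have the same set of edges outside the corner, and the same multiset of specialization labels on upward and leftward edges. Therefore the $\prod_{e\in E^{\uparrow}}H^{\circ}(\rho_e)H(\rho_{\circ};\rho_e)$ factor is identical for the two paths, and the products $\prod_{e>e'}H(\rho_e;\rho_{e'})$ differ only in the relative order of the two corner edges themselves: in $\gamma'$ the upward edge with label $\rho_1$ comes after the leftward edge with label $\rho_2$ (so the pair contributes a factor $H(\rho_1;\rho_2)$), whereas in $\gamma''$ the upward $\rho_1$ edge precedes the leftward $\rho_2$ edge (no contribution). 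Using symmetry of $H$, this gives $Z^{\circ}(\gamma')=H(\rho_2;\rho_1)\,Z^{\circ}(\gamma'')$.

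Combining these two computations yields
\[
\frac{1}{Z^{\circ}(\gamma')}\sum_{\mu}\mathcal{V}(\uplambda')\,\mathcal{U}^{\llcorner}_{\rho_1,\rho_2}(\lambda^{(i+1,j+1)}\mid\lambda^{(i,j+1)},\mu,\lambda^{(i+1,j)})
=\frac{\mathcal{V}(\uplambda'')}{H(\rho_2;\rho_1)\,Z^{\circ}(\gamma')}
=\frac{\mathcal{V}(\uplambda'')}{Z^{\circ}(\gamma'')},
\]
which is precisely the Pfaffian Schur process indexed by $\gamma''$. One should also check that $\mathcal{U}^{\llcorner}_{\rho_1,\rho_2}$ is indeed a well-defined Markov kernel on $\pi$, i.e.\ that $\sum_\pi\mathcal{U}^{\llcorner}_{\rho_1,\rho_2}(\pi\mid\nu,\mu,\kappa)=1$; this is immediate from \eqref{eq:preserveschur} by summing both sides over $\pi$ and applying the skew-Cauchy identity \eqref{eq:skewCauchy} together with the symmetry $H(\rho_2;\rho_1)=H(\rho_1;\rho_2)$.

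The main obstacle is not algebraic but bookkeeping-theoretic: one must be careful with the ordering convention $e>e'$ used in the definition of $Z^{\circ}(\gamma)$ and verify that no other pair of edges in $\gamma'$ versus $\gamma''$ sees its relative order change when the corner at $(i,j)$ is flipped to $(i+1,j+1)$. The substance of the identity is entirely contained in the assumed equation \eqref{eq:preserveschur}, which is a Markov-kernel reformulation of the skew-Cauchy identity; the rest is organizing weights and normalizations so the pushforward of a probability measure remains a probability measure with the expected local skew Schur weights on $\gamma''$.
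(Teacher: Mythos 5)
Your overall strategy is the same as the paper's: sum the $\gamma'$-weight against $\mathcal{U}^{\llcorner}_{\rho_1,\rho_2}$ over the corner partition $\mu=\lambda^{(i,j)}$, use \eqref{eq:preserveschur} with $\nu=\lambda^{(i,j+1)}$, $\kappa=\lambda^{(i+1,j)}$, $\pi=\lambda^{(i+1,j+1)}$ to produce the two skew Schur factors of the $\gamma''$-weight up to $H(\rho_2;\rho_1)^{-1}$, and then match normalization constants; that first step is correct and is exactly the paper's argument. The genuine error is in the normalization bookkeeping: you assert $Z^{\circ}(\gamma')=H(\rho_2;\rho_1)\,Z^{\circ}(\gamma'')$, but the true relation is the opposite one, $Z^{\circ}(\gamma'')=H(\rho_2;\rho_1)\,Z^{\circ}(\gamma')$ — and note that this is also the relation your own final chain of equalities requires: from $\tfrac{\mathcal{V}(\uplambda'')}{H(\rho_2;\rho_1)Z^{\circ}(\gamma')}$ you can only reach $\tfrac{\mathcal{V}(\uplambda'')}{Z^{\circ}(\gamma'')}$ if $Z^{\circ}(\gamma'')=H(\rho_2;\rho_1)Z^{\circ}(\gamma')$, so as written your last display contradicts the relation you claim two sentences earlier. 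Intuitively, flipping the corner outward enlarges the region under the path, so the partition function gains (rather than loses) a factor. You can check the direction with one application of the skew-Cauchy identity \eqref{eq:skewCauchy}: e.g.\ for $\gamma'=(2,0)\to(1,0)\to(1,1)\to(0,0)$ versus $\gamma''=(2,0)\to(2,1)\to(1,1)\to(0,0)$, summing the weights gives $Z^{\circ}(\gamma')=H^{\circ}(\rho_1)H(\rho_{\circ};\rho_1)$ while
\begin{equation*}
Z^{\circ}(\gamma'')=\sum_{\lambda,\pi}\tau_{\lambda}(\rho_{\circ})\,s_{\pi/\lambda}(\rho_2)\,s_{\pi}(\rho_1)
= H(\rho_1;\rho_2)\sum_{\lambda}\tau_{\lambda}(\rho_{\circ})s_{\lambda}(\rho_1)
= H(\rho_2;\rho_1)\,Z^{\circ}(\gamma').
\end{equation*}

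Your error is understandable: the displayed zig-zag formula for $Z^{\circ}(\gamma)$ in Section \ref{sec:zigzagformulation}, read literally with the path oriented from the horizontal axis toward the origin, does point in the direction you used (and it also omits the $H$-cross terms between pairs of vertical edges, which are harmless here since they are common to $\gamma'$ and $\gamma''$). The reliable way to fix the bookkeeping is either the direct computation above or the diagram formula $Z^{\circ}=H^{\circ}(\rho_1^-,\dots,\rho_n^-)\prod_{i<j}H(\rho_i^+;\rho_j^-)$: flipping the corner creates exactly one new pair in which the ascending (horizontal) specialization $\rho_2$ precedes the descending (vertical) specialization $\rho_1$, hence one extra factor $H(\rho_2;\rho_1)$ in $Z^{\circ}(\gamma'')$. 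One further minor point: the normalization $\sum_{\pi}\mathcal{U}^{\llcorner}_{\rho_1,\rho_2}(\pi\mid\nu,\mu,\kappa)=1$ does not follow from \eqref{eq:preserveschur} alone for each fixed $\mu$ (summing \eqref{eq:preserveschur} over $\pi$ only gives an identity averaged over $\mu$); it is part of the standing assumption that $\mathcal{U}^{\llcorner}$ is a probability distribution on $\pi$, so nothing needs to be proved there. With the partition-function relation corrected, the rest of your argument is sound and coincides with the paper's proof.
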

\begin{proof}
	The relation \eqref{eq:preserveschur} implies that applying $\mathcal{U}^{\llcorner}_{\rho_1, \rho_2}$ to the  Pfaffian Schur process indexed by $\gamma'$ and averaging over $\lambda^{(i,j)}$ gives a weight proportional to
	$$ s_{\lambda^{(i+1, j+1)}/\lambda^{(i+1, j)}}(\rho_1)s_{\lambda^{(i+1, j+1)}/\lambda^{(i, j+1)}}(\rho_2),$$
	as in the Pfaffian Schur process indexed by $\gamma''$. The normalization $H(\rho_2 ; \rho_1)$ accounts for the fact that $Z^{\circ}(\gamma'') = H(\rho_2 ; \rho_1) Z^{\circ}(\gamma')$.
\end{proof}
We choose a particular solution to  \eqref{eq:preserveschur} given by
\begin{equation}
\mathcal{U}^{\llcorner}_{\rho_1, \rho_2}(\pi \vert \nu, \mu, \kappa) =\mathcal{U}^{\llcorner}_{\rho_1, \rho_2}(\pi \vert \nu, \kappa) =  \frac{s_{\pi/\nu}(\rho_2) s_{\pi/\kappa}(\rho_1)}{\sum_{\lambda}s_{\lambda/\nu}(\rho_2) s_{\lambda/\kappa}(\rho_1)}.
\label{eq:defucorner}
\end{equation}
The Cauchy identity \eqref{eq:skewCauchy} ensures that this choice satisfies \eqref{eq:preserveschur}. There exists  other solutions to \eqref{eq:preserveschur} (see \cite{borodin2013nearest, matveev2015q}) where
$\mathcal{U}^{\llcorner}(\pi \vert \nu, \mu, \kappa)$ depends on $\mu$.

We also define a diagonal transition operator
$\mathcal{U}^{\angle}_{\rho_{\circ}, \rho_1}$ corresponding to the growth of the path by a half box along the diagonal as a probability distribution $ \mathcal{U}^{\angle}_{\rho_{\circ}, \rho_1}(\pi \vert \kappa, \mu) $ on partitions $\pi$ given partitions $\mu\subset \kappa$. It will update the partition $\mu\subset \kappa$ to a partition $\pi\supset \kappa$ such that $\pi$ is a marginal of a new Pfaffian Schur process. Pictorially,
\begin{center}
	\begin{tikzpicture}[scale=1]
	\node (m) at (0,0) {$\mu$};
	\node (bas) at (-1.5,-1.5){};
	\node (k1) at (1,0) {$\kappa$};
	\draw[thick, <-] (bas) -- (m) node[midway, anchor=south east]{$\rho_{\circ}$};
	\draw[thick, <-] (m) -- (k1) node[midway, anchor=north]{$\rho_1$};
	
	\node[gray] (p) at (1,1) {$?$};
	\draw[thick, gray, <-] (m) -- (p) node[midway, anchor=south east]{};
	\draw[thick, gray, <-] (p) -- (k1) node[midway, anchor=west]{$\rho_1$};
	
	\draw[thick, ->, >=stealth'] (2,-0.5) --(4,-0.5) node[midway, anchor=south] {$\mathcal{U}^{\angle}_{\rho_{\circ}, \rho_1}$};
	
	\node (p) at (8,1) {$\pi$};
	\node (bas) at (5.5,-1.5){};
	\node (k1) at (8,0) {$\kappa$};
	\draw[thick, <-] (bas) -- (p) node[midway, anchor=south east]{$\rho_{\circ}$};
	\draw[thick, <-] (p) -- (k1) node[midway, anchor=west]{$\rho_1$};
	
	\node[gray] (m) at (7,0) {};
	\draw[thick, gray, <-] (m) -- (k1) node[midway, anchor=north]{$\rho_1$};
	\end{tikzpicture}
\end{center}
Again, the specializations in the new Pfaffian Schur process are carried from the previous step  by the transition operator.

\begin{lemma} Assume that  $\mathcal{U}^{\angle}_{\rho_{\circ}, \rho_1}$ satisfies
	\begin{equation}
	\sum_{\mu} s_{\kappa/\mu}(\rho_1) \tau_{\mu}(\rho_{\circ})\mathcal{U}^{\angle}_{\rho_{\circ}, \rho_1}(\pi \vert \kappa, \mu)  = \frac{s_{\pi/\kappa}(\rho_1) \tau_{\pi}(\rho_{\circ})}{H^{\circ}(\rho_1) H(\rho_1 ; \rho_{\circ})}.
	\label{eq:preserveschur2}
	\end{equation}
	Let $\gamma'\in \Omega$ contain the vertices $(n,n)$ and  $(n+1, n)$, with the edge  $(n+1,n)\to (n,n)$ labelled by a specialization $\rho_1$ and the diagonal edge labelled by $\rho_{\circ}$ as usual. Let  $\gamma''\in \Omega$  contain the vertices $(n+1, n+1)$ and  $(n+1,n)$, with the edge $(n+1, n) \to (n+1, n+1)$ labelled by the specialization $\rho_1$ and the diagonal edge labelled by $\rho_{\circ}$.
	Then $\mathcal{U}^{\angle}(\lambda^{(n+1, n+1)} \vert \lambda^{(n+1, n)}, \lambda^{(n,n)})$ maps the Pfaffian Schur process indexed by $\gamma'$ and the set of specializations on $E(\gamma')$ to the Pfaffian Schur process indexed by $\gamma''$ and the same set of specializations.
\end{lemma}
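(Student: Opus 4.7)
The proof is essentially parallel to the argument given for the corner operator $\mathcal{U}^\llcorner$. The plan is to isolate the factors of the weight $\mathcal{V}(\uplambda_{\gamma'})$ that involve the partition $\mu := \lambda^{(n,n)}$ being summed out, apply the intertwining identity \eqref{eq:preserveschur2}, and then verify that the normalizing factor $H^\circ(\rho_1)H(\rho_1;\rho_\circ)$ appearing on its right-hand side is exactly the ratio $Z^\circ(\gamma'')/Z^\circ(\gamma')$ of the two partition functions.

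Concretely, the only edges of $\gamma'$ incident to $(n,n)$ are the diagonal edge and the horizontal edge from $(n+1,n)$ to $(n,n)$; these contribute the factors $\tau_\mu(\rho_\circ)$ and $s_{\kappa/\mu}(\rho_1)$, where $\kappa := \lambda^{(n+1,n)}$. Writing $W_{\mathrm{rest}}$ for the product of the remaining edge weights of $\gamma'$, none of which depend on $\mu$, we have
\[
\mathcal{V}(\uplambda_{\gamma'}) \;=\; \tau_\mu(\rho_\circ)\,s_{\kappa/\mu}(\rho_1)\,W_{\mathrm{rest}}.
\]
Pushing the probability distribution forward by $\mathcal{U}^\angle_{\rho_\circ,\rho_1}(\pi\vert\kappa,\mu)$ and summing over $\mu$, the intertwining relation \eqref{eq:preserveschur2} yields the updated weight
\[
W_{\mathrm{rest}}\cdot\frac{s_{\pi/\kappa}(\rho_1)\,\tau_\pi(\rho_\circ)}{H^\circ(\rho_1)\,H(\rho_1;\rho_\circ)}.
\]
The factors $s_{\pi/\kappa}(\rho_1)$ and $\tau_\pi(\rho_\circ)$ are precisely the weights of the two edges of $\gamma''$ incident to the new vertex $(n+1,n+1)$, and every other edge of $\gamma''$ coincides with one of $\gamma'$, so the numerator equals $\mathcal{V}(\uplambda_{\gamma''})$.

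It remains to verify the identity $Z^\circ(\gamma'')=H^\circ(\rho_1)\,H(\rho_1;\rho_\circ)\,Z^\circ(\gamma')$. Since $\gamma''$ has exactly one more vertical edge than $\gamma'$, with specialization $\rho_1$, the factor $\prod_{e\in E^\uparrow}H^\circ(\rho_e)H(\rho_\circ;\rho_e)$ in the formula for $Z^\circ$ acquires the extra term $H^\circ(\rho_1)H(\rho_\circ;\rho_1)$. The remaining bookkeeping is to check that the contributions of the new vertical edge $\rho_1^\uparrow$ of $\gamma''$ to the cross-pair product $\prod_{e>e'}H(\rho_e;\rho_{e'})$ cancel with those involving the removed horizontal edge $\rho_1^\leftarrow$ of $\gamma'$: both edges occupy the last position before the diagonal, so both get paired with exactly the same preceding edges of the path, and by the symmetry $H(\rho;\rho')=H(\rho';\rho)$ these two sets of contributions agree. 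This bookkeeping is the main technical obstacle; it is a direct unfolding of the definitions of $E^\uparrow(\gamma)$, $E^\leftarrow(\gamma)$, and the path order, and it mirrors the analogous verification performed for $\mathcal{U}^\llcorner$ in the preceding corner lemma.
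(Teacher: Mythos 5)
Your overall strategy is the same as the paper's: isolate the two factors $\tau_\mu(\rho_{\circ})s_{\kappa/\mu}(\rho_1)$ of the weight that involve $\mu=\lambda^{(n,n)}$, sum them against $\mathcal{U}^{\angle}_{\rho_{\circ},\rho_1}$ using \eqref{eq:preserveschur2}, and recognize the numerator as the weight of the process indexed by $\gamma''$; that part is correct and is exactly what the paper does. The genuine gap is in your verification of $Z^{\circ}(\gamma'')=H^{\circ}(\rho_1)H(\rho_1;\rho_{\circ})Z^{\circ}(\gamma')$, which you rightly single out as the crux. Your cancellation argument --- ``both edges occupy the last position before the diagonal, so both get paired with exactly the same preceding edges, and by symmetry of $H$ the contributions agree'' --- does not match the structure of the cross product: there, a horizontal edge is only ever paired with \emph{vertical} edges subject to an order condition, so the removed horizontal edge of $\gamma'$, being the last edge before the diagonal, is paired with nothing of the kind that the new vertical edge of $\gamma''$ is paired with, and $H(\rho;\rho')=H(\rho';\rho)$ cannot identify the two sets of factors. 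Worse, if one takes the displayed zig-zag formula for $Z^{\circ}(\gamma)$ at face value (pairs with the vertical edge \emph{after} the horizontal one, and $\prod_e H^{\circ}(\rho_e)$ over individual vertical edges), the ratio comes out as $H^{\circ}(\rho_1)H(\rho_{\circ};\rho_1)\prod_{e'\in E^{\leftarrow}(\gamma'')}H(\rho_1;\rho_{e'})$, so the identity you need would be false whenever the path contains horizontal edges; this signals that the displayed formula is itself imprecise (the order condition should pair each horizontal edge with the vertical edges \emph{preceding} it, and the vertical specializations should enter through $H^{\circ}$ of their union, which carries vertical--vertical cross factors --- compare with the cell-by-cell normalization implicit in Proposition \ref{prop:SchurLPPcorrelations}), so a ``direct unfolding'' of it cannot close the argument.

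The correct bookkeeping is done from Definition \ref{def:PSP}, where $Z^{\circ}=H^{\circ}(\rho_1^-,\dots,\rho_n^-)\prod_{0\leqslant i<j\leqslant n}H(\rho_i^+;\rho_j^-)$. In $\gamma'$ the specialization $\rho_1$ occupies a $\rho^+$ (horizontal) slot adjacent to the diagonal, so its only contribution is $\prod_{v}H(\rho_1;\rho_v)$ over all vertical specializations $\rho_v$ of the path; in $\gamma''$ it occupies a $\rho^-$ (vertical) slot, and exactly the same factors $\prod_{v}H(\rho_1;\rho_v)$ reappear as cross terms inside $H^{\circ}$ of the union of vertical specializations, together with the genuinely new factors $H^{\circ}(\rho_1)$ and $H(\rho_{\circ};\rho_1)$; the horizontal specializations never interact with $\rho_1$ in either partition function, precisely because $\rho_1$ labels the last edge before the diagonal. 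This yields $Z^{\circ}(\gamma'')=H^{\circ}(\rho_1)H(\rho_1;\rho_{\circ})Z^{\circ}(\gamma')$, which is the identity the paper asserts (without detail) in its two-line proof; your write-up needs this corrected verification to be complete.
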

\begin{proof}
	\eqref{eq:preserveschur2} implies that applying $\mathcal{U}^{\angle}_{\rho_{\circ}, \rho_1}$ to the Pfaffian Schur process indexed by $\gamma'$  and averaging over $ \lambda^{(n,n)} $ yields a weight proportional to
	$$s_{\lambda^{(n+1, n+1)}/\lambda^{(n+1, n)}}(\rho_1) \tau_{\lambda^{(n+1, n+1)}}(\rho_{\circ})$$
	as in the Pfaffian Schur process indexed by $\gamma''$. The normalization $H^{\circ}(\rho_1) H(\rho_1 ; \rho_{\circ})$ accounts for the fact that  $Z^{\circ}(\gamma'') = H^{\circ}(\rho_1) H(\rho_1 ; \rho_{\circ}) Z^{\circ}(\gamma')$.
\end{proof}
We choose a particular solution to \eqref{eq:preserveschur2} given by
\begin{equation}
\mathcal{U}^{\angle}_{\rho_{\circ}, \rho_1}(\pi \vert \kappa, \mu)  =\mathcal{U}^{\angle}_{\rho_{\circ}, \rho_1}(\pi \vert \kappa)  =
\frac{1}{H^o(\rho_1) H(\rho_1 ; \rho_{\circ})}
\frac{\tau_{\pi}(\rho_{\circ}) s_{\pi/\kappa}(\rho_1) }{\tau_{\kappa}(\rho_{\circ}, \rho_1)}.
\label{eq:defuangle}
\end{equation}

Let us check that \eqref{eq:preserveschur2} is satisfied. Denoting $Z^{\circ}= H^o(\rho_1) H(\rho_1 ; \rho_{\circ})$, 
\begin{align*}
\sum_{\mu} s_{\kappa/\mu}(\rho_1) \tau_{\mu}(\rho_{\circ})\mathcal{U}^{\angle}_{\rho_{\circ}, \rho_1}(\pi \vert \kappa, \mu) &=  \frac{1}{Z^{\circ}}  \sum_{\mu} s_{\kappa/\mu}(\rho_1) \tau_{\mu}(\rho_{\circ})
\frac{\tau_{\pi}(\rho_{\circ}) s_{\pi/\kappa}(\rho_1) }{\tau_{\kappa}(\rho_{\circ}, \rho_1)} \\
&=  \frac{\tau_{\pi}(\rho_{\circ}) s_{\pi/\kappa}(\rho_1)}{Z^{\circ}} \frac{\sum_{\nu' \text{ even}}\sum_{\mu} s_{\kappa/\mu}(\rho_1) s_{\mu/\nu}(\rho_{\circ})}{\tau_{\kappa}(\rho_{\circ}, \rho_1)}\\
&=\frac{\tau_{\pi}(\rho_{\circ}) s_{\pi/\kappa}(\rho_1)}{Z^{\circ}} \frac{\sum_{\nu' \text{ even}} s_{\kappa/\nu}(\rho_1, \rho_{\circ})}{\tau_{\kappa}(\rho_{\circ}, \rho_1)}\\
&=\frac{\tau_{\pi}(\rho_{\circ}) s_{\pi/\kappa}(\rho_1)}{Z^{\circ}}.\\
\end{align*}
It is not a priori obvious that $\mathcal{U}^{\angle}_{\rho_{\circ}, \rho_1}(\pi \vert \kappa)$ does define a probability distribution, but this can be checked using identities \eqref{eq:skewLittlewoodvariant} and \eqref{eq:skewCauchy}.

There may exist other solutions to \eqref{eq:preserveschur2} where  $ \mathcal{U}^{\angle}(\pi \vert \mu, \kappa) $  depends on $\mu$. We do not attempt to classify other possible choices.

\subsection{The first coordinate marginal: last passage percolation}
\label{sec:marginalLPP}
In this Section, we relate the dynamics on Pfaffian Schur process constructed in Section \ref{sec:dynamics} to half-space LPP.
\begin{definition}[Half-space geometric weight LPP]
	Let  $(a_i)_{i\geqslant 1} $ be a sequence of positive real numbers and $\big(g_{n,m})_{n\geqslant m\geqslant 0}$ be a sequence of independent geometric random variables\footnote{The geometric distribution with parameter $q\in(0,1)$, denoted $\Geom(q)$, is the probability distribution on $\Z_{\geqslant 0}$ such that if $X\sim\Geom(q)$,
		$ \ \PP(X=k) = (1-q)q^k.$} with parameter $a_n a_m$ when $n\geqslant m+1$ and with parameter $c a_n$ when  $n=m$. We define  the geometric last passage percolation time on the half-quadrant (see Figure \ref{fig:lastpassagehalfquadrant}), denoted $G(n,m)$,  by the recurrence for $n\geqslant m$,
	$$ G(n,m) =  g_{n,m} + \begin{cases}
	\max\Big\lbrace G(n-1, m)  , G(n, m-1)\Big\rbrace &\mbox{if } n\geqslant m+1, \\
	G(n,m-1) &\mbox{if }n=m,
	\end{cases}$$
	with the boundary condition $G(n,0)\equiv 0$.
	\label{def:LLPgeom}
\end{definition}
Consider integers $0<i_1 < i_2 < \dots <i_n$ and $j_1> j_2 > \dots > j_n$ such that $i_k>j_k$ for all $k$. The path $k\mapsto (i_k, j_k)$ is a down right path on the half-quadrant, it is called a space-like path in the context of interacting particle systems \cite{borodin2008large1, borodin2008large2}. We can  express the joint law of the last passage times $G(i_k, j_k)$ using a Pfaffian Schur process.

\begin{proposition}
	Let $c$ and $a_1, a_2, \dots $ be positive real numbers.
	We have the equality in law
	$$ \Big(G(i_1, j_1), \dots , G(i_n, j_n)\Big)  \overset{(d)}{=} \Big(\lambda_1^{(1)}, \dots ,\lambda_1^{(n)}\Big), $$
	where the sequence of partitions $\lambda^{(1)}, \dots ,\lambda^{(n)}$ is distributed according the Pfaffian Schur process $\PSP(\rho_{\circ}^+;\rho_{1}^+; \dots; \rho_{n-1}^+ \vert  \rho_1^-; \dots; \rho_n^-)$ with
	\begin{align*}
	\rho_{\circ}^+&= (c, a_{j_1+1}, \dots ,a_{i_1}) , \\
	\rho_k^+&= (a_{i_k+1}, \dots , a_{i_{k+1}}) \text{ for } k=1, \dots, n-1,\\
	\rho_k^-&=  (a_{j_{k+1}+1}, \dots, a_{j_k})\text{ for } k=1, \dots, n -1, \\
	\rho_n^-&=  (a_{1}, \dots, a_{j_n}).
	\end{align*}
	Equivalently, in terms of zig-zag paths,
	$$\Big(G(i_1, j_1), \dots , G(i_n, j_n)\Big)  \overset{(d)}{=} \Big(\lambda_1^{(i_1, j_1)}, \dots ,\lambda_1^{(i_n, j_n)}\Big)$$
	where the sequence of partition $\lambda^{(i_1, j_1)}, \dots ,\lambda^{(i_n, j_n)}$ is distributed according to  the Pfaffian Schur process indexed by a path $\gamma\in\Omega$ going through the points $(i_1, j_1), \dots, (i_n, j_n)$; where  vertical (resp. horizontal) edges with horizontal (resp. vertical) coordinates $i-1$ and $i$ are labelled by specializations into the single variable $a_i$, and the diagonal edge is labelled by the specialization into the variable $c$ (See Figure \ref{fig:choicespecializations}).
	\label{prop:SchurLPPcorrelations}
\end{proposition}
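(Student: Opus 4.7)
The plan is to build the Pfaffian Schur process in the statement inductively using the Markov dynamics $\mathcal{U}^{\llcorner}$ and $\mathcal{U}^{\angle}$ of Section \ref{sec:dynamics}. Starting from the empty path at the origin, one grows a zig-zag path in $\Omega$ by a sequence of elementary box and half-box additions, with edge labels chosen to match Figure \ref{fig:choicespecializations}, until the path passes through all the target points $(i_1,j_1),\dots,(i_n,j_n)$. By the lemmas of Section \ref{sec:dynamics}, the resulting joint law of the partitions along the path is the Pfaffian Schur process described in the proposition. What remains is to show that the \emph{first coordinates} $\lambda_1^{(i_k,j_k)}$ satisfy the half-space LPP recurrence of Definition \ref{def:LLPgeom} with the prescribed geometric weights.

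The key fact, which is the heart of the argument, is that for single-variable specializations both transition kernels project cleanly onto first coordinates. Indeed, for an off-diagonal corner move $\mathcal{U}^{\llcorner}_{a_{j+1},a_{i+1}}(\pi\mid \nu,\kappa)$, using \eqref{eq:defucorner} together with the identity $s_{\pi/\kappa}(a) = a^{|\pi|-|\kappa|}\mathds{1}_{\kappa\prec\pi}$, the weight factors as $(a_{i+1}a_{j+1})^{\pi_1}$ times an expression depending only on $\pi_2,\pi_3,\dots$; the interlacing constraints $\nu,\kappa\prec\pi$ force $\pi_1 \geqslant \max(\nu_1,\kappa_1)$, while the constraints on the remaining coordinates do not involve $\pi_1$. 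Summing over $\pi_j$ for $j\geqslant 2$ and normalizing, one finds that $\pi_1 - \max(\nu_1,\kappa_1)$ is distributed as $\Geom(a_{i+1}a_{j+1})$, independently of $\nu,\kappa$ beyond their first coordinates. An analogous computation for the diagonal move $\mathcal{U}^{\angle}_{c,a_{n+1}}(\pi\mid\kappa)$, this time using \eqref{eq:calcultau} to write $\tau_\pi(c)=c^{\pi_1-\pi_2+\pi_3-\cdots}$, isolates the $\pi_1$-dependence as $(ca_{n+1})^{\pi_1}$ and shows $\pi_1-\kappa_1\sim\Geom(ca_{n+1})$.

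Combining these two facts, one obtains by induction on the number of elementary steps that along the zig-zag path constructed above, the first coordinates satisfy
\begin{equation*}
\lambda_1^{(i,j)} = \max\!\big(\lambda_1^{(i-1,j)},\lambda_1^{(i,j-1)}\big) + g_{i,j}\text{ if }i>j, \qquad \lambda_1^{(n,n)} = \lambda_1^{(n,n-1)} + g_{n,n},
\end{equation*}
where the $g_{i,j}$ are independent geometric variables with the prescribed parameters; independence across sites comes from the Markov property of the dynamics together with the fact that the distribution of the increment at each step does not depend on $\nu,\kappa$ beyond what determines the shift. This is precisely the recurrence defining $G(i,j)$, so the joint distribution of the first coordinates matches that of $(G(i_k,j_k))_{k=1}^n$.

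The main obstacle to be overcome is the projection property in the previous paragraphs, i.e.\ the fact that the first-coordinate increment depends only on the first coordinates of $\nu$ and $\kappa$ and not on their higher parts. This is special to single-variable specializations; for more general Schur nonnegative specializations, the dynamics on the full partitions do not project to Markov dynamics on $\lambda_1$ alone. The single-variable case is precisely what makes Definition \ref{def:LLPgeom} arise as the first-coordinate marginal, with geometric weights of parameter $a_ia_j$ off the diagonal and $ca_n$ on it.
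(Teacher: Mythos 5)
Your proposal is correct and follows essentially the same route as the paper: the paper's proof also grows the zig-zag path via the dynamics of Section \ref{sec:dynamics} and relies on the two single-variable projection facts (its Lemmas \ref{lem:interpretationgeom1} and \ref{lem:interpretationgeom2}, proved exactly by the computation you sketch with $s_{\pi/\kappa}(a)=a^{|\pi|-|\kappa|}\mathds{1}_{\kappa\prec\pi}$ and $\tau_\pi(c)=c^{\pi_1-\pi_2+\cdots}$) to get the geometric increments and the LPP recurrence on first coordinates.
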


\begin{figure}
	\begin{center}
		\begin{tikzpicture}[scale=1]
		\tikzstyle{fleche}=[>=stealth', postaction={decorate}]
		\usetikzlibrary{arrows}
		\usetikzlibrary{shapes}
		\usetikzlibrary{shapes.multipart}
		%\tikzstyle{topartition}=[->,>=stealth']
		\tikzstyle{topartition}=[thick]
		\draw[->, >=stealth', thick, gray] (0,0) -- (0,3.7) ;
		\draw[->, >=stealth', thick, gray] (0,0) -- (6.7, 0) ;
		\begin{scope}[decoration={
			markings,
			mark=at position 0.5 with {\arrow{<}}}]
		\draw[dashed, gray] (0,0) grid(6.3,3.3);
		\draw[thick] (0,0) -- (3,3);
		\draw[fleche] (3,3) -- (4,3);
		\draw[fleche] (4,3)-- (5,3);
		\draw[fleche] (5,3) -- (5,2);
		\draw[fleche] (5,2) -- (6,2);
		\draw[fleche] (6,2) -- (6,1);
		\draw[fleche] (6,1) -- (6,0);
		\fill (3,3) circle(0.10);
		\fill (4,3) circle(0.05);
		\fill (5,3) circle(0.10);
		\fill (5,2) circle(0.05);
		\fill (6,2) circle(0.10);
		\fill (6,1) circle(0.05);
		\draw (3.5, 3.2) node{{\footnotesize $a_4$}};
		\draw (4.5, 3.2) node{{\footnotesize $a_5$}};
		\draw (5.5, 2.2) node{{\footnotesize $a_6$}};
		\draw (4.7, 2.5) node{{\footnotesize $a_3$}};
		\draw (5.7, 1.2) node{{\footnotesize $a_2$}};
		\draw (5.7, 0.2) node{{\footnotesize $a_1$}};
		\draw (1.3, 1.7) node{$c$};
		\draw (3.2, 3.5) node{$\lambda^{(3,3)}$};
		\draw (5.2, 3.5) node{$\lambda^{(5,3)}$};
		\draw (6.7, 2) node{$\lambda^{(6,2)}$};
		\end{scope}

		\begin{scope}[xshift=8cm, yshift=1cm]
		
		\node (emptyleft) at (0,0) {$\varnothing$};
		\node (lambda1) at (1,1.5) {$\lambda^{(1)}$};
		\node (mu1) at (2,0) {$\mu^{(1)}$};
		\node (lambda2) at (3,1.5) {$\lambda^{(2)}$};
		\node (mu2) at (4,0) {$\mu^{(2)}$};
		\node (lambda3) at (5,1.5) {$\lambda^{(3)} $};
		\node (emptyright) at (6,0) {$ \varnothing$};
		\draw[topartition] (emptyleft) -- (lambda1) node[midway, anchor=south, rotate=56]{{\footnotesize  $c$ }};
		\draw[topartition] (lambda1) -- (mu1) node[midway, anchor=south, rotate=-56]{{\footnotesize  $\varnothing$}};
		\draw[topartition] (mu1) -- (lambda2) node[midway, anchor=south , rotate=56]{{\footnotesize  $a_4, a_5$ }};
		\draw[topartition] (lambda2) -- (mu2) node[midway, anchor=south, rotate=-56]{{\footnotesize  $a_3$}};
		\draw[topartition] (mu2) -- (lambda3) node[midway, anchor=south, rotate=56]{{\footnotesize  $a_6$ }};
		\draw[topartition] (lambda3) -- (emptyright) node[midway, anchor=south, rotate=-56]{{\footnotesize  $a_2, a_1$}};
		\end{scope}
		\end{tikzpicture}
	\end{center}
	\caption{Equivalence of the two formulations in Proposition \ref{prop:SchurLPPcorrelations}. Left: The components $\lambda_1^{(i_1, j_1)}, \lambda_1^{(i_2, j_2)} ,\lambda_1^{(i_3, j_3)}$  of the Pfaffian Schur process considered in Proposition \ref{prop:SchurLPPcorrelations}, for $(i_1, j_1) = (3,3)$, $(i_2, j_2) = (5,3)$ and $(i_3, j_3) = (6,2)$.
		Right: The corresponding diagram in the setting of Definition \ref{def:PSP}. There is equality in law between $\lambda_1^{(3,3)}, \lambda_1^{(5, 3)} ,\lambda_1^{(6, 2)}$ on the left, and $\lambda^{(1)},\lambda^{(2)},\lambda^{(3)}$ on the right.}
	\label{fig:choicespecializations}
\end{figure}
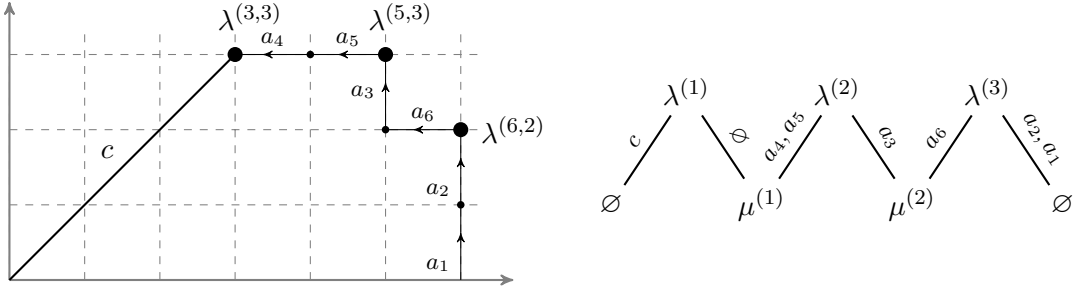
\begin{remark}
	Note that it is also true that $\mu_1^{(1)}$ has the same law as $G(i_1, j_2)$, $\mu_1^{(2)}$ has the same law as $G(i_2, j_3)$, etc. but we do not use this fact.
\end{remark} 
\begin{proof}
	We begin with two lemmas explaining the action of the transition operators $\mathcal{U}^{\llcorner}$ and $\mathcal{U}^{\angle}$ on the first coordinates of the partitions.
	\begin{lemma}[\cite{borodin2008anisotropic}]
		\label{lem:interpretationgeom1}
		Consider two specializations determined by  single variables $a, b>0$ and the transition operator $\mathcal{U}^{\llcorner}_{a,b}(\pi \vert \nu, \kappa)$ defined in \eqref{eq:defucorner} that updates randomly  the partition $\mu$ to become $\pi$ given partitions $\nu, \kappa$. 
		The first coordinate of the partition $\pi$ is such that
		$$ \pi_1  = \max(\nu_1, \kappa_1) + \Geom(ab).$$
	\end{lemma}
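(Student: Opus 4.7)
The plan is to work directly from the explicit formula \eqref{eq:defucorner} specialized to single variables $\rho_1 = a$, $\rho_2 = b$. The crucial simplification comes from the classical identity: for a single-variable specialization, $s_{\pi/\nu}(a) = a^{|\pi|-|\nu|}$ when $\pi/\nu$ is a horizontal strip (equivalently $\nu \prec \pi$) and $0$ otherwise. Substituting this twice into \eqref{eq:defucorner} gives
\begin{equation*}
\mathcal{U}^{\llcorner}_{a,b}(\pi \vert \nu, \kappa)
= \frac{b^{|\pi|-|\nu|} a^{|\pi|-|\kappa|} \mathds{1}_{\nu \prec \pi}\mathds{1}_{\kappa \prec \pi}}{\sum_{\lambda} b^{|\lambda|-|\nu|} a^{|\lambda|-|\kappa|} \mathds{1}_{\nu \prec \lambda}\mathds{1}_{\kappa \prec \lambda}}.
\end{equation*}

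Next I would decouple the role of $\pi_1$ from that of $(\pi_2, \pi_3, \dots)$. Setting $M := \max(\nu_1, \kappa_1)$, the joint interlacings $\nu \prec \pi$ and $\kappa \prec \pi$ impose on the first coordinate only the single constraint $\pi_1 \geq M$, while the remaining constraints $\pi_2 \leq \min(\nu_1,\kappa_1)$, $\pi_i \geq \max(\nu_i,\kappa_i)$ and $\pi_{i+1} \leq \min(\nu_i,\kappa_i)$ for $i \geq 2$ do not involve $\pi_1$ at all (the bound $\pi_2 \leq \pi_1$ is made redundant by $\pi_1 \geq M \geq \min(\nu_1,\kappa_1)$). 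Writing $\pi_1 = M + g$ with $g \geq 0$, the weight factors as
\begin{equation*}
b^{|\pi|-|\nu|} a^{|\pi|-|\kappa|}
= (ab)^g \cdot a^{M-\kappa_1} b^{M-\nu_1} \prod_{i \geq 2} a^{\pi_i - \kappa_i} b^{\pi_i - \nu_i},
\end{equation*}
and summing over $(\pi_2, \pi_3, \dots)$ subject to the constraints above produces a quantity $S_{\nu,\kappa}$ that does not depend on $g$. Consequently the marginal $\PP[\pi_1 = M+g]$ is proportional to $(ab)^g$, and normalization of the resulting geometric series yields $\PP[\pi_1 = M+g] = (1-ab)(ab)^g$, which is precisely $\Geom(ab)$ as defined in the footnote of Definition \ref{def:LLPgeom}; thus $\pi_1 - \max(\nu_1,\kappa_1) \sim \Geom(ab)$ as claimed.

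The only subtle point is the decoupling step: the key observation is that the interlacing bound $\pi_2 \leq \pi_1$ is superseded by the stricter, $\pi_1$-independent bound $\pi_2 \leq \min(\nu_1,\kappa_1)$, which is exactly what makes the marginal an untruncated geometric rather than a truncated one. Convergence of the series requires $ab < 1$, which is consistent with the Schur-nonnegativity constraint on all specializations used throughout the paper.
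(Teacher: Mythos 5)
Your proposal is correct and follows essentially the same route as the paper: both substitute the single-variable formula $s_{\lambda/\mu}(a)=\mathds{1}_{\mu\prec\lambda}a^{|\lambda|-|\mu|}$ into \eqref{eq:defucorner}, observe that the weight is proportional to $(ab)^{\pi_1-\max(\nu_1,\kappa_1)}$ once the lower coordinates are summed out, and normalize to get $\Geom(ab)$. The only difference is presentational: you spell out why the sum over $(\pi_2,\pi_3,\dots)$ is independent of $\pi_1$ (the bound $\pi_2\leqslant\pi_1$ being superseded by $\pi_2\leqslant\min(\nu_1,\kappa_1)$), a point the paper compresses into ``summing out $\pi_2,\pi_3,\dots$ and using that $\mathcal{U}^{\llcorner}$ is a stochastic kernel.''
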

	\begin{proof}
		This Lemma is a particular case of two-dimensional push-block dynamics defined in \cite[Section 2.6, Ex. (3)]{borodin2008anisotropic}. For two partitions $\lambda, \mu$, we have that
		$ s_{\lambda/\mu}(a) = \mathds{1}_{\mu \prec \lambda} a^{\vert \lambda\vert - \vert \mu\vert}.$
		Hence we have that 
		$$\mathcal{U}^{\llcorner}(\pi \vert \nu,  \kappa) =  \mathds{1}_{\nu\prec \pi}\mathds{1}_{\kappa \prec \pi} a^{\vert \pi\vert - \vert \nu\vert} b^{\vert \pi\vert - \vert \kappa\vert}.$$
		Summing out $\pi_2, \pi_3, \dots$ and using the fact that $\mathcal{U}^{\llcorner}$ is a stochastic transition kernel, we find that the distribution of $\pi_1$ is given by
		$$ \mathcal{U}^{\llcorner}(\pi_1 \vert \nu_1, \kappa_1)  = (1-ab) \cdot \mathds{1}_{\pi_1 \geqslant \max(\nu_1, \kappa_1)} \big( a b\big)^{\pi_1 - \max(\nu_1, \kappa_1)}.$$
	\end{proof}
	\begin{lemma}
		\label{lem:interpretationgeom2}
		Consider two specializations determined by  single variables $a, c>0$ and the transition operator $\mathcal{U}^{\angle}_{c, a}(\pi \vert \kappa)$ defined in \eqref{eq:defuangle} that updates randomly the partition $\mu$ to become $\pi$, given the partition $\kappa$.
		The first coordinate of the partition $\pi$ is such that
		$$ \pi_1  = \kappa_1 + \Geom(ac).$$
	\end{lemma}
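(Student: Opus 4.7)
The plan is to compute the marginal law of $\pi_1$ under $\mathcal{U}^{\angle}_{c,a}(\,\cdot\,|\,\kappa)$ directly, by summing out the coordinates $\pi_2, \pi_3, \dots$. First I specialize \eqref{eq:defuangle} to the single-variable case: since $H^{\circ}(a) = 1$ and $H(a;c) = (1-ac)^{-1}$, one has
$\mathcal{U}^{\angle}_{c,a}(\pi \vert \kappa) = (1-ac)\, \tau_\pi(c)\, s_{\pi/\kappa}(a) / \tau_\kappa(c,a).$
Substituting $s_{\pi/\kappa}(a) = \mathds{1}_{\kappa \prec \pi}\, a^{|\pi|-|\kappa|}$ and the formula $\tau_\pi(c) = c^{\pi_1 - \pi_2 + \pi_3 - \cdots}$ from \eqref{eq:calcultau}, I sum over $\pi_2, \pi_3, \ldots$ with $\pi_1 = m \geq \kappa_1$ fixed. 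The key observation is that, with $\pi_1 = m$ fixed, the interlacing $\kappa \prec \pi$ decouples each $\pi_i$ for $i \geq 2$ into the independent interval $[\kappa_i, \kappa_{i-1}]$, so the sum factorizes into a product of finite geometric series $S_i := \sum_{k=\kappa_i}^{\kappa_{i-1}} (c^{(-1)^{i-1}}a)^k$, giving $\PP(\pi_1 = m) = (1-ac)\, c^m a^{m-|\kappa|}\, \prod_{i \geq 2} S_i \, / \, \tau_\kappa(c,a)$.

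The crux is then the identity
$(ac)^{\kappa_1} a^{-|\kappa|} \prod_{i \geq 2} S_i = \tau_\kappa(c,a),$
for once this is in hand the expression above collapses to $(1-ac)(ac)^{m-\kappa_1}$, which is precisely the density of $\kappa_1 + \Geom(ac)$. To prove this identity I expand both sides as sums over partitions $\tau$ interlacing $\kappa$: on one side, distributing the product of $S_i$'s via $\tau_{i-1} := \pi_i$ yields $\prod_{i\geq 2} S_i = \sum_{\tau \prec \kappa} a^{|\tau|}\, c^{-\tau_1 + \tau_2 - \tau_3 + \cdots}$; on the other side, the branching rule together with the observation that a partition $\sigma$ with $\sigma'$ even and $\sigma \prec \tau$ is uniquely determined by $\sigma_{2k-1} = \sigma_{2k} = \tau_{2k}$ gives $\tau_\kappa(c,a) = \sum_{\tau \prec \kappa} c^{|\kappa|-|\tau|}\, a^{\tau_1 - \tau_2 + \tau_3 - \cdots}$.

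To bridge these two expansions I use the involution $\tilde\tau_j := \kappa_j + \kappa_{j+1} - \tau_j$ on the set of partitions interlacing $\kappa$, under which $|\tau| \mapsto 2|\kappa|-\kappa_1-|\tilde\tau|$ and $\tau_1 - \tau_2 + \tau_3 - \cdots \mapsto \kappa_1 - (\tilde\tau_1 - \tilde\tau_2 + \cdots)$. A direct substitution transforms the left-hand side of the identity into $\sum_{\tilde\tau \prec \kappa} a^{|\kappa|-|\tilde\tau|}\, c^{\tilde\tau_1 - \tilde\tau_2 + \cdots} = \tau_\kappa(a,c)$, and this equals $\tau_\kappa(c,a)$ by the obvious symmetry of the two-variable specialization. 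I expect the main difficulty to be locating this involution; once it is identified, the matching of exponents is elementary bookkeeping, and the full statement of the lemma then follows by assembling the pieces. (The same argument, run on the full sequence of odd-indexed parts rather than just $\pi_1$, would give the stronger statement that $(\pi_1, \pi_3, \pi_5, \dots)$ is obtained from $(\kappa_1, \kappa_3, \ldots)$ by adding an independent $\Geom(ac)$ to the first coordinate, but we will not need this.)
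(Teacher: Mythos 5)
Your proof is correct, but it follows a genuinely different (and longer) route than the paper's. The paper's own argument is two lines: since $s_{\pi/\kappa}(a)=\mathds{1}_{\kappa\prec\pi}\,a^{\vert\pi\vert-\vert\kappa\vert}$ and $\tau_{\pi}(c)=c^{\pi_1-\pi_2+\pi_3-\cdots}$, the weight factorizes so that, once $\pi_1\geqslant\kappa_1$, neither the constraints nor the factors involving $\pi_2,\pi_3,\dots$ depend on the value of $\pi_1$; hence the marginal law of $\pi_1$ is proportional to $\mathds{1}_{\pi_1\geqslant\kappa_1}(ac)^{\pi_1-\kappa_1}$, and the normalization comes for free from the fact, noted right after \eqref{eq:defuangle}, that $\mathcal{U}^{\angle}_{c,a}(\cdot\,\vert\,\kappa)$ is a stochastic kernel (checked there via \eqref{eq:skewLittlewoodvariant} and \eqref{eq:skewCauchy}). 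You instead compute the normalizing constant explicitly: your identity $(ac)^{\kappa_1}a^{-\vert\kappa\vert}\prod_{i\geqslant2}S_i=\tau_{\kappa}(c,a)$ is precisely the single-variable instance of the summation identity that underlies that stochasticity, and your involution $\tilde\tau_j=\kappa_j+\kappa_{j+1}-\tau_j$ on partitions interlacing $\kappa$ gives a clean bijective proof of it — I checked the exponent bookkeeping ($\vert\tau\vert\mapsto 2\vert\kappa\vert-\kappa_1-\vert\tilde\tau\vert$, alternating sum $\mapsto\kappa_1$ minus alternating sum) and the two expansions of $\prod S_i$ and of $\tau_\kappa(c,a)$ via the branching rule, and they are right; the final step $\tau_\kappa(a,c)=\tau_\kappa(c,a)$ is legitimate since the union specialization is symmetric in its parts. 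So your argument buys self-containedness (you do not invoke the previously recorded fact that $\mathcal{U}^{\angle}$ is a probability measure — in effect you reprove it in this special case), at the cost of the combinatorial identity the paper avoids.

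One caveat about your closing parenthetical: it is not true that under $\mathcal{U}^{\angle}_{c,a}(\cdot\,\vert\,\kappa)$ the vector $(\pi_1,\pi_3,\pi_5,\dots)$ equals $(\kappa_1+\Geom(ac),\kappa_3,\kappa_5,\dots)$. Your own decoupling shows that all coordinates of $\pi$ are independent, with $\pi_1=\kappa_1+\Geom(ac)$ but with $\pi_i$, $i\geqslant 2$, distributed as truncated geometric variables on $[\kappa_i,\kappa_{i-1}]$ (ratio $ac$ for odd $i$, $a/c$ for even $i$); in particular $\pi_3\neq\kappa_3$ with positive probability whenever $\kappa_2>\kappa_3$. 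The statement you may have had in mind is the one in the remark following Proposition \ref{prop:diagorow}, which concerns marginals of Pfaffian Schur measures rather than the transition operator. Since you flagged this aside as unused, it does not affect the validity of your proof of the lemma.
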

	\begin{proof}
		Similar to the proof of Lemma \ref{lem:interpretationgeom1}, 
		we have that \\
		$\tau_{\pi}(c) = \allowbreak c^{\pi_1 - \pi_2}c^{\pi_3 - \pi_4}\cdots$. 
		Hence, the distribution of $\pi_1$ under $ \mathcal{U}^{\angle}(\pi \vert \kappa) $ is proportional to $\mathds{1}_{\pi_1 \geqslant \kappa_1}(ac)^{\pi_1-\kappa_1}$.
	\end{proof}
	Fix some path $\gamma\in \Omega$. Let us sequentially grow the Pfaffian Schur process according to the procedure defined in Section \ref{sec:dynamics}. We need to specify the diagonal specialization and  the specializations we start from on the horizontal axis. Assume that the diagonal specialization is the single variable specialization into $c>0$, and the specialization on the edge $(i, 0)\to (i-1, 0)$ is the single variable specialization $a_i$. Given how the specializations are transported on the edges of the lattice when we apply $\mathcal{U}^{\llcorner}$ and  $\mathcal{U}^{\angle}$, this choice of initial specializations implies that for all edges of $\gamma$, vertical (resp. horizontal) edges with horizontal (resp. vertical) coordinates $i-1$ and $i$ are labelled by specializations into the single variable $a_i$,  and the diagonal edge is labelled by the specialization into the variable $c$ (See Figure \ref{fig:choicespecializations}).
	
	It follows from Lemmas \ref{lem:interpretationgeom1} and \ref{lem:interpretationgeom2} that the first coordinates of the partitions $\lambda^{v}$ in the sequence of Pfaffian Schur processes are such that for $i>j$
	$$ \lambda^{(i,j)}_1 =  \max\Big(\lambda^{(i-1,j)}_1, \lambda^{(i,j-1)}_1 \Big) + \Geom(a_i a_j),$$
	and
	$$  \lambda^{(i,i)}_1 = \lambda_1^{(i, i-1)} + \Geom(a_i c).$$
	Hence,  for any collection of vertices $v_1, \dots, v_k$ along $\gamma\in \Omega$, we have
	$$ (\lambda^{v_1}_1, \dots, \lambda^{v_1}_1)  \overset{(d)}{=} \big(G(v_1), \dots, G(v_1)\big),$$
	where $G(v)$ is the last passage time to $v$ as in Definition \ref{def:LLPgeom}.
\end{proof}

\begin{remark}
	\label{rem:symmetricRSK}
	Consider a symmetric matrix $S=(g_{ij})_{i,j=1}^n$ of size $n$ whose entries $g_{i,j}$ are such that $g_{i,j}\sim \Geom(a_ia_j)$ for $i\neq j$ and $ g_{i,i}\sim \Geom(c a_i)$. The image of $S$ under RSK correspondence is a Young tableau\footnote{Insertion tableau and Recording tableau are the same because $S$ is symmetric.} whose shape $\lambda$ is distributed according to the Pfaffian Schur measure \\
	$\PSM(c \vert  a_1, \dots, a_n).$
	This can be deduced from \cite[(7.47)]{baik2001algebraic}, see also Eq. (10.151) in \cite{forrester2010log}. As a consequence, this provides a very short proof that $G(n,n)$ has the law of the $\lambda_1$ marginal of $\PSM(c \vert  a_1, \dots, a_n)$. Moreover, this provides an interpretation of the other coordinates of the partition: $\lambda_1+\dots + \lambda_i$ is the maximum over $i$-tuples of non intersecting up-right paths between the points $(1,1), \dots, (1, i)$ and $(n,n), \dots, (n, n-i+1)$ of the sum of weights along these $i$ paths.  Note that the RSK insertion procedure also defines dynamics on (ascending) sequences of interlacing partitions, which are different from the push-block dynamics that we consider.
\end{remark}

\begin{remark}
	Taking a Poisson limit of geometric LPP, such that the points on the diagonal also limits to a one dimensional Poisson process, yields the Poissonized random involution longest increasing subsequence problem considered in \cite{baik2001algebraic, baik2001asymptotics} (equivalently half-space continuous PNG with a source at the origin \cite{sasamoto2004fluctuations}). The limit of Proposition \ref{prop:SchurLPPcorrelations} shows that this corresponds to the PLancherel specialization of the Pfaffian Schur process. 
\end{remark}

\section{Fredholm Pfaffian formulas for $k$-point distributions}
\label{sec:kpointdistribution}

It is shown in \cite[Theorem 3.3]{borodin2005eynard} that if $\bar \lambda, \bar\mu$ 
is distributed according to a Pfaffian Schur process (Definition \ref{def:PSP}), then
$$ \mathcal{L}(\bar\lambda) :=\Big\lbrace \big(1, \lambda_i^{(1)} -i\big)\Big\rbrace_{i\geqslant 1} \cup \dots \cup \Big\lbrace \big(k, \lambda_i^{(k)} -i\big)\Big\rbrace_{i\geqslant 1}$$
is a  Pfaffian point process. In particular, for any $S\geqslant 1$ and pairwise distinct points $(i_s, u_s)$, $1\leqslant s
\leqslant S$ of $\lbrace 1, \dots, k\rbrace \times \Z$ we have the formal\footnote{In \cite{borodin2005eynard} this is written as a formal identity in variables of the symmetric functions, but it can be specialized (as in Section \ref{sec:kernelgeom}) to numeric identities in the cases we consider.} series identity
\begin{equation}
\PP\big( \lbrace (i_1,u_1), \dots , (i_S, u_S)\rbrace  \subset \mathcal{L}(\lambda) \big)  = \Pf\big[K(i_s, u_s; i_t, u_t)\big]_{s,t=1}^S,
\label{eq:correlationfunctionsgeometric}
\end{equation}
where $K(i, u; j, v)$ is a $2 \times 2$ matrix-valued kernel
$$ K= \begin{pmatrix}
K_{11} & K_{12} \\
K_{21} & K_{22}
\end{pmatrix}   $$
with $K_{11}$, $K_{12} = -\big(K_{21}\big)^T$, and $K_{22}$  given by explicit formulas (see  \cite[Theorem 3.3]{borodin2005eynard}).
In Section \ref{sec:PPP}, we first review the formalism of Pfaffian point processes.
In Sections \ref{sec:kernelgeom} and \ref{sec:kernelexp}, we will explain how \cite[Theorem 3.3]{borodin2005eynard} applies to last passage percolation in a half-quadrant with geometric or exponential weights.

\subsection{Pfaffian point processes}
\label{sec:PPP}
We consider presently only simple\footnote{i.e. without multiplicities} point processes and introduce the formalism of Pfaffian point processes.
Let us first  start with the case of a finite state space $\mathbb{X}$. A random (simple) point process on $\mathbb{X}$ is a probability measure on subsets $X$ of $\mathbb{X}$. The correlation functions of the point process are defined by
$$ \rho(Y) = \PP\Big( X\subset \mathbb{X} \text{ such that } Y\subset X\Big), \text{ for }Y\subset \mathbb{X}. $$
This definition implies that for a domain $I_1\times I_2\times\dots \times I_k\subset\mathbb{X}^k$,
\begin{multline}
\sum_{\underset{distinct}{(x_1, \dots, x_k)\in I_1\times\dots \times I_k}}\rho(x_1, \dots, x_k) =\\  \EE\Big[\# \lbrace  k\text{-tuples of distinct points } x_1\in X\cap I_1, \dots , x_k\in X\cap I_k  \rbrace \Big].
\label{eq:momentfactorialmeasurediscrete}
\end{multline}
Such a point process is called Pfaffian if there exists a $2\times 2$ matrix valued  $\vert \mathbb{X} \vert \times \vert \mathbb{X} \vert$ skew-symmetric matrix $K$ with rows and columns parametrized by points of $\mathbb{X}$, such that the correlation functions of the random point process are
\begin{equation}
\rho(Y)  = \Pf\big(K_Y\big), \text{ for any }Y\subset \mathbb{X},
\label{eq:pfaffiankernel}
\end{equation}
where $K_Y:=\big(K(y_i, y_j) \big)_{i,j=1}^k$ for $Y=\lbrace y_1, \dots, y_k \rbrace$, and $\Pf$ is the Pfaffian (see \eqref{def:pfaffian}).

More generally, let $(\mathbb{X}, \mu)$ be a measure space, and $\PP$ (and $\EE$) be a probability measure (and expectation) on the set  $ \Conf(\mathbb{X})$  of countable and locally finite subsets (configurations) $X\subset\mathbb{X}$. We define the $k$th factorial moment  measure on $\mathbb{X}^k$ by
$$ B_1\times \dots \times B_k \rightarrow \EE\Big[\# \lbrace  k\text{-tuples of distinct points } x_1\in X\cap B_1, \dots , x_k\in X\cap B_k  \rbrace \Big],$$
where $B_1, \dots, B_k\subset\mathbb{X}$ are Borel subsets. Assuming it is absolutely continuous with respect to $\mu^{\otimes k}$, the $k$-point correlation function $\rho(x_1, \dots, x_k)$ is the Radon-Nykodym derivative of the $k$-moment factorial measure with respect to $\mu^{\otimes k}$. As in the discrete setting, $\PP$ is a Pfaffian point process if there exists a kernel
$$ K : \mathbb{X}\times \mathbb{X} \to \mathrm{Skew}_2(\R), $$
where $\mathrm{Skew}_2(\R)$ is the set of skew-symmetric $2\times 2$ matrices,
such that for all $Y\subset\mathbb{X}$,
\begin{equation}
\rho(Y) = \Pf(K_Y).
\label{eq:pfaffiankernelcontinuous}
\end{equation}
For measurable functions $f : \mathbb{X}\to \C$, the definition of correlation functions is equivalent to
\begin{equation}
\EE\Big[\sum_{\underset{distinct}{(x_1, \dots, x_k)\in X^k}} f(x_1)\dots f(x_k)\Big] = \int_{ \mathbb{X}^k} \rho(x_1, \dots, x_k)f(x_1)\dots f(x_k)\ \mathrm{d}\mu^{\otimes k},
\label{eq:propertycorrelations}
\end{equation}
and thus  \eqref{eq:pfaffiankernelcontinuous} implies that (\cite[Theorem 8.2]{rains2000correlation}),
\begin{equation}
\EE\Big[ \prod_{x\in X} (1+f(x))\Big] = \Pf\Big(J+K\Big)_{\mathbb{L}^2(\mathbb{X}, f\mu)},
\label{eq:PfaffianObservables}
\end{equation}
whenever both sides are absolutely convergent. The R.H.S of \eqref{eq:PfaffianObservables} is a Fredholm Pfaffian, defined in Definition \ref{def:FredholmPfaffian}.  We refer to \cite[Section 8]{rains2000correlation} and \cite[Appendix B]{ortmann2015pfaffian} for properties of Fredholm Pfaffians. In particular, \eqref{eq:PfaffianObservables} implies that the gap probabilities are given by the Pfaffian formula
\begin{equation}
\PP\big(\text{no point lie in }\mathbb{Y} \big)  = \Pf(J-K)_{\mathbb{L}^2(\mathbb{Y}, \mu)} \text{ for }\mathbb{Y}\subset\mathbb{X}.
\label{eq:gapprobability}
\end{equation}

\subsection{Half-space geometric weight LPP}
\label{sec:kernelgeom}
Let $a_1, a_2, \dots\in(0,1)$, and $c>0$ such that for all $i$, $ca_i<1$; and let $0<n_1 < n_2 < \dots <n_k$ and $m_1> m_2 > \dots > m_k$ be sequences of integers such that $n_i>m_i$ for all $i$.
Consider the Pfaffian Schur process indexed by the unique path in $\Omega$  going through the points
$$(n_k, 0), (n_k, m_k), (n_{k-1}, m_k), (n_{k-1}, m_{k-1}) , \dots , (n_1, m_1), (m_1, m_1),(0,0),$$
where a horizontal edge $(i-1, j)\to(i,j)$ (resp. a vertical edge $(i, j-1) \to (i, j)$) is labelled by the specialization into the single variable $a_i$ (resp. $a_j$), and the diagonal edge is labelled by $c$. In other words, we are in the same setting as in Section \ref{sec:marginalLPP}.
In this case, it follows from \cite[Theorem 3.3]{borodin2005eynard} that  the correlation kernel of $\mathcal{L}(\bar\lambda)$, that we will denote by $K^{\rm geo}$, indexed by couples of points in $\lbrace 1, \dots, k\rbrace \times \Z$,  is given by the following formulas. It is convenient to introduce the notations 
\begin{align*}
h^{\rm geo}_{11}(z,w)&:= \prod_{\ell=1}^{n_i} \frac{z-a_{\ell}}{z} \prod_{\ell=1}^{n_j}\frac{w-a_{\ell}}{w} \prod_{\ell=1}^{m_i} \frac{1}{1-a_{\ell}z} \prod_{\ell=1}^{m_j} \frac{1}{1-a_{\ell} w},\\
h^{\rm geo}_{12}(z,w)&:= \prod_{\ell=1}^{n_i}  \frac{z-a_{\ell}}{z}  \prod_{\ell=1}^{n_j} \frac{1}{1-a_{\ell} w}    \prod_{\ell=1}^{m_i} \frac{1}{1-a_{\ell}z } \prod_{\ell=1}^{m_j} \frac{w-a_{\ell}}{w},\\
h^{\rm geo}_{22}(z,w)&:= \prod_{\ell=1}^{n_i}\frac{1}{1-a_{\ell}z} \prod_{\ell=1}^{n_j} \frac{1}{1-a_{\ell}w} \prod_{\ell=1}^{m_i}  \frac{z-a_{\ell}}{z} \prod_{\ell=1}^{m_j} \frac{w-a_{\ell}}{w}.
\end{align*}
Then the kernel is given by 
\begin{equation}
K^{\rm geo}_{11}(i, u;j,v) =  \iint \frac{(z-w)h^{\rm geo}_{11}(z,w) }{(z^2-1)(w^2-1)(zw-1)}
\frac{z-c}{z} \frac{w-c}{w}  \frac{\dd z}{z^u}\frac{\dd w}{w^v},
\label{eq:K11geoinitial}
\end{equation}
where the contours are positively oriented circles around $0$ of such that for all $i$, $1<\vert z\vert , \vert w\vert <1/a_i$;
\begin{equation}
K^{\rm geo}_{12}(i, u;j,v)   =  \iint \frac{(z-w)h^{\rm geo}_{12}(z,w)}{(z^2-1)w(zw-1)} 
\frac{z-c}{z(1-cw)} \frac{\dd z}{z^u}\frac{\dd w}{w^v}= -K_{21}^{\rm geo}(j,v ; i,u),
\label{eq:K12geoinitial}
\end{equation}
where the contours are positively-oriented circles around $0$ of radius such that for all $i$,  $1<\vert z\vert  <1/a_i$, $\vert w \vert <1/c, 1/a_i$ and if $i\geqslant j$ then $\vert zw\vert >1$,
while  if $i< j$ then $\vert zw\vert <1$; 
\begin{equation}
K^{\rm geo}_{22}(i, u;j,v) =  \iint \frac{(z-w)h^{\rm geo}_{22}(z,w)}{zw(zw-1)} 
\frac{1}{1-cz}\frac{1}{1-cw}\frac{\dd z}{z^u}\frac{\dd w}{w^v},
\label{eq:K22geoinitial}
\end{equation}
where the contours are positively oriented circles around $0$ such that $\vert zw\vert <1$ and $\vert z\vert , \vert w\vert <1/c$, and for all $i$,  $\vert z\vert, \vert w\vert <1/a_i$.

The conditions on the contours ensure that \eqref{eq:correlationfunctionsgeometric} is not only a formal series identity but a numeric equality: when $c<1$, all sums in the proof of \cite[Theorem 3.3]{borodin2005eynard} are actually absolutely convergent, and the formulas can then be extended to any $c$  such that $c a_i<1$ by analytic continuation. The correlation functions of the model are clearly analytic in $c$, and the Pfaffians of integrals on finite contours are analytic as well.  Using Proposition \ref{prop:SchurLPPcorrelations}, \eqref{eq:gapprobability} implies the following.
\begin{remark}
	In the statement of Theorem 3.3 in \cite{borodin2005eynard}, the factor $(zw-1)$ which appear in the denominator of the integrand in $K_{22}$ is replaced by $(1-zw)$. This seems to be a a typo, as the proof of Theorem 3.3 in \cite{borodin2005eynard} suggests that the correct factor should be $(zw-1)$, and \cite{ghosal2017correlation} recently confirmed the correct sign. 
\end{remark}
\begin{proposition}
	For any  $g_1, \dots, g_k \in \Z_{\geqslant 0}$,
	\begin{equation}
	\mathbb{P}\left( \bigcap_{i=k}^k \big \lbrace G(n_i, m_i)\leqslant  g_i \big\rbrace \right)  = \Pf\big(J-K^{\rm geo}\big)_{\mathbb{L}^2\big(\tilde{\mathbb{D}}_k(g_1, \dots, g_k) \big)},
	\label{eq:pdfG}
	\end{equation}
	where
	$$ \tilde{\mathbb{D}}_k(g_1, \dots, g_k) = \lbrace (i,x)\in \lbrace 1, \dots, k\rbrace \times\Z: x \geqslant g_i\rbrace,$$
	and $J$ is the matrix kernel
	\begin{equation}
	J(i, u ; j, v) = \mathds{1}_{(i, u)=(j,v)}\left(\begin{matrix}
	0 &1 \\
	-1 & 0
	\end{matrix} \right).
	\label{eq:kernelJ}
	\end{equation}
	\label{prop:kernelgeom}
\end{proposition}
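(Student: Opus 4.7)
The plan is to combine three ingredients that have already been put in place: the identification of half-space geometric LPP times with the first coordinates of the partitions in a suitably chosen Pfaffian Schur process (Proposition \ref{prop:SchurLPPcorrelations}), the fact that these partitions jointly form a Pfaffian point process with the explicit kernel $K^{\mathrm{geo}}$ recorded in equations \eqref{eq:K11geoinitial}--\eqref{eq:K22geoinitial}, and the general gap-probability formula \eqref{eq:gapprobability} for Pfaffian point processes.

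First I would invoke Proposition \ref{prop:SchurLPPcorrelations} for the specific zig-zag path going through $(n_k,0), (n_k,m_k), (n_{k-1},m_k), \dots, (n_1,m_1), (m_1,m_1), (0,0)$, with horizontal/vertical edges labeled by the single-variable specializations $a_i$ and diagonal edge labeled by $c$. This gives the distributional identity $(G(n_i,m_i))_{i=1}^k = (\lambda_1^{(i)})_{i=1}^k$ in law. Next, I would translate the event $\bigcap_i \{G(n_i,m_i)\leqslant g_i\}$ into an emptiness event for the point process $\mathcal{L}(\bar\lambda)$ defined in the opening of Section \ref{sec:kpointdistribution}: since the points with first coordinate $i$ are $\{(i,\lambda_j^{(i)}-j):j\geqslant 1\}$, whose maximal second coordinate is $\lambda_1^{(i)}-1$, the event $\lambda_1^{(i)}\leqslant g_i$ is exactly the event that no point of $\mathcal{L}(\bar\lambda)$ lies in $\{i\}\times\Z_{\geqslant g_i}$. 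Intersecting over $i$ yields that the left-hand side of \eqref{eq:pdfG} equals the probability that $\mathcal{L}(\bar\lambda)\cap\tilde{\mathbb{D}}_k(g_1,\dots,g_k)=\varnothing$.

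Then I would apply \cite[Theorem 3.3]{borodin2005eynard} to conclude that $\mathcal{L}(\bar\lambda)$ is a Pfaffian point process. The general formulas in \emph{loc.~cit.} are stated as formal series identities in the variables of the Schur-positive specializations; substituting the single-variable specializations prescribed by our zig-zag path turns each generating-function factor into the corresponding finite product of the form $\prod_\ell (z-a_\ell)/z$ or $\prod_\ell 1/(1-a_\ell z)$, and produces the kernel entries $K^{\mathrm{geo}}_{11},K^{\mathrm{geo}}_{12},K^{\mathrm{geo}}_{22}$ displayed in \eqref{eq:K11geoinitial}--\eqref{eq:K22geoinitial}. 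The contour prescriptions listed there ensure that the geometric series arising in the derivation converge, so the formal identity becomes a numerical one first for $c<1$ and then, by analyticity of both sides in $c$ on finitely-many-fold contour integrals, for all admissible $c$. Finally, \eqref{eq:gapprobability} applied to $\mathbb{Y}=\tilde{\mathbb{D}}_k(g_1,\dots,g_k)$ gives the claimed Fredholm Pfaffian.

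The main technical point to verify is absolute convergence of the Fredholm Pfaffian series \eqref{eq:defFredholmPfaffian} on the infinite discrete domain $\tilde{\mathbb{D}}_k(g_1,\dots,g_k)$. This is the step that corresponds to turning the gap formula from an algebraic identity into a bona fide probability: one applies Hadamard's bound as in Lemma \ref{lem:hadamard}, after extracting exponentially decaying factors in $u$ and $v$ from the integrands in \eqref{eq:K11geoinitial}--\eqref{eq:K22geoinitial} by slightly enlarging the $z$ and $w$ contours (which is legitimate since the poles in the integrand are at $z,w\in\{a_\ell^{-1},1/c\}$ away from the chosen circles). The factors $z^{-u}$ and $w^{-v}$ then produce geometric decay on $\tilde{\mathbb{D}}_k$, making the sum in the Fredholm expansion summable by a Hadamard-type bound and justifying all interchanges of summation. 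No new idea is needed beyond this, so the proof is essentially a bookkeeping exercise assembling Proposition \ref{prop:SchurLPPcorrelations}, \cite[Theorem 3.3]{borodin2005eynard}, and \eqref{eq:gapprobability}.
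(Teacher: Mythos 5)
Your argument is correct and takes essentially the same route as the paper: Proposition \ref{prop:SchurLPPcorrelations} identifies the passage times with the first parts $\lambda_1^{(i)}$ of the Pfaffian Schur process along the chosen zig-zag path, \cite[Theorem 3.3]{borodin2005eynard} specialized to the single-variable specializations (with the stated contour conditions turning the formal identity into a numeric one for $c<1$, then analytic continuation in $c$) gives the Pfaffian kernel $K^{\rm geo}$, and the gap-probability formula \eqref{eq:gapprobability} applied to $\tilde{\mathbb{D}}_k(g_1,\dots,g_k)$ yields \eqref{eq:pdfG}. Your extra discussion of absolute convergence via Hadamard bounds is in the spirit of how the paper handles this (cf. Lemmas \ref{lem:hadamard} and \ref{lem:expboundgeom}), with the small caveat that for $K^{\rm geo}_{22}$ and the $w$-variable of $K^{\rm geo}_{12}$ (when $i<j$) the contour constraints give controlled exponential growth rather than decay, which is precisely the mixed situation Lemma \ref{lem:hadamard} is designed to absorb.
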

By Definition \ref{def:FredholmPfaffian}, $\Pf\big(J-K^{\rm geo}\big)$ is defined by the expansion
\begin{multline}
\Pf\big(J-K^{\rm geo}\big)_{\mathbb{L}^2\big(\tilde{\mathbb{D}}_k(g_1, \dots, g_k) \big)} = \\ 1+\sum_{n=1}^{\infty} \frac{(-1)^n}{n!}  \sum_{i_1, \dots, i_n=1 }^k
\sum_{x_1=g_{i_1}}^{\infty} \dots \sum_{x_n=g_{i_n}}^{\infty}
\Pf\big(K^{\rm geo}(i_s, x_s ; i_t, x_t )\big)_{s, t=1}^n.
\label{eq:defPfaffian1}
\end{multline}

\begin{remark}
	In the special case $k=1$ and $n_1=m_1=n$, the Pfaffian representation of the correlation functions for $\lbrace \lambda^{(n, n)}_i -i\rbrace$ was given in  \cite[Corollary 4.3]{rains2000correlation} and subsequently used in \cite{forrester2006correlation} to study involutions with a fixed number of fixed points.
\end{remark}

\subsection{Deformation of contours}
\label{sec:deformedcontours}
We will later need (in order to deduce the correlation kernel of the exponential model from the geometric case) to set all $a_i$ to $\sqrt{q}$ and let $q$ go to $1$.  Before taking this limit,   we need to deform some of the contours used in the formulas for $K^{\rm geo}$ and compute the residues involved. It will be much more convenient\footnote{If it was not the case, we would find issues related to the choice of limiting contours when performing the asymptotic analysis.} for the following asymptotic analysis if all integration contours in the definition of $K^{\rm geo}$ are circles such that $\vert zw \vert >1$. We do not need to transform the expressions for $K^{\rm geo}_{11}$ and for $K^{\rm geo}_{12}$ when $i\geqslant j$. When $c<1$, these residues correspond to terms that arise in the Pfaffian version of Eynard-Mehta's theorem (\cite[Theorem 1.9]{borodin2005eynard}), as in the proof of Theorem 3.3 in \cite{borodin2005eynard}. When $c\geqslant 1$, we get more residues which were not present in the proof of Theorem 3.3 in \cite{borodin2005eynard} (recall that we make an analytic continuation to obtain the correlation kernel when $c\geqslant 1$). These extra residues are the signature of the occurrence of a phase transition when $c$ varies, as discovered in \cite{baik2001asymptotics}.

\textbf{Case $c<1$:}  For $K^{\rm geo}_{22}$ we write
\begin{equation}
K^{\rm geo}_{22}(i, u;j,v) = I^{\rm geo}_{22}(i, u;j,v)+ R^{\rm geo}_{22}(i, u;j,v)
\label{eq:K22geodeformed}
\end{equation}
where $I^{\rm geo}_{22}(i, u;j,v)$ is the same  as \eqref{eq:K22geoinitial}
but the contours are now positively oriented circles around $0$ such that $1<\vert z\vert , \vert w\vert <\min(1/c, 1/a_1, 1/a_2, \dots)$; and
\begin{equation}
R^{\rm geo}_{22}(i, u;j,v) =  \int \frac{1-z^2}{z^2}  \frac{1}{1-cz}\frac{1}{1-c/z}    \frac{h^{\rm geo}_{22}(z,1/z) }{z^{u-v}}\dd z.
\end{equation}
To see the equivalence between \eqref{eq:K22geoinitial} and \eqref{eq:K22geodeformed}, deform the $w$ contour so that the radius exceeds $z^{-1}$. Of course, we have  to substract the residue for $w=1/z$, which is expressed as an integral in $z$, which equals $-R^{\rm geo}_{22}(i, u;j,v)$. Finally, we can freely move the $z$ contour in the two-fold integral (only after having moved the $w$ contour) and get   \eqref{eq:K22geodeformed}.

In the case where $i<j$ we need to also decompose $K_{12}$, because we pick a residue at $w=z^{-1}$ when moving the $w$ contour. We  rewrite $K^{\rm geo}_{12}$ as
\begin{equation}
K^{\rm geo}_{12}(i, u;j,v) = I^{\rm geo}_{12}(i, u;j,v)+ R^{\rm geo}_{12}(i, u;j,v)
\label{eq:decompositionK12geo}
\end{equation}
where $I^{\rm geo}_{12}(i, u;j,v)$ is the same as  \eqref{eq:K12geoinitial}
but the contours are now positively-oriented circles around $0$ of radius such that  $1<\vert z\vert , \vert w \vert <\min(1/c, 1/a_1, 1/a_2, \dots)$ and
\begin{equation}
R^{\rm geo}_{12}(i, u;j,v) = - \int
\frac{h^{\rm geo}_{12}(z,1/z)  }{z^{u-v+1}}\dd z.
\label{eq:R12geosimple}
\end{equation}

\textbf{Case $c>1$:} In that case, we need to take into account the residues at the poles of $1/(1-cz)$ and $1/(1-cw)$ in $K^{\rm geo}_{12}$ and $K^{\rm geo}_{22}$. When deforming the $w$ contour in \eqref{eq:K22geoinitial}, we first encounter a pole at $1/c$ and then a pole at $1/z$. Each of these residues can be written as an integral in the variable $z$, where we may again deform the contour picking a residue at $z=1/c$ when necessary. Then we deform the contour for $z$ in the two-fold integral and pick a residue at $z=1/c$, which is expressed as an integral in $w$. We find 
\begin{equation}
K^{\rm geo}_{22}(i, u;j,v) = I^{\rm geo}_{22}(i, u;j,v)+ \hat{R}^{\rm geo}_{22}(i, u;j,v)
\label{eq:K22geodeformed2}
\end{equation}
where $I^{\rm geo}_{22}$ is as in the $c<1$ case and 
\begin{multline}
\hat{R}^{\rm geo}_{22}(i, u;j,v) =  -c^v \int \frac{h^{\rm geo}_{22}(z,1/c)}{z^{u+1}(z-c)}\dd z
+ c^u \int \frac{h^{\rm geo}_{22}(w,1/c) }{w^{v+1}(w-c)}\dd w \\
+  \int \frac{1-z^2}{z^2}\frac{h^{\rm geo}_{22}(z,1/z)}{(1-cz)(1-c/z)}\frac{\dd z}{z^{u-v}}
+c^{u-v-1} h^{\rm geo}_{22}(1/c, c).
\label{eq:R22geocomplicated}
\end{multline}
Because of the particular sequence of contour deformations that we have chosen, the first integral in \eqref{eq:R22geocomplicated} is such that $c$ is outside the contour, while $c$ is inside the contour in the second and third integral in \eqref{eq:R22geocomplicated}. We get a formula  where the antisymmetry is  apparent  by deforming again the $z$ contour and writing  $\hat{R}^{\rm geo}_{22}$ as
\begin{multline}
\hat{R}^{\rm geo}_{22}(i, u;j,v) =  -c^v \int \frac{h^{\rm geo}_{22}(z,1/c)}{z^{u+1}(z-c)}\dd z
+ c^u\int \frac{h^{\rm geo}_{22}(w,1/c) }{w^{v+1}(w-c)}\dd w \\
+ \int \frac{1-z^2}{z^2}\frac{h^{\rm geo}_{22}(z,1/z)}{(1-cz)(1-c/z)}\frac{\dd z}{z^{u-v}}
-c^{u-v-1} h^{\rm geo}_{22}(1/c, c)
+c^{v-u-1} h^{\rm geo}_{22}(c, 1/c).
\label{eq:R22geobetter}
\end{multline}
where the contours are circles with radius between $c$ and the $1/a_{\ell}$'s in the first and second integral, and radius between $1/c$ and $c$ in the third integral.

For $K^{\rm geo}_{12}$, in the case where $i<j$, we similarly rewrite
\begin{equation*}
K^{\rm geo}_{12}(i, u;j,v) = I^{\rm geo}_{12}(i, u;j,v)+ \hat{R}^{\rm geo}_{12}(i, u;j,v)
\end{equation*}
where $I^{\rm geo}_{12}$ is as in the $c<1$ case and 
\begin{equation}
\hat{R}^{\rm geo}_{12}(i, u;j,v) =  - \int\frac{h^{\rm geo}_{12}(z, 1/z)}{z^{u-v+1}}\dd z
- c^{w} \int \frac{1-cz}{(z^2-1)z} h^{\rm geo}_{12}(z, 1/c) \frac{\dd z}{z^u},
\label{eq:R12geocasc>1}
\end{equation}
and the contours are circles around $0$ such that $1<\vert z\vert <\min(1/a_1, 1/a_2,  \dots)$.

\textbf{Case $c=1$:} When deforming contours as previously, the sequence of residues encountered is slightly different than in the case $c>1$, and we get
\begin{equation}
K^{\rm geo}_{22}(i, u;j,v) = I^{\rm geo}_{22}(i, u;j,v)+ \bar{R}^{\rm geo}_{22}(i, u;j,v)
\label{eq:K22geodeformed3}
\end{equation}
where $I^{\rm geo}_{22}$ is as in the $c<1$ case and 
\begin{multline}
\bar{R}^{\rm geo}_{22}(i, u;j,v) =  \frac{-1}{2\I\pi} \int \frac{h^{\rm geo}_{22}(z,1)}{z^{u+1}(z-1)}\dd z
+ \frac{1}{2\I\pi} \int \frac{h^{\rm geo}_{22}(w,1) }{w^{v+1}(w-1)}\dd w \\
- \frac{1}{2\I\pi} \int \frac{1+z}{z^2(1-z)}\frac{h^{\rm geo}_{22}(z,1/z)}{z^{u-v}}\dd z
- h^{\rm geo}_{22}(1, 1).
\end{multline}
where all contours are circles such that    $1<\vert z\vert , \vert w\vert < \min(1/a_1, 1/a_2,  \dots) $.
In the case where $i<j$,
\begin{equation*}
K^{\rm geo}_{12}(i, u;j,v) = I^{\rm geo}_{12}(i, u;j,v)+ \bar{R}^{\rm geo}_{12}(i, u;j,v)
\end{equation*}
where  $I^{\rm geo}_{12}$ is as in the $c<1$ case and 
\begin{equation*}
\bar{R}^{\rm geo}_{12}(i, u;j,v) = -\frac{1}{2\I\pi} \int\frac{h^{\rm geo}_{12}(z, 1/z)}{z^{u-v+1}}\dd z
+ \frac{1}{2\I\pi} \int \frac{1}{z(1+z)} \frac{h^{\rm geo}_{12}(z, 1)}{z^u} \dd z,
\end{equation*}
and the contours are circles around $0$ such that $\vert z\vert <\min(1/a_1, 1/a_2,  \dots)$.

\subsection{Half-space exponential weight LPP}
\label{sec:kernelexp}
The following lemma is a direct consequence of the geometric to exponential weak convergence. 
\begin{lemma}
	Set all parameters $a_i \equiv \sqrt{q}$ and $c=\sqrt{q} \big( 1+(\alpha-1)(q-1)\big)$. Then, for any sequences of integers $n_1, \dots, n_k$ and $m_1, \dots, m_k$ such that $n_i\geqslant m_i$, we have the weak convergence as $q\to 1$,
	$$ \big\lbrace (1-q)G(n_i,m_i) \big\rbrace_{i=1}^k\Longrightarrow \big\lbrace H(n_i,m_i) \big\rbrace_{i=1}^k. $$
	(Recall $H(n,m)$ from  Definition \ref{def:LPPexp} and $G(n,m)$ from  Definition \ref{def:LLPgeom}.)
	\label{prop:limitgeomtoexp}
\end{lemma}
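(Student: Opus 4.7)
The strategy is the standard geometric-to-exponential weak convergence applied weight-by-weight, combined with continuity of the LPP recursion. First I would verify that with the parameter choices $a_i \equiv \sqrt{q}$ and $c = \sqrt{q}(1+(\alpha-1)(q-1))$, each rescaled weight $(1-q)g_{n,m}$ converges in distribution to the corresponding exponential weight $w_{n,m}$. For off-diagonal entries ($n \geq m+1$) we have $g_{n,m}\sim\Geom(q)$, so for any $x>0$,
\[
\PP\big((1-q)g_{n,m}>x\big)= q^{\lceil x/(1-q)\rceil}\longrightarrow e^{-x}
\]
as $q\to 1$, giving $(1-q)g_{n,m}\Rightarrow\mathcal{E}(1)$. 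For diagonal entries, $g_{n,n}\sim\Geom(p_q)$ with $p_q=ca_n=q(1+(\alpha-1)(q-1))$. One computes $1-p_q=(1-q)\bigl(1+(\alpha-1)q\bigr)=(1-q)\alpha+O((1-q)^2)$, so $-\log p_q\sim\alpha(1-q)$ and therefore
\[
\PP\big((1-q)g_{n,n}>x\big)=p_q^{\lceil x/(1-q)\rceil}\longrightarrow e^{-\alpha x},
\]
i.e. $(1-q)g_{n,n}\Rightarrow \mathcal{E}(\alpha)$.

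Next, since the geometric weights $\{g_{n,m}\}$ are mutually independent for every $q$ and each rescaled marginal converges weakly to the corresponding independent exponential $w_{n,m}$, the full finite collection $\{(1-q)g_{n,m}\}_{n\geq m\geq 0}$ (restricted to the finite index set involved in computing $G(n_1,m_1),\dots,G(n_k,m_k)$) converges jointly in distribution to $\{w_{n,m}\}$.

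Finally, the LPP recursion in Definition \ref{def:LLPgeom} expresses $G(n_i,m_i)$ as a fixed deterministic function (a composition of sums and maxima) of the finitely many weights $g_{n,m}$ with $n\leq n_i$, $m\leq m_i$. The same formula defines $H(n_i,m_i)$ from $\{w_{n,m}\}$ (Definition \ref{def:LPPexp}), and scaling by $(1-q)$ commutes with $\max$ and $+$. By the continuous mapping theorem applied to this continuous functional, the joint convergence of weights transfers to the joint convergence
\[
\big\{(1-q)G(n_i,m_i)\big\}_{i=1}^k\Longrightarrow \big\{H(n_i,m_i)\big\}_{i=1}^k,
\]
which is the claim. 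There is no serious obstacle: the only point requiring any care is the asymptotic expansion of $1-p_q$ needed to identify the rate of the limiting exponential on the diagonal as exactly $\alpha$, which is precisely why $c$ was chosen of the stated form.
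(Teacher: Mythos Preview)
Your proof is correct and is precisely the argument the paper has in mind: the paper does not give a detailed proof at all, stating only that the lemma ``is a direct consequence of the geometric to exponential weak convergence.'' Your write-up spells out exactly this direct consequence --- marginal convergence of each rescaled geometric weight to the appropriate exponential, joint convergence by independence, and the continuous mapping theorem applied to the finite max/sum functional defining the passage times --- including the computation showing that the particular form of $c$ yields rate $\alpha$ on the diagonal.
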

We will use this lemma to deduce the correlation functions for the exponential model. 
Define the kernel $K^{\rm exp}:(\lbrace 1, \dots, k\rbrace \times \R)^2\to \skeww(\R)$ by
$$ K^{\rm exp}(i,x;j,y) = I^{\rm exp}(i,x;j,y) + \begin{cases} R^{\rm exp}(i,x;j,y) & \mbox{when }\alpha>1/2,\\
\hat{R}^{\rm exp}(i,x;j,y) & \mbox{when }\alpha<1/2,\\
\bar{R}^{\rm exp}(i,x;j,y) & \mbox{when }\alpha=1/2.\\  \end{cases}$$
Recalling Definition \ref{def:basicrays} for integration contours, we define $I^{\rm exp}$ by:
\begin{equation}
I^{\rm exp}_{11}(i, x; j, y) :=  \int_{\mathcal{C}_{1/4}^{\pi/3}}\dd z\int_{\mathcal{C}_{1/4}^{\pi/3}}\dd w \frac{(z-w)e^{-xz-yw}}{4zw(z+w)} 
\frac{(1+2z)^{n_i}(1+2w)^{n_j}}{(1-2z)^{m_i}(1-2w)^{m_j}} (2z+2\alpha -1)(2w+2\alpha -1);
\end{equation}
\begin{equation}
I^{\rm exp}_{12}(i, x; j, y) :=  \int_{\mathcal{C}_{a_z}^{\pi/3}}\dd z\int_{\mathcal{C}_{a_w}^{\pi/3}} \dd w \frac{(z-w)e^{-xz-yw}}{2z(z+w)} 
\frac{(1+2z)^{n_i}}{(1-2w)^{n_j}}\frac{(1+2w)^{m_j}}{(1-2 z)^{m_i}}  \frac{2\alpha -1 + 2z}{2\alpha -1-2w} ,
\label{eq:I12exp}
\end{equation}
where in the definition of the contours $\mathcal{C}_{a_z}^{\pi/3}$ and $ \mathcal{C}_{a_w}^{\pi/3} $, the real  constants $a_z, a_w$ are chosen so that $0<a_z<1/2$, $a_z+a_w >0$  and $a_w<(2\alpha-1)/2$;
\begin{equation}
I^{\rm exp}_{22}(i, x; j, y) :=    \int_{\mathcal{C}_{b_z}^{\pi/3}}\dd z\int_{\mathcal{C}_{b_w}^{\pi/3}}\dd w \frac{(z-w)e^{-xz-yw}}{z+w} 
\frac{(1+2z)^{m_i}(1+2w)^{m_j}}{(1-2z)^{n_i}(1-2w)^{n_j}} \frac{1}{2\alpha -1 - 2z}\frac{1}{2\alpha -1 - 2w},
\label{eq:I22exp}
\end{equation}
where in the definition of the contours $\mathcal{C}_{b_z}^{\pi/3}$ and $ \mathcal{C}_{b_w}^{\pi/3} $, the real constants $b_z, b_w$ are chosen so that  $0<b_z, b_w<(2\alpha-1)/2$ when $\alpha>1/2$, while we impose only $b_z, b_w>0$ when $\alpha\leqslant 1/2$.

We set $R_{11}^{\rm exp}(i,x ; j,y) = 0,$ and $R_{12}^{\rm exp}(i,x ; j,y)=0$ when $i\geqslant j$, and likewise for $\hat{R}^{\rm exp}$ and $\bar{R}^{\rm exp}$. The other entries  depend on the value of $\alpha$ and the sign of $x-y$.

\textbf{Case $\alpha>1/2$:} When $x>y$,
$$ R_{22}^{\rm exp}(i,x ; j,y)  = - \int_{\mathcal{C}_{a_z}^{\pi/3}} \frac{(1+2z)^{m_i}(1-2z)^{m_j}}{(1-2z)^{n_i}(1+2z)^{n_j}} \frac{2ze^{-\vert x-y\vert z}\dd z}{(2\alpha-1-2z)2\alpha-1+2z} ,$$
and when $x<y$
$$ R_{22}^{\rm exp}(i,x ; j,y)  = \int_{\mathcal{C}_{a_z}^{\pi/3}} \frac{(1+2z)^{m_j}(1-2z)^{m_i}}{(1-2z)^{n_j}(1+2z)^{n_i}} \frac{2ze^{-\vert x-y\vert z}\dd z}{(2\alpha-1-2z)(2\alpha-1+2z)},$$
where $(1-2\alpha)/2<a_z<(2\alpha-1)/2$. One immediately checks that $R_{22}^{\rm exp}$ is antisymmetric as we expect. When $i<j$ and $x>y$
$$ R_{12}^{\rm exp}(i,x ; j,y)  = -\frac{1}{2\I\pi} \int_{\mathcal{C}_{1/4}^{\pi/3}}
\frac{(1+2z)^{n_i}}{(1+2z)^{n_j}}\frac{(1-2z)^{m_j}}{(1-2z)^{m_i}}
e^{-\vert x-y\vert z}\dd z,$$
while if $x<y$, $ R_{12}^{\rm exp}(i,x ; j,y)=R_{12}^{\rm exp}(i,y ; j,x)$. Note that $R_{12}$ is not antisymmetric nor symmetric (except when $k=1$, i.e. for the one point distribution).

\textbf{Case $\alpha<1/2$:} When $x>y$, we have
\begin{multline}
\hat{R}_{22}^{\rm exp}(i,x ; j,y)  =\frac{-e^{\frac{1-2\alpha}{2}y}}{2}  \int \frac{(1+2z)^{m_i}(2\alpha)^{m_j}}{(1-2z)^{n_i}(2-2\alpha)^{n_j}} \frac{e^{-xz}}{2\alpha-1+2z}\dd z \\
+ \frac{e^{\frac{1-2\alpha}{2}x}}{2} \int \frac{(1+2z)^{m_j}(2\alpha)^{m_i}}{(1-2z)^{n_j}(2-2\alpha)^{n_i}} \frac{e^{-yz}}{2\alpha-1+2z}\dd z \\
- \int_{\mathcal{C}_{a_z}^{\pi/3}} \frac{(1+2z)^{m_i}(1-2z)^{m_j}}{(1-2z)^{n_i}(1+2z)^{n_j}} \frac{1}{2\alpha-1-2z}\frac{1}{2\alpha-1+2z} 2ze^{-\vert x-y\vert z}\dd z \\
-\frac{e^{(x-y)\frac{1-2\alpha}{2}}}{4}\frac{(2\alpha)^{m_i}(2-2\alpha)^{m_j}}{(2-2\alpha)^{n_i}(2\alpha)^{n_j}}+ \frac{e^{(y-x)\frac{1-2\alpha}{2}}}{4}\frac{(2\alpha)^{m_j}(2-2\alpha)^{m_i}}{(2-2\alpha)^{n_j}(2\alpha)^{n_i}},
\label{eq:R22expcasalphapetit1}
\end{multline}
where the contours in the two first integrals pass to the right of $ (1-2\alpha)/2 $. When $x<y$, the sign of the third term is flipped  so that $\hat{R}_{22}^{\rm exp}(i,x ; j,y)=-\hat{R}_{22}^{\rm exp}(j, y ; i,x)$.
We can write slightly simpler formulas by reincorporating residues in the first two integrals: thus, when $x>y$,
\begin{multline}
\hat{R}_{22}^{\rm exp}(i,x ; j,y)  =\frac{-e^{\frac{1-2\alpha}{2}y}}{2}  \int_{\mathcal{C}_{a_z}^{\pi/3}} \frac{(1+2z)^{m_i}(2\alpha)^{m_j}}{(1-2z)^{n_i}(2-2\alpha)^{n_j}} \frac{e^{-xz}}{2\alpha-1+2z}\dd z \\
+ \frac{e^{\frac{1-2\alpha}{2}x}}{2} \int_{\mathcal{C}_{a_z}^{\pi/3}} \frac{(1+2z)^{m_j}(2\alpha)^{m_i}}{(1-2z)^{n_j}(2-2\alpha)^{n_i}} \frac{e^{-yz}}{2\alpha-1+2z}\dd z \\
- \int_{\mathcal{C}_{a_z}^{\pi/3}} \frac{(1+2z)^{m_i}(1-2z)^{m_j}}{(1-2z)^{n_i}(1+2z)^{n_j}} \frac{1}{2\alpha-1-2z}\frac{1}{2\alpha-1+2z} 2ze^{-\vert x-y\vert z}\dd z,
\end{multline}
where $\frac{2\alpha-1}{2} <a_z<\frac{1-2\alpha}{2}$.
When $i<j$, if $x>y$
\begin{equation}
\hat{R}_{12}^{\rm exp}(i,x ; j,y)  = -\int_{\mathcal{C}_{1/4}^{\pi/3}}\frac{(1+2z)^{n_i}}{(1+2z)^{n_j}}\frac{(1-2z)^{m_j}}{(1-2z)^{m_i}}e^{-\vert x-y\vert z}\dd z,%\\
\label{eq:R12expcasalphapetit1}
\end{equation}
while if $x<y$, $ \hat{R}_{12}^{\rm exp}(i,x ; j,y)=\hat{R}_{12}^{\rm exp}(i,y ; j,x)$.
\begin{remark}
	The formula \eqref{eq:R12expcasalphapetit1} for $\hat{R}_{12}^{\rm exp}$ is not exactly the limit of the expression for $\hat{R}_{12}^{\rm geo}$ in \eqref{eq:R12geocasc>1}.  Indeed, the second term in \eqref{eq:R12geocasc>1}, which corresponds to a residue when $w=1/c$ does not have its counterpart in \eqref{eq:R12expcasalphapetit1}. This is because we have assumed that $a_w<(2\alpha-1)/2$ in the contours for $I_{12}^{\rm exp}$ in \eqref{eq:I12exp},  while we had $w>1/c$ in the contours for  $I_{12}^{\rm geo}$ in the case $c>1$.
	\label{rem:echangepole}
\end{remark}
\textbf{Case $\alpha=1/2$:} When $x>y$,
\begin{multline*}
\bar{R}_{22}^{\rm exp}(i,x ; j,y)  = - \int_{\mathcal{C}_{1/4}^{\pi/3}} \frac{(1+2z)^{m_i}}{(1-2z)^{n_i}} \frac{e^{-xz}}{4z}\dd z
+  \int_{\mathcal{C}_{1/4}^{\pi/3}} \frac{(1+2z)^{m_j}}{(1-2z)^{n_j}} \frac{e^{-yz}}{4z}\dd z \\
+ \int_{\mathcal{C}_{1/4}^{\pi/3}} \frac{(1+2z)^{m_i}(1-2z)^{m_j}}{(1-2z)^{n_i}(1+2z)^{n_j}} \frac{e^{-\vert x-y\vert z}\dd z}{2z} \ \  -\frac{1}{4},\\
\end{multline*}
with a modification of the last two terms when $x<y$ so that $\bar{R}_{22}^{\rm exp}(i,x ; j,y)=-\bar{R}_{22}^{\rm exp}(j, y ; i,x)$.
When $i<j$, if $x>y$
\begin{equation*}
\bar{R}_{12}^{\rm exp}(i,x ; j,y)  = - \int_{\mathcal{C}_{1/4}^{\pi/3}}\frac{(1+2z)^{n_i}}{(1+2z)^{n_j}}\frac{(1-2z)^{m_j}}{(1-2z)^{m_i}}e^{-\vert x-y\vert z}\dd z,
\end{equation*}
while if
$x<y$, $ \bar{R}_{12}^{\rm exp}(i,x ; j,y)=\bar{R}_{12}^{\rm exp}(i,y ; j,x)$.
\begin{proof}[Proof of Proposition \ref{prop:kernelexponential}]
	By Lemma \ref{prop:limitgeomtoexp}, the passage times $\lbrace H(n_i, m_i)\rbrace_i$ are the limits when $q$ goes to $1$ of the passage times $\lbrace G(n_i,m_i)\rbrace_i$, and we know the probability distribution function of $\lbrace G(n_i,m_i)\rbrace_i$ from \eqref{eq:pdfG}. Thus, the proof proceeds with two steps: (1) Take the asymptotics of the correlation kernel $K^{\rm geo}$  involved in the probability distribution of $\lbrace G(n_i,m_i)\rbrace_i$ (Section \ref{sec:kernelgeom}), under the scalings  $a_i \equiv \sqrt{q}$ and $c=\sqrt{q} \big( 1+(\alpha-1)(q-1)\big)$, $x=(1-q)u$, $y=(1-q)v$;
	(2) Deduce the asymptotic behavior of $\Pf\big(J+K^{\rm geo}\big)$ from the pointwise asymptotics of $K^{\rm geo}$.
	Using the weak convergence of Lemma \ref{prop:limitgeomtoexp}, we get the distribution of $\lbrace H(n_i, m_i)\rbrace_i$.
	
	\paragraph{\textbf{Step (1):}} We use Laplace's method to take asymptotics of $K^{\rm geo}(i,u;j,v)$ under the scalings above.
	Let us provide a detailed proof for $K^{\rm geo}_{11}$. The arguments are quite similar for $K^{\rm geo}_{12}$ and $K^{\rm geo}_{22}$. We have
	\begin{equation}
	K^{\rm geo}_{11}(i, u ; j, v) =   \iint_{\mathcal{C}^2} \frac{(z-w)(z-c)(w-c)}{(z^2-1)(w^2-1)(zw-1)}
	\frac{(z-\sqrt{q})^{n_i}(w-\sqrt{q})^{n_j}}{(1-z\sqrt{q})^{m_i}(1-w\sqrt{q})^{m_j}} \frac{\dd z  \dd w}{z^{n_i+1}w^{n_j+1}z^uw^v},
	\label{eq:expressionforK11kernelexp}
	\end{equation}
	where the contour $\mathcal{C}$ can be chosen as a circle around $0$ with radius $\frac{1+q^{-1/2}}{2}$.
	Under the scalings considered, the dominant term in the integrand is
	$$ \frac{1}{z^{u}w^v}  = \frac{1}{z^{(1-q)^{-1}x} w^{(1-q)^{-1}y}} = \exp\big((1-q)^{-1} (-x\log(z) - y\log(w))\big).$$
	\newcommand{\pacman}{<}
	Using Cauchy's theorem, we can deform the integration contours as long as we do not cross any pole. Hence, we can deform the contour $\mathcal{C}$ to be the contour $\mathcal{C}_{\pacman}$ depicted in Figure \ref{fig:newcontour}. It is constituted of two rays departing $\frac{1+q^{-1/2}}{2}$ with angles $\pm \pi/3$ in a neighbourhood of $1$ of size $\epsilon$ (for some small $\epsilon >0$), and the rest of the contour is given by an arc of a circle centered at $0$. Its radius $R>\frac{1+q^{-1/2}}{2}$  is chosen so that the circle intersects with the endpoints  of the two rays departing $\frac{1+q^{-1/2}}{2}$ with angles $\pm \pi/3$.
	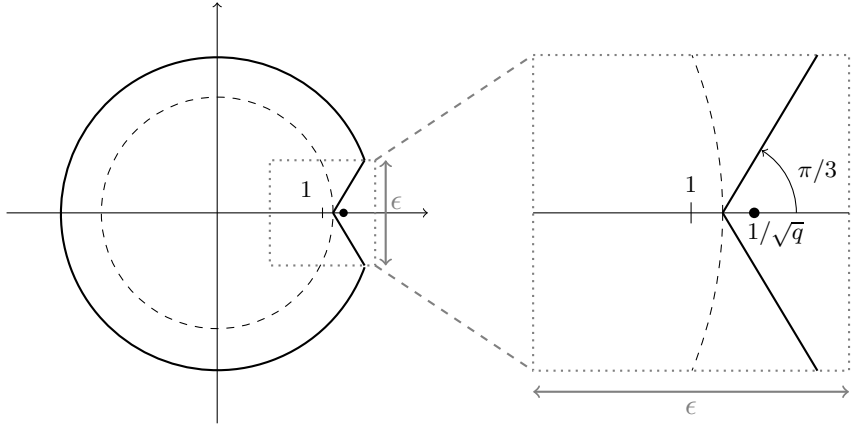
\begin{figure}
		\begin{tikzpicture}[scale=1.4]
		\draw[->] (-2, 0) -- (2, 0);
		\draw[->] (0,-2) -- (0,2);
		\draw (1,-0.05) -- (1, 0.05) node[anchor=south east] {\footnotesize{$1$}};
		\draw[dashed]  (0,0) circle(1.1);
		\fill (1.2, 0) circle(0.04);
		\draw[thick] (1.1, 0) -- (1.4, 0.5);
		\draw[thick] (1.1, 0) -- (1.4, -0.5);
		\draw[gray, thick, dotted] (1.5, 0.5) -- (1.5, -0.5) -- (0.5, -0.5) -- (0.5, 0.5) -- (1.5, 0.5);
		\draw[thick] (1.4, 0.5) arc(20:340:1.4866);
		\draw[thick, gray, <->] (1.6, -0.5) -- (1.6, 0.5);
		\draw[gray] (1.7, 0.1) node{$\epsilon$};

		\draw[thick, gray, dashed] (1.5, 0.5) -- (3, 1.5);
		\draw[thick, gray, dashed] (1.5, -0.5) -- (3, -1.5);
		\draw[thick, gray, dotted] (3, 1.5) -- (6,1.5) -- (6,-1.5) -- (3, -1.5) -- (3,1.5);
		
		\draw (3, 0) -- (6, 0);
		\draw (4.5,-0.1) -- (4.5, 0.1) node[anchor=south] {\footnotesize{$1$}};
		\draw[dashed]  (4.8, 0) arc(0:22:4);
		\draw[dashed]  (4.8, 0) arc(0:-22:4);
		\fill (5.1, 0) circle(0.05);
		\draw (5.3, -0.2) node{\footnotesize{$1/\sqrt{q}$}};
		\draw[thick] (4.8, 0) -- (5.7, 1.5);
		\draw[thick] (4.8, 0) -- (5.7, -1.5);
		\draw[->] (5.5, 0) arc (0:58:0.7);
		\draw (5.7, 0.4) node{\footnotesize{$\pi/3$}};
		\draw[thick, gray, <->] (3, -1.7) -- (6,-1.7);
		\draw[gray] (4.5, -1.85) node{$\epsilon$};
		\end{tikzpicture}
		\caption{The deformed contour
			$\mathcal{C}_{\pacman}$
			(thick black line) and the contour $\mathcal{C}$ (dashed line). On the right, zoom on the portion of the contour which contributes to the limit. }
		\label{fig:newcontour}
	\end{figure}
	Along the contour $\mathcal{C}_{\pacman}$, $\Real[-\log(z)]$ attains its maximum at $\frac{1+q^{-1/2}}{2}$. Thus, the contribution of integration along the portion of contour which is outside the $\epsilon$-box around $1$ is negligible in the limit. More precisely, it  has a contribution of order $\mathcal{O}(e^{-d_{\epsilon}(1-q)^{-1}}) $ for some $d_{\epsilon}>0$. 
	
	Now we estimate the contribution of the integration in the neighbourhood of $1$. Let us rescale the variables around $1$ by setting $z=1+\tilde{z}(1-q) $ and $w=1+\tilde{w}(1-q)$. Cauchy's theorem provides us some freedom in the choice of contours, and we can assume that after the change of variables, the integration contours are given by two rays of length $\epsilon(1-q)^{-1}$ departing $1/4$ in the directions $\pi/3$ and $-\pi/3$. When $q\to 1$, we have the point-wise limits
	\begin{eqnarray*}
		z^u \longrightarrow  e^{\tilde{z}x}, \ \  & \frac{z-\sqrt{q}}{1-\sqrt{q}} \longrightarrow  1+2\tilde{z}, \ \  &\frac{z-c}{1-\sqrt{q}} \longrightarrow -1+2\tilde{z}+2\alpha  \\
		w^v \longrightarrow  e^{\tilde{w}y}, \ \  & \frac{1-z\sqrt{q}}{1-\sqrt{q}} \longrightarrow 1-2\tilde{z}, \ \  &
		\frac{1-zc}{1-\sqrt{q}} \longrightarrow -1-2\tilde{z} +2\alpha
	\end{eqnarray*}
	Using Taylor expansions to a high enough order, the integral in \eqref{eq:expressionforK11kernelexp} converges to the integral of the pointwise limit of the integrand in \eqref{eq:expressionforK11kernelexp}, modulo a $\mathcal{O}(\epsilon)$ error, uniformly in $q$. The kind of estimates needed to control the error are given with more details in a similar situation in Section \ref{sec:pointwiseGSE} (see in particular Equations \eqref{eq:decoupage1} and \eqref{eq:decoupage2}). Moreover, since the limiting integrand has exponential decay in direction $\pm\pi/3$, one can replace the finite integration contours by infinite contours going to $\infty e^{\I\pi/3}$ and $\infty e^{-\I\pi/3}$ with an error going to $0$ as $q$ goes to 1.
	Since the size $\epsilon$ of the box around $1$ (on which we have restricted the integration) can be taken as small as we want, we finally get that
	\begin{multline*}
	\frac{K^{\rm geo}_{11}(i,u:j,v)}{(1-\sqrt{q})^{n_i+n_j-m_i - m_j+2}}  \xrightarrow[q\to 1]{}  \int_{\mathcal{C}_{1/4}^{\pi/3}}\dd\tilde z\int_{\mathcal{C}_{1/4}^{\pi/3}}\dd\tilde w \frac{\tilde z-\tilde w}{4\tilde z\tilde w(\tilde z+\tilde w)} e^{-x\tilde z-y\tilde w} \\ \times
	\frac{(1+2\tilde z)^{n_i}(1+\tilde 2w)^{n_j}}{(1-2\tilde z)^{m_i}(1-2\tilde w)^{m_j}} (2\tilde z+2\alpha -1)(2\tilde w+2\alpha -1) .
	\end{multline*}
	Likewise, we obtain
	\begin{multline*}
	\frac{I^{\rm geo}_{12}(i, u: j,v)}{(1-q)(1-\sqrt{q})^{n_i+m_j-n_j-m_i}}  \xrightarrow[q\to 1]{}  \int_{\mathcal{C}_{a_z}^{\pi/3}}\dd\tilde z\int_{\mathcal{C}_{a_w}^{\pi/3}}\dd\tilde w \frac{\tilde z-\tilde w}{2\tilde z(\tilde z+\tilde w)} e^{-x\tilde z-y\tilde w}  \\ \times
	\frac{(1+2\tilde z)^{n_i}}{(1-2\tilde w)^{n_j}}  \frac{(1+2\tilde w)^{m_j}}{(1-2 \tilde z)^{m_i}} \frac{2\alpha -1 + 2\tilde z}{2\alpha -1-2\tilde w} ,
	\end{multline*}
	where the new contours $\mathcal{C}_{a_z}^{\pi/3}$ and $\mathcal{C}_{a_w}^{\pi/3}$ are chosen as in \eqref{eq:I12exp}. In the case where  $\alpha<1/2$ or $\alpha=1/2$, the contour $\mathcal{C}_{a_w}^{\pi/3}$ is not exactly the limit of the contour in the definition of $I_{12}^{\rm geo}$ in  \eqref{eq:decompositionK12geo}, but we can exclude the pole at $w=(2\alpha-1)/2$ (so that $a_w<(2\alpha-1)/2$) and remove the corresponding residue in $\hat{R}^{\rm exp}_{12}$ and  $\bar{R}^{\rm exp}_{12}$ (see Remark \ref{rem:echangepole}). Thus, we have that
	\begin{equation}
	\frac{R^{\rm geo}_{12}(i, u; j,v)}{(1-q)(1-\sqrt{q})^{n_i+m_j-n_j-m_i}}  \xrightarrow[q\to 1]{} R^{\rm exp}_{12}(i,x;jy),
	\label{eq:convergenceRgeo}
	\end{equation}
	and likewise for $\hat{R}^{\rm geo}_{12}$ and $\bar{R}^{\rm geo}_{12}$ (modulo the residue that is reincorporated in $I_{12}^{\rm exp}$).
	Convergence \eqref{eq:convergenceRgeo} needs some justification though: for $x>y$, we use the exact same arguments as above, since the factor $1/(z^{u-v})\approx e^{-(x-y)\tilde{z}}$ in \eqref{eq:R12geosimple} has exponential decay along the tails of the contour. For $x<y$, we make the change of variables $z=1/\hat{z}$ and we adapt the case $x>y$. This shows that
	$$\frac{K^{\rm geo}_{12}(i, u; j,v)}{(1-q)(1-\sqrt{q})^{n_i+m_j-n_j-m_i}}  \xrightarrow[q\to 1]{} K^{\rm exp}_{12}(i,x;jy). $$
	Using very similar arguments, we find that
	\begin{multline*}
	\frac{I^{\rm geo}_{22}(i, u;j, v)}{(1-q)^2 (1-\sqrt{q})^{m_i+m_j-n_i-n_j-2}}  \xrightarrow[q\to 1]{}  \int_{\mathcal{C}_{b_z}^{\pi/3}}\dd\tilde z\int_{\mathcal{C}_{b_z}^{\pi/3}}\dd\tilde w \frac{\tilde z-\tilde w}{\tilde z+\tilde w} e^{-x\tilde z-y\tilde w} \\ \times
	\frac{(1+2\tilde z)^{m_i}(1+2\tilde w)^{m_j}}{(1-2\tilde z)^{m_i}(1-2\tilde w)^{m_j}} \frac{1}{2\alpha -1 - 2\tilde z}\frac{1}{2\alpha -1 - 2\tilde w},
	\end{multline*}
	where $\mathcal{C}_{b_z}^{\pi/3}$ and $\mathcal{C}_{b_w}^{\pi/3}$ are chosen as in \eqref{eq:I22exp}, and
	$$\frac{R^{\rm geo}_{22}(i, u; j,v)}{(1-q)^2 (1-\sqrt{q})^{m_i+m_j-n_i-n_j-2}}  \xrightarrow[q\to 1]{} R^{\rm exp}_{22}(i,x;jy),$$
	and similar limits hold for $\hat{R}^{\rm geo}_{22}$ and $\bar{R}^{\rm geo}_{22}$.
	
	Note that when taking the Pfaffian, the factors $(1-\sqrt{q})^{n_i+n_j-m_i-m_j+2}$ and $(1-\sqrt{q})^{m_i+m_j-n_i-n_j-2}$ cancel out each other. This is because one can multiply the columns with odd index by $(1-\sqrt{q})^{m_i+m_j-n_i-n_j-2}$ and then the rows with even index by $(1-\sqrt{q})^{n_i+n_j-m_i-m_j+2}$, without changing the value of the Pfaffian\footnote{Otherwise said, we conjugate the kernel by a diagonal kernel with determinant $1$.}. In the end, we find that for all integer $n\geqslant 1$ , positive real variables $x_1, \dots, x_n$, and integers $1\leqslant i_1, \dots, i_n\leqslant k$,
	\begin{equation*}
	\frac{\Pf\Big[K^{\rm geo}\big( i_s, (1-q)^{-1}x_s ; i_t,  (1-q)^{-1}x_t \big) \Big]_{s, t=1}^n}{(1-q)^n} \xrightarrow[q\to 1]{} \Pf\Big[K^{\rm exp}\big(i_s, x_s ; i_t, x_t \big) \Big]_{s,t=1}^n
	\end{equation*}

	\paragraph{\textbf{Step (2):}} We need to prove that
	\begin{multline}
	\sum_{i_1, \dots, i_n=1 }^k \ \ \  \sum_{u_1 =\lfloor g_{i_1}(1-q)^{-1}\rfloor}^{+\infty} \dots \sum_{u_n =\lfloor g_{i_n}(1-q)^{-1}\rfloor}^{+\infty}\Pf\Big[K^{\rm geo}\big( i_s, u_s ; i_t,  u_t \big) \Big]_{s, t=1}^n\\ \xrightarrow[q\to1]{}
	\sum_{i_1, \dots, i_n=1 }^k \int_{g_{i_1}}^{+\infty} \dots  \int_{g_{i_n}}^{+\infty}  \Pf\Big[K^{\rm exp}\big(i_s, x_s ; i_t, x_t \big) \Big]_{s,t=1}^n\mathrm{d}x_1 \dots \mathrm{d}x_k .
	\label{eq:integralsPfaffianlimits}
	\end{multline}
	For that purpose we need to control the asymptotic behaviour of the kernel $K^{\rm geo}$.
	\begin{lemma}
		There exists positive constants $C>0$ and $\frac 1 2 >c_1>c_2>\frac{1-2\alpha}{2}$ such that uniformly for $q$ in a neighbourhood of $1$, for all $ x,y >0$,
		\begin{eqnarray}
		\bigg\vert \frac{K^{\rm geo}_{11}(i, (1-q)^{-1}x;j, (1-q)^{-1}y)}{(1-\sqrt{q})^{n_i+n_j-m_i-m_j}}  \bigg\vert \leqslant& C e^{-c_1 (x+y)}, \label{eq:boundexpgeomK11}\\
		\bigg\vert \frac{K^{\rm geo}_{12}(i, (1-q)^{-1}x;j, (1-q)^{-1}y)}{(1-\sqrt{q})^{n_i+n_j-m_i-m_j}}  \bigg\vert \leqslant& C   e^{-xc_1+yc_2}   , \label{eq:boundexpgeomK12}\\
		\bigg\vert \frac{K^{\rm geo}_{22}(i, (1-q)^{-1}x;j, (1-q)^{-1}y)}{(1-\sqrt{q})^{n_i+n_j-m_i-m_j}}  \bigg\vert \leqslant&  Ce^{xc_2+yc_2}.\label{eq:boundexpgeomK22}
		\end{eqnarray}
		\label{lem:expboundgeom}
	\end{lemma}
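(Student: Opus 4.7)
The plan is to prove these uniform-in-$q$ bounds by essentially the same steepest-descent argument as in step (1) above, now turned into upper bounds rather than pointwise limits. In each case I will deform the $z,w$ contours to circles of a carefully chosen $q$-dependent radius so that the factor $z^{-u}w^{-v}$ immediately produces the claimed exponential behaviour, and then verify that the remaining rational factors together contribute exactly the normalisation $(1-\sqrt q)^{n_i+n_j-m_i-m_j}$.

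For $I^{\rm geo}_{11}$, take both contours to be circles of radius $r=e^{c_1(1-q)}\approx 1+c_1(1-q)$ for some fixed $c_1\in((1-2\alpha)/2,1/2)$; then $|z^{-u}|=r^{-u}=e^{-c_1 x}$ exactly, and the circle stays bounded away from the singularities at $\pm 1$, $\sqrt q$, $1/\sqrt q$, and $1/c$ uniformly in $q$ near $1$. Substituting $z=1+(1-q)\tilde z$ and using the elementary estimates $(z-\sqrt q)\sim (1-q)(\tilde z+\tfrac12)$, $(1-z\sqrt q)\sim(1-q)(\tfrac12-\tilde z)$, $(z^2-1)\sim 2(1-q)\tilde z$, $(z-c)\sim(1-q)(\tilde z+\alpha-\tfrac12)$ and $(zw-1)\sim(1-q)(\tilde z+\tilde w)$, the integrand is seen to scale as $(1-q)^{n_i+n_j-m_i-m_j+3}$ times a function bounded uniformly on the scaled contour; combined with $\dd z\,\dd w=(1-q)^2\dd\tilde z\,\dd\tilde w$ and the fact that the far portion of the scaled contour makes only an $\mathcal{O}((1-q)^N)$ contribution for any $N$ (since $|z^{-u}w^{-v}|$ is already controlled), this yields $|K^{\rm geo}_{11}|\leqslant C(1-\sqrt q)^{n_i+n_j-m_i-m_j+2}e^{-c_1(x+y)}$ and hence \eqref{eq:boundexpgeomK11} (the surplus $(1-\sqrt q)^2$ being absorbed into the constant $C$). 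For $I^{\rm geo}_{12}$ I keep this $z$-contour and take the $w$-contour to be $|w|=e^{-c_2(1-q)}$, producing $|w^{-v}|=e^{c_2 y}$; the constraint $c_2>(1-2\alpha)/2$ places this contour on the correct side of the pole at $w=1/c$ in the regime $\alpha<1/2$, while $c_1>c_2$ matches the original $|zw|>1$ requirement when $i\geqslant j$. For $I^{\rm geo}_{22}$ take both contours of radius $e^{-c_2(1-q)}$; in the $\alpha>1/2$ regime the contour could equally well be taken outside the unit circle yielding even decay, but the stated growth bound is then a fortiori satisfied.

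The residue terms $R^{\rm geo}_{12},R^{\rm geo}_{22}$ and their $\hat R^{\rm geo},\bar R^{\rm geo}$ analogues in the regimes $\alpha<1/2$ and $\alpha=1/2$ are single-variable contour integrals (plus, in the $\alpha<1/2$ case, explicit residue constants); each is bounded by an analogous contour choice. The key new point is the constant $c^{u-v-1}h^{\rm geo}_{22}(1/c,c)$ in \eqref{eq:R22geobetter}: since $c\approx 1+(1-q)(\tfrac12-\alpha)$ this quantity behaves like $\exp((x-y)(\tfrac12-\alpha))$, which is dominated by $e^{c_2(x+y)}$ precisely when $c_2>\tfrac12-\alpha=(1-2\alpha)/2$, explaining the source of the stated constraint on $c_2$. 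The main obstacle will be organising the three-way case analysis $\alpha>1/2$, $\alpha=1/2$, $\alpha<1/2$ cleanly, since the residue structure and nearby pole geometry differ between regimes and several poles collide at $\alpha=1/2$; I expect uniform bounds can nonetheless be obtained in each regime by working directly with the explicit formulas of Section \ref{sec:deformedcontours} and choosing slightly regime-specific contours. Once Lemma \ref{lem:expboundgeom} is in hand, Hadamard's inequality (Lemma \ref{lem:hadamard}) applied with $a=c_1>b=c_2$ provides the decay $\prod_i e^{-(c_1-c_2)x_i}$ needed for dominated convergence to justify \eqref{eq:integralsPfaffianlimits}.
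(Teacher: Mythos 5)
Your overall strategy is the right one (extract the decay from $|z|^{-u}$ on a contour whose distance to the unit circle is of order $c(1-q)$, with $c_1<1/2$ forced by the pole at $1/\sqrt q$ and $c_2>(1-2\alpha)/2$ forced by the pole at $1/c$ — this matches the paper), but the specific contour you choose makes a step fail. If you put $z$ and $w$ on exact circles of radius $e^{\pm c(1-q)}$, the modulus $|z^{-u}w^{-v}|=e^{-c_1x+c_2y}$ (say) is \emph{constant} along the circle, so it gives no extra suppression on the part of the circle away from $z\approx 1$. On that far part none of the factors in \eqref{eq:expressionforK11kernelexp} is small: $(z-\sqrt q)$, $(1-z\sqrt q)$, $(z-c)$ are all of order $1$ there (the smallness of order $(1-q)^{n_i-m_i}$ comes \emph{only} from the neighbourhood of $z=1$), and near $z\approx-1$ or when $zw\approx 1$ the factors $1/(z^2-1)$, $1/(zw-1)$ are as large as $(1-q)^{-1}$. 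Hence the far-portion contribution to $K^{\rm geo}_{11}$ is of order $e^{-c_1(x+y)}$ times a constant (up to logarithms), \emph{not} $\mathcal{O}((1-q)^N)$ as you claim, and after dividing by $(1-\sqrt q)^{n_i+n_j-m_i-m_j}$ it blows up like $(1-q)^{-(n_i+n_j-m_i-m_j)}$. In other words, on your contour the absolute-value estimate cannot see the cancellation (oscillation of $z^{-u}$) that makes the true kernel small, so the claimed bounds do not follow. The related assertion that your circle "stays bounded away from the singularities at $\pm1$, $\sqrt q$, $1/\sqrt q$, $1/c$ uniformly in $q$" is also false: all of these distances are $\mathcal{O}(1-q)$; your rescaling $z=1+(1-q)\tilde z$ repairs this only near $z=1$, not at $z\approx -1$ or at the near-collision $w\approx 1/z$ on the opposite side of the circle.

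The paper's proof avoids exactly this trap: it keeps the contour of $\mathcal{C}_{\pacman}$-type, i.e. rays near $z=1$ arranged so that the rescaled variable satisfies $\Re[\tilde z]\geqslant c_1$ (resp.\ $\Re[\tilde w]\geqslant -c_2$ for the $w$-variable in $K_{12},K_{22}$, staying on the correct side of $\tilde w=(2\alpha-1)/2$), and away from $z=1$ it bulges out to a circular arc of radius $R$ bounded away from $1$ uniformly in $q$; on that arc $|z|^{-u}\leqslant e^{-x\log (R)/(1-q)}$ is superexponentially small and dominates both the normalisation $(1-\sqrt q)^{-(n_i+n_j-m_i-m_j)}$ and the mildly singular rational factors, while on the local rays the normalised factors such as $(z-\sqrt q)^{n_i}/(1-\sqrt q)^{n_i}$ are controlled and the exponential decay comes from $e^{-x\Re\tilde z-y\Re\tilde w}$. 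To salvage your argument you would have to replace your circles by such hybrid contours, at which point you are reproducing the paper's proof. Two smaller points: your identification of the source of the constraint $c_2>(1-2\alpha)/2$ through the pole at $w=1/c$ and the term $c^{u-v-1}h^{\rm geo}_{22}(1/c,c)$ in \eqref{eq:R22geobetter} is correct in spirit; but note that pushing the $w$-contour of $I^{\rm geo}_{12}$ or $I^{\rm geo}_{22}$ inside the unit circle and past $1/c$ (or past $w=1/z$) crosses poles and therefore regenerates residue terms, so you cannot bound $I^{\rm geo}$ and $\hat R^{\rm geo}$ separately with those contours without redoing the residue bookkeeping of Section \ref{sec:deformedcontours}; the paper instead bounds the $I$-terms on contours consistent with their definitions and bounds the residue terms $R_{12},\hat R_{12},\bar R_{12}$ and $R_{22},\hat R_{22},\bar R_{22}$ directly, by $Ce^{-c|x-y|}$ and $C\max\{1,e^{|x-y|(1-2\alpha)/2}\}$ respectively.
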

	\begin{proof}
		The arguments are similar for all integrals involved. It amounts to justify that we can restrict the integration on a region on the complex plane with real part greater than $c_1$ or $-c_2$. Let us first see how it works for the bound \eqref{eq:boundexpgeomK11}. We start with \eqref{eq:expressionforK11kernelexp} as in Step (1). The contribution of integration along the circular parts of $\mathcal{C}_{\pacman}$ can be bounded by a constant for $q$ in a neighbourhood of $1$. Next we perform the change of variables $z=1+\tilde{z}(1-q) $ and $w=1+\tilde{w}(1-q)$ as in Step (1) and observe that all factors in the integrand are bounded for $q$ in a neighbourhood of $1$, except the factor $1/(z^u w^v)$ which behaves as
		$$\frac{1}{z^{u}w^v} = \exp\big(-x(1-q)^{-1}\log(1+\tilde{z}(1-q))-y(1-q)^{-1}\log(1+\tilde{w}(1-q)) \big) \approx e^{-x\tilde{z}- y\tilde{w}}.$$
		Since we can deform the contours so that
		$\Re[\tilde{z}]$ and $ \Re[\tilde{w}]$ are greater than $c_1$ (for some $c_1<1/2$), we get the desired bound for $K_{11}$. Strictly speaking, Cauchy's theorem allows to deform the contour $\mathcal{C}_{\pacman}$ so that the real part stays above $c_1$ along the two rays, and then on cuts the circular part which has a negligible contribution.
		Similarly, we find that
		$$\bigg\vert \frac{I^{\rm geo}_{12}(i, (1-q)^{-1}x;j, (1-q)^{-1}y)}{(1-\sqrt{q})^{n_i+n_j-m_i-m_j}}  \bigg\vert \leqslant  C   e^{-xc_1+yc_2}$$
		by restricting the integration on contours such that  $\Re[\tilde{z}]>c_1$ and $ \Re[\tilde{w}]>-c_2$. When deforming the contour for $w$, we have to take into account the pole at $\frac{2\alpha-1}{2}$.
		Since for any $\alpha>0$, $\frac{1-2\alpha}{2}<\frac{1}{2}$, we can always find suitable constants $c_1,c_2>0$ such that  $\frac 1 2 >c_1>c_2>\frac{1-2\alpha}{2}$. By the same kind of arguments, 
		$$\bigg\vert \frac{\hat{R}^{\rm geo}_{12}(i, (1-q)^{-1}x;j, (1-q)^{-1}y)}{(1-\sqrt{q})^{n_i+n_j-m_i-m_j}}  \bigg\vert \leqslant  C   e^{-c\vert x-y\vert} <  C   e^{-xc_1+yc_2},$$
		and it is easier to check that the inequality also holds for $\bar{R}^{\rm geo}_{12}$ and $R^{\rm geo}_{12}$,
		so that \eqref{eq:boundexpgeomK12} holds.
		For $K_{22}$, one  does not need exponential decay. We have  that
		$$\bigg\vert \frac{I^{\rm geo}_{22}(i, (1-q)^{-1}x;j, (1-q)^{-1}y)}{(1-\sqrt{q})^{n_i+n_j-m_i-m_j}}  \bigg\vert \leqslant C $$
		and
		$$\bigg\vert \frac{\hat{R}^{\rm geo}_{22}(i, (1-q)^{-1}x;j, (1-q)^{-1}y)}{(1-\sqrt{q})^{n_i+n_j-m_i-m_j}}  \bigg\vert \leqslant  C \max\big\lbrace 1 ; e^{\vert x-y\vert \frac{1-2\alpha}{2}}\big\rbrace <Ce^{xc_2+yc_2}, $$
		so that \eqref{eq:boundexpgeomK22} holds.
	\end{proof}
	Hence using  Lemma \ref{lem:hadamard},   %  Hadamard's bound \eqref{eq:hadamard},
	$$\bigg\vert \Pf\Big[K^{\rm geo}\big( i_s, (1-q)^{-1}x_s ; i_t,  (1-q)^{-1}x_t \big) \Big]_{s, t=1}^n  \bigg\vert \leqslant  (2n)^{n/2}C^n e^{-(c_1-c_2)(x_1+\dots + x_n)}.$$
	and \eqref{eq:integralsPfaffianlimits} follows by dominated convergence. Dominated convergence also proves the convergence of Fredholm Pfaffian series expansions, so that
	\begin{equation}
	\Pf\big(J-K^{\rm geo}\big)_{\ell^2\big(\tilde{\mathbb{D}}_k(h_1 (1-q)^{-1}, \dots, h_k(1-q)^{-1})\big)} \xrightarrow[q\to 1]{} \Pf\big(J-K^{\rm exp}\big)_{\mathbb{L}^2\big(\mathbb{D}_k(h_1, \dots, h_k)\big)}.
	\label{eq:limitFredholmPfaffiansdiscretetocontinuous}
	\end{equation}
	
	Finally, Proposition \ref{prop:limitgeomtoexp} implies that
	\begin{equation*} \PP\left(\bigcap_{i=1}^k \big\lbrace G(n_i, m_i)\leqslant \frac{x_i}{1-q} \big\rbrace\right) \xrightarrow[q\to 1]{}   \PP\left(  \bigcap_{i=1}^k \big\lbrace H(n_i,m_i)\leqslant x_i\big\rbrace \right).
	\end{equation*}
	which, together with  \eqref{eq:limitFredholmPfaffiansdiscretetocontinuous}, concludes the proof.
\end{proof}

\section{Convergence to the GSE Tracy-Widom distribution}
\label{sec:GSEasymptotics}
This section is devoted to the proof of the first part of Theorem \ref{theo:LPPdiagointro}, and thus we assume that $\alpha >1/2$. 
By Proposition \ref{prop:kernelexponential}, the probability $\PP\left( H(n,n)  < x \right)$ can be expressed as the Fredholm Pfaffian of the kernel $K^{\rm exp}$. The most natural would be to prove that under the scalings considered, $K^{\rm exp}$ converges pointwise to $K^{\rm GSE}$, and that the convergence of the Fredholm Pfaffian holds as well, using estimates to control the absolute convergence of the series expansion. Unfortunately, this approach cannot work here because $K^{\rm exp}$ does not converge to $K^{\rm GSE}$. The next remark provides a heuristic explanation. 

\begin{remark}
	\label{rem:explaincollisions} We expect the kernel $K^{\rm exp}(1,x;1,y)$ to be the Pfaffian correlation kernel of some simple Pfaffian point process, say 
	$ X^{(n)}:= \big\lbrace X^{(n)}_1> \dots > X^{(n)}_n \big\rbrace. $
	On the other hand, $K^{\rm GSE}(x, y)$ is the correlation kernel of a Pfaffian point process
	$\chi_1>  \chi_2> \dots $ having the same law as the first eigenvalues of a matrix from the GSE in the large size limit.

	In light of Remark \ref{rem:symmetricRSK}, the points  $X^{(n)}_i$ are such that  $X^{(n)}_1 + \dots + X^{(n)}_i$ has the same law as the maximal weight of a $i$-tuple of non-intersecting up-right paths in a symmetric exponential environment with weights $\mathcal{E}(\alpha)$ on the diagonal and $\mathcal{E}(1)$ off-diagonal (in particular $X^{(n)}_1$ has the same distribution as $H(n,n)$ since the half-space LPP and symmetrized LPP are equivalent).  When $\alpha$ is large enough, it is clear that the optimal paths will seldom visit the diagonal, so that $X_1^{(n)}$ and  $X_2^{(n)}$ will have (under $n^{1/3}$ scaling) the same scaling limit. More precisely, we expect that $X_{2i-1}^{(n)}$ and  $X_{2i}^{(n)}$ will both  converge to $\chi_i$, so that the point process $X^{(n)}$ converges as $n$ goes to infinity to a non-simple point process where each point has multiplicity 2.
	
	Let us denote by $\rho^{\rm GSE}$ the correlation function of the point process  $\chi_1, \chi_2, \dots $, and let $\rho^{(n)}$ be the correlation function of the rescaled point process $\frac{X^{(n)}-4n}{2^{4/3}n^{1/3}}$. By the characterization of correlation functions in  \eqref{eq:propertycorrelations}, we expect that for distinct points $x_1, \dots , x_k$,
	$$ \rho^{(n)}(x_1, \dots, x_k)\xrightarrow[n\to\infty]{} 2^{k} \rho^{\rm GSE}(x_1, \dots, x_k),$$
	and a singularity should appear in the limit of $\rho^{(n)}(x_1, \dots, x_k)$ for $k>1$ in the neighborhood of $x_i=x_j$. 
\end{remark}

\subsection{A formal representation of the GSE distribution}\label{formal}

We introduce a kernel which is a scaling limit of $K^{\rm exp}$ and whose Fredholm Pfaffian corresponds to the GSE distribution. However, it is not the Pfaffian correlation kernel of the GSE point process. This kernel is distribution valued, and we show in Proposition \ref{prop:equivalentpfaffians} that its Fredholm Pfaffian is the same as that of $K^{\rm GSE}$, by expanding and reordering terms in the Fredholm Pfaffian. In fact, to avoid dealing with distribution valued kernels, we later prove an approximate prelimiting version of this result as Proposition \ref{prop:approxequivalentpfaffians}.

Let us first recall that by Lemma \ref{def:GSEdistribution}, the kernel $K^{\rm GSE}$ has the form
$$ K^{\rm GSE}(x,y)  = \begin{pmatrix}
A(x,y) & -\partial_y A(x, y) \\
-\partial_x A(x,y) & \partial_x\partial_y A(x,y)
\end{pmatrix} $$
where $A(x,y)$ is the smooth and antisymmetric kernel $K^{\rm GSE}_{11}$.

We introduce the kernel
$$ K^{\infty}(x,y) = \begin{pmatrix}
A(x,y) & -2\partial_y A(x, y) \\
-2\partial_x A(x,y) & 4\partial_x\partial_y A(x,y) + \delta'(x,y)
\end{pmatrix} $$
where $\delta'$ is a distribution on $\R^2$ such that
\begin{equation} \int \int f(x,y)\delta'(x,y)\mathrm{d}x\mathrm{d}y =  \int  \left.\big(\partial_y f(x,y) - \partial_x f(x,y)\big)\right|_{y=x}\mathrm{d}x,\label{eq:defdeltaprime}
\end{equation}
for smooth and compactly supported test functions $f$.

The quantity
\begin{equation} \label{eq:kfold} \int_{\R^k} \Pf[K^{\infty}(x_i, x_j)]_{i,j=1}^k\mathrm{d}x_1\dots \mathrm{d}x_k
\end{equation}
makes perfect sense because all terms of the form $\delta'(x_i, x_j)$ are integrated against smooth functions of $x_i, x_j$ with sufficient decay. Expanding the Pfaffian and using the definition of $\delta'$, \eqref{eq:kfold} collapses into  a sum of $j$-fold integrals for $j=k$ down to $k-\lfloor k/2\rfloor$, where the integrands are smooth functions. We can then form the Fredholm Pfaffian of $K^{\infty}$, and we will show that the Fredholm Pfaffian series expansion is absolutely convergent so that we can reorder terms, and ultimately recognize the Fredholm Pfaffian of another kernel.
\begin{proposition}Let $\mathbf{A}:\R^2 \to \skeww(\R)$ be a kernel of the form
	$$ \mathbf{A}(x,y) =  \begin{pmatrix}
	A(x,y) & -\partial_y A(x, y) \\
	-\partial_x A(x,y) & \partial_x\partial_y A(x,y)
	\end{pmatrix},$$
	where $A$ is smooth, antisymmetric, and $\mathbf{A}$ satisfies the decay hypotheses of Lemma \ref{lem:hadamard}.
	Let $\mathbf{B}$ be the kernel
	$$ \mathbf{B}(x,y) =  \begin{pmatrix}
	A(x,y) & -2\partial_y A(x, y) \\
	-2\partial_x A(x,y) & 4\partial_x\partial_y A(x,y) +\delta'(x,y)
	\end{pmatrix}.$$
	Then for any $x\in \R$,
	\begin{equation}
	\Pf[J-\mathbf{A}]_{\mathbb{L}^2(x, +\infty)}= \Pf[J-\mathbf{B}]_{\mathbb{L}^2(x, +\infty)}.
	\label{eq:equivalentpfaffians}
	\end{equation}
	In particular,
	\begin{equation*}
	\Pf[J-K^{\rm GSE}]_{\mathbb{L}^2(x, +\infty)}= \Pf[J-K^{\infty}]_{\mathbb{L}^2(x, +\infty)}.
	\end{equation*}
	\label{prop:equivalentpfaffians}
\end{proposition}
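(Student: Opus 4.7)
The plan is to match the Fredholm Pfaffian series for $J-\mathbf{B}$ and $J-\mathbf{A}$ term by term, using the defining property \eqref{eq:defdeltaprime} of $\delta'$ to collapse the distributional contributions into lower-dimensional integrals, and to interpret the combinatorial outcome as a Pfaffian expansion of the smooth kernel $\mathbf{A}$. The conceptual motivation is that $\mathbf{B}$ should be regarded as the formal correlation kernel of the ``doubled'' point process obtained from an $\mathbf{A}$-Pfaffian process by replacing each point by a pair of coincident points; since gap probabilities are insensitive to multiplicity, the Fredholm Pfaffians must agree.

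First I would write the $k$-th term of $\Pf[J-\mathbf{B}]_{\mathbb{L}^2(x,\infty)}$ explicitly and expand $\Pf[\mathbf{B}(x_i,x_j)]_{i,j=1}^k$ as a sum over perfect matchings of $\{1,\ldots,2k\}$ viewed as row/column indices of the $2k\times 2k$ skew-symmetric matrix. Each matched pair coming from the $(2,2)$-block contributes either the smooth term $4\partial_x\partial_y A(x_i,x_j)$ or the distributional term $\delta'(x_i,x_j)$; split the sum over matchings $M$ according to which subset $E\subseteq M$ of $(2,2)$-pairs is chosen to contribute via $\delta'$. For $|E|=\ell$, integrate each $\delta'(x_i,x_j)$ in $E$ against the smooth remainder using \eqref{eq:defdeltaprime}, which collapses the two variables $x_i,x_j$ into a single variable and inserts the operator $(\partial_y-\partial_x)|_{y=x}$ on the adjacent factors of the integrand. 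What remains is a $(k-\ell)$-fold integral of a product of $A$, $\partial A$, $\partial^2 A$ factors.

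Next I would reorganize the series by fixing $n=k-\ell$ and collecting all contributions with $k-\ell=n$. Using the product rule to distribute each $(\partial_y-\partial_x)|_{y=x}$ onto the remaining $A$-type factors and the structural identities $\mathbf{A}_{12}=-\partial_y A$, $\mathbf{A}_{21}=-\partial_x A$, $\mathbf{A}_{22}=\partial_x\partial_y A$, the resulting $n$-fold integrand reorganizes as a sum over perfect matchings of $\{1,\ldots,2n\}$ with each matched pair producing an entry of $\mathbf{A}$. The doubling factors $2$ and $4$ in the off-diagonal and $(2,2)$-entries of $\mathbf{B}$ are precisely calibrated so that this bookkeeping closes up: they supply the multinomial-type weights needed to turn $\mathbf{B}$-matchings of size $k$ (after $\delta'$-collapse on $\ell$ pairs) into $\mathbf{A}$-matchings of size $n$, with the correct signs and prefactors to recover $\Pf[\mathbf{A}(y_a,y_b)]_{a,b=1}^n$.

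Throughout, absolute convergence of the rearranged series is justified via the decay hypotheses on $A$ and its derivatives using Lemma \ref{lem:hadamard}, so the reordering is legitimate. The main obstacle is the last paragraph's combinatorial identity: tracking how product-rule expansions of $(\partial_y-\partial_x)|_{y=x}$ across the matching graph produce, after summation over $E$, exactly the entries of $\mathbf{A}$ with the correct Pfaffian signature. To sidestep the delicate combinatorics and distributional manipulations, an alternative and likely cleaner route is to prove a smoothed prelimit version (as announced later via Proposition \ref{prop:approxequivalentpfaffians}): replace $\delta'$ by a smooth approximation $\delta'_\varepsilon$, compute both Fredholm Pfaffians for the regularized kernels, verify equality for each $\varepsilon>0$ by a residue-style contour argument mirroring the $n\to\infty$ limit of $K^{\rm exp}$ (where the ``doubled'' structure appears from pairs of critical points merging), and then pass to $\varepsilon\to 0$ using uniform Hadamard bounds.
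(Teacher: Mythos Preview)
Your overall strategy---expand the Fredholm Pfaffian of $\mathbf{B}$, split each term according to how many of its $(2,2)$-factors are $\delta'$, collapse those using \eqref{eq:defdeltaprime}, and regroup by the resulting integration dimension---is exactly the approach the paper takes. The paper organizes this via the Stembridge minor summation formula (its Lemma~\ref{lem:minorsummation}) rather than by hand over matchings, but structurally there is no difference.

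The genuine gap is the combinatorial identity you flag as ``the main obstacle.'' You assert that the factors $2$ and $4$ are ``precisely calibrated'' so the bookkeeping closes, but this is the whole content of the proposition and requires a concrete computation you have not carried out. The paper's mechanism is this: writing $b_{k+i}^k$ for the $k$-fold integral contribution coming from the $(k+i)$-th Fredholm term (i.e.\ $i$ factors of $\delta'$), one shows explicitly that
\[
b_{k+i}^k \;=\; \frac{(-1)^k}{k!}\binom{k}{i}2^{k-i}(-1)^i\, a_k,
\]
and then the binomial identity $\sum_{i=0}^k \binom{k}{i}2^{k-i}(-1)^i = (2-1)^k = 1$ gives $\sum_i b_{k+i}^k = \frac{(-1)^k}{k!}a_k$. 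The key computation (Lemmas~\ref{lem:actionofdelta'} and~\ref{lem:actionofseveraldelta'}) uses the differential relations $\partial_x \mathbf{A}_{11}=-\mathbf{A}_{21}$, $\partial_y \mathbf{A}_{11}=-\mathbf{A}_{12}$, $\partial_x \mathbf{A}_{12}=-\mathbf{A}_{22}$ to show that each $\delta'$ acting on a Pfaffian minor produces, after setting the two variables equal, a copy of the smaller $\mathbf{A}$-Pfaffian times an explicit power of~$2$. Your ``product rule'' description is pointing in the right direction but does not isolate this clean factorization, which is what makes the proof go through.

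Your proposed alternative does not sidestep the difficulty. Proposition~\ref{prop:approxequivalentpfaffians} is not an independent route: its proof reuses exactly the combinatorial identity \eqref{eq:keyidentity} established in the proof of Proposition~\ref{prop:equivalentpfaffians}, so appealing to it is circular. And a ``residue-style contour argument mirroring $K^{\rm exp}$'' cannot work for a general smooth antisymmetric $A$, which is the setting of the proposition; such an argument would at best handle the special case $A=K^{\rm GSE}_{11}$.
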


\begin{proof}[Proof of Proposition \ref{prop:equivalentpfaffians}]
	In order to simplify the notations in this proof, we assume that all integrals are  performed over the domain $(x, +\infty)$.  
	Denoting
	$$b_m =  \int\mathrm{d}x_1\dots\int\mathrm{d}x_m  \ \Pf[\mathbf{B}(x_i, x_j)]_{i,j=1}^m, $$
	and 
	$$a_k =  \int \mathrm{d}x_1\dots\int \mathrm{d}x_k\  \Pf[\mathbf{A}(x_i, x_j)]_{i,j=1}^k,$$
	we have 
	$$ \Pf[J-\mathbf{A}] = \sum_{k=0}^{\infty} \frac{(-1)^k}{k!} a_k \ \ \text{ and }\ \   \Pf[J-\mathbf{B}] = \sum_{m=0}^{\infty} \frac{(-1)^m}{m!} b_m. $$
	In order to analyze the quantity $b_m$, we will use the following definition of the Pfaffian, valid for any antisymmetric matrix $M$ of size $2m$:
	\begin{equation}
	\Pf[M] = \sum_{\alpha = \lbrace (i_1, j_1), \dots, (i_m, j_m)\rbrace}\epsilon(\alpha) M_{i_1, j_1}\cdots M_{i_m, j_m},
	\label{eq:defPfaffian2}
	\end{equation}
	where the sum runs over partitions into pairs of the set $\lbrace 1, \dots, 2m\rbrace$, in the form $$\alpha = \big\lbrace (i_1, j_1), \dots, (i_m, j_m) \big\rbrace $$ with $i_1< \dots < i_m$ and $i_{\ell}<j_{\ell}$, and the signature $\varepsilon(\alpha)$ of the partition $\alpha$ is given by the signature of the permutation
	$$ \begin{pmatrix}
	1&2&3&\dots&2m \\
	i_1&j_1&i_2&\dots&j_{m}
	\end{pmatrix} .$$

	Expanding the Pfaffian $\Pf[\mathbf{B}(x_i, x_j)]_{i,j=1}^m$ and using the definition of $\delta'$, $b_m$ collapses into  a sum of $j$-fold integrals for $j\leqslant m$. In the kernel $\mathbf{B}$, the term $\delta'$ appears only in the component $\mathbf{B}_{22}$, so that when expanding  $\Pf[\mathbf{B}(x_i, x_j)]_{i,j=1}^m$ according to \eqref{eq:defPfaffian2} all terms have degree at most $\lfloor m/2\rfloor$ in $\delta'$. Hence we do not obtain  $j$-fold integral terms for $j<m-\lfloor m/2\rfloor$, and we can we decompose $b_m$ as 
	$$ \frac{(-1)^m}{m!} b_m
	= \sum_{k=\lceil m/2\rceil}^m b_m^k,  $$
	where $ b_m^j $ denotes the $j$-fold integral term in the expansion. 
	
	The key fact towards the proof of the proposition is that 
	\begin{equation} 
	\sum_{i=0}^k b_{k+i}^k=\frac{(-1)^k}{k!} a_k.
	\label{eq:keyidentity}
	\end{equation}
	This corresponds to the fact that $k$-fold integral terms match in the expansion of both sides of \eqref{eq:equivalentpfaffians}. 
	Then we can write 
	\begin{equation}
	\sum_{m=0}^{\infty} \frac{(-1)^m}{m!} b_m
	= \sum_{m=0}^{\infty} \sum_{k=\lceil m/2\rceil}^m b_m^k
	= \sum_{k=0}^{\infty} \sum_{i=0}^k b_{k+i}^k
	= \sum_{k=0}^{\infty} \frac{(-1)^k}{k!} a_k,
	\label{eq:fubinireorder}
	\end{equation}
	which is what we wanted to prove. In order to reorder terms in the third equality in \eqref{eq:fubinireorder} (using Fubini's theorem), we must check that the double series $b_{k+i}^k$ is absolutely convergent. Using Lemma \ref{lem:hadamard}, it is enough to check that
	$$ \sum_{k=0}^{\infty}\sum_{i=0}^k \frac{1}{k!} \binom k i 2^{k-1} (2k)^{k/2} C^k,$$
	is absolutely convergent, which is clearly the case ($C$ is some constant depending on  $A(x,y)$).

	The rest of this proof is devoted to the proof of \eqref{eq:keyidentity}. We will actually prove that 
	$$b_{k+i}^k  =  \frac{(-1)^k}{k!} \binom{k}{i} 2^{k-i}(-1)^i a_k.$$
	from which \eqref{eq:keyidentity} is deduced easily by summing over $i$:
	$$ \sum_{i=0}^k b_{k+i}^k = \frac{(-1)^k}{k!}  \sum_{i=0}^{k}\binom{k}{i} 2^{k-i}(-1)^i a_k = \frac{(-1)^k}{k!} a_k.$$

	The matrix $\mathbf{B}$ can be seen as the sum of a smooth matrix with good decay properties and another matrix whose entries are generalized functions of the type $\delta'(x_i, x_j)$. The following minor summation formula will be useful:
	\begin{lemma}[\cite{stembridge1990nonintersecting} Lemma 4.2] Let $M, N$ be skew symmetric matrices of size $n$ for some even $n$. Then,
		$$\Pf[M+N] = \sum_{I\subset \lbrace 1, \dots, n\rbrace; \#I \text{ even}} (-1)^{\vert I \vert -\#I/2} \Pf[M_I]\Pf[N_{I^c}],$$
		where $\#I$ denotes the cardinality of $I$, $\vert I\vert$ the sum of its elements, and $I^c$ its complement in $\lbrace 1, \dots, n\rbrace$. For $I\subset\lbrace 1, \dots, n\rbrace$, $M_I$ denotes the matrix $M$ restricted to rows and columns indexed by elements of $I$.
		\label{lem:minorsummation}
	\end{lemma}

	Let us fix some $k\geqslant 1$ and compute $b_{k+i}^{k}$ for $i=1$ to $k$.  It is instructive to consider first the case $i=1$. 
	\item{\textbf{Computation of $b_{k+1}^{k}$:}}
	By definition  $b_{k+1}^{k}$ corresponds to the terms of degree $1$ in $\delta'$ in the Pfaffian expansion of
	$$\frac{(-1)^{k+1}}{(k+1)!} \Pf\left[ \begin{pmatrix}
	\mathbf{A}_{11}(x_i,x_j) &  2\mathbf{A}_{12}(x_i, x_j) \\
	2\mathbf{A}_{21}(x_i,x_j) & 4\mathbf{A}_{22}(x_i,x_j)
	\end{pmatrix} + \begin{pmatrix}
	0&  0 \\
	0 & \delta'(x_i, x_j)
	\end{pmatrix} \right]_{i,j=1}^{k+1}$$
	after taking the integral over $x_1, \dots, x_{k+1}$. Using Lemma \ref{lem:minorsummation}, we can write 
	\begin{equation}
	b_{k+1}^{k} = \frac{(-1)^{k+1}}{(k+1)!}\int\mathrm{d}x_1\dots \int \mathrm{d}x_{k+1} \sum_{1\leqslant i<j\leqslant k+1}\delta'(x_i, x_j) (-1)^{2i+2j-1} \Pf\left[ \mathbf{A}_{k+1}^{\widehat{2i},\widehat{2j}} \right],
	\label{eq:termdegree1}
	\end{equation}
	where $\mathbf{A}_{n}^{\widehat{2i},\widehat{2j}}$ denotes the $2(n-1)\times 2(n-1)$ dimensional matrix which is the result of removing columns and rows $2i$ and $2j$ from
	$$\begin{pmatrix}
	\mathbf{A}_{11}(x_a,x_b) &  2\mathbf{A}_{12}(x_a, x_b) \\
	2\mathbf{A}_{21}(x_a,x_b) & 4\mathbf{A}_{22}(x_a,x_b)
	\end{pmatrix}_{a,b=1}^{n}.$$
	By using operations on rows and columns, we find that all terms in the summation in the R.H.S. of  \eqref{eq:termdegree1} are equal modulo a permutation of the $x_i$. Since we integrate the $x_i$ over a symmetric domain, these permutations are inconsequential so that 
	$$ b_{k+1}^{k} = -\frac{(-1)^{k+1}k(k+1)}{2(k+1)!} \int\mathrm{d}x_1\dots \int \mathrm{d}x_{k+1}\ \  \delta'(x_k, x_{k+1})  \Pf\left[ \mathbf{A}_{k+1}^{\widehat{2k},\widehat{2(k+1)}} \right].$$
	\begin{lemma}
		The action of $\delta'$ is such that
		\begin{equation*}
		\int\mathrm{d}x_1\dots\int  \mathrm{d}x_{k+1} \ \ \delta'(x_k, x_{k+1}) \ \  \Pf\left[ \mathbf{A}_{k+1}^{\widehat{2k},\widehat{2(k+1)}} \right]  =   -2^k a_k.
		\end{equation*}
		\label{lem:actionofdelta'}
	\end{lemma}
	\begin{proof}
		We will use the shorthand notation $\mathbf{M}_{k+1}$ for the matrix $\mathbf{A}_{k+1}^{\widehat{2k},\widehat{2(k+1)}}$, and 
		$ \mathbf{M}_{k}$ for the matrix formed from the matrix $\mathbf{A}_{k+1}^{\widehat{2k},\widehat{2(k+1)}}$ after swapping the column and row of indices $2k-1$ and $2k$. The notation $\mathbf{M}_{j}$ for $j=k, k+1$ means that the column and row where the variable $x_{j}$ appears is on the rightmost/bottommost position.
		We have $\Pf[\mathbf{M}_{k}] = -\Pf[\mathbf{A}_{k+1}^{\widehat{2k},\widehat{2(k+1)}}]$ and $\Pf[\mathbf{M}_{k+1}] = \Pf[\mathbf{A}_{k+1}^{\widehat{2k}\widehat{2(k+1)}}]$.
		Then,
		\begin{multline*}
		\int \mathrm{d}x_1\dots\int \mathrm{d}x_{k+1}\ \ \delta'(x_k, x_{k+1})  \Pf\left[ \mathbf{A}_{k+1}^{\widehat{2k},\widehat{2(k+1)}} \right]  \\=  \int\mathrm{d}x_1\dots\int \mathrm{d}x_{k} \ \ \Big(\partial_{x_k}\Pf\left[ \mathbf{M}_{k} \right] \big\vert_{x_{k+1}=x_k} +\partial_{x_{k+1}}\Pf\left[ \mathbf{M}_{k+1} \right]\big\vert_{x_{k+1}=x_k}\Big).
		\end{multline*}
		Moreover, expanding the Pfaffian as in \eqref{eq:defPfaffian1} or \eqref{eq:defPfaffian2}, we see that
		$$ \big(\partial_{x_k}\Pf\left[ \mathbf{M}_{k} \right]\big)\Big\vert_{x_{k+1}=x_k} = \Pf\left[\left(\partial_{x_k} \mathbf{M}_{k}\right)\big\vert_{x_{k+1}=x_k} \right] $$
		and
		$$ \big(\partial_{x_{k+1}}\Pf\left[ \mathbf{M}_{k+1} \right]\big)\Big\vert_{x_{k+1}=x_k} = \Pf\left[\left(\partial_{x_{k+1}} \mathbf{M}_{k+1}\right)\big\vert_{x_{k+1}=x_k} \right].$$
		
		Using the differential relations $\partial_{x_{k}} A_{11}(x_1, x_{k}) =  - A_{12}(x_1, x_{k})$, $\partial_{x_{k}} A_{11}( x_{k}, x_1) =  - A_{21}( x_{k}, x_1)$ and
		$\partial_{x_{k}} A_{21}(x_1, x_{k}) = -A_{22}(x_1, x_k)$, $\partial_{x_{k}} A_{12}( x_{k},x_1) = -A_{22}( x_k, x_1)$, we see that
		$$ \Pf\left[\left(\partial_{x_k} \mathbf{M}_{k}\right)\big\vert_{x_{k+1}=x_k} \right] = - 2^{k-1}\Pf[\mathbf{A}(x_i, x_j)]_{i,j=1}^{k},$$
		and  we obtain similarly that
		$$ \Pf\left[\left(\partial_{x_{k+1}} \mathbf{M}_{k+1}\right)\big\vert_{x_{k+1}=x_k} \right] = - 2^{k-1}\Pf[\mathbf{A}(x_i, x_j)]_{i,j=1}^{k}.$$
		Thus,
		\begin{multline*}
		\int\mathrm{d}x_1\dots \int \mathrm{d}x_{k+1} \ \  \delta'(x_k, x_{k+1})  \Pf\left[ \mathbf{A}_{k+1}^{\widehat{2k},\widehat{2(k+1)}} \right]  = \\ 
		-2^k \int\mathrm{d}x_1\dots \int \mathrm{d}x_{k} \ \ \Pf[\mathbf{A}(x_i, x_j)]_{i,j=1}^{k} = -2^k a_k. \qedhere
		\end{multline*}
	\end{proof}
	Thus, Lemma \ref{lem:actionofdelta'} shows that 
	$$  b_{k+1}^k = \frac{(-1)^{k+1}}{k!}k2^{k-1}  a_k.$$
	
	\item{\textbf{Computation of $b_{k+i}^{k}$ in general:}}
	By definition $b_{k+i}^{k}$ corresponds to  the terms of degree $i$ in $\delta'$ in the Pfaffian expansion of
	$$\frac{(-1)^{k+i}}{(k+i)!} \Pf\left[ \begin{pmatrix}
	\mathbf{A}_{11}(x_i,x_j) &  2\mathbf{A}_{12}(x_i, x_j) \\
	2\mathbf{A}_{21}(x_i,x_j) & 4\mathbf{A}_{22}(x_i,x_j)
	\end{pmatrix} + \begin{pmatrix}
	0&  0 \\
	0 & \delta'(x_i, x_j)
	\end{pmatrix} \right]_{i,j=1}^{k+i}$$
	after  taking the integral over $x_1, \dots, x_{k+i}$. Using Lemma \ref{lem:minorsummation}, 
	\begin{multline}
	b_{k+i}^{k} = \frac{(-1)^{k+i}}{(k+i)!}\int\mathrm{d}x_1\dots \int\mathrm{d}x_{k+i}  \\ 
	\sum_{1\leqslant j_1 < \dots <j_{2i}\leqslant k+i}\Pf[\delta'(x_{j_r}, x_{j_s})]_{r,s=1}^{2i} (-1)^{2j_1+\dots+2j_{2i}-i} \Pf\left[ \mathbf{A}_{k+i}^{\widehat{2j_1}, \dots, \widehat{2j_{2i}}} \right],
	\label{eq:termdegreei}
	\end{multline}
	where $\mathbf{A}_{n}^{\widehat{2j_1},\dots,  \widehat{2j_i}}$ is the $2(n-i)\times 2(n-i)$ dimensional matrix which is the result of removing the columns and rows indexed by $2j_1, 2j_2 \dots, 2j_{i}$ from
	$$\begin{pmatrix}
	\mathbf{A}_{11}(x_a,x_b) &  2\mathbf{A}_{12}(x_a, x_b) \\
	2\mathbf{A}_{21}(x_a,x_b) & 4\mathbf{A}_{22}(x_a,x_b)
	\end{pmatrix}_{a,b=1}^{n}.$$
	Again, all terms give an equal contribution after integrating over the $x_i$, so that 
	\begin{equation*} 
	b_{k+i}^{k} = (-1)^i \frac{(-1)^{k+i}}{(k+i)!}\binom{k+i}{2i} \int\mathrm{d}x_1\dots \int \mathrm{d}x_{k+i} 
	\Pf[\delta'(x_r, x_s)]_{r,s=k-i+1}^{k+i}  \Pf\left[ \mathbf{A}_{k+i}^{\widehat{2(k-i+1)}, \dots, \widehat{2(k+i)}} \right].
	\end{equation*}
	The Pfaffian $\Pf[\delta'(x_r, x_s)]_{r,s=k-i+1}^{k+i}$ can be expanded into products of $\delta'$ acting on disjoint pairs of variables. Each term in the expansion will give the same contribution after we have integrated over the $x_j$s, in the sense of the following.
	\begin{lemma}
		For any partition into pairs of the set $\lbrace k-i+1, \dots, k+i\rbrace$, in the form
		$\alpha = \big\lbrace (r_1, s_1), \dots, (r_i, s_i) \big\rbrace $ with $r_1< \dots < r_i$ and $r_j<s_j$, we have
		\begin{multline}
		\int \mathrm{d}x_1\dots \int \mathrm{d}x_{k+i}\ \ \delta'(x_{r_1}, x_{s_1}) \dots \delta'(x_{r_i}, x_{s_i})  \Pf\left[ \mathbf{A}_{k+i}^{\widehat{2(k-i+1)}, \dots, \widehat{2(k+i)}} \right] \\
		= \varepsilon(\alpha) \int\mathrm{d}x_1\dots \int\mathrm{d}x_{k+i}\ \  
		\prod_{j=1}^i   \delta'(x_{k-i+2j-1}, x_{k-i+2j})  \Pf\left[ \mathbf{A}_{k+i}^{\widehat{2(k-i+1)}, \dots, \widehat{2(k+i)}} \right].
		\label{eq:signatruresappear}
		\end{multline}
		\label{lem:equivterms}
	\end{lemma}
	\begin{proof}
		In order to match both sides of \eqref{eq:signatruresappear}, we make a change of variables so that the $\delta'$ factors exactly match. This does not change the value of the integral. Then we have  to exchange rows/columns in the Pfaffian. Each time that we exchange two adjacent rows/columns\footnote{Recall that in Pfaffian calculus, one always exchanges rows and columns simultaneously, to preserve the antisymmetry of the matrix.}, we factor out a $(-1)$,   hence the signature prefactor in the R.H.S of \eqref{eq:signatruresappear}.
	\end{proof}
	\begin{lemma}
		The action of $\delta'$ is such that
		\begin{equation}
		\int\mathrm{d}x_1\dots \int\mathrm{d}x_{k+i} \ \ \prod_{j=1}^i   \delta'(x_{k-i+2j-1}, x_{k-i+2j}) \Pf\left[ \mathbf{A}_{k+i}^{\widehat{2(k-i+1)},\dots, \widehat{2(k+i)}} \right]
		=(-1)^i 2^k  a_k.
		\end{equation}
		\label{lem:actionofseveraldelta'}
	\end{lemma}
	\begin{proof} 
		We adapt the proof of Lemma \ref{lem:actionofdelta'}. 
		For $j_1<\dots < j_i$ such that
		$$j_1\in\lbrace k-i+1, k-i+2 \rbrace, j_2\in\lbrace k-i+3, k-i+4 \rbrace, \ \dots\ , j_i\in\lbrace k+i-1, k+i \rbrace$$
		we denote by $\mathbf{M}_{j_1, \dots, j_i}$ the matrix obtained from $\mathbf{A}_{k+i}^{\widehat{2(k-i+1)},\dots, \widehat{2(k+i)}}$ where for all $r$, one has moved the column and row where $x_{j_r}$ appears to position $2(k-i+r)$. With this definition, if $j_r$ is even then the column and row where $x_{j_r}$ appears do not move, and if $j_r$ is odd then the column and row where $x_{j_r}$ appears is swapped with the right/bottom neighbor.
		Hence we have that
		$$ \Pf[\mathbf{A}_{k+i}^{\widehat{k-i+1},\dots, \widehat{k+i}}]  = (-1)^{j_1+\dots+ j_i}\Pf[\mathbf{M}_{j_1,\dots, j_i }].$$
		By the definition of $\delta'$,
		\begin{multline}
		\int \mathrm{d}x_1\dots\int \mathrm{d}x_{k+i}\ \ \prod_{j=1}^i   \delta'(x_{k-i+2j-1}, x_{k-i+2j}) \Pf\left[ \mathbf{A}_{k+i}^{\widehat{2(k-i+1)},\dots, \widehat{2(k+i)}} \right]\\
		= \int\mathrm{d}x_1\dots \int\mathrm{d}x_{k} \ \ \prod_{j=1}^i (\partial_{x_{k-i+2j}} - \partial_{x_{k-i+2j-1}})  \Pf\left[ \mathbf{A}_{k+i}^{\widehat{2(k-i+1)},\dots, \widehat{2(k+i)}} \right] \Big\vert_{x_{k+i}=\dots=x_{k+1}=x_k}.
		\label{eq:productsofpartial}
		\end{multline}
		We can develop the product of partial derivatives so that
		\begin{multline*}
		\eqref{eq:productsofpartial} = \int\mathrm{d}x_1\dots \int\mathrm{d}x_{k} \ \ \sum_{ j_1, \dots , j_i} \prod_{\ell=1}^i \big( (-1)^{j_{\ell}}  \partial_{x_{j_{\ell}}} \big) \Pf\left[ \mathbf{A}_{k+i}^{\widehat{2(k-i+1},\dots, \widehat{2(k+i)}} \right] \Big\vert_{x_{k+i}=\dots=x_{k+1}=x_k}\\
		=  \int \mathrm{d}x_1\dots \int \mathrm{d}x_{k}\ \ \sum_{j_1,  \dots,  j_i}  \partial_{x_{j_1}} \dots \partial_{x_{j_i}} \Pf\left[ \mathbf{M}_{j_1, \dots, j_i}\right] \Big\vert_{x_{k+i}=\dots=x_{k+1}=x_k},
		\end{multline*}
		where the sums runs over all sequences $j_1<\dots < j_i$ such that $j_1\in\lbrace k-i+1, k-i+2 \rbrace$,  $j_2\in\lbrace k-i+3, k-i+4 \rbrace$, etc.
		For any such sequence $j_1, \dots, j_i$, the relations between coefficients in the matrix $\mathbf{A}$ implies that
		$$ \partial_{x_{j_1}} \dots \partial_{x_{j_i}} \Pf\left[ \mathbf{M}_{j_1, \dots, j_i} \right] \Big\vert_{x_{k+i}=\dots=x_{k+1}=x_k} = (-1)^i 2^{k-i} \Pf[\mathbf{A}(x_r, x_s)]_{r,s=1}^{k}.$$
		Thus, we find that
		\begin{equation*}
		\eqref{eq:productsofpartial} =   (-1)^{i}  \int \mathrm{d}x_1\dots \int\mathrm{d}x_{k}\ \ 2^{k} \Pf[\mathbf{A}(x_r, x_s)]_{r,s=1}^{k}  =  (-1)^{i} 2^k a_k. \qedhere
		\end{equation*} 
	\end{proof}
	There are $ \frac{(2i)!}{2^i i!}$ terms in the expansion of the Pfaffian of the $2i\times 2i$ matrix $\big(\delta'(x_r, x_s)\big)_{r,s=k-i+1}^{k+i}$.  Combining  Lemmas \ref{lem:actionofseveraldelta'} and  \ref{lem:equivterms}, we have 
	\begin{equation*} b_{k+i}^k  = \frac{(-1)^{k+i}}{(k+i)!}\binom{k+i}{2i} \frac{(2i)!}{2^i i!}  2^k a_k = \\  \frac{(-1)^k}{k!} \binom{k}{i} 2^{k-i}(-1)^i a_k .\qedhere
	\end{equation*}
\end{proof}
Proposition \ref{prop:equivalentpfaffians} suggests that if the kernel $K^{\rm exp}$ converges to $K^{\infty}$ under some scalings, then the Fredholm Pfaffian of $K^{\rm exp}$ should converge to $F_{\rm GSE}$. However, the kernel $K^{\infty}$ is distribution valued, and we do not know an appropriate notion of convergence to distribution-valued kernels. The following provides an approximative version of Proposition \ref{prop:equivalentpfaffians} that provides  a notion of convergence sufficient for the Fredholm Pfaffian to converge.
\begin{proposition}
	Let $(A_n(x,y))_{n\in \mathbb{Z}_{\geqslant 0}}$ be a family of smooth and antisymmetric kernels such that for all $r,s\in \Z_{\geqslant 0}$,  $\partial_x^r\partial_y^sA_n(x,y)$ has exponential decay at infinity (in any direction of $\R^2$), uniformly for all $n$.
	Let
	$$  \mathbf{A}_n(x,y):= \begin{pmatrix} A_n(x,y)& -\partial_yA_n(x,y)\\
	-\partial_xA_n(x,y) &\partial_x\partial_y A_n(x,y)
	\end{pmatrix}$$
	and
	$$  \mathbf{B}_n(x,y):= \begin{pmatrix} A_n(x,y)& -2\partial_yA_n(x,y)\\
	-2\partial_xA_n(x,y) &4\partial_x\partial_y A_n(x,y) + \delta'_n(x,y)
	\end{pmatrix}$$
	be two families of kernels $(\R)^2 \to \skeww(\R)$ such that
	\begin{enumerate}
		\item The family of kernels
		$$ \begin{pmatrix} A_n(x,y)& -2\partial_yA_n(x,y)\\
		-2\partial_xA_n(x,y) &4\partial_x\partial_y A_n(x,y)
		\end{pmatrix} $$
		satisfy, uniformly in $n$, the hypotheses of Lemma \ref{lem:hadamard}.
		\item The kernel
		$\mathbf{A}_n(x,y)$ converges pointwise to
		$$\mathbf{A}:= \begin{pmatrix} A(x,y)& -\partial_yA(x,y)\\
		-\partial_xA(x,y) &\partial_x\partial_y A(x,y)
		\end{pmatrix},$$
		a smooth kernel  $\R^2 \to \skeww(\R)$ (necessarily satisfying the hypotheses of Lemma \ref{lem:hadamard}).
		\item For any smooth function $f:\mathbb{R}^2\to \R$ such that $\partial_x^r\partial_y^s f(x,y)$ have exponential decay at infinity for all $r,s$, we have
		\begin{equation*}
		\bigg\vert  \iint  \delta'_n(x,y)f(x,y)\mathrm{d}x\mathrm{d}y\ - \int \Big( \partial_y f(x,y) - \partial_x f(x,y)\Big)\Big\vert_{x=y} \mathrm{d}x\bigg\vert 
		=:  c_n(f) \xrightarrow[n\to\infty]{} 0.
		\end{equation*}
	\end{enumerate}
	Then for all $x\in \R$,
	$$ \Pf[J-\mathbf{B}_n]_{\mathbb{L}^2(x, \infty)} \xrightarrow[n\to\infty]{} \Pf[J-\mathbf{A}]_{\mathbb{L}^2(x, \infty)}.$$
	\label{prop:approxequivalentpfaffians}
\end{proposition}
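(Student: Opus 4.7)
The plan is to adapt the proof of Proposition \ref{prop:equivalentpfaffians} to the approximate setting, tracking errors that vanish as $n\to\infty$. Write $\Pf[J-\mathbf{B}_n]_{\mathbb{L}^2(x,\infty)} = \sum_{m=0}^{\infty} \frac{(-1)^m}{m!} b_m^{(n)}$ where $b_m^{(n)} := \int \Pf[\mathbf{B}_n(x_i,x_j)]_{i,j=1}^m\,\mathrm{d}x_1\cdots\mathrm{d}x_m$, and similarly $\Pf[J-\mathbf{A}]_{\mathbb{L}^2(x,\infty)} = \sum_{k=0}^{\infty} \frac{(-1)^k}{k!} a_k$. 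Decomposing $\mathbf{B}_n = \mathbf{A}^{\times}_n + \mathbf{\Delta}_n$ into its smooth part (with entries scaled by the factors $1,2,2,4$) and the matrix whose only non-zero block entry is $\delta'_n$ in the $(2,2)$ slot, the minor summation formula (Lemma \ref{lem:minorsummation}) yields $b_m^{(n)} = \sum_{i=0}^{\lfloor m/2\rfloor} b^{(m-i),(n)}_{m}$, where $b^{k,(n)}_{k+i}$ collects all contributions in which $i$ factors of $\delta'_n$ are paired against a $2k\times 2k$ Pfaffian minor of $\mathbf{A}^{\times}_n$, mirroring exactly the decomposition used in Section \ref{formal}.

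The first substantive step is termwise convergence: for each fixed $k,i$,
$$ b_{k+i}^{k,(n)} \xrightarrow[n\to\infty]{} \frac{(-1)^k}{k!}\binom{k}{i} 2^{k-i}(-1)^i\, a_k. $$
Following the symmetry reductions of Lemma \ref{lem:equivterms} and the combinatorial computation of Lemma \ref{lem:actionofseveraldelta'}, this reduces to showing that
$$ \int\prod_{j=1}^i \delta'_n(x_{k-i+2j-1},x_{k-i+2j})\,F_n(x_1,\dots,x_{k+i})\,\mathrm{d}x_1\cdots\mathrm{d}x_{k+i}$$
converges to the analogous integral where each $\delta'_n$ is replaced by $\delta'$ and $F_n$ (the smooth Pfaffian minor of $\mathbf{A}^{\times}_n$) by its pointwise limit $F$. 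Applying hypothesis (3) one factor at a time, one writes $F_n=F+(F_n-F)$: the fixed-$F$ contribution is handled directly by hypothesis (3), and the residual is bounded using the uniform exponential decay of all partial derivatives of $A_n$ (which transfers to $F_n-F\to 0$ in a $C^1$-sense with uniform decay) together with an integration-by-parts moving the derivatives off $\delta'_n$.

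The second step is to interchange $n\to\infty$ with the summation on $(k,i)$. Hypothesis (1) combined with Lemma \ref{lem:hadamard} produces a Hadamard-type bound
$$ \bigl|b_{k+i}^{k,(n)}\bigr| \leqslant \frac{C^{k+i}(2(k+i))^{(k+i)/2}}{(k+i)!}$$
uniformly in $n$; here the $\delta'_n$ factors, after integration by parts against the smooth Pfaffian minor, land on derivatives of $A_n$ whose magnitudes are dominated by hypothesis (1). Dominated convergence on the double sum then gives, via the same Fubini reorganisation as in \eqref{eq:fubinireorder},
$$ \Pf[J-\mathbf{B}_n]_{\mathbb{L}^2(x,\infty)}\xrightarrow[n\to\infty]{} \sum_{k=0}^{\infty}\frac{(-1)^k}{k!}a_k \sum_{i=0}^k \binom{k}{i}2^{k-i}(-1)^i = \sum_{k=0}^{\infty}\frac{(-1)^k}{k!}a_k = \Pf[J-\mathbf{A}]_{\mathbb{L}^2(x,\infty)}. $$

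The main obstacle is the treatment of $\delta'_n$ acting on the $n$-dependent test functions $F_n$, since hypothesis (3) only postulates convergence against a fixed smooth $f$. The key observation, which both handles termwise convergence and supplies the uniform bound, is that the pointwise convergence $\mathbf{A}_n\to\mathbf{A}$ together with uniform exponential decay of all mixed partials of $A_n$ (which follows from hypothesis (1) applied to the smooth block of $\mathbf{B}_n$) ensures locally uniform $C^r$-convergence of the Pfaffian minors $F_n$, reducing the $\delta'_n$-action on $F_n$ to its action on the fixed limit $F$ plus a negligible error.
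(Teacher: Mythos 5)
Your outline follows the paper's general strategy (reuse the combinatorial collapse of Proposition \ref{prop:equivalentpfaffians}, control $\delta'_n$ versus $\delta'$ through hypothesis (3), then sum with Hadamard bounds), but there is a genuine gap at the two places where you need \emph{quantitative} control of $\delta'_n$. First, in the termwise limit you pair $\delta'_n$ against the residual $F_n-F$ and propose to handle it by ``integration-by-parts moving the derivatives off $\delta'_n$''. No such operation is available: $\delta'_n$ is an arbitrary kernel about which the hypotheses assert nothing beyond the convergence statement (3) --- it is not assumed to be a derivative of anything, to be smooth, or to satisfy any weighted $L^1$ bound (in the application it is $R^{\rm exp,n}_{22}\sim n^2\,\sgn(y-x)e^{-n|x-y|}$, whose total mass diverges). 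Second, and more damagingly, your uniform-in-$n$ Hadamard bound on $b^{k,(n)}_{k+i}$ rests on the same phantom integration by parts: hypothesis (1) only bounds the smooth block, while hypothesis (3) gives, for each \emph{fixed} $f$, convergence (hence boundedness) of $\iint\delta'_n f$, which does not yield a bound uniform over the $n$- and $(k,i)$-dependent family of Pfaffian minors. Without such a bound the dominated-convergence interchange over your double sum in $(k,i)$ is unjustified. To salvage your route you would need either an explicitly uniform reading of hypothesis (3), say $c_n(f)\leqslant \epsilon_n\cdot(\text{suitable norms of }f)$ with $\epsilon_n\to0$, or a Banach--Steinhaus equicontinuity argument on the Fr\'echet space of admissible test functions; you state neither.

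The paper sidesteps both difficulties by never pairing $\delta'_n$ against a limit or a difference. It inserts the intermediate object $\Pf[J-\mathbf{A}_n]$ and compares it with $\Pf[J-\mathbf{B}_n]$ at the \emph{same} $n$, so each $\delta'_n$ acts on exactly the minor of $\mathbf{A}_n$ on which the corresponding $\delta'$ acts, and hypothesis (3) bounds the discrepancy term by term by powers of $c_n$ times $|a_k(n)|$, giving $|\Pf[J-\mathbf{B}_n]-\Pf[J-\mathbf{A}_n]|\leqslant\sum_k\frac{2^k}{k!}|a_k(n)|\big((1+c_n)^k-1\big)\to0$; here only the $\delta'$-free quantities $a_k(n)$ need the Hadamard bound from hypothesis (1), and the remaining step $\Pf[J-\mathbf{A}_n]\to\Pf[J-\mathbf{A}]$ is routine dominated convergence from hypotheses (1)--(2). (The paper does implicitly apply (3) to the $n$-dependent minors, i.e.\ it too reads $c_n$ with the uniformity that Proposition \ref{prop:asymptoticsRGSE} actually supplies, but its bookkeeping never requires the extra estimates your splitting $F_n=F+(F_n-F)$ forces on you.) Restructuring your argument this way removes the need for the residual step and the unjustified uniform bound.
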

\begin{proof}
	We will follow the proof of Proposition \ref{prop:equivalentpfaffians}, and make approximations when necessary. Let 
	$$b_m(n) =  \int\mathrm{d}x_1\dots\int \mathrm{d}x_m\ \   \Pf[\mathbf{B}_n(x_i, x_j)]_{i,j=1}^m, $$
	$$ a_m(n) =  \int \mathrm{d}x_1\dots\int\mathrm{d}x_m\ \  \Pf[\mathbf{A}_n(x_i, x_j)]_{i,j=1}^m,$$
	and denote by
	$ b_m^k(n) $ the part of the expansion of $\frac{(-1)^m}{m!} b_m(n)$ involving only terms of degree $m-k$ in $\delta'_n$ (as in \eqref{eq:termdegreei}).  We need to justify the approximation
	$$\sum_{i=0}^k b_{k+i}^k(n) \approx \frac{(-1)^k}{k!} a_k(n),$$ and show that the error is summable and goes to zero. Let us replace $\mathbf{A}, \mathbf{B}$ by $\mathbf{A}_n, \mathbf{B}_n$ in the proof of Proposition \ref{prop:equivalentpfaffians}. The only part of the proof which breaks down is when we apply the definition of $\delta'$. However, using the hypothesis (3), we see that replacing $\delta'$ by $\delta'_n$ in Lemma \ref{lem:actionofseveraldelta'} results in an error bounded by
	$$ 2^i (c_n)^i\vert a_k(n) \vert $$
	for $i\geqslant 1$ and the error is zero when $i=0$.
	Let  $\tilde{b}_{k+i}^k(n)$ be the quantity having the same expression as $b_{k+i}^k(n)$ where all occurrences of $\delta'_n$ are  replaced by $\delta'$. In other terms, $\tilde{b}_{k+i}^k(n)$  is given by \eqref{eq:termdegreei} where the kernel is $\mathbf{A}_n$ instead of $\mathbf{A}$. We have
	$$ \Big\vert  b_{k+i}^k(n) - \tilde{b}_{k+i}^k(n) \Big\vert \leqslant \frac{1}{k!} \binom{k}{i} 2^{k-i} 2^i (c_n)^i \vert a_k(n) \vert.$$
	On one hand,
	$$  \Pf[J-\mathbf{B_n}] = \sum_{m=0}^{\infty} \frac{(-1)^m}{m!} b_m(n)
	= \sum_{m=0}^{\infty} \sum_{k=\lfloor m/2\rfloor}^m b_m^k(n), $$
	while on the other hand,
	$$ \Pf[J-\mathbf{A_n}] =  \sum_{k=0}^{\infty} \frac{(-1)^k}{k!} a_k  = \sum_{k=0}^{\infty} \sum_{i=0}^k \tilde{b}_{k+i}^k = \sum_{m=0}^{\infty} \sum_{k=\lfloor m/2\rfloor}^m \tilde{b}_m^k(n),$$
	where we have used \eqref{eq:keyidentity} from the proof of Proposition \ref{prop:equivalentpfaffians} in the second equality and Fubini's Theorem in the third equality (which we can do by hypothesis (1) and Lemma \ref{lem:hadamard}).
	Thus,
	\begin{multline*} \Big\vert \Pf[J-\mathbf{B_n}] - \Pf[J-\mathbf{A_n}] \Big\vert \leqslant \sum_{m=0}^{\infty} \sum_{k=\lfloor m/2\rfloor}^{m} \frac{1}{k!} \binom{k}{m-k} 2^{k} (c_n)^{m-k} \vert a_k(n) \vert\\ = \sum_{k=0}^{\infty} \sum_{i=1}^k \frac{1}{k!} \binom{k}{i} 2^{k} (c_n)^i\vert a_k(n) \vert= \sum_{k=0}^{\infty} \frac{2^k}{k!} \vert a_k(n) \vert \left(\left(1+c_n  \right)^k - 1 \right) ,\end{multline*}
	Using the bound
	$$ \left(\left(1+c_n  \right)^k - 1 \right)\leqslant k \log(1+c_n) \left(1+c_n \right)^k \leqslant k c_n\left(1+c_n  \right)^k, $$
	we find that
	$$ \Big\vert \Pf[J-\mathbf{B_n}] - \Pf[J-\mathbf{A_n}] \Big\vert \leqslant \sum_{k=0}^{\infty} \frac{2^k}{k!} \vert a_k(n) \vert kc_n \left(1+c_n \right)^k \xrightarrow[n\to\infty]{} 0 $$
	
	Moreover,  using the pointwise convergence of hypothesis (2) and the decay hypothesis (1) along with Lemma \ref{lem:hadamard}, dominated convergence shows that
	\begin{equation*}
	\Pf[J-\mathbf{A_n}] \xrightarrow[n\to \infty]{} \Pf[J-\mathbf{A}]. \qedhere
	\end{equation*}
\end{proof}

\subsection{Asymptotic analysis of the kernel $ K^{\rm exp}$}
\label{sec:pointwiseGSE}
Recall that we have assumed that $\alpha>1/2$ (as in the first part of Theorem \ref{theo:LPPdiagointro}). Let us first focus on pointwise convergence of $K^{\rm exp}(1,x;1,y)$ as $n=m$ goes to infinity. In the following, we write simply $K^{\rm exp}(x;y)$ instead of $K^{\rm exp}(1,x;1,y)$.  We expect that as $n$ goes to infinity, $H(n,n)$ is asymptotically equivalent to $h n$ for some constant $h$, and fluctuates around $hn$ in the $n^{1/3}$ scale. Thus, it is natural to study the correlation kernel at a point 
\begin{equation}
\mathfrak{r}(x,y): = \big(n h + n^{1/3} \sigma x, n h + n^{1/3} \sigma y\big),
\label{eq:defr}
\end{equation}
where the constants $h, \sigma$ will be fixed later.  We introduce the rescaled kernel
\begin{equation*} K^{\rm{exp}, n}(x,y) :=  \begin{pmatrix}
\frac{1}{(2\alpha-1)^2} K^{\rm exp}_{11}\Big(\mathfrak{r}(x,y)\Big) & \sigma n^{1/3}K^{\rm exp}_{12}\Big(\mathfrak{r}(x,y)\Big)\\
\sigma n^{1/3}K^{\rm exp}_{21}\Big(\mathfrak{r}(x,y)\Big) & \sigma^2 n^{2/3}(2\alpha-1)^2 K^{\rm exp}_{22}\Big(\mathfrak{r}(x,y)\Big)
\end{pmatrix}.
\end{equation*}
By a change of variables in the Fredholm Pfaffian expansion of $K^{\rm exp}$ and a conjugation of the kernel preserving the Pfaffian, we get using Proposition \ref{prop:kernelexponential} that 
$$ \PP\left( \frac{H(n,n)-hn}{\sigma n^{1/3}} \leqslant y \right) = \Pf[J- K^{\rm{exp}, n}]_{\mathbb{L}^2(y, \infty)}.$$
Moreover, we write
$$ K^{\rm{exp}, n}(x,y) = I^{\rm{exp}, n}(x,y)+R^{\rm{exp}, n}(x,y), $$
according to the decomposition made in Section \ref{sec:kernelexp}.
The kernel $I^{\rm exp, n}_{11}$ can be rewritten as
\begin{multline}
I^{\rm exp, n}_{11}(x,y) = \int_{\mathcal{C}_{1/4}^{\pi/3}}\dd z \int_{\mathcal{C}_{1/4}^{\pi/3}} \dd w \frac{z-w}{4zw(z+w)} \frac{(2z+2\alpha -1)(2w+2\alpha -1)}{(2\alpha-1)^2}  \\ \times 
\exp\Big[n(f(z)+f(w)) - n^{1/3}\sigma (xz+yw)\Big],
\label{eq:kernelK11}
\end{multline}
where
\begin{equation}
f(z) = -h z + \log(1+2z) -\log(1-2z).
\label{eq:deff}
\end{equation}
Setting $h=4$, we find  that $f'(0)=f''(0)=0$.  Setting $\sigma=2^{4/3}$, Taylor expansion of $f$ around zero yields
\begin{equation}
f(z) = \frac{\sigma^3}{3} z^3 + \mathcal{O}(z^4).
\label{eq:Taylorapprox1}
\end{equation}
Similarly, the  kernel $I^{\rm exp, n}_{12}$ can be rewritten  as
\begin{multline}
I^{\rm exp, n}_{12}(x,y)= \sigma n^{1/3} \int_{\mathcal{C}_{1/4}^{\pi/3}} \dd z\int_{\mathcal{C}_{0}^{\pi/3}}\dd w \frac{z-w}{2z(z+w)} \frac{2\alpha -1 + 2z}{2\alpha -1-2w} \\ \times 
\exp\Big[n(f(z)+f(w)) - n^{1/3}\sigma (zx+wy)\Big].
\label{eq:kernelK12}
\end{multline}
Here the assumption that $\alpha>1/2$ matters in the choice of contours, so that the poles for $w=(2\alpha -1)/2$ lie on the right of the contour.
The kernel $I^{\rm exp, n}_{22}$ can be rewritten as
\begin{multline*}
I^{\rm exp, n}_{22}(x,y) = \sigma^2 n^{2/3} \int_{\mathcal{C}_{1/4}^{\pi/3}}\dd z\int_{\mathcal{C}_{1/4}^{\pi/3}}\dd w \frac{z-w}{z+w}  \frac{(2\alpha-1)}{2\alpha -1 - 2z}\frac{(2\alpha-1)}{2\alpha -1 - 2w} \\ \times 
\exp\Big[n(f(z)+f(w)) - n^{1/3}\sigma (zx+wy)\Big].
\end{multline*}
The kernel $R^{\rm{exp}, n}$ is such that $R^{\rm{exp}, n}_{11}= R^{\rm{exp}, n}_{12}=R^{\rm{exp}, n}_{21}=0$ and
$$ R^{\rm exp, n}_{22}(x,y)= \sgn(y-x) \left(\sigma n^{1/3}\frac{2\alpha-1}{2}\right)^2 e^{-\vert x-y\vert \sigma n^{1/3} \frac{2\alpha-1}{2}}.$$
\begin{proposition} For $\alpha >1/2$ and $\sigma=2^{4/3}$, we have
	\begin{align*}
	I^{\rm exp, n}_{11}(x,y) \xrightarrow[n\to\infty]{}& K_{11}^{\rm GSE}(x,y)=\int_{\mathcal{C}_{1}^{\pi/3}} \dd z\int_{\mathcal{C}_{1}^{\pi/3}}\dd w  \frac{(z-w)e^{z^3/3 + w^3/3 - xz - yw }}{4zw(z+w)}, \\
	I^{\rm exp, n}_{12}(x,y)  \xrightarrow[n\to\infty]{}& 2K_{12}^{\rm GSE}(x,y) =   \int_{\mathcal{C}_{1}^{\pi/3}} \dd z\int_{\mathcal{C}_{1}^{\pi/3}}\dd w  \frac{(z-w) e^{z^3/3 + w^3/3 - xz -  yw }}{2z(z+w)} ,\\
	I^{\rm exp, n}_{22}(x,y)  \xrightarrow[n\to\infty]{}& 4K_{22}^{\rm GSE}(x,y) =   \int_{\mathcal{C}_{1}^{\pi/3}}\dd z\int_{\mathcal{C}_{1}^{\pi/3}}\dd w \frac{(z-w)e^{z^3/3 + w^3/3 - xz -  yw }}{z+w} .
	\end{align*}
	\label{prop:GSEpointwise}
\end{proposition}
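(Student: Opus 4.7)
The plan is to perform a standard saddle-point asymptotic analysis of the double contour integrals defining $I^{\rm exp, n}_{11}$, $I^{\rm exp, n}_{12}$, $I^{\rm exp, n}_{22}$, and to use the decay of the exponential factor to justify the passage to the pointwise limit. First I would record the Taylor expansion of $f$ at its critical point: $f(0)=f'(0)=f''(0)=0$ and $f'''(0)=2\sigma^3$, so that $f(z)=\sigma^3 z^3/3+\mathcal{O}(z^4)$ near the origin as already noted in \eqref{eq:Taylorapprox1}. This dictates the change of variables $z=\tilde z/(\sigma n^{1/3})$ and $w=\tilde w/(\sigma n^{1/3})$, under which the exponential factor converges pointwise,
\begin{equation*}
n\bigl(f(z)+f(w)\bigr)-n^{1/3}\sigma(xz+yw)\ \longrightarrow\ \frac{\tilde z^3+\tilde w^3}{3}-x\tilde z-y\tilde w.
\end{equation*}

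Next I would verify that, after this substitution, the Jacobian $\dd z\,\dd w$ together with the rational prefactors and the external powers of $n^{1/3}$ visible in the definitions of $I^{\rm exp, n}_{ij}$ combine to give exactly the correct limiting prefactor. Explicitly, $\dd z\,\dd w=(\sigma n^{1/3})^{-2}\dd\tilde z\,\dd\tilde w$ and
\begin{equation*}
\tfrac{z-w}{zw(z+w)}=(\sigma n^{1/3})^{2}\tfrac{\tilde z-\tilde w}{\tilde z\tilde w(\tilde z+\tilde w)},\ \ \tfrac{z-w}{z(z+w)}=(\sigma n^{1/3})\tfrac{\tilde z-\tilde w}{\tilde z(\tilde z+\tilde w)},\ \ \tfrac{z-w}{z+w}=\tfrac{\tilde z-\tilde w}{\tilde z+\tilde w},
\end{equation*}
so the powers of $n$ cancel against $1$, $\sigma n^{1/3}$, $\sigma^{2}n^{2/3}$ respectively; the regular prefactors $(2z+2\alpha-1)(2w+2\alpha-1)/(2\alpha-1)^2$ and $(2\alpha-1)/(2\alpha-1\mp 2z)$ tend uniformly to $1$ on any bounded $\tilde z,\tilde w$ region as $n\to\infty$. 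This gives the factors $1$, $2$, $4$ stated in the proposition. I would then deform the rescaled contours $\mathcal{C}_{\sigma n^{1/3}/4}^{\pi/3}$ (and $\mathcal{C}_{0}^{\pi/3}$ for the $\tilde w$-variable in $I^{\rm exp,n}_{12}$) to the limiting contour $\mathcal{C}_1^{\pi/3}$; the hypothesis $\alpha>1/2$ guarantees that the pole at $\tilde w=(2\alpha-1)\sigma n^{1/3}/2$ (and its counterpart in $\tilde z$) stays on the correct side of the deformed contour and recedes to infinity, while the pole at $\tilde z+\tilde w=0$ is avoided because both deformed contours lie in $\Re(\tilde z),\Re(\tilde w)\geqslant 1$.

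To turn these pointwise statements into actual convergence of the integrals, I would split the rescaled contour into a bulk part $\lbrace|\tilde z|,|\tilde w|\leqslant M\rbrace$, on which dominated convergence together with the uniform bound $|e^{(\tilde z^3+\tilde w^3)/3-x\tilde z-y\tilde w}|\leqslant Ce^{-c(|\tilde z|^3+|\tilde w|^3)}$ yields convergence to the bulk truncation of $K^{\rm GSE}_{ij}$, and a complementary tail part whose contribution goes to zero uniformly in $n$ as $M\to\infty$. The main analytical input is the tail estimate $\Re[nf(z)]\leqslant -cn|z|^3$ on the short-range portion $|z|\leqslant \delta$ of $\mathcal{C}_{1/4}^{\pi/3}$ together with $\Re[nf(z)]\leqslant -c'n$ on the long-range portion $|z|\geqslant\delta$; the latter is the main obstacle and is where one must check that $\Re[f]$ is indeed strictly negative away from $0$ along the rays of angle $\pm\pi/3$ based at $1/4$, which stay uniformly away from the singularity of $\log(1-2z)$ at $z=1/2$. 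These estimates are entirely parallel to those already carried out in Step (1) of the proof of Proposition \ref{prop:kernelexponential} and Lemma \ref{lem:expboundgeom}, and can be imported with essentially no change.
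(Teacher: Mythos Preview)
Your overall strategy---change of variables $z=\tilde z/(\sigma n^{1/3})$, identification of the limiting integrand, contour deformation, bulk/tail split---is exactly the paper's, and the bookkeeping of powers of $n$ and of the $\alpha$-dependent prefactors is correct. There is, however, a concrete error in the tail estimate.

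You claim that one should check $\Re[f(z)]<0$ away from $0$ along the rays of $\mathcal{C}_{1/4}^{\pi/3}$. This is false already at the vertex: with $h=4$,
\[
f(1/4)=-1+\log(3/2)-\log(1/2)=-1+\log 3\approx 0.0986>0,
\]
so $e^{n f(1/4)}$ diverges and no decay estimate can hold on $\mathcal{C}_{1/4}^{\pi/3}$. The critical point of $f$ is at $z=0$, not at $z=1/4$, and the saddle-point method requires the contour to pass through the critical point. The paper's proof therefore \emph{first} deforms the integration contour from $\mathcal{C}_{1/4}^{\pi/3}$ to a contour $\mathcal{C}_{0\not\in}$ which coincides with $\mathcal{C}_0^{\pi/3}$ outside an $n^{-1/3}$-neighborhood of $0$ (the modification near $0$ is forced by the pole $1/z$ in $I_{11}^{\rm exp,n}$). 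The key analytical input is then Lemma~\ref{lem:steepdescent}, which shows that $t\mapsto\Re[f(te^{\pm i\pi/3})]$ is strictly decreasing on $(0,\infty)$ with linear decay for $t>1$; this gives the needed tail bound on the deformed contour, not on the original one. After this deformation the rest of your outline (localization, rescaling, extending the local contour to $\mathcal{C}_1^{\pi/3}$) goes through as you describe.

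In short: you must move the contour to the critical point \emph{before} attempting the tail estimate. The reference to $\mathcal{C}_{1/4}^{\pi/3}$ in your last paragraph should be replaced throughout by the steep-descent contour $\mathcal{C}_{0\not\in}$, and the steep-descent property of $\mathcal{C}_0^{\pi/3}$ (your ``main obstacle'') must actually be proved---this is the content of the paper's Lemma~\ref{lem:steepdescent}.
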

\begin{proof}
	Let us provide a detailed proof for the asymptotics of $I^{\rm exp, n}_{11}$. The arguments are almost identical for $I^{\rm exp, n}_{12}$ and $I^{\rm exp, n}_{22}$, the only difference being the presence of poles that are harmless for the contour deformation that we will perform (see the end of the proof).
	
	We use Laplace's method on the contour integrals in \eqref{eq:kernelK11}. It is clear that the asymptotic behaviour of the integrand is dominated by the term $\exp\big[n(f(z) + f(w))\big]$.
	Let us study the behaviour of $\Real [f(z)]$ along the contour $\mathcal{C}_{0}^{\pi/3}$.
	\begin{lemma} The contour $\mathcal{C}_{0}^{\pi/3}$ is steep descent\footnote{In general, we say that a contour $\mathcal{C}$ is steep descent for a real valued function $g$ if $g$ attains a unique maximum on the contour and is monotone along both parts of the contours separated by this maximum.} for $\Re[f]$ in the sense that
		the functions $t\mapsto \Real[f(te^{\I\pi/3})]$ and $t\mapsto \Real[f(te^{-\I\pi/3})]$ are strictly decreasing for $t>0$. Moreover, for $ t>1$, $\Real[f(te^{\pm\I\pi/3})]<-2t+2$.
		\label{lem:steepdescent}
	\end{lemma}
	\begin{proof} We have
		$$\Real[f(te^{\I\pi/3})] = \Real[f(te^{-\I\pi/3})] = -2 t + \tanh^{-1}\left(\frac{2t}{1+4t^2}\right), $$
		so that
		$$\frac{\mathrm{d}}{\mathrm{d}t}\Real[f(te^{\pm\I\pi/3})] = -\frac{16 (t^2 + 2 t^4)}{1 + 4 t^2 + 16 t^4)}, $$
		which is always negative, and less than $-2$ for $t>1$.
	\end{proof}
	We would like to deform the integration contour in \eqref{eq:kernelK11} to be the steep-descent contour  $\mathcal{C}_0^{\pi/3}$. This is not  possible due to the pole at $0$, and hence we modify the contour $\mathcal{C}_0^{\pi/3}$ in a $n^{-1/3}$--neighbourhood of $0$ in order to avoid the pole. We call $\mathcal{C}_{0\not\in}$ the resulting contour, which is depicted in Figure \ref{fig:modifiedcontour}. For a fixed but large enough $n$, the integrand in  \eqref{eq:kernelK11} has exponential decay along the tails of the integration contour: this is because for large $n$, the behaviour is governed by $e^{nf(z)+ nf(w)}$ and we have that
	$$ \frac{\Re[f(z)]}{\vert z \vert} \longrightarrow -\infty \text{ for }z\to\infty e^{\I \pi/3}. $$
	Hence, we are allowed to deform the contour to be $\mathcal{C}_{0\not\in}$ and  write
	\begin{multline}
	I^{\rm exp, n}_{11}(x,y) = \int_{\mathcal{C}_{0\not\in}}\dd z \int_{\mathcal{C}_{0\not\in}} \dd w \frac{z-w}{4zw(z+w)} \frac{(2z+2\alpha -1)(2w+2\alpha -1)}{(2\alpha-1)^2}  \\ \times
	\exp\Big[n(f(z)+f(w)) - n^{1/3}\sigma (xz+yw)\Big].
	\label{eq:kernelK11_modifiedcontour}
	\end{multline}
	We recall that $x$ and $y$ here can be negative. We will estimate \eqref{eq:kernelK11_modifiedcontour} by cutting it into two parts: The contribution of the integrations in a neighbourhood of $0$ will converge to the desired limit, and the integrations along the rest of the contour will go to $0$ as $n$ goes to infinity.
	
	Let us denote  $\mathcal{C}_{0\not\in}^{\rm local}$ the intersection of the contour $\mathcal{C}_{0\not\in}$ with a ball of radius $\epsilon$ centered at $0$, and set  $\mathcal{C}_{0\not\in}^{\rm tail} := \mathcal{C}_{0\not\in}\setminus \mathcal{C}_{0\not\in}^{\rm local}.$
	We first  show that the R.H.S. of \eqref{eq:kernelK11_modifiedcontour} with integrations over $\mathcal{C}_{0\not\in}^{\rm tail}$ goes to zero as $n\to \infty $  for any fixed $\epsilon>0$.
	\begin{figure}
		\begin{tikzpicture}[scale=0.85]
		\draw[->] (-2,0) -- (4,0);
		\draw[->] (0,-3.7) -- (0,3.7);
		\draw[thick] (60:1.5) -- (60:3.3);
		\draw[dashed] (60:0) -- (60:1.5);
		\draw[thick] (-60:1.5) -- (-60:3.3);
		\draw[dashed] (-60:0) -- (-60:1.5);
		\draw[thick] (-60:1.5 ) arc(-60:60:1.5);
		\draw[->, >=stealth'] (3,0) arc(0:59:3);
		\draw node at (3,1.5) {$\pi/3$};
		\draw node at (2,-2) {$\mathcal{C}_{0\not\in}^{\rm local}$};
		\draw[<->, >=stealth'] (45:0.05) -- (45:1.45);
		\draw node at (0.85, 0.4) {\tiny{$n^{-1/3}$}};
		\draw[<->, >=stealth'] (-0.1, 0.1) -- (62:3.3);
		\draw (0.6,2) node{$\epsilon$};
		\draw(-0.2, -0.3) node{$0$};
		
		\begin{scope}[xshift=8cm]
		\draw[->] (-2,0) -- (4,0);
		\draw[->] (0,-3.7) -- (0,3.7);
		\draw[thick] (60:1.5) -- (60:3.5);
		\draw[dashed] (60:0) -- (60:1.5);
		\draw[thick] (-60:1.5) -- (-60:3.5);
		\draw[dashed] (-60:0) -- (-60:1.5);
		\draw[thick] (-60:1.5 ) arc(-60:60:1.5);
		\draw[->, >=stealth'] (3,0) arc(0:59:3);
		\draw node at (3,1.5) {$\pi/3$};
		\draw node at (2,-2) {$\mathcal{C}_{0\not\in}$};
		\draw[<->, >=stealth'] (45:0.05) -- (45:1.45);
		\draw node at (0.85, 0.4) {\tiny{$n^{-1/3}$}};
		\draw[thick, dotted] (60:3.5) -- (60:4.5);
		\draw[thick, dotted] (-60:3.5) -- (-60:4.5);
		\draw(-0.2, -0.3) node{$0$};
		\end{scope}
		\end{tikzpicture}
		\caption{Contours used in the proof of Proposition  \ref{prop:GSEpointwise}. Left: the contour $\mathcal{C}_{0\not\in}^{\rm local}$. Right: the contour $\mathcal{C}_{0\not\in}$.}
		\label{fig:modifiedcontour}
	\end{figure}
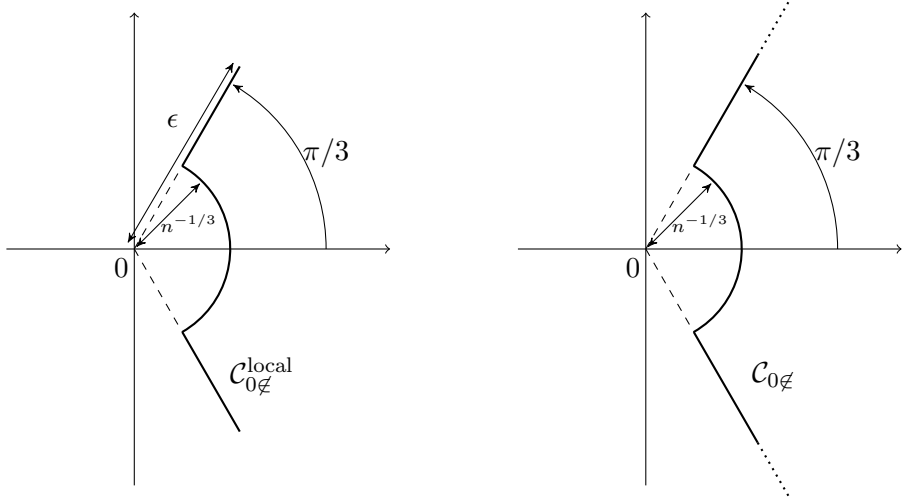
	Using Lemma \ref{lem:steepdescent}, we know that for any fixed $\epsilon >0$, there exists a constant $c>0$ such that for $t >\epsilon$,  $\Real[f(te^{\pm \I\pi/3})]<-ct$. Moreover, using both Taylor approximation \eqref{eq:Taylorapprox1} and Lemma \ref{lem:steepdescent}, we know that for any $z\in \mathcal{C}_{0\not\in}$, $\Real[nf(z)]$ is bounded uniformly in $n$ on the circular part of $\mathcal{C}_{0\not\in}$.
	This implies that there exists constants $n_0, C_0>0$ such that for $n>n_0$ and  $z, w \in \mathcal{C}_{0\not\in}^{\rm tail}$ we have
	$$\bigg\vert \exp\Big[n(f(z)+f(w)) - n^{1/3}\sigma (xz+yw)\Big]\bigg\vert <  C_0 e^{-cn \vert z\vert}.$$
	Using this estimate inside the integrand in \eqref{eq:kernelK11_modifiedcontour} and integrating over $\mathcal{C}_{0\not\in}^{\rm tail}$ yields a bound of the form  $Ce^{-cn\epsilon /2}$ which goes to $0$ as $n\to \infty$.
	
	Now  we estimate the contribution of the integration over $\mathcal{C}_{0\not\in}^{\rm local}$. We make the change of variables $z=n^{-1/3}\tilde z, w=n^{-1/3}\tilde w$ and approximate the integrand in \eqref{eq:kernelK11_modifiedcontour} by
	\begin{equation}
	\int_{n^{1/3}\mathcal{C}_{0\not\in}^{\rm local}} \int_{n^{1/3}\mathcal{C}_{0\not\in}^{\rm local}}\frac{\tilde z-\tilde w}{4\tilde z\tilde w(\tilde z+\tilde w)} e^{\frac{\sigma^3}{3}\tilde z^3 +\frac{\sigma^3}{3} \tilde w^3 -\sigma x\tilde z - \sigma y\tilde w }\mathrm{d}\tilde z\mathrm{d}\tilde w.
	\label{eq:maincontrib}
	\end{equation}
	We are making two approximations:
	(1) Replacing $nf(z)$ by  $\frac{\sigma^3}{3}\tilde{z}^3$, and likewise for $w$. Note that by Taylor approximation,  $nf(z) - \frac{\sigma^3}{3}\tilde{z}^3 = n\ \mathcal{O}(z^4) = n^{-1/3}\mathcal{O}(\tilde{z}^4)$.
	(2) Replacing $(2\tilde z+2\alpha -1)$ by $(2\alpha - 1)$ and likewise for $\tilde w$.
	
	In order to control the error made, we use the inequality $\vert e^x -1 \vert \leqslant\vert x\vert e^{\vert x \vert}$, so that the error can be bounded by $ E_1 + E_2 $ where
	\begin{multline}
	E_1 =  \int_{n^{1/3}\mathcal{C}_{0\not\in}^{\rm local}}\int_{n^{1/3}\mathcal{C}_{0\not\in}^{\rm local}} \vert n^{-1/3}\mathcal{O}(\tilde{z}^4) + n^{-1/3}\mathcal{O}(\tilde{w}^4) \vert    e^{\vert n^{-1/3}\mathcal{O}(\tilde{z}^4) + n^{-1/3}\mathcal{O}(\tilde{w}^4) \vert } \\ \times  \frac{z-w}{4zw(z+w)} e^{\frac{\sigma^3}{3}\tilde z^3 + \frac{\sigma^3}{3}\tilde w^3 -\sigma x\tilde z - \sigma y\tilde w } \frac{(2\tilde z+2\alpha -1)(2\tilde w+2\alpha -1)}{(2\alpha-1)^2} \mathrm{d}\tilde z\mathrm{d}\tilde w
	\label{eq:decoupage1}
	\end{multline}
	and
	\begin{equation}
	E_2 =  n^{-1/3}\int_{n^{1/3}\mathcal{C}_{0\not\in}^{\rm local}} \int_{n^{1/3}\mathcal{C}_{0\not\in}^{\rm local}} \frac{(\tilde z-\tilde w)e^{\frac{\sigma^3}{3}\tilde z^3 + \frac{\sigma^3}{3}\tilde w^3 -\sigma x\tilde z -\sigma  y\tilde w }}{4\tilde z\tilde w(\tilde z+\tilde w)}  \frac{4(\tilde z +\tilde w)}{2\alpha-1}\mathrm{d}\tilde z\mathrm{d}\tilde w.
	\label{eq:decoupage2}
	\end{equation}
	Notice that in the first integral in \eqref{eq:decoupage1},
	$$ e^{\vert n^{-1/3}\mathcal{O}(\tilde{z}^4) n^{-1/3}\mathcal{O}(\tilde{w}^4) \vert} =  e^{\vert n^{-1/3} \tilde{z} \mathcal{O}(\tilde{z}^3) + n^{-1/3} \tilde{w} \mathcal{O}(\tilde{w}^3)\vert } < e^{\epsilon \vert\mathcal{O}(\tilde{z}^3)+ \mathcal{O}(\tilde{w}^3)\vert}.$$
	Hence, choosing $\epsilon$ smaller than $\sigma^3/3$, the integrand in \eqref{eq:decoupage1} has exponential decay along the tails of the contour  $n^{1/3}\mathcal{C}_{0\not\in}^{\rm local}$, so that by dominated convergence, $E_1$ goes to $0$ as $n$ goes to infinity. It is also clear that $E_2$ goes to $0$ as $n$ goes to infinity.
	
	Finally, let us explain why the contour $n^{1/3}\mathcal{C}_{0\not\in}^{\rm local}$ in \eqref{eq:maincontrib} can be replaced by $\mathcal{C}_{1}^{\pi/3}$. Using the exponential decay of the integrand for $z, w$ in direction $ \pm\pi/3 $, we can extend  $n^{1/3}\mathcal{C}_{0\not\in}^{\rm local}$ to an infinite contour. Then, using Cauchy's theorem and again the exponential decay, this infinite contour can be freely deformed (without crossing any pole) to $\mathcal{C}_{1}^{\pi/3}$. We obtain,  after a change of variables $\sigma \tilde{z}= z $ and likewise for $w$,
	$$I^{\rm exp, n}_{11}(x,y) \xrightarrow[n\to\infty]{}  \frac{1}{(2\I\pi)^2} \int_{\mathcal{C}_{1}^{\pi/3}} \dd z\int_{\mathcal{C}_{1}^{\pi/3}} \dd w \frac{z-w}{4zw(z+w)} e^{z^3/3 + w^3/3 - xz -  yw } .$$
	
	When adapting the same method for $I^{\rm exp, n}_{12}$ and $I^{\rm exp, n}_{22}$, we could in principle encounter poles of $1/(2\alpha-1-2z)$ or $1/(2\alpha-1-2w)$. However, assuming that $\alpha>1/2$, these poles are in the positive real part region, and given the definitions of contours used in Section \ref{sec:deformedcontours}, we can always perform the above contour deformations.
\end{proof}

Now, we check that the kernel $R^{\rm exp, n}_{22}$ satisfies the hypothesis (3) of Proposition \ref{prop:approxequivalentpfaffians}.
\begin{proposition} Assume that $f(x,y)$ is a smooth function such that all its partial derivatives have exponential decay at infinity. Then,
	$$ \bigg\vert  \iint R^{\rm exp, n}_{22} f(x, y)\mathrm{d}x\mathrm{d}y - \iint \delta'(x, y) f(x, y)\mathrm{d}x\mathrm{d}y  \bigg\vert \xrightarrow[n\to\infty]{} 0 . $$
	\label{prop:asymptoticsRGSE}
\end{proposition}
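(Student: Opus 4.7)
The plan is to reduce the double integral to a one-dimensional one via the change of variables $v=y-x$, and then extract the action of $\delta'$ as the leading term of a Taylor expansion, exploiting the antisymmetry built into the $\sgn(y-x)$ factor. Let me write $\beta_n := \sigma n^{1/3}(2\alpha-1)/2$, so that
\[
R^{\rm exp,n}_{22}(x,y)=\sgn(y-x)\,\beta_n^2\, e^{-\beta_n|x-y|}.
\]
Setting $v=y-x$, splitting the $v$-integral into $v>0$ and $v<0$, and defining
\[
g(v):=\int_{\R} f(x,x+v)\,dx,
\]
a direct computation gives
\[
\iint R^{\rm exp,n}_{22}(x,y)\,f(x,y)\,dx\,dy \;=\; \int_0^\infty \beta_n^2\,e^{-\beta_n v}\,\bigl[g(v)-g(-v)\bigr]\,dv.
\]
The function $g$ is smooth with exponential decay at infinity (inherited from $f$), and $g(v)-g(-v)$ is odd.

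Next I would Taylor-expand $g(v)-g(-v)=2g'(0)v+r(v)$, where $r(v)=O(v^3)$ as $v\to 0$. The leading term contributes exactly
\[
2g'(0)\int_0^\infty \beta_n^2\,v\,e^{-\beta_n v}\,dv \;=\; 2g'(0),
\]
independently of $n$: this is the nontrivial cancellation, since separately the factor $\beta_n^2 e^{-\beta_n v}$ has mass $\beta_n\to\infty$, but the odd factor $v$ reduces the degree by one. I would then bound the remainder by splitting $[0,\infty)=[0,1]\cup[1,\infty)$: on $[0,1]$ one uses $|r(v)|\le C v^3$ (with $C$ depending on the first three derivatives of $g$, hence of $f$) to get a bound $C\beta_n^2\int_0^\infty v^3 e^{-\beta_n v}dv=O(\beta_n^{-2})$; on $[1,\infty)$ one uses that $g(v)-g(-v)$ is bounded (by exponential decay of $f$) to get a bound $O(\beta_n e^{-\beta_n})$. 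Both contributions vanish as $n\to\infty$.

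To conclude, I would identify $2g'(0)$ with $\iint \delta'(x,y)f(x,y)\,dx\,dy$ as defined in \eqref{eq:defdeltaprime}. This is a one-line integration by parts: since $f$ decays at infinity, $\int_\R \frac{d}{dx}[f(x,x)]\,dx=0$, which yields $\int_\R \partial_x f(x,x)\,dx=-\int_\R \partial_y f(x,x)\,dx$. Combined with $g'(0)=\int_\R\partial_y f(x,x)\,dx$, we obtain $2g'(0)=\int_\R(\partial_y f-\partial_x f)|_{y=x}\,dx$, which matches the action of $\delta'$.

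The only genuinely delicate point is the tension between the divergent prefactor $\beta_n^2 e^{-\beta_n v}$ (whose mass blows up like $\beta_n$) and the oddness produced by $\sgn(y-x)$; the entire result hinges on the fact that $g(v)-g(-v)$ vanishes to first order at $v=0$ so that one factor of $v$ reduces the divergent $\beta_n$ to an $O(1)$ contribution and higher-order terms are genuinely negligible. Quantifying the error rate would also let one track $c_n(f)$ explicitly, which is the hypothesis needed to later invoke Proposition \ref{prop:approxequivalentpfaffians}.
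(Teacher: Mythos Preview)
Your proof is correct and follows essentially the same approach as the paper: both reduce to the diagonal via the change of variables $v=y-x$, exploit the oddness of $\sgn(y-x)$ so that only odd Taylor terms survive, and identify the linear term with the action of $\delta'$ while bounding the $O(v^3)$ remainder. Your presentation via the one-dimensional function $g(v)=\int f(x,x+v)\,dx$ is slightly more streamlined than the paper's, which keeps the two-dimensional Taylor expansions explicit, but the substance is the same.
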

\begin{proof}
	It is enough to prove the Proposition replacing $R^{\rm exp, n}_{22}$ by
	$$ n^2 \sgn(y-x)e^{\vert x-y\vert n}.$$
	We can write
	\begin{multline*}
	\iint n^2 \sgn(y-x)e^{\vert x-y\vert n}f(x, y)\mathrm{d}x\mathrm{d}y  = \\ \iint_{y>x}\big(f(x,y)-f(x,x)\big) n^2 e^{\vert x-y\vert n} - \iint_{y>x}\big(f(y,x)-f(x,x)\big) n^2 e^{\vert x-y\vert n}.\end{multline*}
	We make the change of variables $y=x+z/n$, and use the Taylor approximations
	\begin{align*}
	f(y,x)-f(x,x)&= \frac{z}{n}\partial_x f(x,x) + \frac{z^2}{2n^2}\partial_x^2 f(x,x) + \frac{z^3}{6n^{3}}\partial_x^3 f(x,x) + o(z^3n^{-3}),\\
	f(x,y)-f(x,x)&= \frac{z}{n}\partial_y f(x,x) +\frac{z^2}{2n^2}\partial_y^2 f(x,x) + \frac{z^3}{6n^{3}}\partial_y^3 f(x,x) + o(z^3n^{-3}).
	\end{align*}
	The quadratic terms will cancel, so that
	\begin{multline*} \iint n^2 \sgn(y-x)e^{\vert x-y\vert n}f(x, y)\mathrm{d}x\mathrm{d}y  =  \int \big(\partial_yf(x,x)-\partial_xf(x,x)\big)\mathrm{d}x\int ze^{-z}\dd z + \\\frac{1}{n^2}\int \big(\partial_y^3f(x,x)-\partial_x^3f(x,x)\big)\mathrm{d}x\int z^3e^{-z}\dd z + o(n^{-2}).
	\end{multline*}
	so that
	$$ \iint n^2 \sgn(y-x)e^{\vert x-y\vert n}f(x, y)\mathrm{d}x\mathrm{d}y =\int \big(\partial_yf(x,x)-\partial_xf(x,x)\big)\mathrm{d}x + \frac{C}{n^2} + o(n^{-2}).$$
\end{proof}

\subsection{Conclusion}
We have to prove that when $\alpha>1/2$,
\begin{equation}
\lim_{n\to\infty} \PP\left( H(n,n) <4n + 2^{4/3}n^{1/3} z \right)  = \Pf\big( J- K^{\rm GSE}\big)_{\mathbb{L}^2(z, \infty)}.
\label{eq:limitPfaffianFredholmGSE}
\end{equation}
Since
$$\PP\left( H(n,n) <4n + 2^{4/3}n^{1/3} z \right)=\Pf[J-K^{\rm exp, n}]_{\mathbb{L}^2(z, \infty)}$$
we are left with checking that the kernel $K^{\rm exp, n}$ satisfies the assumptions in Proposition \ref{prop:approxequivalentpfaffians}: Hypothesis (3) is proved by Proposition \ref{prop:asymptoticsRGSE}. Hypothesis (2) is proved by Proposition \ref{prop:GSEpointwise} (with $\mathbf{B}_n$ being $K^{\rm exp}$). Note that $K^{\rm GSE}(x,y)$ (defined in Lemma \ref{def:GSEdistribution}) does not have exponentail decay in all directions, but since we compute the Fredholm Pfaffian on $\mathbb{L}^2(z, \infty)$, we can replace $K^{\rm GSE}(x,y)$ by $\mathds{1}_{x,y\geqslant z}K^{\rm GSE}(x,y)$ so that exponential decay holds in all directions. Hypothesis (1) is proved by the following
\begin{lemma} Assume $\alpha>1/2$ and
	let $z\in \R$ be fixed. There exist a constant $c_0>0$ and for all $r,s\in \Z_{\geqslant 0}$, there exist positive constants $C_{r,s}, n_0$ such that for $n>n_0$ and $x, y>z$,
	$$ \Big\vert \partial_x^r\partial_y^s I^{\rm exp, n}_{11}(x,y)\Big\vert < C_{r,s} e^{-c_0 x - c_0 y}.$$
	In particular, there exist $C>0$ such that 
	$$\Big\vert I^{\rm exp, n}_{11}(x,y)\Big\vert < C e^{-c_0 x - c_0 y}, \ \ \Big\vert  I^{\rm exp, n}_{12}(x,y)\Big\vert < C  e^{-c_0 x - c_0 y},\ \
	\Big\vert  I^{\rm exp, n}_{22}(x,y)\Big\vert < C  e^{-c_0 x - c_0 y}. $$
	\label{lem:exponentialbound}
\end{lemma}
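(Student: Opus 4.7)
The plan is to extract the desired decay $e^{-c_0 x - c_0 y}$ directly from the exponential factor $e^{-n^{1/3}\sigma(xz+yw)}$ present in the integrand by shifting the contour of integration to have real part bounded below by a positive constant in the rescaled variable. Concretely, I perform the change of variables $z=n^{-1/3}\tilde z$, $w=n^{-1/3}\tilde w$ in the defining integral
$$
I^{\rm exp,n}_{11}(x,y) = \iint \frac{(z-w)(2z+2\alpha-1)(2w+2\alpha-1)}{4zw(z+w)(2\alpha-1)^2}\, e^{nf(z)+nf(w) - n^{1/3}\sigma(xz+yw)}\,\dd z\,\dd w,
$$
so that the exponent becomes $nf(n^{-1/3}\tilde z)+nf(n^{-1/3}\tilde w) - \sigma(x\tilde z + y\tilde w)$. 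I then deform the contour $\mathcal{C}_{1/4}^{\pi/3}$ in $z$ (that is, $\mathcal{C}_{n^{1/3}/4}^{\pi/3}$ in $\tilde z$) to $\mathcal{C}_{c_1}^{\pi/3}$ in $\tilde z$ for a small fixed $c_1>0$, i.e.\ $\mathcal{C}_{c_1 n^{-1/3}}^{\pi/3}$ in $z$. For $n$ large enough this deformation crosses no pole (the poles at $z=0,\pm 1/2$ are avoided since $0<c_1n^{-1/3}<1/2$; for $I^{\rm exp,n}_{12}$ and $I^{\rm exp,n}_{22}$ the additional poles at $z=(2\alpha-1)/2$ remain to the right of the deformed contour because $\alpha>1/2$). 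On the new contour $\Re[\tilde z]\geqslant c_1$, so $|e^{-\sigma x\tilde z - \sigma y\tilde w}|\leqslant e^{-\sigma c_1 x - \sigma c_1 y}$ uniformly along the contour, which yields the desired exponential factor with $c_0:=\sigma c_1$.

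What remains is to bound the integral of the absolute value of the other factors by a constant (depending on the fixed $z\in\R$ but not on $n$, $x$, or $y$). I split the contour into a local piece $\{|z|\leqslant \delta\}$ and a tail $\{|z|>\delta\}$ for a fixed small $\delta>0$. On the local piece the Taylor expansion $nf(n^{-1/3}\tilde z)=\sigma^3\tilde z^3/3 + n^{-1/3}\tilde z^4\mathcal{O}(1)$ holds uniformly in $n$, and on the ray $\mathcal{C}_{c_1}^{\pi/3}$ one has $\Re[\tilde z^3]\leqslant -c|\tilde z|^3$ for $|\tilde z|$ large, so that $|e^{nf(n^{-1/3}\tilde z)}|\leqslant C\, e^{-c|\tilde z|^3}$ (after shrinking $\delta$ to absorb the quartic Taylor remainder). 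Combined with the algebraic prefactor $\tfrac{(\tilde z-\tilde w)(2n^{-1/3}\tilde z+2\alpha-1)(2n^{-1/3}\tilde w+2\alpha-1)}{4\tilde z\tilde w(\tilde z+\tilde w)(2\alpha-1)^2}$, which is of polynomial growth in $|\tilde z|,|\tilde w|$ and bounded away from its poles since $|\tilde z|,|\tilde w|,|\tilde z+\tilde w|\geqslant c_1$, the integrand is integrable in $\tilde z,\tilde w$ and yields an $n$-independent bound. On the tail $|z|>\delta$, Lemma~\ref{lem:steepdescent}, extended by continuity to the nearby contour $\mathcal{C}_{c_1n^{-1/3}}^{\pi/3}$, gives $\Re[f(z)]<-c|z|$, hence $|e^{nf(z)}|\leqslant e^{-cn\delta}$; the tail contribution is therefore super-exponentially small in $n$ and swamps any polynomial-in-$n$ prefactors produced by the change of variables.

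For the derivative bounds, differentiating $r$ times in $x$ and $s$ times in $y$ under the integral introduces the polynomial factor $(-\sigma\tilde z)^r(-\sigma\tilde w)^s$, which is absorbed by the cubic decay $e^{-c|\tilde z|^3 - c|\tilde w|^3}$ at the cost of an $r,s$-dependent constant and a negligible decrease of $c_0$. The same scheme works verbatim for $I^{\rm exp,n}_{12}$ and $I^{\rm exp,n}_{22}$; the only new point is that the factors $1/(2\alpha-1-2n^{-1/3}\tilde w)$ are bounded uniformly on the deformed contour, since $\tilde w$ lies far to the left of the pole $\tilde w = n^{1/3}(2\alpha-1)/2$ when $n$ is large. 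I expect the main technical obstacle to be verifying that the steep-descent estimate of Lemma~\ref{lem:steepdescent}, which is stated for $\mathcal{C}_0^{\pi/3}$, transfers uniformly in $n$ to the shifted contour $\mathcal{C}_{c_1 n^{-1/3}}^{\pi/3}$; this amounts to showing that the monotonicity of $t\mapsto\Re[f(c_1n^{-1/3}+te^{\pm i\pi/3})]$ in $t$ is preserved for $n$ large, which follows from the $C^1$-continuity of $f$ and the strict sign of the derivative computed in the proof of Lemma~\ref{lem:steepdescent}, but writing out the uniformity carefully is the heart of the estimate.
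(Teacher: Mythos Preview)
Your approach is essentially the same as the paper's: deform the contour so that in the rescaled variable $\tilde z = n^{1/3}z$ the real part is bounded below by a positive constant, extract $e^{-c_0(x+y)}$ from $e^{-\sigma(x\tilde z+y\tilde w)}$, and bound the remaining integral uniformly in $n$. The paper uses the contour $\mathcal{C}_{0\not\in}$ (rays from $0$ with an arc of radius $n^{-1/3}$) instead of your shifted wedge $\mathcal{C}_{c_1 n^{-1/3}}^{\pi/3}$, but these are interchangeable. Your local/tail decomposition, which uses the cubic decay $|e^{nf(n^{-1/3}\tilde z)}|\leqslant Ce^{-c|\tilde z|^3}$ on the local piece, is actually more explicit than the paper's argument: the paper replaces $e^{nf(z)+nf(w)}$ by the uniform bound $e^{2C_f}$ and then relies on $e^{-\sigma(x\tilde z+y\tilde w)}$ alone for integrability, which as written makes the constant implicitly depend on $x,y$. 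Your treatment avoids that looseness.

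There is one genuine gap. Your key inequality $|e^{-\sigma x\tilde z - \sigma y\tilde w}|\leqslant e^{-\sigma c_1 x - \sigma c_1 y}$ ``uniformly along the contour'' is false when $x<0$ (which is allowed, since the lemma asks for $x,y>z$ with $z\in\R$ fixed): for $x<0$ the function $\Re[\tilde z]\mapsto e^{-\sigma x\Re[\tilde z]}$ is increasing, and $\Re[\tilde z]$ is unbounded above on $\mathcal{C}_{c_1}^{\pi/3}$. The fix is immediate within your own framework: for $x\in[z,0]$ write $|e^{-\sigma x\tilde z}| = e^{-\sigma c_1 x}\,e^{-\sigma x(\Re[\tilde z]-c_1)} \leqslant e^{-\sigma c_1 x}\,e^{\sigma|z|\,\Re[\tilde z]}$, and absorb the linearly growing factor $e^{\sigma|z|\,\Re[\tilde z]}$ into the cubic decay $e^{-c|\tilde z|^3}$ on the local piece (and into $e^{-cn|z|}$ on the tail). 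Equivalently, one may treat $x,y$ in the compact set $[z,0]^2$ separately using uniform boundedness of the kernel, as the paper does. With this correction your argument is complete.
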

\begin{proof}
	We start with the expression for $I^{\rm exp, n}_{11}$ in  \eqref{eq:kernelK11_modifiedcontour}. We have already seen in the proof of Proposition \ref{prop:GSEpointwise} that using both Taylor approximation \eqref{eq:Taylorapprox1} and Lemma \ref{lem:steepdescent}, $\Re[nf(z)]$ is bounded by some constant $C_f$ uniformly in $n$ for $z\in \mathcal{C}_{0\not\in}$. Hence, there exists a constant $C_f$ such that
	$$ \vert \eqref{eq:kernelK11_modifiedcontour} \vert < \int_{\mathcal{C}_{0\not\in}}\dd z \int_{\mathcal{C}_{0\not\in}} \dd w \frac{(z-w)(2z+2\alpha -1)(2w+2\alpha -1)}{4zw(z+w)} 
	e^{2 C_f - n^{1/3}\sigma (xz+yw)}. $$
	Now make the change of variables $\tilde{z} = n^{1/3}z$ and likewise for $w$, so that
	\begin{equation*}
	\vert \eqref{eq:kernelK11_modifiedcontour} \vert < \int_{n^{1/3}\mathcal{C}_{0\not\in}}\mathrm{d}\tilde{z} \int_{n^{1/3}\mathcal{C}_{0\not\in}} \mathrm{d}\tilde{w} \frac{\tilde{z}-\tilde{w}}{4\tilde{z}\tilde{w}(\tilde{z}+\tilde{w})}  (2n^{-1/3}\tilde{z}+2\alpha -1)(2n^{-1/3}\tilde{w}+2\alpha -1)
	e^{2 C_f - \sigma (x\tilde{z}+y\tilde{w})}.
	\end{equation*}
	Since the real part of $z$ and $ w$ stays above 1/2 along the contour $n^{1/3}\mathcal{C}_{0\not\in}$, we get that for $x, y>0$
	$$ \vert \eqref{eq:kernelK11_modifiedcontour} \vert < C \exp\big(-c_0 x - c_0 y \big) $$
	for some constants $c_0, C>0$. Moreover, the estimates used in Proposition \ref{prop:GSEpointwise} show that for $x$ and $y$ in a compact subset, the kernel is bounded uniformly in $n$, so that there exists $C_{00}>0$ such that for all $x, y>z$,
	$$\Big\vert I^{\rm exp, n}_{11}(x,y)\Big\vert  < C_{00} \exp\big(-c_0 x - c_0 y \big).$$

	Applying derivatives in $x$ or $y$ to $I^{\rm exp, n}_{11}(x,y)$ corresponds to multiply the integrand by powers of $z$ and $w$. Hence, the above proof also shows that for $r,s\in \Z_{\geqslant 0}$, there exists $C_{rs}>0$ such that
	\begin{equation*}\Big\vert \partial_x^r \partial_y^s I^{\rm exp, n}_{11}(x,y)\Big\vert  < C_{rs} \exp\big(-c_0 x - c_0 y \big).\qedhere\end{equation*}
\end{proof}

\section{Asymptotic analysis in other regimes: GUE, GOE, crossovers}
\label{sec:asymptoticsvarious}

In this Section,  we prove GUE and GOE and Gaussian  asympotics and discuss the phase transition. In these regimes, the phenomenon of coalescence of points in the limiting point process --  which introduces subtleties in the asymptotic analysis of Section \ref{sec:GSEasymptotics} -- is not present, and the proofs follow a more standard approach.

\subsection{Phase transition}
\label{sec:mainasymptoticresults}

When we make $\alpha$ vary in $(0, \infty)$, the fluctuations of $H(n,n)$ obey  a phase transition already observed in \cite{baik2001asymptotics} in the geometric case. Let us explain why such a transition occurs. When $\alpha$ goes to infinity, the weights on the diagonal go to $0$.  It is then  clear that the last passage path to $(n,n)$  will avoid the diagonal. The total passage time will be the passage time from $(1,0)$ to $(n, n-1)$, which has the same law as $H(n-1, n-1)$ when $\alpha=1$.  Thus, the fluctuations should be the same when $\alpha$ equal $1$ or infinity, and by interpolation,  for any $\alpha\in(1, +\infty)$. The first part of Theorem \ref{theo:LPPawaydiagointro} shows that this is in fact the case for $\alpha\in (1/2, \infty)$. When $\alpha$ is very small however,  we expect that the last passage path will stay close to the diagonal in order to collect most of the large weights on the diagonal. Hence, the last passage time will be the sum of $\mathcal{O}(n)$ random variables, and hence fluctuate on the $n^{1/2}$ scale with Gaussian fluctuations. The critical value of $\alpha$ separating these two very different fluctuation regimes occurs  at  $1/2$.

Let us give an argument showing why the critical value is $1/2$, using a symmetry of the Pfaffian Schur processes.
\begin{lemma}
	The half-space last passage time ${H}(n,n)$ when the weights are $\mathcal{E}(\alpha)$ on the diagonal and $\mathcal{E}(1)$ away from the diagonal,   has the same distribution as the half-space last passage time $\tilde{H}(n,n)$ when the weights are $\mathcal{E}(\alpha)$ on the first row and $\mathcal{E}(1)$ everywhere else.
\end{lemma}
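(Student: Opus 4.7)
The plan is to derive the lemma from Proposition \ref{prop:diagorow} by identifying both $H(n,n)$ and $\tilde H(n,n)$, at the geometric level, as $\lambda_1$--marginals of Pfaffian Schur measures that differ only in the placement of the single-variable specialization $c$, and then passing to the exponential limit via Proposition \ref{prop:limitgeomtoexp}.

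By Remark \ref{rem:symmetricRSK}, the geometric half-quadrant time $G(n,n)$ with off-diagonal weights $\Geom(a_ia_j)$ and diagonal weights $\Geom(ca_i)$ satisfies $G(n,n)\overset{(d)}{=}\lambda_1$ with $\lambda\sim\PSM[c\,|\,a_1,\dots,a_n]$. For the other side, apply the same remark with diagonal parameter $0$ and row parameters $(c,a_1,\dots,a_n)$: the shape $\mu\sim\PSM[0\,|\,c,a_1,\dots,a_n]$ is the RSK image of an $(n+1)\times(n+1)$ symmetric matrix $\tilde S$ with $\tilde S_{ii}=0$ and $\tilde S_{ij}\sim\Geom(\tilde a_i\tilde a_j)$ for $i\neq j$, where $\tilde a_1=c$ and $\tilde a_i=a_{i-1}$ for $i\ge 2$. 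Equivalently, $\mu_1$ is the half-quadrant LPP time from $(1,1)$ to $(n+1,n+1)$ in this matrix.

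The key technical step is to identify this enlarged size-$(n+1)$ LPP with zero diagonal with the geometric analogue $\tilde G(n,n)$ of $\tilde H(n,n)$ on a size-$n$ half-quadrant. Because off-diagonal weights are non-negative while diagonal weights vanish, any interior diagonal visit $(k,k)$ along a path admits the local detour $(k,k-1)\to(k,k)\to(k+1,k)\rightsquigarrow(k,k-1)\to(k+1,k-1)\to(k+1,k)$, which is legal in the half-quadrant and replaces the zero weight $w_{k,k}$ by the non-negative weight $w_{k+1,k-1}$. Iterating, an optimal path touches the diagonal only at its two endpoints; the shift $(\tilde n,\tilde m)=(n-1,m)$ then puts such paths in bijection with up-right paths from $(1,1)$ to $(n,n)$ in a size-$n$ half-quadrant. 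Under this shift the weights $\tilde S_{i,1}\sim\Geom(ca_{i-1})$ become first-row weights of rate $c a_{\tilde n}$ and the remaining $\tilde S_{ij}$ become $\Geom(a_{\tilde n}a_{\tilde m-1})$, matching the geometric analogue of $\tilde H(n,n)$: the scaling $a_i\mapsto\sqrt q$, $c\mapsto\sqrt q\bigl(1+(\alpha-1)(q-1)\bigr)$ sends these to $\mathcal E(\alpha)$ and $\mathcal E(1)$ respectively in the limit.

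Combining, $G(n,n)\overset{(d)}{=}\lambda_1\overset{(d)}{=}\mu_1\overset{(d)}{=}\tilde G(n,n)$, the middle equality being Proposition \ref{prop:diagorow}, and Proposition \ref{prop:limitgeomtoexp} upgrades this to $H(n,n)\overset{(d)}{=}\tilde H(n,n)$. The only delicate point is the detour/bijection argument in the third paragraph; Proposition \ref{prop:diagorow} does essentially all of the algebraic work.
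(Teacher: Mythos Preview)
Your argument is correct and follows the paper's route: realize both passage times as $\lambda_1$-marginals of Pfaffian Schur measures related by Proposition~\ref{prop:diagorow}, then take the exponential limit. The paper's one-line proof leaves implicit precisely the identification you spell out with the detour/shift argument, so you are filling a genuine gap rather than deviating.

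That said, the combinatorial bijection can be avoided. By Proposition~\ref{prop:SchurLPPcorrelations} with row parameters $a_1=c,\ a_2=\cdots=a_n=\sqrt q$ and diagonal parameter $\sqrt q$, the geometric version of $\tilde H(n,n)$ is \emph{directly} the $\lambda_1$-marginal of $\PSM[\sqrt q\mid c,\sqrt q,\dots,\sqrt q]$; a second application of Proposition~\ref{prop:diagorow} (now with the single variable $\sqrt q$ moved from the diagonal into the $\rho$-slot) identifies this with $\PSM[0\mid c,\sqrt q,\dots,\sqrt q]$, the same measure you reach from the $H$ side. This two-fold use of Proposition~\ref{prop:diagorow} replaces your third paragraph entirely and is likely what the paper has in mind.
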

\begin{proof}
	Apply Proposition \ref{prop:diagorow} to the Pfaffian Schur process with single variable specializations considered in Proposition \ref{prop:SchurLPPcorrelations}, and take $q\to 1$.
\end{proof}
Let us study where the transition should occur in the latter $\tilde H$  model.
We have the equality in law
\begin{equation}
\tilde{H}(n,n)  \overset{(d)}{=} \max_{x\in\Z_{>0}} \left\lbrace \sum_{i=1}^x w_{1i} + \bar{H}( n-1, n-x)\right\rbrace ,
\label{eq:maximization1}
\end{equation}
where $\bar{H}(n-1, n-x)$ is the half-space last passage time from point $(n,n)$ to point  $(x, 2)$. It is independent of the $w_{1i}$, and has the same law as $H(n-1, n-x)$ when $\alpha=1$. For $n$ going to infinity with  $x=(1-\kappa) n$, $\kappa\in[0,1]$, we know from Theorem \ref{theo:LPPawaydiagointro} (which is independent from the present discussion) that $\bar{H}(n-1, n-x)/n$ goes to $(1+\sqrt{\kappa})^2$. Hence,
\begin{equation}
\lim_{n\to\infty} \frac{\tilde{H}(n,n)}{n} =  \max_{\kappa\in[0,1]} \left\lbrace \frac{1-\kappa}{\alpha} + (1+\sqrt{\kappa})^2 \right\rbrace =
\begin{cases} 4 &\mbox{ if }\alpha\in(1/2, 1),\\
\frac{1}{\alpha(1-\alpha)} &\mbox{ if }\alpha \leqslant 1/2.
\end{cases}
\label{eq:maximization2}
\end{equation}
This suggests that the transition happens when $\alpha=1/2$. The fact that  the maximum in \eqref{eq:maximization2} is attained for $\kappa=(\alpha/(1-\alpha))^2$ when $\alpha < 1/2$ even suggests that the fluctuations are Gaussian with variance $\frac{1-2\alpha}{\alpha^2 (1-\alpha)^2}$ on the scale $n^{1/2}$. The fluctuations of $\sum_{i=1}^x w_{1i}$ are dominant compared to those of $\bar{H}(n-x, n-1)$. We do not attempt to make this argument complete -- it would require proving some concentration bounds for $\bar{H}(n-1,n-x)$ when $x=(1-\kappa) n+ o(n)$ --  but we provide a computational derivation of the Gaussian asymptotics in Section \ref{sec:Gaussianasymptotics}. See also \cite{baik2005phase, arous2011current} for more details on similar probabilistic arguments.

In the case $\alpha=1/2$, the second part of Theorem \ref{theo:LPPdiagointro} shows that the fluctuations of $H(n,n)$ are on the scale $n^{1/3}$ and Tracy-Widom GOE distributed. It suggests that the value of $x$ that maximizes \eqref{eq:maximization1} is close to $0$  on the scale at most  $n^{2/3}$.

\subsection{GOE asymptotics}
\label{sec:GOEasymptotics}
In this Section, we prove the second part of Theorem \ref{theo:LPPdiagointro}. 
We consider a scaling of the kernel slightly different than what we used in Section \ref{sec:pointwiseGSE}. We set $\sigma=2^{4/3}$ as before, $\mathfrak{r}(x,y)$ as in \eqref{eq:defr}, and here 
\begin{equation*} K^{\rm{exp}, n}(x,y) := 
\begin{pmatrix}
\sigma^2 n^{2/3} K^{\rm exp}_{11}\Big(\mathfrak{r}(x,y)\Big) & \sigma n^{1/3}K^{\rm exp}_{12}\Big(\mathfrak{r}(x,y)\Big)\\
\sigma n^{1/3}K^{\rm exp}_{21}\Big(\mathfrak{r}(x,y)\Big) &  K^{\rm exp}_{22}\Big(\mathfrak{r}(x,y)\Big)
\end{pmatrix}.
\end{equation*}
We write
$ K^{\rm{exp}, n}(x,y) = I^{\rm{exp}, n}(x,y)+R^{\rm{exp}, n}(x,y), $
according to the decomposition made in Section \ref{sec:kernelexp}.
The pointwise asymptotics of Proposition \ref{prop:GSEpointwise} can be readily adapted when $\alpha=1/2$.
\begin{proposition} For $\alpha =1/2$, we have
	\begin{align*}
	I^{\rm exp, n}_{11}(x,y) \xrightarrow[n\to\infty]{}&  \int_{\mathcal{C}_{1}^{\pi/3}} \dd z\int_{\mathcal{C}_{1}^{\pi/3}}\dd w  \frac{z-w}{z+w} e^{z^3/3 + w^3/3 - xz - yw },\\
	I^{\rm exp, n}_{12}(x,y)  \xrightarrow[n\to\infty]{}&   \int_{\mathcal{C}_{1}^{\pi/3}} \dd z\int_{\mathcal{C}_{-1/2}^{\pi/3}}\dd w  \frac{w-z}{2w(z+w)} e^{z^3/3 + w^3/3 - xz -  yw },\\
	I^{\rm exp, n}_{22}(x,y)  \xrightarrow[n\to\infty]{}&   \int_{\mathcal{C}_{1}^{\pi/3}}\dd z\int_{\mathcal{C}_{1}^{\pi/3}}\dd w \frac{z-w}{4zw(z+w)} e^{z^3/3 + w^3/3 - xz -  yw }.
	\end{align*}
	\label{prop:GOEpointwise}
\end{proposition}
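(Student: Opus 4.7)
The plan is to adapt verbatim the steep-descent analysis of Proposition~\ref{prop:GSEpointwise}, using the same critical-point function $f(z)=-4z+\log(1+2z)-\log(1-2z)$, whose Taylor expansion $f(z)=\sigma^3 z^3/3+\mathcal{O}(z^4)$ at $z=0$ with $\sigma=2^{4/3}$ already encodes the $n^{1/3}$ scaling. The contour $\mathcal{C}_0^{\pi/3}$ remains steep descent for $\Re f$ by Lemma~\ref{lem:steepdescent}, and the modified contour $\mathcal{C}_{0\not\in}$ used in the GSE proof (equal to $\mathcal{C}_0^{\pi/3}$ outside an $n^{-1/3}$-neighborhood of $0$) can again be used once we check that no pole is crossed during the deformation.

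The essential new input at $\alpha=1/2$ is the change in the behavior of the rational prefactors at the critical point. Specifically, one has the identifications
\[
(2z+2\alpha-1)(2w+2\alpha-1)=4zw,\qquad \frac{1}{(2\alpha-1-2z)(2\alpha-1-2w)}=\frac{1}{4zw},\qquad \frac{2\alpha-1+2z}{2\alpha-1-2w}=-\frac{z}{w},
\]
which, combined with the rescaled prefactors $\sigma^2 n^{2/3}$, $\sigma n^{1/3}$, $1$ in front of $I^{\rm exp}_{11},I^{\rm exp}_{12},I^{\rm exp}_{22}$ respectively (versus $(2\alpha-1)^{-2},\sigma n^{1/3},\sigma^2 n^{2/3}(2\alpha-1)^2$ used in the GSE case), precisely balance the powers of $n^{1/3}$ produced by the change of variables $z=n^{-1/3}\tilde z$, $w=n^{-1/3}\tilde w$. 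A routine bookkeeping check then matches the resulting integrands to the GOE-style kernels appearing in the statement after the final substitution $\tilde z\mapsto z/\sigma$, $\tilde w\mapsto w/\sigma$.

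With this setup, the proof decomposes exactly as in Proposition~\ref{prop:GSEpointwise}: split each contour at distance $\epsilon$ from the origin, control the tail by the steep-descent estimate (yielding an $\mathcal{O}(e^{-cn\epsilon})$ bound), and handle the local contribution by Taylor-expanding $nf(z)$ and applying the inequality $|e^x-1|\leqslant|x|e^{|x|}$ to upgrade pointwise convergence of the integrand to convergence of the integral over the finite contour $n^{1/3}\mathcal{C}_{0\not\in}^{\rm local}$. Extending to infinite contours and deforming to $\mathcal{C}_1^{\pi/3}$ (or $\mathcal{C}_{-1/2}^{\pi/3}$ in the $w$-variable of $I_{12}$) uses the cubic exponential decay along the tails exactly as before, and $\epsilon$ is then sent to zero.

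The one place where care is required---and the step I expect to be the main obstacle---is the $w$-integration in $I^{\rm exp,n}_{12}$. Here the factor $1/(2\alpha-1-2w)=-1/(2w)$ introduces a pole at $w=0$, so the contour conditions in \eqref{eq:I12exp} force $a_w<0$ at $\alpha=1/2$. After rescaling, the pole sits at $\tilde w=0$ and the original contour $\mathcal{C}_{n^{1/3}a_w}^{\pi/3}$ retreats to $-\infty$; one must verify that one can deform it back to the finite contour $\mathcal{C}_{-1/2}^{\pi/3}$ without crossing this pole, while simultaneously preserving the condition $a_z+a_w>0$ needed to tame the $1/(\tilde z+\tilde w)$ singularity (the limiting contours give $1+(-1/2)=1/2>0$, which is consistent). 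This is the only aspect where the pole structure is genuinely different from the $\alpha>1/2$ setting, and it forces the asymmetric choice of limit contours $\mathcal{C}_1^{\pi/3}$ for $z$ and $\mathcal{C}_{-1/2}^{\pi/3}$ for $w$ in the GOE kernel $K^{\rm GOE}_{12}$.
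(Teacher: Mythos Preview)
Your proposal is correct and follows essentially the same approach as the paper. The paper handles the $w$-contour in $I^{\rm exp,n}_{12}$ slightly more concretely: rather than rescaling first and then deforming back, it introduces from the outset a modified contour $\mathcal{C}_{0\in}$ (the analogue of $\mathcal{C}_{0\not\in}$ but detouring to the \emph{left} of $0$ in an $n^{-1/3}$-neighborhood, so that the pole at $w=0$ stays on the right), performs the steep-descent analysis on this contour, and only then rescales---this avoids the awkwardness you flag of the rescaled vertex $n^{1/3}a_w$ drifting to $-\infty$.
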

\begin{proof}
	We adapt the asymptotic analysis from Proposition \ref{prop:GSEpointwise}.
	For $I^{\rm exp, n}_{11}$ the arguments are strictly identical.  For  $I^{\rm exp, n}_{12}$, we have a pole for the variable $w$ in $0$ (coming from the factor $ \frac{1}{2\alpha-1-2w}$ in \eqref{eq:kernelK12}) which must stay on the right of the contour. Hence, we cannot use the contour $\mathcal{C}_{0\not\in} $ as for $I^{\rm exp, n}_{11}$ but another modification of the contour $ \mathcal{C}_0^{\pi/3}$ in a $n^{-1/3}$--neighbourhood of $0$ such that the contour passes to the left of $0$. We denote by $\mathcal{C}_{0\in}$ this new contour -- see Figure \ref{fig:modifiedcontour2}.
	Instead of working with a formula similar to \eqref{eq:kernelK11_modifiedcontour}, we start with
	\begin{multline}
	I^{\rm exp, n}_{12}(x,y)= \sigma n^{1/3} \int_{\mathcal{C}_{0\in}}\dd z\int_{\mathcal{C}_{0\in}}\dd w \frac{z-w}{2z(z+w)}\frac{2\alpha -1 + 2z}{2\alpha -1 - 2w} \\ \times
	\exp\Big[n(f(z)+f(w)) - n^{1/3}\sigma (zx+wy)\Big],
	\label{eq:kernelK12modifiedcontour}
	\end{multline}
	where $f$ is defined in \eqref{eq:deff}. 
	The arguments used to perform the asymptotic analysis of \eqref{eq:kernelK11_modifiedcontour} can be readily adapted to  \eqref{eq:kernelK12modifiedcontour}.
	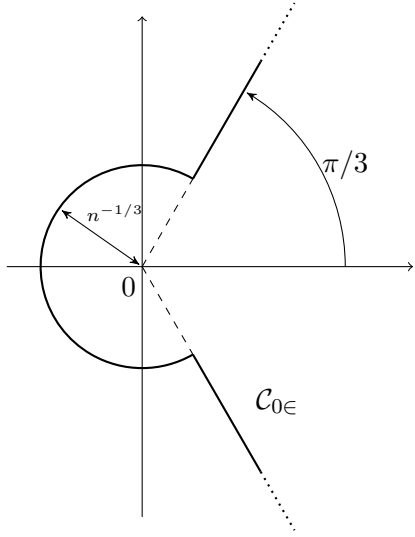
\begin{figure}
		\begin{tikzpicture}[scale=0.9]
		\draw[->] (-2,0) -- (4,0);
		\draw[->] (0,-3.7) -- (0,3.7);
		\draw[thick] (60:1.5) -- (60:3.5);
		\draw[dashed] (60:0) -- (60:1.5);
		\draw[thick] (-60:1.5) -- (-60:3.5);
		\draw[dashed] (-60:0) -- (-60:1.5);
		\draw[thick] (60:1.5 ) arc(60:300:1.5);
		\draw[->, >=stealth'] (3,0) arc(0:59:3);
		\draw node at (3,1.5) {$\pi/3$};
		\draw node at (2,-2) {$\mathcal{C}_{0\in}$};
		\draw[<->, >=stealth'] (145:0.05) -- (145:1.45);
		\draw node at (-0.4, 0.8) {\tiny{$n^{-1/3}$}};
		\draw[thick, dotted] (60:3.5) -- (60:4.5);
		\draw[thick, dotted] (-60:3.5) -- (-60:4.5);
		\draw(-0.2, -0.3) node{$0$};
		\end{tikzpicture}
		\caption{The contour $\mathcal{C}_{0\in}$ used in the end of the proof of Proposition  \ref{prop:GOEpointwise}, for the asymptotics of $K^{\rm exp}_{12}$.}
		\label{fig:modifiedcontour2}
	\end{figure}
	This explains the choice of $\mathcal{C}_{-1/2}^{\pi/3}$ as the contour for $w$ in $I_{12}$ in the limit. For $I^{\rm exp, n}_{22}$ however, thanks to the deformation of contours performed in Section \ref{sec:deformedcontours}, the presence of poles for $z$ and $w$ at $0$ has no influence in the computation of the limit, and the asymptotic analysis is very similar as in Proposition \ref{prop:GSEpointwise}.
\end{proof}
\begin{proposition}
	Assume $\alpha=1/2$. Then, we have $\bar{R}^{\rm exp, n}_{11}(x,y) = \bar{R}^{\rm exp, n}_{12}(x,y)=\bar{R}^{\rm exp, n}_{21}(x,y)=0$, and for any $x, y\in\R$,
	$$ \bar{R}^{\rm exp, n}_{22}(x,y) \xrightarrow[n\to\infty]{} -\int_{\mathcal{C}_{1}^{\pi/3}} e^{z^3/3-zx}\frac{\dd z}{4z}+  \int_{\mathcal{C}_{1}^{\pi/3}} e^{z^3/3-zy}\frac{\dd z}{4z} -\frac{\sgn{(x-y)}}{4}.$$
\end{proposition}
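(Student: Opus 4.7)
The first two identities $\bar R^{\rm exp,n}_{11}=\bar R^{\rm exp,n}_{12}=\bar R^{\rm exp,n}_{21}=0$ are immediate from the decomposition introduced in Section~\ref{sec:kernelexp}, since by construction $\bar R^{\rm exp}_{11}\equiv 0$ and $\bar R^{\rm exp}_{12}(i,x;j,y)\equiv 0$ for $i\geqslant j$ (and in our setting $i=j=1$). So the real content is the $22$ entry. Specialize $i=j=1$ and $n_i=n_j=m_i=m_j=n$: the prefactor $\frac{(1+2z)^{m_i}(1-2z)^{m_j}}{(1-2z)^{n_i}(1+2z)^{n_j}}$ in the third summand of the formula for $\bar R^{\rm exp}_{22}$ collapses to $1$, and the first two summands both factor through $\frac{(1+2z)^n}{(1-2z)^n}e^{-4nz}=e^{nf(z)}$ with $f$ as in \eqref{eq:deff} and $h=4$. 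Assuming $x>y$ (the case $x<y$ follows by the antisymmetry $\bar R^{\rm exp,n}_{22}(x,y)=-\bar R^{\rm exp,n}_{22}(y,x)$, which flips the sign of the $-\tfrac14$ constant and accounts for the factor $\sgn(x-y)$), this yields
\begin{align*}
\bar R^{\rm exp,n}_{22}(x,y) &= -\int_{\mathcal{C}_{1/4}^{\pi/3}}\frac{e^{nf(z)-2^{4/3}n^{1/3}xz}}{4z}\dd z + \int_{\mathcal{C}_{1/4}^{\pi/3}}\frac{e^{nf(z)-2^{4/3}n^{1/3}yz}}{4z}\dd z \\
&\quad + \int_{\mathcal{C}_{1/4}^{\pi/3}}\frac{e^{-2^{4/3}n^{1/3}(x-y)z}}{2z}\dd z - \frac{1}{4}.
\end{align*}

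The first two integrals are handled by the very same Laplace-type argument used in Proposition~\ref{prop:GSEpointwise} and reused in Proposition~\ref{prop:GOEpointwise}: with $\sigma=2^{4/3}$ one has $f(0)=f'(0)=f''(0)=0$ and $f(z)=\tfrac{\sigma^3}{3}z^3+\mathcal{O}(z^4)$, the contour $\mathcal{C}_{1/4}^{\pi/3}$ is steep descent for $\Re f$ (Lemma~\ref{lem:steepdescent}), and after the $\mathcal{O}(n^{-1/3})$ modification $\mathcal{C}_{0\not\in}$ around the pole at $z=0$, the change of variables $\tilde z=\sigma n^{1/3}z$ together with the tail/local split as in \eqref{eq:decoupage1}--\eqref{eq:decoupage2} yields pointwise convergence to $-\int_{\mathcal{C}_1^{\pi/3}}\frac{e^{z^3/3-xz}}{4z}\dd z$ and $\int_{\mathcal{C}_1^{\pi/3}}\frac{e^{z^3/3-yz}}{4z}\dd z$ respectively. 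Since this is verbatim the mechanism already proved in the GSE pointwise analysis (with the pole at $0$ handled in exactly the same way), I will simply invoke it.

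The new ingredient is the third integral, which carries no $e^{nf(z)}$ factor. Setting $a:=2^{4/3}n^{1/3}(x-y)>0$, the integrand $e^{-az}/(2z)$ has its only singularity at $z=0$, which lies strictly to the left of $\mathcal{C}_{1/4}^{\pi/3}$. Hence by Cauchy's theorem the integral is unchanged if the vertex $1/4$ is replaced by any $R>0$, and a direct parametrization of the two rays of $\mathcal{C}_R^{\pi/3}$ gives
\[
\left|\int_{\mathcal{C}_R^{\pi/3}}\frac{e^{-az}}{2z}\dd z\right|\leqslant \frac{C\,e^{-aR}}{R}\xrightarrow[R\to\infty]{}0,
\]
so the third integral equals $0$ for every $n$ (as soon as $x\neq y$). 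Combining the three limits with the constant $-\tfrac14$, and using the antisymmetry of $\bar R^{\rm exp,n}_{22}$ to extend from $x>y$ to $x<y$, produces the announced formula. The only technical point is the saddle-point error control near $z=0$ for the first two integrals, and this is inherited without change from the proof of Proposition~\ref{prop:GSEpointwise}; the characteristic new feature, that the third summand contributes nothing in the limit, is a one-line consequence of Cauchy's theorem.
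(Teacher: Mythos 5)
Your proof is correct and follows essentially the same route as the paper's: the same pre-limit formula for $\bar{R}^{\rm exp,n}_{22}$ with $f$ as in \eqref{eq:deff}, the same Laplace-type analysis borrowed from Proposition \ref{prop:GSEpointwise} for the two $e^{nf(z)}$ integrals, and antisymmetry to pass from $x>y$ to $x<y$. Your only addition is the explicit Cauchy-theorem observation that the third integral (the one without the $e^{nf(z)}$ factor) vanishes identically for $x\neq y$, a point the paper's proof leaves implicit; this is a correct and slightly cleaner way to dispose of that term.
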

\begin{proof}
	The fact that $\bar{R}^{\rm exp, n}_{11}(x,y) = \bar{R}^{\rm exp, n}_{12}(x,y)=\bar{R}^{\rm exp, n}_{21}(x,y)=0$ is just by definition of $\bar{R}^{\rm exp, n}$ in Section \ref{sec:kernelexp}.  Regarding $\bar{R}^{\rm exp, n}_{22}$, when $x>y$,
	\begin{equation}
	\bar{R}_{22}^{\rm exp, n}(x ; y)  = -\int_{\mathcal{C}_{1/4}^{\pi/3}}  \frac{e^{n f(z)-x\sigma n^{1/3}z}}{4z}\dd z
	+   \int_{\mathcal{C}_{1/4}^{\pi/3}}  \frac{e^{nf(z)-y\sigma n^{1/3}z}}{4z}\dd z 
	+ \int_{\mathcal{C}_{1/4}^{\pi/3}} \frac{1}{2z} e^{-\sigma n^{1/3}\vert x-y\vert z}\dd z\ \  -\frac{1}{4}, 
	\end{equation}
	where $f$ is defined in the proof of Proposition \ref{prop:GSEpointwise}, and $\bar{R}_{22}^{\rm exp}(i,x ; j,y)$ is antisymmetric. Similar arguments as in Proposition  \ref{prop:GSEpointwise} show that
	$$
	\int_{\mathcal{C}_{1/4}^{\pi/3}}  \frac{e^{n f(z)-x\sigma n^{1/3}z}}{4z}\dd z  \xrightarrow[n\to\infty]{} \int_{\mathcal{C}_{1}^{\pi/3}} e^{z^3/3-zx}\frac{\dd z}{4z}, $$
	so that the Proposition is proved, using the antisymmetry of $\bar{R}_{22}$.
\end{proof}
At this point, we have shown that when $\alpha=1/2$,
\begin{equation*}
\Pf\Big(K^{\rm exp, n}\big( x_i, x_j \big) \Big)_{i,j=1}^k \xrightarrow[q\to 1]{} \Pf\Big({K}^{\rm GOE}\big( x_i, x_j \big) \Big)_{i,j=1}^k.
\end{equation*}
In order to conclude that the Fredholm Pfaffian likewise has the desired limit, we need a control on the entries of the kernel $K^{\rm{exp}, n}$, in order to apply dominated convergence.
\begin{lemma}
	Assume $\alpha=1/2$ and
	let $a\in \R$ be fixed. There exist positive constants $C, c_1, m$ for $n>m$ and $x, y>a$,
	\begin{equation*}
	\Big\vert n^{2/3} K^{\rm exp, n}_{11}(x,y)\Big\vert < C e^{-c_1 x - c_1 y},
	\Big\vert n^{1/3} K^{\rm exp, n}_{12}(x,y)\Big\vert < C e^{-c_1 x },
	\Big\vert  K^{\rm exp, n}_{22}(x,y)\Big\vert < C.
	%\label{eq:exponentialbound}
	\end{equation*}
	\label{lem:GOEbound}
\end{lemma}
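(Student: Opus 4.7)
The plan is to follow closely the strategy of Lemma \ref{lem:expboundgeom} and the contour deformations used in Proposition \ref{prop:GOEpointwise}, replacing the pointwise asymptotic analysis by uniform estimates. At $\alpha=1/2$ the factor $(2z+2\alpha-1)=2z$ simplifies the integrand of $I^{\rm exp}_{11}$ and cancels its apparent singularity at $z=0$, while the singularities at $w=0$ in $I^{\rm exp}_{12}$ and at $z,w=0$ in $I^{\rm exp}_{22}$ are handled by the same modifications of contours as in Proposition \ref{prop:GOEpointwise}.

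For $I^{\rm exp,n}_{11}$, I would deform both contours to $\mathcal{C}_{0\not\in}$ (as in \eqref{eq:kernelK11_modifiedcontour}), and for $I^{\rm exp,n}_{12}$ deform the $z$-contour to $\mathcal{C}_{0\not\in}$ and the $w$-contour to $\mathcal{C}_{0\in}$ (as in \eqref{eq:kernelK12modifiedcontour}). By Lemma \ref{lem:steepdescent} combined with the Taylor expansion $f(z)=\frac{\sigma^3}{3}z^3+\mathcal{O}(z^4)$ near $0$, the modulus $|e^{nf(z)}|$ is bounded by a constant $C_f$ uniformly in $n$ and $z$ on these contours, and in fact $|e^{nf(\tilde z n^{-1/3})}|\leqslant e^{-c|\tilde z|^3}$ on the rescaled contours for $|\tilde z|$ sufficiently large. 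After the change of variables $\tilde z=n^{1/3}z$, $\tilde w=n^{1/3}w$, the Jacobian factor $n^{-2/3}$ (resp.\ $n^{-1/3}$) exactly compensates the prefactor $\sigma^2 n^{2/3}$ (resp.\ $\sigma n^{1/3}$) in the definition of $K^{\rm exp,n}_{11}$ (resp.\ $K^{\rm exp,n}_{12}$). On $n^{1/3}\mathcal{C}_{0\not\in}$ we can arrange $\mathrm{Re}[\tilde z]\geqslant c_1$ for some fixed $c_1>0$, since the small arc has been bent to the right of $0$ at distance $\geqslant 1$ from the origin. Thus $|e^{-\sigma\tilde x\tilde z}|\leqslant e^{-\sigma c_1\tilde x}$ uniformly in $\tilde x>a$, and combining with the cubic decay yields the claimed bound $|K^{\rm exp,n}_{11}(\tilde x,\tilde y)|\leqslant C e^{-c_1\tilde x-c_1\tilde y}$ after integrating the remaining rational factors against a dominating cubic-Gaussian on the tails.

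For $K^{\rm exp,n}_{12}$, the same estimate gives the $e^{-c_1\tilde x}$ decay from the $z$-variable, while on $n^{1/3}\mathcal{C}_{0\in}$ the real part of $\tilde w$ is not bounded below by a positive constant. However, for $\tilde y$ bounded below by $a$, the cubic decay $e^{-c|\tilde w|^3}$ from $|e^{nf(\tilde w n^{-1/3})}|$ absorbs the linear factor $|e^{-\sigma\tilde y\tilde w}|$ at the tails uniformly in $\tilde y$, while near the origin the contour has bounded real part so $|e^{-\sigma\tilde y\tilde w}|$ is bounded by a constant depending only on $a$; this yields $|K^{\rm exp,n}_{12}(\tilde x,\tilde y)|\leqslant Ce^{-c_1\tilde x}$ as required. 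The bound $|K^{\rm exp,n}_{22}(\tilde x,\tilde y)|\leqslant C$ then splits into the analogous treatment of $I^{\rm exp,n}_{22}$ (now without any $n^{1/3}$ prefactor) and a direct estimate of $\bar R^{\rm exp,n}_{22}$: each of its four explicit single-integral summands is bounded by applying the same steep-descent argument to the one-variable contours, and the sign term contributes at most $\tfrac14$.

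The main technical obstacle I anticipate is the uniformity in $\tilde y$ for the bound on $K^{\rm exp,n}_{12}$: one must verify that the constant $C$ does not blow up as $\tilde y\to+\infty$ along the tails of $n^{1/3}\mathcal{C}_{0\in}$, and in particular that the cubic decay in $\tilde w$ dominates the linear growth of $-\sigma\tilde y\,\mathrm{Re}[\tilde w]$ uniformly on the tails. A natural remedy is to split the $\tilde w$-integral into a bounded piece $|\tilde w|\leqslant M$ (on which the integrand is bounded by a polynomial in $\tilde y$, which can be absorbed since $\tilde y$ is bounded below only, and for $\tilde y$ large and positive the contour estimate gives additional decay) and a tail $|\tilde w|>M$, on which the cubic envelope dominates and integration yields a contribution decaying exponentially in $|\tilde y|$, hence uniformly bounded.
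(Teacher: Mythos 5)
Your overall strategy is the same as the paper's (which simply adapts Lemma \ref{lem:exponentialbound}: contour $\mathcal{C}_{0\not\in}$ for $I^{\rm exp,n}_{11}$ and $I^{\rm exp,n}_{22}$, contour $\mathcal{C}_{0\in}$ for the $w$-variable of $I^{\rm exp,n}_{12}$, and a constant bound for $K^{\rm exp,n}_{22}$ because of the $\sgn(x-y)$ term), and your treatment of the $11$ and $22$ entries is fine. The genuine gap is in your justification of the bound $|K^{\rm exp,n}_{12}(x,y)|\leqslant Ce^{-c_1x}$ \emph{uniformly in} $y>a$. On the rescaled contour $n^{1/3}\mathcal{C}_{0\in}$ the small arc around the origin has $\Re[\tilde w]\in[-1,1/2]$, so on the portion with $\Re[\tilde w]<0$ the factor $|e^{-\sigma y\tilde w}|=e^{-\sigma y\Re[\tilde w]}$ is as large as $e^{\sigma y}$ when $y\to+\infty$. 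Your claim that "near the origin the contour has bounded real part so $|e^{-\sigma\tilde y\tilde w}|$ is bounded by a constant depending only on $a$" is false: boundedness of $\Re[\tilde w]$ together with a lower bound on $y$ only controls this factor where $\Re[\tilde w]\geqslant 0$. Your final paragraph then mislocates the difficulty at the tails of the contour (where $\Re[\tilde w]\geqslant 1/2$ and there is in fact no problem for $y\geqslant 0$) and asserts the bounded piece grows at most polynomially in $y$, whereas a triangle-inequality bound there is genuinely exponential, $e^{\sigma y}$. This cannot be waved away: a bound of the form $Ce^{-c_1x+\sigma y}$ with $\sigma=2^{4/3}$ is useless for Lemma \ref{lem:hadamard}, since the available decay rate in $x$ on these contours is at most $\sigma/2$.

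The missing step is to handle the pole of $1/(2\alpha-1-2w)=-1/(2w)$ at $w=0$ explicitly rather than hiding it behind the arc of $\mathcal{C}_{0\in}$. Deform the $w$-contour back to $\mathcal{C}_{0\not\in}$ (real part bounded below by a positive constant after rescaling), picking up the residue at $w=0$; the residue term equals $-\tfrac12\int e^{nf(z)-n^{1/3}\sigma xz}(\cdots)\dd z$, is independent of $y$, and is bounded by $Ce^{-c_1x}$ by the same steep-descent argument in the single variable $z$ --- this is exactly the prelimiting analogue of the $\tfrac12\Ai(x)$ term appearing in $K^{\rm GOE}_{12}$ in Lemma \ref{def:GOEdistribution}. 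The remaining double integral now has $\Re[\tilde w]\geqslant c_1>0$, hence is bounded by $Ce^{-c_1x-c_1y}\leqslant C'e^{-c_1x}$ for $y>a$. With this extra residue extraction (or an equivalent $y$-dependent contour manipulation) your argument goes through; without it, the crucial uniformity in $y$ of the $K_{12}$ bound is not established.
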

\begin{proof}
	The proof is very similar to that of Lemma \ref{lem:exponentialbound}. The contour for $w$ in $K^{\rm exp}_{12}$ used in the asymptotic analysis is now $\mathcal{C}_{0\in}$  instead of $\mathcal{C}_{0\not\in}$, resulting in an absence of decay as $y \to\infty$. The kernel $I^{\rm exp, n}_{22}$ has exponential decay in $x$ and $y$ using arguments similar with Lemma \ref{lem:exponentialbound}. However, the bound on $K^{\rm exp, n}_{22}$ is only constant, due to the term $\sgn(x-y)$.
\end{proof}
The bounds from Lemma \ref{lem:GOEbound} are such that the hypotheses in Lemma \ref{lem:hadamard} are satisfied, and we conclude, applying dominated convergence in the Pfaffian series expansion,  that
$$ \lim_{n\to\infty} \PP\left( H(n,n) <4n + 2^{4/3}n^{1/3}x \right)  = \Pf\big( J- K^{\rm GOE}\big)_{\mathbb{L}^2(x, \infty)}. $$

\subsection{GUE asymptotics}
\label{sec:GUEasymptotics} In this Section, we prove Theorem \ref{theo:LPPawaydiagointro}. 
Fix a parameter $\kappa\in (0,1)$ and assume that  $m = \kappa n$. Again we expect that $H(n,m)/n$ converges to some constant $h$ depending on $\kappa$, with fluctuations on the $n^{1/3}$ scale.

The kernel $K^{\rm exp}_{11}$ can be rewritten as
\begin{multline}
K^{\rm exp, n}_{11}\Big(\mathfrak{r}(x,y)\Big) =\int_{\mathcal{C}_{1/4}^{\pi/3}}\dd z\int_{\mathcal{C}_{1/4}^{\pi/3}}\dd w \frac{z-w}{4zw(z+w)}\\(2z+2\alpha -1)(2w+2\alpha -1)
\exp\Big[n(f_{\kappa}(z)+f_{\kappa}(w)) - n^{1/3}\sigma (xz+yw)\Big],
\label{eq:kernelK11:N<n}
\end{multline}
where
\begin{equation}
f_{\kappa}(z) = -h z + \log(1+2z) -\kappa\log(1-2z).
\label{eq:deffGUE}
\end{equation}

It is convenient to parametrize the constant $\kappa$ by a constant $\theta \in (0,1/2)$, and set the values of $h$ and $\sigma$ such that
\begin{equation*}
\kappa = \left(\frac{1-2\theta}{1+2\theta}\right)^2, \ \ 
h = \frac{4}{(1+2\theta)^2}, \ \ 
\sigma =  \left(\frac{8}{(1+2\theta)^3} + \frac{8\kappa}{(1-2\theta)^3}\right)^{1/3}.
\end{equation*}
With this choice,  $f_{\kappa}'(\theta) = f_{\kappa}''(\theta)=0$ and by Taylor expansion,
\begin{equation}
f_{\kappa}(z) =  f_{\kappa}(\theta) + \frac{\sigma^3}{3}(z-\theta)^3 + \mathcal{O}\big((z-\theta)^4\big).
\label{eq:Taylorapprox:N<T1}
\end{equation}
The kernel $K^{\rm exp}_{22}$ can be rewritten as
$$K^{\rm exp}_{22}= I^{\rm exp}_{22}+\begin{cases}R^{\rm exp}_{22} &\mbox{ when }\alpha>1/2,\\ \hat{R}^{\rm exp}_{22} &\mbox{ when }\alpha<1/2,\\\bar{R}^{\rm exp}_{22}&\mbox{ when }\alpha=1/2,\end{cases}$$
where
\begin{multline*}
I^{\rm exp}_{22}\Big(\mathfrak{r}(x,y)\Big) =  \int_{\mathcal{C}_{1/4}^{\pi/3}}\dd z\int_{\mathcal{C}_{1/4}^{\pi/3}}\dd w \frac{z-w}{z+w} \frac{1}{2\alpha -1-2z} \frac{1}{2\alpha -1-2w} \\
\exp\Big[(n(g_{\kappa}(z)+g_{\kappa}(w)) - n^{1/3}\sigma (xz+yw)\Big] ,
\end{multline*}
with $ g_{\kappa}(z) = -f_{\kappa}(-z)$.
When $\alpha>1/2$,
\begin{equation*}
R^{\rm exp}_{22} \Big(\mathfrak{r}(x,y)\Big) = -\sgn(x-y)\int_{\mathcal{C}_{0}^{\pi/3}} \frac{e^{ n(g_{\kappa}(z)-f_{\kappa}(z))-\vert x-y\vert n^{1/3}\sigma z}}{(2\alpha-1-2z)(2\alpha-1+2z)} 2z\dd z.
\end{equation*}
When $\alpha<1/2$,
\begin{multline*}
\hat{R}^{\rm exp}_{22} \Big(\mathfrak{r}(x,y)\Big) =
-\sgn(x-y)\int_{\mathcal{C}_{0}^{\pi/3}} \frac{e^{ n(g_{\kappa}(z)-f_{\kappa}(z))-\vert x-y\vert n^{1/3}\sigma z}}{(2\alpha-1-2z)(2\alpha-1+2z)}2z\dd z\\
-e^{\frac{1-2\alpha}{2}\sigma n^{1/3}y} \int_{\mathcal{C}_{0}^{\pi/3}}e^{n\left(g_{\kappa}(z)+g_{\kappa}\left(\frac{1-2\alpha}{2}\right)\right)-x\sigma n^{1/3}z}\frac{\dd z}{2\alpha-1-2z} \\
+e^{\frac{1-2\alpha}{2}\sigma n^{1/3}x} \int_{\mathcal{C}_{0}^{\pi/3}}e^{n\left(g_{\kappa}(z)+g_{\kappa}\left(\frac{1-2\alpha}{2}\right)\right)-y\sigma n^{1/3}z}\frac{\dd z}{2\alpha-1-2z}.
\end{multline*}
when $\alpha=1/2$,
\begin{multline*}
\bar{R}^{\rm exp}_{22} \Big(\mathfrak{r}(x,y)\Big) =
\sgn(x-y)\int_{\mathcal{C}_{1/4}^{\pi/3}} e^{ n(g_{\kappa}(z)-f_{\kappa}(z))-\vert x-y\vert n^{1/3}\sigma z}\frac{\dd z}{2z}\\
+ \int_{\mathcal{C}_{1/4}^{\pi/3}}e^{n\left(g_{\kappa}(z)+g_{\kappa}\left(\frac{1-2\alpha}{2}\right)\right)-x\sigma n^{1/3}z}\frac{\dd z}{2z} 
- \int_{\mathcal{C}_{1/4}^{\pi/3}}e^{n\left(g_{\kappa}(z)+g_{\kappa}\left(\frac{1-2\alpha}{2}\right)\right)-y\sigma n^{1/3}z}\frac{\dd z}{2z} - \frac{\sgn(x-y)}{4}.
\end{multline*}
Since $g_{\kappa}'(-\theta)= g_{\kappa}''(-\theta)=0$, and $g_{\kappa}'''(-\theta) = f_{\kappa}'''(\theta)$,  Taylor expansion around $-\theta$ yields
$$ g_{\kappa}(z) = g_{\kappa}(-\theta) + \frac{\sigma^3}{3} (z+\theta)^3 + \mathcal{O}\big((z+\theta)^4\big). $$

Scale the kernel $K^{\rm exp}$ as
\begin{equation*} K^{\rm{exp}, n}(x,y) := 
\begin{pmatrix}
\dfrac{\sigma n^{1/3} K^{\rm exp}_{11}\Big(\mathfrak{r}(x,y)\Big)}{e^{ 2nf_{\kappa}(\theta) -\sigma n^{1/3} (x+y)\theta }}
& \sigma n^{1/3}K^{\rm exp}_{12}\Big(\mathfrak{r}(x,y)\Big)\\
\sigma n^{1/3}K^{\rm exp}_{21}\Big(\mathfrak{r}(x,y)\Big)
&\dfrac{ \sigma n^{1/3}  K^{\rm exp}_{22}\Big(\mathfrak{r}(x,y)\Big)}{e^{2ng_{\kappa}(-\theta) +\sigma n^{1/3} (x+y)\theta }}
\end{pmatrix}.
\end{equation*}
By a conjugation of the kernel preserving the Pfaffian,
the factor $e^{2nf_{\kappa}(\theta)}$ in $K^{\rm exp}_{11}$ cancels with $e^{2ng_{\kappa}(-\theta)}$ in $K^{\rm exp}_{22}$ (since $f_{\kappa}(\theta)= - g_{\kappa}(-\theta)$), and the factor $e^{-\sigma n^{1/3}(x\theta + y\theta) }$ in $K^{\rm exp}_{11}$ cancels with $e^{-\sigma n^{1/3}(-x\theta - y\theta)} $ in $K^{\rm exp}_{22}$.
This  implies that by a change of variables in the Pfaffian series expansion, 
$$ \PP\left(\frac{H(n,\kappa n) - hn}{\sigma n^{1/3}} \leqslant y\right)  = \Pf\big(J - K^{\rm exp, n} \big)_{\mathbb{L}^2(y, \infty)}.$$
\begin{proposition}
	We have that
	\begin{equation*}
	\sigma^2 n^{2/3}K^{\rm exp, n}_{11}(x,y) \xrightarrow[]{n\to\infty}    \int_{\mathcal{C}_{1}^{\pi/4}}\dd z \int_{\mathcal{C}_{1}^{\pi/4}}\dd w  \frac{z-w}{8\theta^3}  (2\theta+2\alpha-1)^2
	e^{\frac{z^3}{3}+ \frac{w^3}{3}-x z - y  w)},
	\end{equation*}
	so that in particular $K^{\rm exp, n}_{11}(x,y)\xrightarrow[n\to \infty]{} 0$.
	\label{prop:GUEpointwise11}
\end{proposition}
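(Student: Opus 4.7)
The plan is a saddle-point analysis of the double contour integral \eqref{eq:kernelK11:N<n}. The constants $\theta,h,\sigma$ were chosen precisely so that $\theta$ is a double critical point of $f_\kappa$, with the cubic expansion \eqref{eq:Taylorapprox:N<T1}; this is the standard setup for Airy-type asymptotics, and the analysis will closely parallel that of Proposition \ref{prop:GSEpointwise} but with the critical point shifted from $0$ to $\theta$.

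I would proceed in three steps. \textbf{Step 1 (contour deformation).} Deform both contours $\mathcal{C}_{1/4}^{\pi/3}$ to a contour $\mathcal{C}_{\theta}^{\pi/3}$ passing through the critical point $\theta$ at the steep-descent angles $\pm\pi/3$. Since $\theta\in(0,1/2)$ lies to the right of the pole at $z=0$, this deformation is free. Then verify, analogously to Lemma \ref{lem:steepdescent}, that $t\mapsto\Re[f_\kappa(\theta+te^{\pm i\pi/3})]$ is strictly decreasing for $t>0$ and decays at least linearly for large $t$ along the rays. \textbf{Step 2 (localization and rescaling).} Split each contour into a neighborhood $|z-\theta|\le\varepsilon$ of the critical point and a tail. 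The tails contribute $O(e^{-cn})$ by the steep-descent property. On the local part, change variables $z=\theta+\tilde z/(\sigma n^{1/3})$, $w=\theta+\tilde w/(\sigma n^{1/3})$. Taylor expansion gives $nf_\kappa(z)=nf_\kappa(\theta)+\tfrac{1}{3}\tilde z^3+O(n^{-1/3})$ and $\sigma n^{1/3}xz=\sigma n^{1/3}x\theta+x\tilde z$, while the rational prefactor $\frac{z-w}{4zw(z+w)}(2z+2\alpha-1)(2w+2\alpha-1)$ converges pointwise to $\frac{(\tilde z-\tilde w)(2\theta+2\alpha-1)^2}{8\theta^3\,\sigma n^{1/3}}$, with the extra $1/(\sigma n^{1/3})$ coming from $z-w$. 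Combined with $dz\,dw=d\tilde z\,d\tilde w/(\sigma n^{1/3})^2$, the local piece of the integral has the form
\begin{equation*}
\frac{e^{2nf_\kappa(\theta)-\sigma n^{1/3}(x+y)\theta}}{\sigma^3 n}\iint\frac{(\tilde z-\tilde w)(2\theta+2\alpha-1)^2}{8\theta^3}\,e^{\frac{\tilde z^3+\tilde w^3}{3}-x\tilde z-y\tilde w}\,d\tilde z\,d\tilde w \; + \; o(n^{-1}).
\end{equation*}
Dividing by the conjugation factor $e^{2nf_\kappa(\theta)-\sigma n^{1/3}(x+y)\theta}$ built into $K^{\rm exp,n}_{11}$ and multiplying by $\sigma^2 n^{2/3}\cdot\sigma n^{1/3}=\sigma^3 n$ exactly cancels both the exponential and the $1/(\sigma^3 n)$ prefactor, leaving the stated integral.

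\textbf{Step 3 (infinite-contour extension).} Replace the finite rescaled local contour of length $\sim\varepsilon\sigma n^{1/3}$ by an infinite contour through $1$ in the directions $\pm\pi/3$, justified by Cauchy's theorem and the cubic decay $\Re[\tilde z^3/3]\sim-|\tilde z|^3/3$ along those rays; no poles are crossed in the rescaled variables. The secondary bound $K^{\rm exp,n}_{11}(x,y)\to 0$ is immediate since the scaling reads $K^{\rm exp,n}_{11}\sim (\sigma^2 n^{2/3})^{-1}\cdot[\text{limiting integral}]$.

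The main technical obstacle is controlling the Taylor remainder uniformly on the rescaled contour of diverging length. I would handle this exactly as in Proposition \ref{prop:GSEpointwise} (cf.\ equations \eqref{eq:decoupage1}--\eqref{eq:decoupage2}), using the inequality $|e^x-1|\le|x|e^{|x|}$ and choosing $\varepsilon$ small enough that the cubic error $n^{-1/3}\tilde z^4=n^{-1/3}\tilde z\cdot\tilde z^3$ is dominated by the leading $\tilde z^3/3$ uniformly for $|\tilde z|\le\varepsilon\sigma n^{1/3}$. Compared to the GSE case, no modification of contour near $0$ is needed since the critical point $\theta\ne 0$ and the factors $1/z$, $1/w$ in the integrand are simply evaluated at $\theta$, so this case is strictly easier than Proposition \ref{prop:GSEpointwise}.
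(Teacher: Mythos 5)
Your overall strategy — deform to a wedge through the critical point $\theta$, localize, rescale by $n^{-1/3}$, control the Taylor remainder as in Proposition \ref{prop:GSEpointwise}, and extend to an infinite contour — is exactly the paper's strategy, and your bookkeeping of the prefactors, the conjugation factor $e^{2nf_\kappa(\theta)-\sigma n^{1/3}(x+y)\theta}$, and the powers of $\sigma n^{1/3}$ is correct. The gap is in Step 1: the verification you propose, namely that $t\mapsto\Re\bigl[f_\kappa(\theta+te^{\pm i\pi/3})\bigr]$ is strictly decreasing for $t>0$, is \emph{false} for part of the parameter range $\kappa\in(0,1)$. For $\kappa$ small (equivalently $\theta$ close to $1/2$) the ray at angle $\pi/3$ is not steep descent: for instance at $\theta=0.45$ (so $\kappa=(0.1/1.9)^2\approx 0.0028$, $h\approx1.108$) one computes $\frac{\mathrm{d}}{\mathrm{d}t}\Re[f_\kappa(\theta+te^{i\pi/3})]>0$ on roughly $t\in(0.15,0.9)$, and in fact $\Re f_\kappa(\theta+0.9\,e^{i\pi/3})\approx 0.166$ exceeds $f_\kappa(\theta)\approx 0.150$. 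Since the tail estimate requires $\Re[f_\kappa(z)]\leqslant f_\kappa(\theta)-c$ away from $\theta$, the factor $e^{n(\Re f_\kappa(z)-f_\kappa(\theta))}$ is exponentially \emph{large} on part of your contour, so the localization step ``the tails contribute $O(e^{-cn})$'' breaks down there; the argument as written cannot be completed uniformly in $\kappa$.

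This is precisely the point where the paper departs from the GSE analysis: it first rotates the contour from asymptotic angle $\pi/3$ to $\pi/4$ (allowed because the integrand decays in every direction $e^{i\phi}$, $|\phi|<\pi/2$, for fixed $n$), and then Lemma \ref{lem:steepdescentGUE} proves that the rays $\theta+t(1\pm i)$ \emph{are} steep descent for $\Re f_\kappa$ for every $\theta\in(0,1/2)$ — this is why the limiting contour in the statement is $\mathcal{C}_1^{\pi/4}$ rather than $\mathcal{C}_1^{\pi/3}$ (after the limit the two are of course equivalent, since $e^{z^3/3}$ decays in all directions of angle in $(\pi/6,\pi/2)$ and no poles are present). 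So your proof becomes correct once you replace the $\pi/3$ wedge through $\theta$ by the $\pi/4$ wedge and prove the corresponding monotonicity statement; with the angle $\pi/3$ the key steep-descent lemma is simply not true for all $\kappa$.
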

\begin{proof}
	We start with \eqref{eq:kernelK11:N<n}. Since for fixed $n$, the integrand has exponential decay  in the direction $e^{i\phi}$ for any $\phi\in (-\pi/2, \pi/2)$, we are allowed to deform the contour from $\mathcal{C}_{1/4}^{\pi/3}$ to $\mathcal{C}_{1/4}^{\pi/4}$. Then we deform the contour from $\mathcal{C}_{1/4}^{\pi/4}$ to  $\mathcal{C}_{\theta}^{\pi/4}$. This is valid as soon as we do not cross any pole during the deformation, which is the case when  $\theta\in (0, 1/2)$. The following shows that the contour $\mathcal{C}_{\theta}^{\pi/4}$ is steep-descent.
	\begin{lemma}For any $\theta\in (0,1/2)$, the functions $t\mapsto \Real[f_{\kappa}(\theta + t(1+i))]$ and $t\mapsto \Real[f_{\kappa}(\theta + t(1-i))]$ are strictly decreasing for $t>0$. Moreover, for $t>1$, $\frac{\mathrm{d}}{\mathrm{d}t} \Real[f_{\kappa}(\theta + t(1+i))]$ is uniformly bounded away from zero.
		\label{lem:steepdescentGUE}
	\end{lemma}
	\begin{proof}
		Straightforward calculations show that 
		\begin{multline*}
		\frac{\mathrm{d}}{\mathrm{d}t} \Real[f_{\kappa}(\theta + t(1+i))] =   -h(\theta + t)+ \frac 1 2 \log\big((1+2\theta + 2t)^2+(2t)^2 \big) - \frac{\kappa}{2}\log\big( (1-2\theta-2t)^2+(2t)^2\big).
		\end{multline*}
		is strictly negative for $t>0$ and $\theta\in (0, 1/2)$. Calculations are the same for the other branch (in direction $1-i$) of the contour.
	\end{proof}
	Using similar arguments as in Section \ref{sec:pointwiseGSE}, we estimate the integral in two parts.  We call $\mathcal{C}_{\theta}^{\rm local}$ the intersection between $\mathcal{C}_{\theta}^{\pi/4}$ with a ball of radius $\epsilon$ centred at $\theta$, and we write
	$$\mathcal{C}_{\theta}^{\pi/4}  = \mathcal{C}_{\theta}^{\rm local} \sqcup \mathcal{C}_{\theta}^{\rm tail}  .$$
	Using the steep-descent properties of Lemma \ref{lem:steepdescentGUE}, we can show that the integration over  $\mathcal{C}_{\theta}^{\rm tail}$ goes to $0$ exponentially fast as $n$ goes to infinity. We  are left with finding the asymptotic behavior of
	\begin{equation*}
	\int_{\mathcal{C}_{\theta}^{\rm local}}\int_{\mathcal{C}_{\theta}^{\rm local}} \frac{(z-w)(2z+2\alpha -1)(2w+2\alpha -1)}{4zw(z+w)}
	e^{n(f_{\kappa}(z)+f_{\kappa}(w)) - n^{1/3}\sigma (xz+yw)} \dd z\dd w.
	\end{equation*}
	We make the change of variables
	$ z=\theta+\tilde z n^{-1/3}$ and $w=\theta+\tilde w n^{-1/3}, $
	and find
	\begin{multline*}
	n^{-2/3} \int_{n^{1/3}\mathcal{C}_{\theta}^{\rm local}}\int_{n^{1/3}\mathcal{C}_{\theta}^{\rm local}} \frac{n^{-1/3}(\tilde z-\tilde w)}{8\theta^3}(2\theta+2\alpha -1)(2\theta+2\alpha -1)\\
	\exp\Big[ 2nf(\theta) -\sigma n^{1/3} (x+y)\Big]\exp\Big[\frac{\sigma^3}{3}\tilde z^3- \sigma (x \tilde z+y\tilde w)\Big] \mathrm{d}\tilde z\mathrm{d}\tilde w,
	\end{multline*}
	where the $ n^{-2/3}$ factor is due to the Jacobian of the change of variables. Finally, we can extend $n^{1/3}\mathcal{C}_{\theta}^{\rm local}$ to an infinite contour making a negligible error and then freely deform the contour to  $\mathcal{C}_{1}^{\pi/4}$.
\end{proof}
For $K^{\rm exp, n}_{22}$, the presence of the pole in $1/(2\alpha -1-2z)$ imposes the condition $ \alpha >1/2 - \theta$, or equivalently, $\alpha >\frac{\sqrt{\kappa}}{1+\sqrt{\kappa}}$.
\begin{proposition}
	For $\alpha >\frac{\sqrt{\kappa}}{1+\sqrt{\kappa}}$, we have that
	\begin{equation*}
	\sigma^2 n^{2/3}K^{\rm exp, n}_{22}(x,y) \xrightarrow[n\to\infty]{}  \int_{\mathcal{C}_{0}^{\pi/4}}\dd z \int_{\mathcal{C}_{0}^{\pi/4}}\dd w  \frac{z-w}{-2{\theta}}   \frac{e^{ \frac{z^3}{3}  + \frac{w^3}{3}  - xz-yw }}{(2\alpha-1+2{\theta})^2}
	,
	\end{equation*}
	so that in particular $K^{\rm exp, n}_{22}(x,y)\xrightarrow[n\to \infty]{} 0$.
	\label{prop:GUEpointwise22}
\end{proposition}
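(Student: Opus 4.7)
The plan is to follow the template of Proposition \ref{prop:GUEpointwise11}, treating each piece of the decomposition $K^{\rm exp}_{22} = I^{\rm exp}_{22} + R^{\rm exp}_{22}$ (with the appropriate variant $\hat{R}^{\rm exp}_{22}$ or $\bar{R}^{\rm exp}_{22}$ for $\alpha<1/2$ or $\alpha=1/2$) under the conjugation factor $e^{2ng_\kappa(-\theta) + \sigma n^{1/3}(x+y)\theta}$ that defines $K^{\rm exp,n}_{22}$. The key analytic input is that $g_\kappa(z) = -f_\kappa(-z)$ inherits from $f_\kappa$ a double critical point at $z=-\theta$, with Taylor expansion $g_\kappa(z) = g_\kappa(-\theta) + \frac{\sigma^3}{3}(z+\theta)^3 + \mathcal{O}((z+\theta)^4)$ and $g_\kappa(-\theta) = -f_\kappa(\theta)$.

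For the double integral $I^{\rm exp}_{22}$, I would deform both contours from $\mathcal{C}_{b_z}^{\pi/3}$ to $\mathcal{C}_{-\theta}^{\pi/4}$. The hypothesis $\alpha>\sqrt{\kappa}/(1+\sqrt{\kappa})$ is equivalent to $(2\alpha-1)/2 > -\theta$, which means the pole of $1/(2\alpha-1-2z)$ lies strictly to the right of $-\theta$ and is not crossed during the deformation. An elementary computation parallel to Lemma~\ref{lem:steepdescentGUE} shows that $t\mapsto \Re[g_\kappa(-\theta + t(1\pm i))]$ is strictly decreasing for $t>0$, so $\mathcal{C}_{-\theta}^{\pi/4}$ is steep descent for $\Re[g_\kappa]$. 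After localizing to an $\epsilon$-ball around $-\theta$ (tails being negligible by steep descent, exactly as in the estimate following \eqref{eq:kernelK11_modifiedcontour}), changing variables $z=-\theta+\tilde z\,n^{-1/3}$, $w=-\theta+\tilde w\,n^{-1/3}$, and approximating $(z-w)/(z+w) \sim (\tilde z - \tilde w)n^{-1/3}/(-2\theta)$ and $(2\alpha-1-2z)(2\alpha-1-2w) \sim (2\alpha-1+2\theta)^2$ to leading order, the leading contribution is
\[
\sigma n^{1/3} I^{\rm exp}_{22}(\mathfrak{r}(x,y)) \sim \frac{\sigma n^{-2/3}\,e^{2ng_\kappa(-\theta)+\sigma n^{1/3}(x+y)\theta}}{-2\theta\,(2\alpha-1+2\theta)^2}\iint (\tilde z-\tilde w)\,e^{\frac{\sigma^3}{3}(\tilde z^3+\tilde w^3)-\sigma(x\tilde z+y\tilde w)}\,\mathrm{d}\tilde z\,\mathrm{d}\tilde w.
\]
Extending the localized contours to $\mathcal{C}_{0}^{\pi/4}$ (absorbing the negligible error from the far part), dividing by the conjugation factor, multiplying by $\sigma^2 n^{2/3}$, and rescaling $\sigma\tilde z\to z$, $\sigma\tilde w\to w$ yields exactly the announced limit.

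For the residue terms $R^{\rm exp}_{22}$, $\hat{R}^{\rm exp}_{22}$ and $\bar{R}^{\rm exp}_{22}$, I would show each piece becomes negligible after the conjugation. Each such piece is a single contour integral (or an explicit exponential) whose leading exponential order is $ng_\kappa$ evaluated at either another critical point of $g_\kappa - f_\kappa$ or at the pole $(1-2\alpha)/2$. Combining these $n$-scale exponents with the constants $nh(1-2\alpha)/2 + n[\kappa\log(2\alpha)-\log(2-2\alpha)]$ that appear via $(1-2\alpha)h/2$ in the $\hat R$ terms, and combining the $n^{1/3}$-scale term $\sigma n^{1/3} y(1-2\alpha)/2$ with the conjugation correction $-\sigma n^{1/3} y\theta$ into $-\sigma n^{1/3} y\bigl[\alpha - (1-2\theta)/2\bigr]$, the strict inequality $\alpha > (1-2\theta)/2 = \sqrt{\kappa}/(1+\sqrt{\kappa})$ forces strict exponential decay at every scale. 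The main obstacle is precisely this bookkeeping in the $\alpha<1/2$ case, where $\hat{R}^{\rm exp}_{22}$ is a sum of four pieces, each having its own critical point structure: one must verify that the numerical identities among $g_\kappa(-\theta)$, $g_\kappa((1-2\alpha)/2)$, $h\theta$, and the constants $\kappa\log(2\alpha) - \log(2-2\alpha)$ combine to make the conjugated residue strictly sub-exponential in $n$, the borderline case being exactly the critical phase transition $\alpha = (1-2\theta)/2$ of Remark~\ref{rem:BBP}.
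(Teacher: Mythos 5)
Your local computation at the double critical point $-\theta$ of $g_{\kappa}$ is fine (the prefactors $-2\theta$ and $(2\alpha-1+2\theta)^{-2}$, and the observation that the hypothesis places the pole $(2\alpha-1)/2$ strictly to the right of $-\theta$), but the global structure of your argument --- treating $I^{\rm exp}_{22}$ and the residue pieces separately --- breaks down, and this is precisely where the paper proceeds differently. Two concrete problems. First, you cannot deform the two contours of $I^{\rm exp}_{22}$ from $\mathcal{C}^{\pi/3}_{b_z}$, $\mathcal{C}^{\pi/3}_{b_w}$ (on which $\Re(z+w)>0$) to $\mathcal{C}^{\pi/4}_{-\theta}$ for both variables: the target contour contains the pair $z=i\theta$, $w=-i\theta$, so the singularity of $1/(z+w)$ sits on the integration contour, and during the deformation you necessarily cross the locus $z=-w$ (and, when $\alpha<1/2$, also the poles at $(2\alpha-1)/2$, since near the real axis the contour is pushed from $b_z>0$ all the way to $-\theta$), picking up single-integral residues of exactly the same form as $R^{\rm exp}_{22}$. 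Second, and more seriously, your claim that the residue pieces become negligible after the conjugation is false. Already in the simplest case $\alpha>1/2$: the exponent of $R^{\rm exp}_{22}\big(\mathfrak{r}(x,y)\big)$ is $n\big(g_{\kappa}(z)-f_{\kappa}(z)\big)=n(\kappa-1)\log(1-4z^2)$, whose maximum over any admissible contour is attained at $z=0$ with value $0$ (the contour cannot cross the real axis beyond $\pm 1/2$ because of the poles of order $n-m$ there), so $R^{\rm exp}_{22}\big(\mathfrak{r}(x,y)\big)$ is only polynomially small in $n$. The conjugation divides by $e^{2ng_{\kappa}(-\theta)}=e^{-2nf_{\kappa}(\theta)}$, and $f_{\kappa}(\theta)>0$ for $\theta\in(0,1/2)$ (e.g.\ $f_{\kappa}(\theta)=\tfrac{16}{3}\theta^3+\mathcal{O}(\theta^4)$ as $\theta\to 0$), so the conjugated residue term is exponentially \emph{large} in $n$; the hypothesis $\alpha>\sqrt{\kappa}/(1+\sqrt{\kappa})$ has no bearing on this.

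The resolution is that the exponentially large pieces cancel: the residue picked up when pushing the contours of $I^{\rm exp}_{22}$ across $z=-w$ has exponent $g_{\kappa}(w)+g_{\kappa}(-w)=(g_{\kappa}-f_{\kappa})(w)$ and is exactly of the form $-R^{\rm exp}_{22}$, so only the sum $I^{\rm exp}_{22}+R^{\rm exp}_{22}$ (resp.\ with $\hat R^{\rm exp}_{22}$, $\bar R^{\rm exp}_{22}$) has the exponentially small order $e^{2ng_{\kappa}(-\theta)}$ that survives the conjugation and produces the stated limit. This is why the paper first \emph{recombines} the two pieces into a single double integral over vertical contours of negative real part, by reversing the deformations of Section \ref{sec:deformedcontours} --- legitimate because $\kappa<1$ gives enough decay of $e^{ng_{\kappa}}$ along vertical lines --- and only then does the saddle-point analysis, on the contour $\mathcal{C}_{\vert\eta}$ of Figure \ref{fig:contourK22}, whose real part stays in $[-\theta,-\eta]$ precisely so that $z+w\neq 0$ throughout and no residues ever appear (Lemmas \ref{lem:steepdescentGUE2} and \ref{lem:steepdescentGUEvert} give the steep-descent property). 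Your proof needs this recombination step, or an explicit verification of the cancellation of the exponentially large contributions; as written, both halves of your argument are off by the same exponentially large quantity.
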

\begin{proof}
	Since $n>m=\kappa n$, all contours can be deformed to the vertical contours $\mathcal{C}_{0}^{\pi/2}$ or  $\mathcal{C}_{1/4}^{\pi/2}$ in the expressions for $I_{22}, R_{22}, \hat{R}_{22}$ and $\bar{R}_{22}$. This is because the quantity $e^{g_{\kappa}(z)} $ has enough decay along vertical contours as long as $\kappa<1$ and $n$ is large enough. By reversing the deformations of contours performed in Section \ref{sec:deformedcontours}, 
	\begin{equation*}
	K^{\rm exp}_{22}\Big(\mathfrak{r}(x,y)\Big) =  \int_{\mathcal{C}_{a_z}^{\pi/2}}\dd z\int_{\mathcal{C}_{a_w}^{\pi/2}}\dd w \frac{z-w}{z+w}   \frac{e^{n(g_{\kappa}(z)+g_{\kappa}(w)) - n^{1/3}\sigma (xz+yw)}}{(2\alpha -1-2z)(2\alpha -1-2w)},
	\end{equation*}
	where $a_z, a_w$ are chosen as any value in $\mathbb{R}_{<0}$ when $\alpha\geqslant 1/2$ and 
	any value in $(\frac{2\alpha-1}{2}, 0)$ when $\alpha<1/2$.
	Since we have assumed that $\theta\in(\frac{2\alpha-1}{2}, 0)$, we can freely deform the contours to the contour $\mathcal{C}_{\vert\eta}$ depicted on Figure \ref{fig:contourK22}, where the constant $\eta>0$ can be chosen as small as we want.
	\begin{figure}
		\begin{tikzpicture}[scale=0.8]
		\draw[->] (-4,0) -- (2,0);
		\draw[->] (0,-3.7) -- (0,3.7);
		\draw (-2, -0.1) -- (-2, 0.1);
		\draw(-2, 0.3) node{$\theta$};
		\draw(0.2, -0.4) node{$0$};
		\draw[<->, >=stealth'] (-0.45, 1) -- (0.05,1);
		\draw node at (-0.2, 0.8) {\tiny{$\eta$}};
		\draw[thick]  (-0.4, -3.6) -- (-0.4, -1.6) --(-2,0) -- (-0.4, 1.6) -- (-0.4, 3.6);
		\draw node at (-1,2) {$\mathcal{C}_{\vert \eta}$};
		\end{tikzpicture}
		\caption{The contour $\mathcal{C}_{\vert\eta}$ used in the proof of Proposition \ref{prop:GUEpointwise22}.}
		\label{fig:contourK22}
	\end{figure}
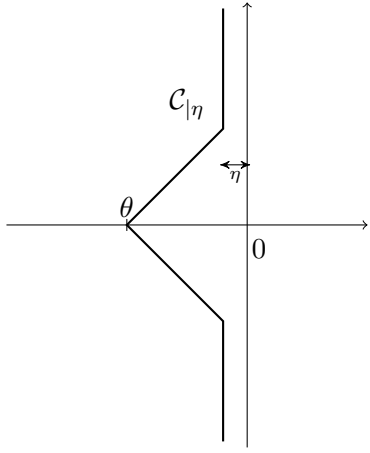
	Now, we estimate the behaviour of $\Re[g_{\kappa}]$ on the contour $\mathcal{C}_{\vert \eta}$.
	\begin{lemma}
		For any $\theta\in (0,1/2)$, the functions $t\mapsto \Real[f_{\kappa}(\theta - t(1+i))]$ and $t\mapsto \Real[f_{\kappa}(\theta - t(1-i))]$ are strictly increasing for $t>0$. Moreover, for $t>1$, $\frac{\mathrm{d}}{\mathrm{d}t} \Real[f_{\kappa}(\theta - t(1+i))]$ is uniformly bounded away from zero. In particular, the contour $\mathcal{C}_{-\theta}^{\pi/4}$ is steep descent for $\Re[g_{\kappa}]$.
		\label{lem:steepdescentGUE2}
	\end{lemma}
	\begin{proof}
		The calculations are analogous to the proof of Lemma \ref{lem:steepdescentGUE}.
	\end{proof}
	\begin{lemma}
		For $\eta>0$ small enough, the function $\Real[g_{\kappa}(-\eta+iy)]$ is decreasing for $y\in (c(\eta), +\infty)$ where $0<c(\eta)<\theta-\eta$, and increasing for $y\in (-c(\eta), -\infty)$.
		\label{lem:steepdescentGUEvert}
	\end{lemma}
	\begin{proof}
		We have 
		$$\frac{\mathrm{d}}{\mathrm{d}y}\Real\big[g_{\kappa}(-\eta + iy)\big] = \frac{(\kappa-1)\big(32y^3+ (1+2\eta)^2\big) + 8\eta}{2\big((1-2\eta)^2+4y^2\big)\big((1+2\eta)^2+4y^2\big)},$$
		from which the result follows readily because $\kappa<1$.
	\end{proof}
	Lemmas \ref{lem:steepdescentGUE2} and \ref{lem:steepdescentGUEvert} together imply that for $\eta$ small enough, the contour $\mathcal{C}_{\vert \eta}$ is steep descent for the function $\Real[g_{\kappa}]$. Then, an adaptation of the proof of Proposition \ref{prop:GUEpointwise11} concludes the proof.
\end{proof}
Turning to the kernel $K^{\rm exp}_{12}$, after the change of variables $ w=-\tilde{w}$, we have 
\begin{multline}
K^{\rm exp, n}_{12}(x,y) = \frac{\sigma^{-1} n^{-1/3}}{(2\I\pi)^2} \int_{\mathcal{C}_{1/4}^{\pi/3}}\dd z\int_{\mathcal{C}_{-1/4}^{2\pi/3}}\dd \tilde w \frac{z+\tilde w}{2z(z-\tilde w)} \frac{2\alpha-1+2z}{2\alpha-1+2\tilde w} \\ \times 
\exp\Big[n(f_{\kappa}(z)-f_{\kappa}(\tilde w)) - n^{1/3}\sigma (z-\tilde w)\Big],
\label{eq:kernelK12GUE}
\end{multline}
where $f_{\kappa}$ is defined in \eqref{eq:deffGUE}.
\begin{proposition}
	For $\alpha >\frac{\sqrt{\kappa}}{1+\sqrt{\kappa}}$,
	$$K^{\rm exp, n}_{12}(x,y) \xrightarrow[n\to\infty]{}  K_{\rm Ai}(x, y),$$
	where $K_{\rm Ai}$ is the Airy kernel defined in \eqref{eq:defairykernel}.
	\label{prop:GUEpointwise12}
\end{proposition}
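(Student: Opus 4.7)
In the one-point setting considered here, only the diagonal entry $K^{\rm exp}_{12}(1,x;1,y)$ is needed, and by definition $R^{\rm exp}_{12}(i,x;j,y)$ (together with its variants $\hat R^{\rm exp}_{12}, \bar R^{\rm exp}_{12}$) vanishes when $i\geqslant j$. Hence the analysis reduces to the asymptotics of $\sigma n^{1/3}\,I^{\rm exp}_{12}(\mathfrak{r}(x,y))$. Substituting $n_i=n_j=n$, $m_i=m_j=\kappa n$ in \eqref{eq:I12exp} and using $g_\kappa(w)=-f_\kappa(-w)$, the integrand takes the form
\begin{equation*}
\frac{z-w}{2z(z+w)}\cdot\frac{2\alpha-1+2z}{2\alpha-1-2w}\cdot \exp\!\bigl[n f_\kappa(z)+n g_\kappa(w)-n^{1/3}\sigma(xz+yw)\bigr],
\end{equation*}
with phase functions whose critical points are at $z=\theta$ and $w=-\theta$ respectively.

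The plan is a double saddle-point analysis on contours deformed to pass through these two critical points. The required steep-descent is provided by Lemma \ref{lem:steepdescentGUE} for $\Re[f_\kappa]$ along $\mathcal{C}_\theta^{\pi/4}$ and Lemma \ref{lem:steepdescentGUE2} for $\Re[g_\kappa]$ along $\mathcal{C}_{-\theta}^{\pi/4}$ (a local deformation near $z=0$ may be required, as in Proposition \ref{prop:GUEpointwise11}). The $w$-contour must be pushed past $w=0$ all the way to $-\theta$; this is permissible precisely when $-\theta<(2\alpha-1)/2$, i.e.\ $\alpha>\frac{\sqrt\kappa}{1+\sqrt\kappa}$, which ensures that we do not cross the pole of $1/(2\alpha-1-2w)$. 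By steepness, the integral localizes to $n^{-1/3}$-neighborhoods of the two critical points up to exponentially small tails.

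Next, rescale locally via $z=\theta+\tilde z\,n^{-1/3}$, $w=-\theta+\tilde w\,n^{-1/3}$. The Taylor expansion $nf_\kappa(z)=nf_\kappa(\theta)+\tfrac{\sigma^3}{3}\tilde z^3+O(n^{-1/3})$ and its analogue for $g_\kappa$ around $-\theta$, combined with the crucial cancellation $g_\kappa(-\theta)=-f_\kappa(\theta)$, kill the $O(n)$ terms in the exponent. The prefactor simplifies using $z-w\to 2\theta$, $2z\to 2\theta$, $(2\alpha-1+2z)/(2\alpha-1-2w)\to 1$, and most importantly $z+w=(\tilde z+\tilde w)\,n^{-1/3}$. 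This last point is the main structural subtlety: the pole of the prefactor at $z+w=0$ lies exactly at the pair of critical points, so Laplace's method does not produce a Gaussian but rather a kernel weighted by $1/(\tilde z+\tilde w)$. Combining the $\sigma n^{1/3}$ prefactor, the Jacobian $n^{-2/3}$ from the change of variables, and the $n^{1/3}$ generated by the pole gives a net order-one scaling.

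The remaining subtlety is the linear-in-$n^{1/3}$ piece of the exponent, $-n^{1/3}\sigma(xz+yw)=-n^{1/3}\sigma\theta(x-y)-\sigma(x\tilde z+y\tilde w)+o(1)$: the spurious factor $e^{-n^{1/3}\sigma\theta(x-y)}$ is cancelled by the implicit diagonal conjugation $D(x)=\mathrm{diag}(e^{\sigma n^{1/3}\theta x},e^{-\sigma n^{1/3}\theta x})$ already used in the definition of $K^{\rm exp,n}$ to remove the exponential factors from $K^{\rm exp,n}_{11}$ and $K^{\rm exp,n}_{22}$; this conjugation contributes exactly $e^{\sigma n^{1/3}\theta(x-y)}$ to the $(1,2)$-entry. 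After extending the localized contours to infinite rays $\mathcal{C}_1^{\pi/3}$ (with exponentially small error) and rescaling $\tilde z\to z/\sigma$, $\tilde w\to w/\sigma$, the limit is
\begin{equation*}
\int_{\mathcal{C}_1^{\pi/3}}\dd z\int_{\mathcal{C}_1^{\pi/3}}\dd w\,\frac{e^{z^3/3+w^3/3-xz-yw}}{z+w},
\end{equation*}
which equals $K_{\rm Ai}(x,y)$ by combining $K_{\rm Ai}(x,y)=\int_0^{\infty}\Ai(x+r)\Ai(y+r)\dd r$ with the contour-integral representation \eqref{eq:defairy} of the Airy function.
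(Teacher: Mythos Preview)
Your argument is essentially the same saddle-point analysis as the paper's; the only cosmetic difference is that the paper first substitutes $w\mapsto -\tilde w$ so that both saddle points sit at $\theta$ and the singular factor becomes $1/(z-\tilde w)$, whereas you keep the saddle points at $\theta$ and $-\theta$ and work with $1/(z+w)$. Your treatment of the conjugation factor $e^{\sigma n^{1/3}\theta(x-y)}$ is in fact more explicit than the paper's (where it is left implicit in the displayed definition of $K^{\rm exp,n}_{12}$); the one place where the paper is slightly more careful is in spelling out how to keep the $z$ and $w$ contours $n^{-1/3}$-separated near the critical pair (contours $\mathcal{C}_{\theta\not\in}$, $\mathcal{D}_{\theta\not\in}$) so that the pole of $1/(z+w)$ is never crossed during the deformation---you note the pole but do not detail this step.
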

\begin{proof}
	We have already seen in Lemma \ref{lem:steepdescentGUE} that the contour $\mathcal{C}_{\theta}^{\pi/4}$ is steep-descent for $\Real[f_{\kappa}(z)]$, and in  Lemma \ref{lem:steepdescentGUE2}  that $\mathcal{C}_{\theta}^{3\pi/4}$ is steep descent for $-\Real[f_{\kappa}(z)]$.
	Due to the term $1/(z-w)$ in \eqref{eq:kernelK12GUE}, we  cannot in principle use simultaneously the contour $\mathcal{C}_{\theta}^{\pi/4}$ for $z$ and $\mathcal{C}_{\theta}^{3\pi/4}$ for $w$. Hence, we deform the contours in a $n^{-1/3}$-neighbourhood of $\theta$. Let us denote $\mathcal{C}_{\theta\not\in}$ and $\mathcal{D}_{\theta\not\in}$ the modified contours, see Figure \ref{fig:modifiedcontoursGUE}.
	As in the proof of Proposition \ref{prop:GSEpointwise},    we  call $\mathcal{C}_{\theta\not\in}^{\rm local}$ and $\mathcal{D}_{\theta\not\in}^{\rm local}$ the intersection of $\mathcal{C}_{\theta\not\in}$ and $\mathcal{D}_{\theta\not\in}$ with  a ball of radius $\epsilon$ around $\theta$, as in Section \ref{sec:pointwiseGSE}.
	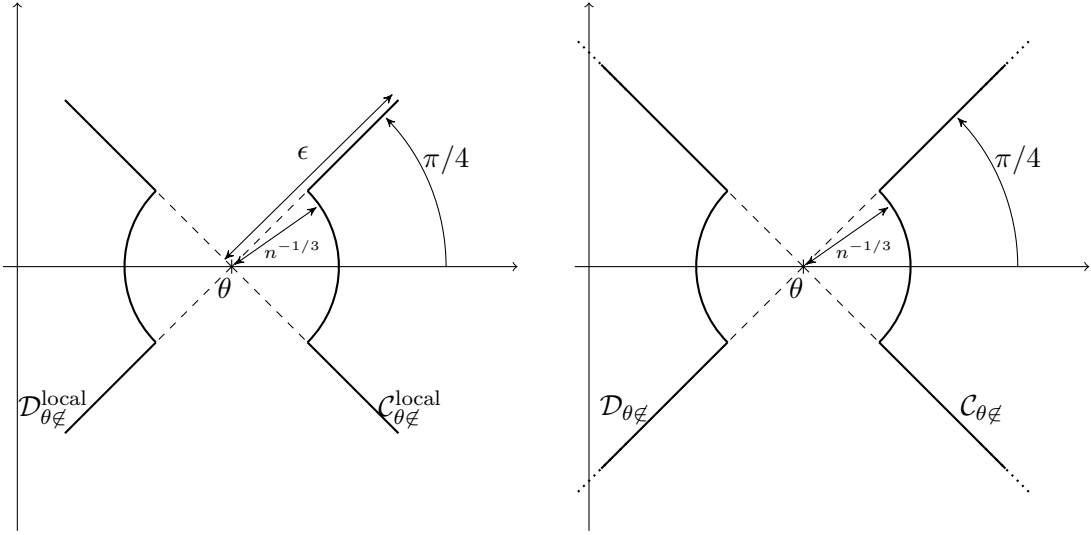
\begin{figure}
		\begin{tikzpicture}[scale=0.95]
		\draw (0, -0.1) -- (0, 0.1);
		\draw[->] (-3.2,0) -- (4,0);
		\draw[->] (-3,-3.7) -- (-3,3.7);
		\draw[thick] (45:1.5) -- (45:3.3);
		\draw[dashed] (45:0) -- (45:1.5);
		\draw[thick] (-45:1.5) -- (-45:3.3);
		\draw[dashed] (-45:0) -- (-45:1.5);
		\draw[thick] (-45:1.5 ) arc(-45:45:1.5);
		\draw[->, >=stealth'] (3,0) arc(0:44:3);
		\draw node at (3,1.5) {$\pi/4$};
		\draw node at (2.5,-2) {$\mathcal{C}_{\theta\not\in}^{\rm local}$};
		\draw[<->, >=stealth'] (35:0.05) -- (35:1.45);
		\draw node at (0.85, 0.25) {\tiny{$n^{-1/3}$}};
		\draw[<->, >=stealth'] (-0.1, 0.1) -- (47:3.3);
		\draw (1,1.6) node{$\epsilon$};
		\draw(-0.1, -0.3) node{$\theta$};

		\draw[thick] (135:1.5) -- (135:3.3);
		\draw[dashed] (135:0) -- (135:1.5);
		\draw[thick] (-135:1.5) -- (-135:3.3);
		\draw[dashed] (-135:0) -- (-135:1.5);
		\draw[thick] (135:1.5 ) arc(135:225:1.5);
		\draw node at (-2.5,-2) {$\mathcal{D}_{\theta\not\in}^{\rm local}$};

		\begin{scope}[xshift=8cm]
		\draw (0, -0.1) -- (0, 0.1);
		\draw[->] (-3.2,0) -- (4,0);
		\draw[->] (-3,-3.7) -- (-3,3.7);
		\draw[thick] (45:1.5) -- (45:4);
		\draw[dashed] (45:0) -- (45:1.5);
		\draw[thick] (-45:1.5) -- (-45:4);
		\draw[dashed] (-45:0) -- (-45:1.5);
		\draw[thick] (-45:1.5 ) arc(-45:45:1.5);
		\draw[->, >=stealth'] (3,0) arc(0:44:3);
		\draw node at (3,1.5) {$\pi/4$};
		\draw node at (2.5,-2) {$\mathcal{C}_{\theta\not\in}$};
		\draw[<->, >=stealth'] (35:0.05) -- (35:1.45);
		\draw node at (0.85, 0.25) {\tiny{$n^{-1/3}$}};
		%\draw[<->, >=stealth'] (-0.1, 0.1) -- (47:4);
		%\draw (1,1.6) node{$\epsilon$};
		\draw(-0.1, -0.3) node{$\theta$};

		\draw[thick] (135:1.5) -- (135:4);
		\draw[dashed] (135:0) -- (135:1.5);
		\draw[thick] (-135:1.5) -- (-135:4);
		\draw[dashed] (-135:0) -- (-135:1.5);
		\draw[thick] (135:1.5 ) arc(135:225:1.5);
		\draw node at (-2.5,-2) {$\mathcal{D}_{\theta\not\in}$};

		\draw[thick, dotted] (45:3.5) -- (45:4.5);
		\draw[thick, dotted] (-45:3.5) -- (-45:4.5);
		\draw[thick, dotted] (135:3.5) -- (135:4.5);
		\draw[thick, dotted] (-135:3.5) -- (-135:4.5);
		\end{scope}
		\end{tikzpicture}
		\caption{Contours used in the proof of Proposition  \ref{prop:GSEpointwise}. Left: the contours $\mathcal{C}_{\theta\not\in}^{\rm local}$ and $\mathcal{D}_{\theta\not\in}^{\rm local}$. Right: the contours $\mathcal{C}_{\theta\not\in}$ and $\mathcal{D}_{\theta\not\in}$.}
		\label{fig:modifiedcontoursGUE}
	\end{figure}
	Using the same arguments as in Section \ref{sec:pointwiseGSE}, we can first deform the contours in \eqref{eq:kernelK12GUE} to $\mathcal{C}_{\theta\not\in}$ and $\mathcal{D}_{\theta\not\in}$, and then approximate the integrals by integrations over $\mathcal{C}_{\theta\not\in}^{\rm local}$ and $\mathcal{D}_{\theta\not\in}^{\rm local}$. By making the change of variables
	$ z=\theta+\tilde z n^{-1/3}$ and $ w=\theta+\tilde w n^{-1/3}, $
	and approximating the integrand using the Taylor approximation \eqref{eq:Taylorapprox:N<T1} and straightforward pointwise limits, we are  left with
	\begin{equation*}
	n^{-2/3} \int_{\mathcal{C}_{1/4}^{\pi/4}}\dd\tilde z\int_{\mathcal{C}_{-1/4}^{3\pi/4}}\dd \tilde w \frac{e^{\frac{\sigma^3}{3} \tilde z^3 -\frac{\sigma^3}{3} \tilde w^3 -x\sigma \tilde z + y\sigma \tilde w}}{n^{-1/3}(\tilde z-\tilde w)}
	,
	\end{equation*}
	as desired. Note that the integration contours should be $n^{1/3}\big(\mathcal{C}_{0\not\in}^{\rm local} - \theta\big)$ and $n^{1/3}\big(\mathcal{D}_{0\not\in}^{\rm local} - \theta\big)$, but in the large $n$ limit, we can use $\mathcal{C}_{1}^{\pi/4}$ and $\mathcal{C}_{-1}^{3\pi/4}$ instead for the same reasons as in Proposition \ref{prop:GSEpointwise}, so that we recognize the Airy kernel.
\end{proof}
Let us denote the pointwise limit of $K^{\rm exp, n}$ by
$$ K^{\rm GUE}(x,y):=\begin{pmatrix}
0 & K_{\Ai}(x,y)\\
-K_{\Ai}(y,x) & 0
\end{pmatrix}.  $$
The pointwise convergence of the kernel (Propositions \ref{prop:GUEpointwise11},  \ref{prop:GUEpointwise22} and  \ref{prop:GUEpointwise12}) along with the fact that $K^{\rm exp, n}$ satisfies uniformly the hypotheses of Lemma \ref{lem:hadamard} (this is clear for $K^{\rm exp, n}_{11}$ and $K^{\rm exp, n}_{22}$ and it is proved  as in Lemma  \ref{lem:GOEbound} for $K^{\rm exp, n}_{12}$), shows by dominated convergence that
$$ \lim_{n\to\infty} \PP\left( H(n,\kappa n) <(1+\sqrt{\kappa})^2 n+\sigma n^{1/3} x \right)  = \Pf\big( J- K^{\rm GUE}\big)_{\mathbb{L}^2(x, \infty)}.$$
Finally,
\begin{align*}
\Pf\big( J- K^{\rm GUE}\big)_{\mathbb{L}^2(x, \infty)} &= \sqrt{\det\big(I +JK^{\rm GUE} \big)_{\mathbb{L}^2(x, \infty)}} \\
&=\sqrt{ \det\left( \begin{pmatrix}
	I & 0\\
	0 & I
	\end{pmatrix}  - \begin{pmatrix}
	K_{ \Ai} & 0 \\
	0 & K_{ \Ai}
	\end{pmatrix} \right)_{\mathbb{L}^2(x, \infty)}}\\
&= \det\big(I-K_{\Ai}\big)_{\mathbb{L}^2(x, \infty)} = F_{\rm GUE}(x).
\end{align*}
\begin{remark}
	We have assumed in the statement of Theorem \ref{theo:LPPawaydiagointro} that $m=\kappa n$ for some fixed $\kappa$. A stronger statement holds. When $m=\kappa n+sn^{2/3-\epsilon}$ for any $s\in \R$ and $\epsilon >0$, 
	$$\lim_{n\to\infty} \PP\left( \frac{H(n,\kappa n) -(1+\sqrt{\kappa})^2 n-2sn^{2/3-\epsilon}}{\sigma n^{1/3}} < x \right) = F_{\rm GUE}(x).$$
	This change  results in an additional factor $ e^{2sn^{2/3-\epsilon} w - sn^{2/3-\epsilon}\log(1-2w) }$
	in \eqref{eq:kernelK11:N<n}. After the change of variables $w=n^{-1/3}\tilde{w}$ in the proof of Propositions \ref{prop:GUEpointwise11}, \ref{prop:GUEpointwise22} and \ref{prop:GUEpointwise12}, the additional factor becomes $e^{2n^{-\epsilon} \tilde{w}}$ and does not contribute to the limit.
	\label{rem:varyingkappa}
\end{remark}

\subsection{Gaussian asymptotics}
\label{sec:Gaussianasymptotics}
In this Section, we prove the third part of Theorem \ref{theo:LPPdiagointro}. 
Assume that $\alpha<1/2$.  The factors $\frac{1}{2\alpha-1-2w}$ and $\frac{1}{2\alpha-1-2z}$ in the expressions of $K^{\rm exp}_{12}$ and $K^{\rm exp}_{22}$ in Proposition \ref{prop:kernelexponential} prevent us from deforming the contours to go through the critical point at zero as in Section \ref{sec:pointwiseGSE}. We already know that the LLN of $H(n,n)$ will be different, and so too will  the critical point (its position will coincide with the poles above-mentioned). We have already argued in Section \ref{sec:mainasymptoticresults} that
$$ \frac{H(n,n)}{n} \xrightarrow[n\to\infty]{} \frac{1}{\alpha(1-\alpha)} =:h(\alpha), $$
and we expect Gaussian fluctuations on the $n^{1/2}$ scale. Let $\mathfrak{r}_{\alpha}(x,y) = (n h_{\alpha} + n^{1/2} \sigma_{\alpha} x,n h_{\alpha} + n^{1/2} \sigma_{\alpha} y)$ for some constants $h_{\alpha}, \sigma_{\alpha}$ depending on $\alpha$. 
The kernel $K^{\rm exp}_{11}$ can be rewritten as
\begin{multline}
K^{\rm exp}_{11}\Big(\mathfrak{r}_{\alpha}(x,y)\Big) =\int_{\mathcal{C}_{1/4}^{\pi/3}}\dd z \int_{\mathcal{C}_{1/4}^{\pi/3}} \dd w \frac{z-w}{4zw(z+w)}\\ \times (2z+2\alpha -1)(2w+2\alpha -1)
e^{n(f_{\alpha}(z)+f_{\alpha}(w)) - n^{1/2}\sigma (xz+yw)},
\label{eq:kernelK11:Gaussian}
\end{multline}
where
$$ f_{\alpha}(z) = -h_{\alpha} z + \log(1+2z) -\log(1-2z).$$
There are  two critical points, $f_{\alpha}'\left(\frac{2\alpha-1}{2}\right)= f_{\alpha}'\left(\frac{1-2\alpha}{2}\right)=0$.  Setting $\theta=\frac{1-2\alpha}{2}$,  and $\sigma_{\alpha}>0$ such that $\sigma_{\alpha}^2=f_{\alpha}''\left(\theta \right) = \frac{1-2\alpha}{\alpha^2 (1-\alpha)^2},$
Taylor expansions of $f_{\alpha}$ around $\theta$ and $-\theta$ yield
\begin{align*}
f_{\alpha}(z) = f_{\alpha}\left(\theta \right) + \frac{\sigma_{\alpha}^2}{2} (z-\theta)^2 + \mathcal{O}\left((z-\theta)^3\right), \label{eq:TaylorapproxGaussian1}\\
f_{\alpha}(z) = f_{\alpha}\left(-\theta\right) - \frac{\sigma_{\alpha}^2}{2} (z+\theta)^2 + \mathcal{O}\left((z+\theta)^3\right).
%\label{eq:TaylorapproxGaussian2}
\end{align*}
Let us rescale the kernel by setting
\begin{equation*} K^{\rm{exp}, n}(x,y) := 
\begin{pmatrix}
\frac{\sigma_{\alpha}^2 n K^{\rm exp}_{11}\Big(\mathfrak{r}_{\alpha}(x,y)\Big)}{e^{ 2nf_{\alpha}(\theta) -\sigma n^{1/2} (x+y)\theta }}
& \sigma_{\alpha} n^{1/2}K^{\rm exp}_{12}\Big(\mathfrak{r}_{\alpha}(x,y)\Big)\\
\sigma_{\alpha} n^{1/2}K^{\rm exp}_{21}\Big(\mathfrak{r}_{\alpha}(x,y)\Big)
&\frac{   K^{\rm exp}_{22}\Big(\mathfrak{r}_{\alpha}(x,y)\Big)}{e^{ 2nf_{\alpha}(-\theta) +\sigma_{\alpha} n^{1/2} (x+y)\theta }}
\end{pmatrix}.
\end{equation*}
We can conjugate the kernel without changing the Pfaffian so that factors $\exp \big[ 2 n f_{\alpha}(-\theta)-\sigma n^{1/3}(-x\theta -y\theta) \big]$ and $\exp\Big[ 2n f_{\alpha}(\theta) -\sigma n^{1/2} (x+y)\theta \Big]$ cancels out each other.
Moreover, a factor $\sigma_{\alpha} n^{1/2}$ will be absorbed by the Jacobian of the change of variables in the Fredholm Pfaffian expansion of $K^{\rm exp, n}$, so that
$$ \PP\left(\frac{H(n,\kappa n) - h(\alpha)n}{\sigma_{\alpha} n^{1/2}} \leqslant y\right)  = \Pf\big(J - K^{\rm exp, n} \big)_{\mathbb{L}^2(y, \infty)}.$$

Saddle-point analysis of $K^{\rm exp}_{11}$ around the critical point $ \theta = \frac{1-2\alpha}{2}$ shows
\begin{equation}
\sigma_{\alpha}^3 n^{1/2} K^{\rm exp, n}_{11}(x,y) \xrightarrow[n \to \infty]{} - \int_{\mathcal{C}_{1}^{\pi/3}}\dd z \int_{\mathcal{C}_{1}^{\pi/3}}\dd w  \frac{z-w}{8\theta^3} zw e^{z^2/2+ w^3/3- x  z-y w},
\label{eq:limitGaussian1}
\end{equation} 
Notice that since $\alpha<1/2$, $\theta>0$. Since the contours for $K^{\rm exp}_{11}$ in Proposition \ref{prop:kernelexponential} must stay in the positive real part region, we choose the critical point $\theta$.

The derivation of \eqref{eq:limitGaussian1} follows the same steps as in Proposition \ref{prop:GUEpointwise11}. The most important step is to find a steep-descent contour. The contour $\mathcal{C}_{\theta}^{\pi/4}$ used in in Proposition \ref{prop:GUEpointwise11} is not a good choice here, but it is easy to check by direct computation that  $\mathcal{C}_{\theta}^{\pi/3}$ is steep descent for $z\mapsto \Re[f_{\alpha}]$ for any value of $\alpha\in(0,1/2)$. 

Similarly for $K^{\rm exp}_{22}$, by a saddle-point analysis around the critical point $-\theta = \frac{2\alpha-1}{2}$ (the restriction on the contours of $K^{\rm exp}_{22}$ in Proposition \ref{prop:kernelexponential} impose now to chose the negative critical point), we get that
\begin{equation*}
\sigma_{\alpha} n^{1/2} K^{\rm exp, n}_{22}(x,y) \xrightarrow[n \to \infty]{}  \int_{\mathcal{C}_{1}^{\pi/6}}\dd z \int_{\mathcal{C}_{1}^{\pi/6}}\dd w  \frac{z-w}{-4{\theta^2}z w}  
e^{ -z^2/2 - w^2/2 - xz-yw }.
\end{equation*}
The angle is chosen as $\pi/6$ because the contour  $\mathcal{C}_{-\theta}^{\pi/6}$ is steep descent for the function $z\mapsto \Re[f_{\alpha}]$, which can be checked again by direct computation. Note that it is less obvious here that we can use vertical contours as in the proof of Proposition \ref{prop:GUEpointwise22}, but it is still possible because the integrand is oscillatory with $1/z$ decay.

In the case of $K^{\rm exp}_{12}$, we use the critical point $\theta$ for the variable $z$ and $-\theta$ for the variable $w$ and obtain 
\begin{equation*}
K^{\rm exp, n}_{12}(x,y) \xrightarrow[n \to \infty]{}  \int_{\mathcal{C}_{2}^{\pi/3}}\dd z \int_{\mathcal{C}_{-1}^{\pi/6}}\dd w \frac{1}{z+w}\frac{z}{-w} e^{ z^2/2 -w^2/2 -x z - y w}.
\end{equation*}
We recognize the Hermite kernel in the R.H.S. (see \cite{bleher2005integral}), which yields $K^{\rm exp, n}_{12}(x,y) \xrightarrow[n \to \infty]{} \frac{1}{\sqrt{2\pi}}e^{-\frac{x^2+y^2}{4}}.$

Finally, using exponential bounds for the kernel as in Lemmas \ref{lem:exponentialbound} and \ref{lem:GOEbound}, we obtain that
$$ \lim_{n\to\infty} \PP\left( \frac{H(n, n) -h(\alpha) n}{\sigma n^{1/2}} < x \right)  = \Pf\big( J- K^{\rm G}\big)_{\mathbb{L}^2(x, \infty)} ,$$
where
$K^{G}$ is defined by the matrix kernel
\begin{equation*}
K^{G}_{11}(x,y) = K^{G}_{22}(x,y) = 0, \ \ 
K^{G}_{12}(x,y) = -K^{G}_{21}(x,y) =  \frac{1}{\sqrt{2\pi}} e^{-\frac{x^2+y^2}{4}}.
\end{equation*}
It is clear that $K^G$ is of rank one and its Fredholm Pfaffian is the cumulative distribution function of the standard Gaussian.
\subsection{Convergence of the Symplectic-Unitary transition to the GSE distribution}
\label{sec:rigorousSUtoGSE}
We have seen in Section \ref{sec:GSEasymptotics} that when $\alpha>1/2$, $K^{\rm exp}$  is the  correlation kernel of a simple point process which converges to a point process where each point has multiplicity two in the limit. The  symplectic-unitary kernel $K^{\rm  SU}$ is another example of  such kernel in the limit $\eta\to 0$. The framework that we introduced in Section \ref{sec:GSEasymptotics} -- in particular Proposition \ref{prop:approxequivalentpfaffians} -- can be used to prove the following. 
\begin{proposition}
	Let $F_{\rm SU}(x) = \Pf\big( J- K^{\rm  SU}\big)_{\mathbb{L}^2(x, \infty)}$ where $K^{\rm  SU}(x,y):=K^{\rm  SU}(1,x;1,y)$ be the cumulative distribution function of the largest eigenvalue of the symplectic-unitary transition ensemble, depending on a parameter $\eta:=\eta_1\in (0,+\infty)$.  We have that for all $x\in\R$,
	\begin{equation*}
	F_{\rm SU}(x)\xrightarrow[\eta\to 0]{} F_{\rm GSE}(x) \ \ \ \text{ and }\ \  \ F_{\rm SU}(x)\xrightarrow[\eta\to +\infty]{} F_{\rm GUE}(x).
	\end{equation*}
	\label{prop:limitcrossoverSU}
\end{proposition}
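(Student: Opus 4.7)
The two limits require very different analyses. The limit $\eta\to+\infty$ is a routine steep-descent argument, while the limit $\eta\to 0$ is subtle and uses the collision-of-points framework of Section~\ref{sec:GSEasymptotics}. I treat them in turn.

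\emph{The limit $\eta\to+\infty$.} The strategy mirrors the GUE analysis of Section~\ref{sec:GUEasymptotics}. After a shift $z\mapsto z-\eta_1,\ w\mapsto w-\eta_1$ in each integral representation (performed so that the poles at $z=-\eta_1$ and $w=-\eta_1$ move to the origin), one checks by power counting that the factor $\tfrac{1}{4(z+\eta_1)(w+\eta_1)(z+w+2\eta_1)}$ makes $I^{\rm SU}_{11}$ pointwise of order $\eta^{-3}$, and that $I^{\rm SU}_{22}$, together with a suitable conjugation of the kernel that cancels the large factors $e^{\eta^3/3}$ arising in the exponential and leaves the Fredholm Pfaffian invariant, vanishes as well. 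The prefactor in $I^{\rm SU}_{12}$ tends pointwise to $\frac{1}{z+w}$; after the change of variables $w\mapsto -w$ this becomes precisely the Airy kernel~\eqref{eq:defairykernel}, while $R^{\rm SU}_{22}$ decays exponentially by the same steep-descent estimates. Uniform exponential bounds on each entry, in the spirit of Lemmas~\ref{lem:exponentialbound} and~\ref{lem:GOEbound}, justify dominated convergence on the Fredholm Pfaffian expansion, giving $F_{\rm SU}(x)\to\det(I-K_{\rm Ai})_{\mathbb{L}^2(x,\infty)}=F_{\rm GUE}(x)$.

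\emph{The limit $\eta\to 0$.} Here I would invoke Proposition~\ref{prop:approxequivalentpfaffians}. Substituting $\eta=0$ directly in the defining integrals of Section~\ref{sec:defcrossovers} yields the pointwise limits
\[
I^{\rm SU}_{11}(1,x;1,y)\to K^{\rm GSE}_{11}(x,y),\qquad I^{\rm SU}_{12}(1,x;1,y)\to 2K^{\rm GSE}_{12}(x,y),\qquad I^{\rm SU}_{22}(1,x;1,y)\to 4K^{\rm GSE}_{22}(x,y),
\]
which match exactly the three ``smooth'' entries of the distribution-valued kernel $K^{\infty}$ from Section~\ref{formal}. Uniform decay bounds on these components and their derivatives follow from the same steep-descent estimates as in Section~\ref{sec:pointwiseGSE}. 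The crucial remaining input is that $R^{\rm SU}_{22}(1,x;1,y)$ plays the role of the approximation $\delta'_\eta$ to $\delta'$ in hypothesis~(3) of Proposition~\ref{prop:approxequivalentpfaffians}: a saddle-point analysis of the single contour integral defining $R^{\rm SU}_{22}$ (the saddle sits at $z_0=(x-y)/(4\eta)$) shows that, for small $\eta>0$, $R^{\rm SU}_{22}(1,x;1,y)$ is asymptotic to the $(x-y)$-derivative of a Gaussian of variance $4\eta$ centered on the diagonal, so that integration against any smooth test function and Taylor expansion around $y=x$ yield a constant multiple of $\int\bigl(\partial_yf-\partial_xf\bigr)\big|_{y=x}\,dx$ in the limit.

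\emph{Main obstacle.} The delicate point is matching the normalization of the limiting distribution of $R^{\rm SU}_{22}$ to the precise $\delta'$ demanded by Proposition~\ref{prop:approxequivalentpfaffians}, and, jointly with this, matching the factors $1,2,2,4$ appearing in the limits of the $I^{\rm SU}_{ij}$ to the canonical form $(A,-2\partial_yA,-2\partial_xA,4\partial_x\partial_yA+\delta')$ of Proposition~\ref{prop:approxequivalentpfaffians}. A direct computation produces a Gaussian-derivative whose total mass against a test function is a computable scalar multiple of the canonical $\delta'$, and this discrepancy must be reconciled either by a Fredholm-Pfaffian-preserving conjugation of $K^{\rm SU}$ by a constant diagonal matrix (which simultaneously rescales the off-diagonal $I^{\rm SU}_{ij}$ back to the form $(A,-2\partial_yA,-2\partial_xA,4\partial_x\partial_yA)$ derived from a single antisymmetric kernel $A$ tending to $K^{\rm GSE}_{11}$), or by a minor extension of Proposition~\ref{prop:approxequivalentpfaffians} allowing the constants multiplying the off-diagonal entries of $\mathbf B_n$ and $\delta'_n$ to be suitably rescaled. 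Once this bookkeeping is in place, the remaining ingredients---pointwise convergence above and uniform bounds in the spirit of Lemma~\ref{lem:exponentialbound}---yield $F_{\rm SU}(x)\to F_{\rm GSE}(x)$ directly from Proposition~\ref{prop:approxequivalentpfaffians}.
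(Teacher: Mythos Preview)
Your approach is essentially the same as the paper's: apply Proposition~\ref{prop:approxequivalentpfaffians} for $\eta\to 0$ and dominated convergence for $\eta\to+\infty$. The key difference is that your ``Main obstacle'' is not real. The integral defining $R^{\rm SU}_{22}(1,x;1,y)$ is a Gaussian in $z$ and can be computed \emph{exactly} (not just via saddle-point): one finds
\[
R^{\rm SU}_{22}(x,y)=\frac{(y-x)\,\exp\!\Big(-\tfrac{(x-y)^2}{8\eta}+\tfrac{2\eta^3}{3}-\eta(x+y)\Big)}{4\sqrt{2\pi}\,\eta^{3/2}}
\underset{\eta\to 0}{\sim}\frac{(y-x)\,e^{-(x-y)^2/(8\eta)}}{4\sqrt{2\pi}\,\eta^{3/2}}.
\]
A direct check (change variables to $s=\tfrac{x+y}{2},\ t=y-x$ and Taylor expand the test function) shows that this converges to $\delta'$ with \emph{exactly} the normalization of \eqref{eq:defdeltaprime}: the second moment $\int t^2 e^{-t^2/(8\eta)}\,dt=8\eta^{3/2}\sqrt{2\pi}$ cancels the prefactor $1/(8\sqrt{2\pi}\,\eta^{3/2})$ precisely. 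Since the limits of $I^{\rm SU}_{11},I^{\rm SU}_{12},I^{\rm SU}_{22}$ are already $K^{\rm GSE}_{11},\,2K^{\rm GSE}_{12},\,4K^{\rm GSE}_{22}$, the kernel $K^{\rm SU}$ is exactly of the form $\mathbf B_n$ in Proposition~\ref{prop:approxequivalentpfaffians} with no conjugation or extension needed. The paper's proof is therefore a two-line application of that proposition once the explicit Gaussian computation is done; your proposed saddle-point route works too (it is exact here), but the worry about normalization and the possible need for a conjugation or an extended version of Proposition~\ref{prop:approxequivalentpfaffians} can be dropped.
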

\begin{proof}
	The limit $F_{\rm SU}(x)\xrightarrow[\eta\to +\infty]{} F_{\rm GUE}(x)$ is straightforward as $K^{\rm SU}$ converges pointwise to $K^{GUE}$ and we readily check that we can apply dominated convergence in the Fredholm Pfaffian expansion.
	
	Regarding the convergence $F_{\rm SU}(x)\xrightarrow[\eta\to 0]{} F_{\rm GSE}(x)$, the limit of $I^{\rm SU}(x,y)$ when $\eta\to 0$ is straightforward as well, and we compute that
	$$ R_{22}^{\rm SU}(x,y)=   \frac{(y-x) \exp\left( \frac{(x-y)^2}{8\eta}+\frac{2\eta^3}{3}-\eta(x+y)\right)}{4\sqrt{2\pi}\eta^{3/2}}\underset{\eta\to 0}{\sim}   \frac{(y-x) \exp\left( \frac{(x-y)^2}{8\eta}\right)}{4\sqrt{2\pi} \eta^{3/2}}.$$
	Then we readily  verify that $R_{22}^{\rm SU}$ converges to the distribution $\delta'$ in the same sense as in Proposition \ref{prop:asymptoticsRGSE}. Hence, $K^{\rm SU}$ converges as $\eta\to 0$ to the kernel $K^{\infty}$ in the sense of Proposition \ref{prop:approxequivalentpfaffians}, and we conclude that
	\begin{equation*} \Pf(J-K^{\rm SU})_{\mathbb{L}^2(x, \infty)} \xrightarrow[\eta \to 0]{}\Pf(J-K^{\rm GSE})_{\mathbb{L}^2(x, \infty)}.\qedhere
	\end{equation*}
\end{proof}
\bibliographystyle{amsplain}
\bibliography{facilitated.bib}

\providecommand{\bysame}{\leavevmode\hbox to3em{\hrulefill}\thinspace}
\providecommand{\MR}{\relax\ifhmode\unskip\space\fi MR }
% \MRhref is called by the amsart/book/proc definition of \MR.
\providecommand{\MRhref}[2]{%
  \href{http://www.ams.org/mathscinet-getitem?mr=#1}{#2}
}
\providecommand{\href}[2]{#2}
\begin{thebibliography}{10}

\bibitem{baik2002painleve}
J.~Baik, \emph{Painlev{\'e} expressions for {LOE, LSE}, and interpolating
  ensembles}, Int. Math. Res. Not. \textbf{2002} (2002), no.~33, 1739--1789.

\bibitem{baik2017facilitated}
J.~Baik, G.~Barraquand, I.~Corwin, and T.~Suidan, \emph{Facilitated exclusion
  process}, arXiv preprint arXiv:1707.01923 (2017).

\bibitem{baik2005phase}
J.~Baik, G.~Ben~Arous, and S.~P{\'e}ch{\'e}, \emph{Phase transition of the
  largest eigenvalue for nonnull complex sample covariance matrices}, Ann.
  Probab. (2005), 1643--1697.

\bibitem{baik1999distribution}
J.~Baik, P.~Deift, and K.~Johansson, \emph{On the distribution of the length of
  the longest increasing subsequence of random permutations}, J. Amer. Math.
  Soc. \textbf{12} (1999), no.~4, 1119--1178.

\bibitem{baik2001algebraic}
J.~Baik and E.~M. Rains, \emph{Algebraic aspects of increasing subsequences},
  Duke Math. J. \textbf{109} (2001), no.~1, 1--65. \MR{1844203 (2002i:05119)}

\bibitem{baik2001asymptotics}
\bysame, \emph{The asymptotics of monotone subsequences of involutions}, Duke
  Math. J. \textbf{109} (2001), no.~2, 205--281. \MR{1845180 (2003e:60016)}

\bibitem{baik2001symmetrized}
\bysame, \emph{Symmetrized random permutations}, Random matrix models and their
  applications, Math. Sci. Res. Inst. Publ., vol.~40, Cambridge Univ. Press,
  Cambridge, 2001, pp.~1--19. \MR{1842780 (2002i:82038)}

\bibitem{barraquand2016pfaffian}
G.~Barraquand, A.~Borodin, and I.~Corwin, \emph{Half-space {M}acdonald
  processes}, arXiv:1802.08210.

\bibitem{barraquand2017stochastic}
G.~Barraquand, A.~Borodin, I.~Corwin, and M.~Wheeler, \emph{Stochastic
  six-vertex model in a half-quadrant and half-line open {ASEP}}, arXiv
  preprint arXiv:1704.04309 (2017).

\bibitem{arous2011current}
G.~Ben~Arous and I.~Corwin, \emph{Current fluctuations for {TASEP}: A proof of
  the {Pr{\"a}hofer--Spohn} conjecture}, Ann. Probab. \textbf{39} (2011),
  no.~1, 104--138.

\bibitem{bleher2005integral}
P.~M. Bleher and A.~B.~J. Kuijlaars, \emph{Integral representations for
  multiple {H}ermite and multiple {L}aguerre polynomials}, Ann. Inst. Fourier
  \textbf{55} (2005), no.~6, 2001--2014.

\bibitem{bloemendal2013limits}
A.~Bloemendal and B.~Vir{\'a}g, \emph{Limits of spiked random matrices {I}},
  Probab. Theor. Rel. Fields \textbf{156} (2013), no.~3-4, 795--825.

\bibitem{borodin2011schur}
A.~Borodin, \emph{Schur dynamics of the {S}chur processes}, Adv. Math.
  \textbf{4} (2011), no.~228, 2268--2291.

\bibitem{borodin2015directed}
A.~Borodin, A.~Bufetov, and I.~Corwin, \emph{Directed random polymers via
  nested contour integrals}, Ann. Phys. \textbf{268} (2015), 191--247.

\bibitem{borodin2014macdonald}
A.~Borodin and I.~Corwin, \emph{Macdonald processes}, Probab. Theory and Rel.
  Fields \textbf{158} (2014), no.~1-2, 225--400.

\bibitem{borodin2008large1}
A.~Borodin and P.~L. Ferrari, \emph{{Large time asymptotics of growth models on
  space-like paths I: PushASEP}}, Electron. J. Probab. \textbf{13} (2008),
  no.~50, 1380--1418.

\bibitem{borodin2008anisotropic}
\bysame, \emph{Anisotropic growth of random surfaces in 2+ 1 dimensions}, Comm.
  Math. Phys. \textbf{325} (2014), no.~2, 603--684.

\bibitem{borodin2008large2}
A.~Borodin, P.~L. Ferrari, and T.~Sasamoto, \emph{{Large time asymptotics of
  growth models on space-like paths II: PNG and parallel TASEP}}, Commun. Math.
  Phys. \textbf{283} (2008), no.~2, 417--449.

\bibitem{borodin2012lectures}
A.~Borodin and V.~Gorin, \emph{Lectures on integrable probability}, Lectures
  notes for a summer school at Saint Petersburg, arXiv:1212.3351 (2012).

\bibitem{borodin2013nearest}
A.~Borodin and L.~Petrov, \emph{Nearest neighbor {M}arkov dynamics on
  {M}acdonald processes}, Adv. Math. \textbf{300} (2016), 71--155.

\bibitem{borodin2005eynard}
A.~Borodin and E.~M. Rains, \emph{{Eynard--Mehta theorem, Schur process, and
  their Pfaffian analogs}}, J. Stat. Phys. \textbf{121} (2005), no.~3-4,
  291--317.

\bibitem{borodin2006averages}
A.~Borodin and E.~Strahov, \emph{Averages of characteristic polynomials in
  random matrix theory}, Comm. Pure Appl. Math. \textbf{59} (2006), no.~2,
  161--253.

\bibitem{ferrari2004polynuclear}
P.~L. Ferrari, \emph{Polynuclear growth on a flat substrate and edge scaling of
  {GOE} eigenvalues}, Comm. Math. Phys. \textbf{252} (2004), no.~1-3, 77--109.

\bibitem{forrester2010log}
P.~J. Forrester, \emph{Log-gases and random matrices}, vol. 34 of London
  Mathematical Society Monographs Series, Princeton University Press,
  Princeton, NJ, 2010.

\bibitem{forrester1999correlations}
P.~J. Forrester, T.~Nagao, and G.~Honner, \emph{Correlations for the
  orthogonal-unitary and symplectic-unitary transitions at the hard and soft
  edges}, Nucl. Phys. B \textbf{553} (1999), no.~3, 601--643.

\bibitem{forrester2006correlation}
P.~J. Forrester, T.~Nagao, and E.~M. Rains, \emph{Correlation functions for
  random involutions}, Int. Math. Res. Not. \textbf{2006} (2006), 89796.

\bibitem{ghosal2017correlation}
P.~Ghosal, \emph{Correlation functions of the {Pfaffian Schur} process using
  {M}acdonald difference operators}, arXiv preprint arXiv:1705.05859 (2017).

\bibitem{gueudre2012directed}
T.~Gueudr{\'e} and P.~Le~Doussal, \emph{Directed polymer near a hard wall and
  {KPZ} equation in the half-space}, Euro. Phys. Lett. \textbf{100} (2012),
  no.~2, 26006.

\bibitem{johansson2000shape}
K.~Johansson, \emph{Shape fluctuations and random matrices}, Commun. Math.
  Phys. \textbf{209} (2000), no.~2, 437--476.

\bibitem{kerov2003asymptotic}
S.~V. Kerov, \emph{Asymptotic representation theory of the symmetric group and
  its applications in analysis}, Translations of Mathematical Monographs, vol.
  219, American Mathematical Society, Providence, RI, 2003, Translated from the
  Russian manuscript by N. V. Tsilevich, With a foreword by A. Vershik and
  comments by G. Olshanski.

\bibitem{macdonald1995symmetric}
I.~G. Macdonald, \emph{{Symmetric functions and Hall polynomials}}, second ed.,
  vol. 354, 1995.

\bibitem{matveev2015q}
K.~Matveev and L.~Petrov, \emph{q-randomized {Robinson-Schensted-Knuth}
  correspondences and random polymers}, Ann. Inst. H. Poincar{\'e} D \textbf{4}
  (2017), no.~1, 1--123.

\bibitem{okounkov2001infinite}
A.~Okounkov, \emph{Infinite wedge and random partitions}, Selecta Math.
  \textbf{7} (2001), no.~1, 57--81.

\bibitem{okounkov2003correlation}
A.~Okounkov and N.~Reshetikhin, \emph{Correlation function of {S}chur process
  with application to local geometry of a random 3-dimensional young diagram},
  J. Amer. Math. Soc. \textbf{16} (2003), no.~3, 581--603.

\bibitem{ortmann2015pfaffian}
J.~Ortmann, J.~Quastel, and D.~Remenik, \emph{A {P}faffian representation for
  flat {ASEP}}, Comm. Pure Appl. Math. \textbf{70} (2017), no.~1, 3--89.

\bibitem{o2014geometric}
N.~O’Connell, T.~Sepp{\"a}l{\"a}inen, and N.~Zygouras, \emph{Geometric {RSK}
  correspondence, {W}hittaker functions and symmetrized random polymers},
  Invent. Math. \textbf{197} (2014), no.~2, 361--416.

\bibitem{rains2000correlation}
E.~M. Rains, \emph{Correlation functions for symmetrized increasing
  subsequences}, arXiv preprint math/0006097 (2000).

\bibitem{sasamoto2004fluctuations}
T.~Sasamoto and T.~Imamura, \emph{Fluctuations of the one-dimensional
  polynuclear growth model in half-space}, J. Stat. Phys. \textbf{115} (2004),
  no.~3-4, 749--803.

\bibitem{stembridge1990nonintersecting}
J.~R. Stembridge, \emph{Nonintersecting paths, {P}faffians, and plane
  partitions}, Adv. Math. \textbf{83} (1990), no.~1, 96--131.

\bibitem{tracy2005matrix}
C.~A. Tracy and H.~Widom, \emph{Matrix kernels for the {G}aussian orthogonal
  and symplectic ensembles}, Ann. Inst. Fourier \textbf{55} (2005), no.~6,
  2197--2207.

\bibitem{tracy2013asymmetric}
\bysame, \emph{The asymmetric simple exclusion process with an open boundary},
  J. Math. Phys. \textbf{54} (2013), no.~10, 103301.

\bibitem{wang2009largest}
D.~Wang, \emph{The largest sample eigenvalue distribution in the rank 1
  quaternionic spiked model of {W}ishart ensemble}, Ann. Probab. (2009),
  1273--1328.

\end{thebibliography}

\end{document}